\title[]{Fractional Brauer configuration algebras II: covering theory.}
\author{Nengqun Li and Yuming Liu*}
\address{Nengqun Li
\newline School of Mathematics
\newline Liaoning Normal University
\newline Dalian 116029
\newline P.R.China}
\email{linengqun@lnnu.edu.cn}
\address{Yuming Liu
\newline School of Mathematical Sciences
\newline Laboratory of Mathematics and Complex Systems
\newline Beijing Normal University
\newline Beijing 100875
\newline P.R.China}
\email{ymliu@bnu.edu.cn}
\date{version of \today}
\newtheorem{Thm}{Theorem}[section]
\newtheorem{Lem}[Thm]{Lemma}
\newtheorem{Def}[Thm]{Definition}
\newtheorem{Cor}[Thm]{Corollary}
\newtheorem{Prop}[Thm]{Proposition}
\newtheorem{Ex1}[Thm]{Example}
\newtheorem{Rem1}[Thm]{Remark}
\newcommand{\lra}{\longrightarrow}
\newcommand{\ra}{\rightarrow}
\newcommand{\sdp}{\times\kern-.2em\vrule height1.1ex depth-.05ex}
\newcommand{\epi}{\lra \kern-.8em\ra}
\begin{document}
\renewcommand{\thefootnote}{\alph{footnote}}
\setcounter{footnote}{-1} \footnote{* Corresponding author.}
\setcounter{footnote}{-1} \footnote{\it{Mathematics Subject
Classification (2020)}: 16Gxx; 16B50.}
\renewcommand{\thefootnote}{\alph{footnote}}
\setcounter{footnote}{-1} \footnote{\it{Keywords}: Covering of fractional Brauer configurations, Fundamental group, Covering of quiver with relations, Universal cover, Van Kampen theorem.}

\maketitle

\begin{abstract}
In this paper, we develop a covering theory for the fractional Brauer configurations and connect it with the coverings of the associated quivers with relations in the sense of Mart\'inez-Villa and de la Pe\~na. Among the results, we show the following: (1) The universal cover of any fractional Brauer configuration is simply connected and we construct explicitly the universal cover of fractional Brauer configurations of type MS; (2) The fundamental group of a fractional Brauer configuration $E$ of type S is isomorphic to the fundamental group of the associated quiver with relations $(Q_E,I_E)$; (3) A (regular) covering of fractional Brauer configurations induces a (Galois) covering of the associated fractional Brauer configuration categories; (4) Set up an analogy of Van Kampen theorem for fractional Brauer configurations and apply it to calculate the fundamental group of any connected Brauer configuration.
\end{abstract}

\section{introduction}

In a previous paper \cite{LL}, we introduced fractional Brauer configurations (abbr. f-BCs) and fractional Brauer configuration algebras (abbr. f-BCAs), which are generalization of Brauer configurations (abbr. BCs) and Brauer configuration algebras (abbr. BCAs) respectively introduced by Green and Schroll \cite{GS}. Moreover, we introduced various types of fractional Brauer configurations, including fractional Brauer configuration of type S (abbr. $f_s$-BC), fractional Brauer configuration of type MS (abbr. $f_{ms}$-BC) and so on, and introduced the corresponding algebras $f_s$-BCA, $f_{ms}$-BCA and so on. Note that all the above algebras are generalizations of Brauer graph algebras (abbr. BGA). The relations of these algebras can be illustrated by the following diagram:

\medskip
\begin{center}	
		
		\begin{tikzpicture}[x=0.55pt,y=0.45pt,yscale=-1,xscale=1]
			\draw   (170,90) .. controls (170,70) and (190,60) .. (200,60) -- (397,60) .. controls (412,60) and (425,65.97) .. (430,86.9) -- (430,120) .. controls (430,140) and (412,140) .. (397,140) -- (190,140) .. controls (190,140) and (170,140) .. (170,120) -- cycle ;
			\draw   (150,80) .. controls (150,60) and (170,50) .. (180,50) -- (417,50) .. controls (432,50) and (445,55.97) .. (450,76.9) -- (450,150) .. controls (450,170) and (432,170) .. (417,170) -- (170,170) .. controls (170,170) and (150,170) .. (150,150) -- cycle ;
			\draw   (139,76.9) .. controls (139,55.97) and (155.97,39) .. (176.9,39) -- (423.1,39) .. controls (444.03,39) and (461,55.97) .. (461,76.9) -- (461,190.6) .. controls (461,211.53) and (444.03,228.5) .. (423.1,228.5) -- (176.9,228.5) .. controls (155.97,228.5) and (139,211.53) .. (139,190.6) -- cycle ;
			\draw   (113,73.3) .. controls (113,44.42) and (136.42,21) .. (165.3,21) -- (445.7,21) .. controls (474.58,21) and (498,44.42) .. (498,73.3) -- (498,230.2) .. controls (498,259.08) and (474.58,282.5) .. (445.7,282.5) -- (165.3,282.5) .. controls (136.42,282.5) and (113,259.08) .. (113,230.2) -- cycle ;
			\draw   (99,75.7) .. controls (99,39.14) and (128.64,9.5) .. (165.2,9.5) -- (445.8,9.5) .. controls (482.36,9.5) and (512,39.14) .. (512,75.7) -- (512,274.3) .. controls (512,310.86) and (482.36,340.5) .. (445.8,340.5) -- (165.2,340.5) .. controls (128.64,340.5) and (99,310.86) .. (99,274.3) -- cycle ;
			
			\draw (280,104) node [anchor=north west][inner sep=0.75pt]   [align=left] {BGA};
			\draw (280,145) node [anchor=north west][inner sep=0.75pt]   [align=left] {BCA};
			\draw (280,191) node [anchor=north west][inner sep=0.75pt]   [align=left] {$f_{ms}$-BCA};
			\draw (280,243) node [anchor=north west][inner sep=0.75pt]   [align=left] {$f_s$-BCA};
			\draw (280,301) node [anchor=north west][inner sep=0.75pt]   [align=left] {$f$-BCA};

		\end{tikzpicture} \quad .
		
	\end{center}

\medskip
Our motivation to introduce f-BCAs comes from two aspects. The first one is that the class of Brauer configuration algebras is not closed under derived equivalence. The second one is that it is natural to enlarge the algebras from the scope of symmetric algebras to the scope of self-injective algebras. Our study in \cite{LL} showed that the class of f-BCAs behaves well with respect to the above two aspects, for example, it is shown that the class of representation-finite f-BCAs of type S coincides the class of standard representation-finite self-injective algebras and therefore closed under derived equivalence (up to Morita equivalence).

In the present paper, we develop a covering theory for f-BCs and discuss it with the connection of the covering theory for the associated fractional Brauer configuration categories (abbr. f-BCC) and for the associated f-BCAs.

\medskip
In Section 2, we recall the definition of fractional Brauer configuration and fractional Brauer configuration category. We also recall various types of fractional Brauer configurations, and the admissible ideal presentation of a fractional Brauer configuration category of type S.

\medskip
In Section 3, we establish a covering theory for f-BCs. In Section 3.1, we define morphisms and coverings between two f-BCs. In Section 3.2, we introduce walks (which are analog of paths in a topological space) of an f-BC, and define a homotopy relation $\sim$ on the set of walks. Using the homotopy relation we define fundamental groups (groupoids) of an f-BC (see Definition \ref{definition-fundamental-group-of-f-BC}). One reason for us to introduce fundamental group is to compare two coverings $f_1:E_1\rightarrow E$ and $f_2:E_2\rightarrow E$ for a given f-BC $E$ in Section 3.3:

\begin{Prop}{\rm(see Proposition \ref{existence of morphism})}
Let $E$, $E_{1}$, $E_{2}$ be f-BCs with $E_{1}$ connected, $f_{1}:E_{1}\rightarrow E$ a morphism of f-BCs, $f_{2}:E_{2}\rightarrow E$ a covering of f-BCs. For $e_i\in E_{i}$ $(i=1,2)$ with $f_{1}(e_1)=f_{2}(e_2)$, there exists a morphism $\phi:E_{1}\rightarrow E_{2}$ of f-BCs such that $f_{1}=f_{2}\phi$ and $\phi(e_1)=e_2$ if and only if $f_{1*}(\Pi(E_1,e_1))\subseteq f_{2*}(\Pi(E_2,e_2))$. Moreover, such $\phi$ is unique if it exists.
\end{Prop}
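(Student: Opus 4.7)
The plan is to follow the classical topological blueprint for the lifting criterion, transported to the combinatorial setting of walks in f-BCs. The necessity direction is purely formal: if a morphism $\phi:E_{1}\rightarrow E_{2}$ with $f_{1}=f_{2}\phi$ and $\phi(e_{1})=e_{2}$ exists, then the induced group homomorphisms satisfy $f_{1*}=f_{2*}\phi_{*}$ at the chosen basepoints, and the inclusion $f_{1*}(\Pi(E_{1},e_{1}))\subseteq f_{2*}(\Pi(E_{2},e_{2}))$ follows immediately. Uniqueness should follow by a standard connectedness argument: two lifts that agree at $e_{1}$ must agree on any neighbour of $e_{1}$ (by the local bijectivity coming from the definition of a covering in Section 2.1), and hence on all of $E_{1}$ because $E_{1}$ is connected and coverings have the unique walk-lifting property, which I expect to have been established in Section 2.2 alongside the definition of homotopy.

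The core of the proof is the existence direction. Given $e'\in E_{1}$, choose a walk $w$ from $e_{1}$ to $e'$ in $E_{1}$ (using connectedness of $E_{1}$), push it down to a walk $f_{1}(w)$ in $E$ starting at $f_{1}(e_{1})=f_{2}(e_{2})$, and lift this uniquely through the covering $f_{2}$ to a walk $\widetilde{f_{1}(w)}$ in $E_{2}$ starting at $e_{2}$. Set $\phi(e')$ to be the endpoint of this lift. The main obstacle, and the place where the hypothesis $f_{1*}(\Pi(E_{1},e_{1}))\subseteq f_{2*}(\Pi(E_{2},e_{2}))$ is actually used, is well-definedness: if $w_{1},w_{2}$ are two walks from $e_{1}$ to $e'$, then $w_{1}w_{2}^{-1}$ is a closed walk at $e_{1}$, so $f_{1}(w_{1}w_{2}^{-1})$ represents a class in $f_{1*}(\Pi(E_{1},e_{1}))$, hence in $f_{2*}(\Pi(E_{2},e_{2}))$, and therefore lifts to a closed walk at $e_{2}$; invoking the fact that homotopic walks have lifts with the same endpoints (which I expect to be a property of the homotopy relation $\sim$ defined in Section 2.2), the endpoints of the lifts of $f_{1}(w_{1})$ and $f_{1}(w_{2})$ must coincide.

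It remains to check that the map $\phi:E_{1}\rightarrow E_{2}$ thus constructed is actually a morphism of f-BCs, and not merely a set-theoretic lift. Here I would argue locally: around any $e'\in E_{1}$, the covering $f_{2}$ restricts to a bijection from a neighbourhood of $\phi(e')$ in $E_{2}$ onto a neighbourhood of $f_{2}\phi(e')=f_{1}(e')$ in $E$, compatible with the f-BC structure. Composing the (local) morphism induced by $f_{1}$ with the inverse of this local bijection produces the required structural data for $\phi$ near $e'$, and these local pieces agree on overlaps because they are determined by walk-lifting. Finally $f_{2}\phi=f_{1}$ and $\phi(e_{1})=e_{2}$ hold by construction, so $\phi$ has all the required properties. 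The uniqueness assertion then follows from the uniqueness in walk-lifting and the connectedness of $E_{1}$, since any morphism $\phi'$ with the same properties must agree with $\phi$ on the endpoints of all lifted walks starting at $e_{2}$.
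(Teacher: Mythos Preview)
Your proposal is correct and follows essentially the same route as the paper: define $\phi$ by lifting walks through the covering $f_{2}$, use the subgroup hypothesis together with the homotopy-lifting property (Proposition~\ref{homotopy lifting}) for well-definedness, and deduce uniqueness from unique walk-lifting. The one place where your language is slightly looser than needed is the verification that $\phi$ is a morphism of f-BCs: rather than invoking a ``local inverse'' of $f_{2}$, the concrete check (which the paper merely says ``can be checked'') is to extend a walk $w$ from $e_{1}$ to $e'$ by a single step of type $g$, $g^{-1}$, or $\tau$ and observe that the endpoint of the lift moves accordingly, directly giving $G$-equivariance and $\phi(P_{1}(e'))\subseteq P_{2}(\phi(e'))$, $\phi(L_{1}(e'))\subseteq L_{2}(\phi(e'))$.
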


Moreover, for an f-BC $E$ with a base point $e$, we construct its universal cover $\widetilde{E}$ at $e$ by using all homotopy classes of walks of $E$ starting at $e$. We show that $\widetilde{E}$ is simply connected (that is, $\widetilde{E}$ is connected and has trivial fundamental group), and show that the natural covering $p:\widetilde{E}\rightarrow E$ satisfies the universal property:

\begin{Prop}{\rm(see Corollary \ref{property of universal cover})}
For each covering $f:E'\rightarrow E$ and for all $\overline{w}\in\widetilde{E}$, $e'\in E'$ with $p(\overline{w})=f(e')$, there exists a unique covering $\phi:\widetilde{E}\rightarrow E'$ such that $\phi(\overline{w})=e'$ and $p=f\phi$.
\end{Prop}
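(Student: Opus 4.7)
The plan is to reduce the statement to the preceding Proposition \ref{existence of morphism}, which supplies a unique morphism of f-BCs, and then upgrade the resulting morphism $\phi$ to a covering using the fact that $p$ and $f$ are already coverings together with the factorization $p = f\phi$.

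First I would apply Proposition \ref{existence of morphism} with $E_{1} = \widetilde{E}$, $E_{2} = E'$, $f_{1} = p$, $f_{2} = f$, $e_{1} = \overline{w}$ and $e_{2} = e'$. The base points match by hypothesis, and $\widetilde{E}$ is connected since it is simply connected. As $\widetilde{E}$ has trivial fundamental group, $p_{*}(\Pi(\widetilde{E},\overline{w}))$ is trivial and so the inclusion $p_{*}(\Pi(\widetilde{E},\overline{w})) \subseteq f_{*}(\Pi(E',e'))$ is automatic. The proposition then produces a unique morphism $\phi:\widetilde{E}\rightarrow E'$ with $\phi(\overline{w}) = e'$ and $p = f\phi$. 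Uniqueness of $\phi$ as a covering follows at once from uniqueness as a morphism.

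The remaining task is to verify that this $\phi$ is in fact a covering. A covering of f-BCs is characterised by a local bijection condition on the combinatorial data around each element (the star) plus an appropriate surjectivity/connectivity condition onto the relevant component. For local bijectivity, at any $\widetilde{e}\in\widetilde{E}$ the map $p$ restricts to a bijection between the local structure at $\widetilde{e}$ and that at $p(\widetilde{e}) = f(\phi(\widetilde{e}))$, while $f$ restricts to a bijection between the local structure at $\phi(\widetilde{e})$ and that at $f(\phi(\widetilde{e}))$. Composing the first with the inverse of the second forces $\phi$ to induce a bijection on local structures at $\widetilde{e}$.

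The main obstacle will be the surjectivity of $\phi$ onto the connected component of $e'$ in $E'$, which is what separates a locally bijective morphism from a covering. I would handle this by a walk-lifting argument: given any $e_{1}'$ in that component, choose a walk in $E'$ from $e'$ to $e_{1}'$, push it down via $f$ to a walk in $E$ based at $p(\overline{w})$, and use the explicit construction of $\widetilde{E}$ (whose elements are homotopy classes of walks of $E$ starting at the base point) to lift the pushed-down walk uniquely to a walk in $\widetilde{E}$ starting at $\overline{w}$. The local bijectivity of $\phi$ together with $f\phi = p$ forces the $\phi$-image of this lifted walk to agree with the original walk in $E'$, so its terminal element is a preimage of $e_{1}'$ under $\phi$. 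This yields the required surjectivity and completes the argument.
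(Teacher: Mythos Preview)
Your approach is essentially the paper's: invoke Proposition \ref{existence of morphism} using the triviality of $\Pi(\widetilde{E},\overline{w})$ (from Proposition \ref{universal cover is simply connected}) to obtain the unique morphism $\phi$, then upgrade $\phi$ to a covering via the factorization $p=f\phi$ and a two-out-of-three argument on the local bijections $\widetilde{P}(\overline{v})\to P'(\phi(\overline{v}))\to P(t(v))$ and likewise for $L$.

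The one discrepancy is that your final paragraph is superfluous. In this paper a covering of f-BCs (Definition \ref{morphism and covering of pre-configurations}) is characterised \emph{solely} by the local condition that $\phi$ induce bijections $P(e)\to P'(\phi(e))$ and $L(e)\to L'(\phi(e))$ for every $e$; there is no surjectivity or connectivity requirement built into the definition. Once your second paragraph establishes these local bijections, $\phi$ is already a covering, and the paper's proof stops there. Your walk-lifting argument is correct but not needed.
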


We also define regular covering of f-BCs and describe it using the quotient of f-BCs:

\begin{Thm}{\rm(see Theorem \ref{regular-covering-explicit})}
Let $E,E'$ be connected f-BCs and let $f:E\rightarrow E'$ be a regular covering. Then Aut$(f)$ acts admissibly on $E$, and there exists an isomorphism $r:E/\mathrm{Aut}(f)\xrightarrow{\sim} E'$ such that the diagram
$$\xymatrix{
		& E \ar[dr]^{f}\ar[dl]_{p} &  \\
		E/\mathrm{Aut}(f)\ar[rr]_{r}^{\sim} & & E'
	}$$
commutes, where $p:E\rightarrow E/\mathrm{Aut}(f)$ is the natural projection.
\end{Thm}

In Section 3.4 we deal with f-BC of type MS. For an $f_{ms}$-BC $E$, we can construct its universal cover $\mathbb{Z}B_{(E,e)}$ explicitly using special walks. More precisely, we have:

\begin{Cor}{\rm(see Corollary \ref{universality})}
Let $E=(E,P,L,d)$ be an $f_{ms}$-BC and $e\in E$, and define $q:\mathbb{Z}B_{(E,e)}\rightarrow E$, $(w,n)\mapsto g^{n d(t(w))}\cdot t(w)$. Then $q$ is a covering of f-BCs. Let $\widetilde{E}$ be the universal cover of $E$ at $e$ and $p:\widetilde{E}\rightarrow E$, $\overline{v}\mapsto t(v)$. Then there exists a unique isomorphism of f-BCs $\phi:\mathbb{Z}B_{(E,e)}\rightarrow\widetilde{E}$ such that $q=p\phi$ and $\phi(((e||e),0))=\overline{(e||e)}$.
\end{Cor}

\medskip
In Sections 4 and 5, we study the relation between the covering (resp. fundamental groups) of f-BCs and the covering (resp. fundamental groups) of the associated f-BCCs. We first show that the fundamental group of an $f_s$-BC $E$ is isomorphic to the fundamental group (in the sense of Mart\'inez-Villa and de la Pe\~na) of the associated quiver with relations $(Q_E,I_E)$:

\begin{Thm}{\rm(see Theorem \ref{fundamental-group-of-fs-BC-and-fundamental-group-of-algebra-and-isomorphic} and Corollary \ref{iso-of-fundamental-gp-of-f-BC-and-fundamental-gp-of-quiver-with-aadmissible-relation})}
Let $E$ be a connected $f_s$-BC and $e\in E$. Then there is an isomorphism $\Pi(E,e)\cong\Pi(Q_E,I_E)$, where $\Pi(E,e)$ and $\Pi(Q_E,I_E)$ denote the fundamental group of $E$ and the fundamental group of the quiver with relations $(Q_E,I_E)$ respectively. Moreover, $\Pi(E,e)$ is also isomorphic to the fundamental group of the associated quiver with admissible relations $(Q'_E,I'_E)$.
\end{Thm}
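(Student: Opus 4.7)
The plan is to construct a direct identification between the combinatorial data defining $\Pi(E,e)$ and the data defining $\Pi(Q_E,I_E)$. On both sides the fundamental group is obtained from a set of walks based at a chosen point, modulo a homotopy relation generated by local moves. The bridge is that, in the $f_s$ case, the arrows of $Q_E$ already encode the elementary steps one can take in a walk on $E$: each arrow corresponds to the successor operation given by the permutation at a vertex of $E$ relating two incident polygons. Consequently, there is a natural map $\Psi$ sending an $E$-walk based at $e$ to a walk in $Q_E$ based at the vertex of $Q_E$ representing the polygon at $e$, obtained by reading off the arrow associated to each elementary step (with a formal inverse whenever the step goes against the orientation).

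The heart of the argument is to show that $\Psi$ respects the two homotopy relations. For this I would check that the generators of $\sim$ on $E$-walks (trivial back-and-forth, insertion of constant walks, and the local identifications at polygons and vertices introduced in Section~2.2) correspond under $\Psi$ precisely to the generators of the Mart\'inez-Villa and De La Pe\~na homotopy on $(Q_E,I_E)$-walks (cancellation of $\alpha\alpha^{-1}$, insertion of trivial paths, and the minimal relations generating $I_E$). The main obstacle is to identify the precise list of minimal generators of $I_E$ for an $f_s$-BC and to match each one with a local homotopy move on $E$-walks; the type S assumption, which furnishes a linear ordering at each vertex rather than only a cyclic one, should make this matching rigid enough that no extra relations appear on either side. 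Granting this matching, the induced map on fundamental groups is a well-defined homomorphism; it is surjective because every arrow of $Q_E$ is realized by some elementary $E$-step, and it is injective because the matching of generators lets one lift any quiver-side homotopy back to a homotopy on the $E$ side. Together this yields $\Pi(E,e)\cong\Pi(Q_E,I_E)$.

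For the second isomorphism, I would invoke the passage from $I_E$ to the admissible ideal $I'_E$, which is the standard reduction (contract arrows appearing in linear relations and discard the redundant ones) that forces $I'_E$ to lie in the square of the arrow ideal while preserving the associated algebra up to Morita equivalence. Such a reduction preserves the Mart\'inez-Villa and De La Pe\~na fundamental group, since each contracted arrow becomes trivial in the fundamental groupoid and no new homotopies are thereby introduced. Hence $\Pi(Q_E,I_E)\cong\Pi(Q'_E,I'_E)$, and composing with the first isomorphism delivers the moreover clause of the statement.
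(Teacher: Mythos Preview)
Your overall strategy matches the paper's, but there are two real gaps.

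First, your characterization of type S is off: condition $(f7)$ is not about a linear order at vertices but a condition on standard sequences. What it actually provides, via \cite[Lemma~4.16]{LL}, is a complete description of the minimal relations of $I_E$: each is a combination $\sum\lambda_i L(p_i)$ with the $p_i$ standard sequences satisfying $\prescript{\wedge}{}{p_i}\equiv\prescript{\wedge}{}{p_j}$. This is precisely what lets one match the homotopy move $(h3)$ on $E$ with the move $(E2)$ on the quiver side. More seriously, the injectivity step is not as automatic as you suggest. Your map $\Psi$ kills every $\tau$-step and identifies all elements of each $L$-class, so it is far from injective on walks; hence ``lifting a quiver-side homotopy back'' is not a formality. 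The paper isolates this as a separate lemma (Lemma~\ref{injectivity}): given $\gamma\sim_{I_E} f(w)$ one constructs $w'\sim w$ with $f(w')=\gamma$, by a case analysis of the moves $(E1)$ and $(E2)$. The $(E2)$ case genuinely uses the description of minimal relations above together with $(h3)$ and $(h4)$ to realize the replacement $L(l_1)\rightsquigarrow L(l_2)$ as an $E$-homotopy; your sentence about ``matching of generators'' elides exactly this work.

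Second, the argument for $\Pi(Q_E,I_E)\cong\Pi(Q'_E,I'_E)$ cannot be reduced to a general principle: the Mart\'inez-Villa--De~La~Pe\~na fundamental group depends on the presentation $(Q,I)$, not only on the algebra $kQ/I$, so passing to an admissible presentation need not preserve it. The paper proves the isomorphism directly (Proposition~\ref{iso-of-fundamental-gp-between-two-quiver-with-relation}): for each arrow $\alpha$ removed in passing to $Q'_E$ it produces a path $v_\alpha$ of $Q'_E$ with $\alpha\,R\,v_\alpha$ (Lemma~\ref{replace-reduced-arrow}), and then checks that replacing each such $\alpha$ by $v_\alpha$ in a walk carries minimal relations of $I_E$ to minimal relations of $I'_E$ and conversely. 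Both directions again rely on the explicit form of the minimal relations furnished by the type~S hypothesis.
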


In Section 5.1, we show that a covering of f-BCs can induce a covering between the associated quivers with relations, and therefore induces a covering functor between the associated fractional Brauer configuration categories (abbr. f-BCC):

\begin{Prop}{\rm(see Theorem \ref{covering of f-BCs induces covering functor} and Proposition \ref{induce-Galois-covering-of-categories})}
Let $E$, $E'$ be connected $f_s$-BCs, $\phi:E\rightarrow E'$ be a covering (resp. regular covering). Then $\phi$ induces a covering (resp. Galois covering) $f:(Q_E,I_E)\rightarrow (Q_{E'},I_{E'})$ of the associated quivers with relations. Especially, $\phi$ induces a covering functor (resp. Galois covering functor) $F:\Lambda_{E}\rightarrow\Lambda_{E'}$.
\end{Prop}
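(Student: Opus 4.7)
The plan is to build the induced quiver morphism directly from the combinatorial data of $\phi$, verify the three conditions that characterize a covering of quivers with relations in the sense of Mart\'inez-Villa–De La Pe\~na, and then transport this to the $k$-linear categories. Concretely, recall that the vertices of $Q_E$ correspond to the polygons of $E$, while the arrows at a vertex $v \in Q_E$ correspond to the successor structure coming from the cyclic orderings at the vertices of $E$ incident to the polygon labelled by $v$. Since $\phi$ sends polygons to polygons and vertices to vertices compatibly with the local cyclic orderings (this is just part of the definition of a morphism of f-BCs, together with the local bijection property of a covering), we obtain a well-defined map $f:Q_E\to Q_{E'}$ on vertices and arrows. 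The fact that $f$ sends $I_E$ into $I_{E'}$ follows because the generators of $I_E$ (monomial relations coming from the cyclic rotation around each vertex of $E$, commutativity relations at shared vertices, and nilpotency relations) are all local and are sent to the corresponding generators of $I_{E'}$ by $\phi$.

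The next step is to check that $f$ is a covering of quivers with relations. The key point is the local bijectivity at every vertex $v$ of $Q_E$: the set of arrows with source $v$ must be in bijection with the set of arrows with source $f(v)$, and similarly for target. This reduces to showing that for each vertex $\alpha$ of $E$ and each polygon $P$ incident to $\alpha$, the map $\phi$ induces a bijection between the arrows at $P$ coming from $\alpha$ and the arrows at $\phi(P)$ coming from $\phi(\alpha)$. Such local bijectivity is exactly what the covering condition on $\phi$ gives us at each vertex of $E$. One also needs that $f$ identifies exactly the relations needed, i.e.\ that $I_{E'}$ pulls back to $I_E$ along $f$ up to the ideal generated locally; this again is verified generator by generator, using that $\phi$ is a \emph{local} isomorphism. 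Once $f$ is checked to be a covering, the induced functor $F:\Lambda_E\to\Lambda_{E'}$ between the associated $k$-linear path categories modulo relations is a covering functor by the standard result of Mart\'inez-Villa–De La Pe\~na.

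For the Galois statement, I would show that the functor $\mathrm{Aut}(\phi)\to\mathrm{Aut}(f)$, $\psi\mapsto f_\psi$, is well-defined and compatible with the action on fibers. Any automorphism $\psi$ of the covering $\phi:E\to E'$ is a morphism of f-BCs over $E'$, hence by the construction in the previous paragraph induces an automorphism of $Q_E$ over $Q_{E'}$ preserving $I_E$, that is, an automorphism of the covering $f$. Conversely, any such automorphism of $f$ restricts to an automorphism of $\phi$ because the vertex set and polygon set of $E$ are recovered from $Q_E$. If $\phi$ is regular, then $\mathrm{Aut}(\phi)$ acts transitively on every fibre of $\phi$, and the same transitivity translates via $\phi\mapsto f$ to the fibres of $f$ at each vertex of $Q_{E'}$, making $f$ a Galois covering of quivers with relations. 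Passing to the $k$-linear categories, $F:\Lambda_E\to\Lambda_{E'}$ becomes a Galois covering functor with group $\mathrm{Aut}(\phi)$.

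The main obstacle I anticipate is the bookkeeping in verifying the local bijection on arrows: arrows in $Q_E$ are defined from the successor maps attached to each vertex of $E$, and one must carefully untangle how a covering of f-BCs (which is defined by local bijectivity around vertices) forces a local bijection on arrows at the polygon level. A secondary point requiring care is confirming that relations match up exactly on the nose, not merely up to inclusion — in particular, the commutativity relations between cycles at two different vertices of a shared polygon need both vertices to be covered compatibly, which is where the global hypothesis that $\phi$ is a morphism of f-BCs (rather than just of the underlying combinatorial data) is used.
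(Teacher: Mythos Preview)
Your outline has the right shape but contains two genuine gaps that the paper has to work to fill.

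First, the local bijectivity on arrows is asymmetric in a way you gloss over. Arrows of $Q_E$ \emph{starting} at $P(e)$ are the classes $\{L(h):h\in P(e)\}$, and the covering axioms on $\phi$ give the bijection with $\{L'(h'):h'\in P'(\phi(e))\}$ directly. But arrows \emph{ending} at $P(e)$ are the $L(h)$ with $g\cdot h\in P(e)$, and here injectivity is not automatic: one needs the auxiliary lemma (Lemma~\ref{L(e)=L(h)} in the paper) that if $P(g\cdot e)=P(g\cdot h)$ and $L'(\phi(e))=L'(\phi(h))$ then already $L(e)=L(h)$. This lemma is also what drives the relation-lifting step below.

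Second, and more seriously, the definition of a covering of quivers with relations does not ask that \emph{generators} of $I_{E'}$ lift, but that every \emph{minimal relation} (and every zero relation) of $I_{E'}$ lifts to a relation of $I_E$ over each chosen fibre vertex. Minimal relations are not the explicit generators $(fR1)$--$(fR3)$; they are arbitrary elements of $I_{E'}$ that cannot be broken into smaller pieces. The paper appeals to \cite[Lemma 4.16]{LL} to show that any minimal relation of $I_{E'}$ has the form $\sum_i\lambda_i u'_i$ with the $u'_i$ pairwise $R'$-equivalent paths in $\mathscr{E}'$, and only then lifts each $u'_i$ to the unique path $u_i$ with prescribed source and checks, via Lemma~\ref{L(e)=L(h)} again, that the $u_i$ are pairwise $R$-equivalent in $\mathscr{E}$. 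Your phrase ``verified generator by generator, using that $\phi$ is a local isomorphism'' hides exactly this work; without the structure theorem for minimal relations, the lifting condition cannot be checked.

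For the Galois part, the paper does not attempt to invert the map $\mathrm{Aut}(\phi)\to\mathrm{Aut}(f)$, and your claim that an automorphism of $f$ ``restricts'' to one of $\phi$ is unsupported: an automorphism of $(Q_E,I_E)$ permutes $P$-classes and $L$-classes, not individual angles, so there is no obvious restriction. Instead the paper uses Theorem~\ref{regular-covering-explicit} to realize a regular $\phi$ as the quotient $E\to E/\Pi$ by an admissible group, checks that $\Pi$ induces a group acting freely on $(Q_E)_0$, and identifies $(Q_E/\Pi,\overline{I_E})$ with $(Q_{E'},I_{E'})$ directly.
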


Moreover, using a criterion on determining when a locally bounded category is simply connected, we show in Section 5.3 that each fractional Brauer configuration category defined by an $f_{ms}$-BC is standard:

\begin{Prop}{\rm(see Corollary \ref{type-ms-is-standard})}
If $E$ is a connected $f_{ms}$-BC and $\widetilde{E}$ is its universal cover, then $\Lambda_{\widetilde{E}}$ is simply connected. Especially, $\Lambda_E$ is standard.
\end{Prop}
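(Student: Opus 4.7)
The plan is to combine the Theorem identifying $\Pi(E,e)$ with $\Pi(Q_E,I_E)$ for $f_s$-BCs with the Proposition turning regular coverings of f-BCs into Galois covering functors on the associated categories. First I would check that the universal cover $\widetilde{E}$ of an $f_{ms}$-BC is itself an $f_{ms}$-BC; this ought to be transparent from the explicit construction in Section 2.4, since the local datum at each vertex (including the type-MS condition) is lifted isomorphically from $E$ along $p:\widetilde{E}\to E$. In particular $\widetilde{E}$ is an $f_s$-BC, so the Theorem applies and
\[
\Pi(Q_{\widetilde{E}},I_{\widetilde{E}}) \;\cong\; \Pi(\widetilde{E},\tilde e)
\]
for any $\tilde e\in\widetilde{E}$. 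The right-hand side is trivial because $\widetilde{E}$ is a universal cover, whence $\Pi(Q_{\widetilde{E}},I_{\widetilde{E}})=1$, which is by definition what it means for $\Lambda_{\widetilde{E}}$ to be simply connected.

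For the second assertion, the covering $p:\widetilde{E}\to E$ is regular with deck group $\Pi(E,e)$, so by the Proposition from Section 4.1 it induces a Galois covering functor $F:\Lambda_{\widetilde{E}}\to\Lambda_E$ with the same group. I would then invoke the standard principle (Bongartz--Gabriel, Riedtmann) that a self-injective locally bounded $k$-category admitting a Galois covering by a simply connected category is standard: $\Lambda_E$ is realised as the orbit category $\Lambda_{\widetilde{E}}/\Pi(E,e)$, and this presentation is precisely the combinatorial one read off from $(Q_E,I_E)$, with no cocycle deformations.

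The main obstacle I anticipate is the exact matching between the quiver-with-relations presentation $(Q_{\widetilde{E}},I_{\widetilde{E}})$ descended along $p$ and the presentation $(Q_E,I_E)$ of $\Lambda_E$ itself. This is what converts the abstract statement ``$\Lambda_E$ has a Galois cover by a simply connected category'' into the concrete statement ``$\Lambda_E$ is standard''; one needs to know that the functoriality set up in Section 4.1 is compatible with the canonical presentations on both sides, so that the orbit algebra literally coincides with the algebra defined by the f-BC $E$ rather than a scalar twist of it. Once this compatibility is in hand, no further computation is required and standardness follows at once.
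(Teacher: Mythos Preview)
Your argument for the first assertion has a genuine gap. The definition of ``simply connected'' used in the paper (Definition~\ref{simply-connected-and-standard}, following Skowro\'nski) requires two things: the Gabriel quiver $Q$ of $\Lambda$ contains no oriented cycles, and $\Pi(Q,I)=\{1\}$ for \emph{every} admissible ideal $I$ with $\Lambda\cong kQ/I$. Your proposal establishes only that $\Pi(Q_{\widetilde{E}},I_{\widetilde{E}})\cong\Pi(\widetilde{E},\tilde e)=\{1\}$ for the one canonical presentation, and says nothing about oriented cycles or about other admissible presentations. Note also that $(Q_{\widetilde{E}},I_{\widetilde{E}})$ itself need not be admissible; the admissible presentation is $(Q'_{\widetilde{E}},I'_{\widetilde{E}})$.

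The paper closes this gap through Proposition~\ref{f-BCC-of-simply-connected-f-BC-is-simply-connected}, whose proof does substantial additional work: it uses the integer-valued function $\alpha$ from the proof of Proposition~\ref{fundamental group of Be} to exclude oriented cycles in $Q_{\widetilde{E}}$, and then shows that for each arrow $L(e)$ of $Q'_{\widetilde{E}}$ there is a \emph{unique} path from $P(e)$ to $P(g\cdot e)$. This uniqueness feeds into Lemma~\ref{criterion-simply-connected}, which is precisely the device that upgrades ``$\Pi(Q,I)=\{1\}$ for one admissible $I$'' to ``for all admissible $I$''. Without an argument of this kind, the conclusion that $\Lambda_{\widetilde{E}}$ is simply connected does not follow.

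For the second assertion you are overcomplicating matters. In the paper's framework, ``standard'' \emph{means} admitting a Galois covering by a simply connected locally bounded category; once $\Lambda_{\widetilde{E}}$ is simply connected and Proposition~\ref{induce-Galois-covering-of-categories} supplies the Galois covering $\Lambda_{\widetilde{E}}\to\Lambda_E$, standardness is immediate by definition, with no need to worry about scalar twists or matching presentations.
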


\medskip
In last section, we calculate the fundamental groups of BCs. We first define sub-f-BCs of an f-BC, and define their union and intersection. Then we establish an analogy of the Van Kampen theorem:

\begin{Prop}{\rm(see Proposition \ref{Van-Kampen})}
Let $E$ be an f-BC, which is an admissible union of a family of sub-f-BCs $\{E_{\alpha}\}_{\alpha\in I}$. Let $A$ be a subset of $\bigcap_{\alpha\in I}E_{\alpha}$ such that for each $\alpha\in I$, $A$ meets each connected component of $E_{\alpha}$. Then the groupoid $\Pi(E,A)$ is the direct limit of the groupoids $\Pi(E_{\alpha},A)$.
\end{Prop}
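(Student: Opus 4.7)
The plan is to verify the universal property of the direct limit in the category of groupoids. Let $G$ be any groupoid and $\psi_{\alpha}\colon\Pi(E_{\alpha},A)\to G$ a family of groupoid morphisms that is compatible in the sense that any two members agree on the image of $\Pi(E_{\alpha}\cap E_{\beta},A)$ under the inclusion functors. I must construct a unique groupoid morphism $\psi\colon\Pi(E,A)\to G$ satisfying $\psi\,\iota_{\alpha*}=\psi_{\alpha}$ for every $\alpha$, where $\iota_{\alpha*}$ is induced by the inclusion $E_{\alpha}\hookrightarrow E$. The canonical cone of the $\iota_{\alpha*}$ into $\Pi(E,A)$ already exhibits $\Pi(E,A)$ as a candidate; the task is to verify that this cone is terminal.

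The first step is the \emph{subdivision of walks}. For a walk $w$ in $E$ between points of $A$, the definition of admissible union should permit me to write $w$ as a concatenation $w=w_{1}\cdot w_{2}\cdots w_{n}$ with each factor $w_{i}$ lying entirely inside some $E_{\alpha_{i}}$. The subdivision points need not lie in $A$, but since $A$ meets every connected component of every $E_{\alpha}$, I can insert auxiliary connecting walks inside the appropriate $E_{\alpha_{i}}$ so that every factor has both endpoints in $A$; these auxiliary walks cancel in pairs by the compatibility of the $\psi_{\alpha}$ on intersections. I then set $\psi([w]):=\psi_{\alpha_{1}}([w_{1}])\cdots\psi_{\alpha_{n}}([w_{n}])$ in $G$. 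The same argument proves, as a by-product, that $\Pi(E,A)$ is generated as a groupoid on object set $A$ by the images of the $\iota_{\alpha*}$, which yields \emph{uniqueness} of $\psi$ at once.

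The heart of the proof is \emph{well-definedness} of $\psi([w])$. Independence of the chosen subdivision follows by passing to a common refinement and invoking the agreement of the $\psi_{\alpha}$ on the intersection groupoids. Independence of the homotopy class of $w$ reduces to the assertion that each elementary homotopy generating the equivalence relation $\sim$ of Section~2.2, namely the insertion or deletion of backtracks at an angle together with the basic moves coming from the f-BC structure, can be realized inside a single $E_{\alpha}$. Once this locality is established, a standard induction on the number of elementary moves transforming one walk into a homotopic one shows that $\psi$ is insensitive to the representative.

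The \textbf{main obstacle} is precisely the locality statement in the previous paragraph: one has to extract from the definition of admissible union exactly what guarantees that every generating homotopy is localizable in some $E_{\alpha}$. In the classical Van Kampen theorem this is the Lebesgue-number step for a continuous homotopy $[0,1]^{2}\to X$; here a combinatorial analogue has to be matched, generator by generator, against whatever admissibility condition is imposed on the union $E=\bigcup_{\alpha}E_{\alpha}$. After that verification the rest of the proof is formal manipulation of groupoid morphisms.
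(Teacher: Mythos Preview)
Your overall architecture (verify the universal property by defining $\psi$ on subdivided walks, check independence of subdivision via common refinement, check independence of homotopy class, deduce uniqueness from generation) matches the paper's Steps 1--2 and 6--7 quite closely. The genuine gap is in your treatment of the $\tau$-steps and, correspondingly, in your understanding of what the word ``admissible'' is doing.

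You assert that the definition of admissible union ``should permit'' you to write any walk as a concatenation of pieces each lying in a single $E_{\alpha}$. This is false as stated: a length-one walk $(y\,|\,\tau\,|\,x)$ records only that $P(x)=P(y)$ in $E$, and there is no reason for $x$ and $y$ to lie in the same class of any $P_{\alpha}$. The paper handles this (Step~3) by replacing each such $\tau$-step with a chain $(e_n|\tau|e_{n-1})\cdots(e_1|\tau|e_0)$ along a \emph{connecting sequence} $x=e_0,\dots,e_n=y$ in which consecutive pairs lie in a common $P_{\alpha_i}$-class; such a sequence is exactly a walk in the simplicial complex $K_x$ on $P(x)$. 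The admissibility hypothesis---simple connectedness of every $K_x$---is then invoked in Step~4 to show that two different connecting sequences yield the same value of $\psi$, not to localize homotopies.

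This also undermines your plan for well-definedness under homotopy. The generating relations $(h3)$ and $(h4)$ involve a $\tau$-step governed by $P(e)=P(h)$ or $L(e)=L(h)$ in $E$, and in general this relation cannot be realized inside a single $E_{\alpha}$. The paper's Step~5 (Cases~3 and~4) instead pushes the $\tau$-step along a connecting sequence, producing a chain of intermediate walks $r_0,\dots,r_n$ with each transition $r_{i-1}\rightsquigarrow r_i$ localizable in some $E_{\gamma_i}$; only after this refinement does the ``one $E_\alpha$ at a time'' argument go through. Your proposal identifies the right obstacle but mislocates the role of admissibility: it is not a Lebesgue-number substitute for the homotopy moves, but the ingredient that makes the auxiliary replacement of $\tau$-steps canonical up to the choices involved.
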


Using this proposition, we can reduce the calculation of the fundamental group of BCs to the calculation of the fundamental group of BGs (see Corollary \ref{fundamental group of BC and BG}), and moreover reduce the calculation of the fundamental group of BGs to two special cases (see Lemma \ref{a calculation of fundamental group} and Lemma \ref{a calculation of fundamental groupoid}). The fundamental groups of a BC is given as follows:

\begin{Thm}{\rm(see Theorem \ref{fundamental group of BC})}
Let $E$ be a connected Brauer configuration with $n$ vertices $v_1$, $\cdots$, $v_n$. Let $k_l$ be the number of $l$-gons of $E$ for each $l\geq 2$. Let $d_i=d_f(v_i)$ be the f-degree of $v_i$ for each $1\leq i\leq n$, and let $r=\sum_{l\geq 2} (l-1)k_l-n+1$. Then the fundamental group of $E$ is isomorphic to
$$F\langle a_1,\cdots,a_n,b_1,\cdots,b_r\rangle/\langle a_{1}^{d_1}=\cdots=a_{n}^{d_n}, a_{1}^{d_1} b_1=b_1 a_{1}^{d_1},\cdots, a_{1}^{d_1} b_r=b_r a_{1}^{d_1}\rangle,$$
where $F\langle a_1,\cdots,a_n,b_1,\cdots,b_r\rangle$ denotes the free group on the set $\{a_1,\cdots,a_n,b_1,\cdots,b_r\}$. In particular, the fundamental group of a connected Brauer configuration is finitely presented.
\end{Thm}

The covering theory developed in this paper is practical and efficiently used in a series subsequent studies \cite{LL2,LL3,GLL}. We also notice that there is already a discussion of coverings on BGs in \cite{GSS}, aiming a connection with coverings on BGAs. However, there are no discussions on the universal cover and on the fundamental group. On the contrary, the fundamental group and the universal cover play an important role in our covering theory for f-BCs. For example, we will determine the representation type of an $f_{ms}$-BGA by using the fundamental group of the defining $f_{ms}$-BG in \cite{LL2} and classify all tame and wild $f_{ms}$-BCAs based on the universal cover of the defining $f_{ms}$-BCs in \cite{LL3}.

\section*{Data availability} The datasets generated during the current study are available from the corresponding author on reasonable request.

\section*{Acknowledgements} The first author is supported by the Doctoral Research Startup Fund of Liaoning Normal University (No.603260070011).

\section{Preliminaries}

Throughout this paper, we fix $k$ to be a field and $G$ an infinite cyclic group generated by $g$. We write a path $p$ in a quiver from right to left and denote by $s(p)$ and $t(p)$ the source and the terminal of $p$, respectively. Let us first recall the definition of fractional Brauer configuration and fractional Brauer configuration category.

\begin{Def} \label{f-BC} {\rm(\cite[Definition 3.3]{LL})}
A fractional Brauer configuration (abbr. f-BC) is a quadruple $E=(E,P,L,d)$, where $E$ is a $G$-set, $P$ and $L$ are two partitions of $E$ (for each $e$ in $E$, denote by $P(e)$ and $L(e)$ the equivalence classes of $e$ under the partitions $P$ and $L$, respectively), and $d: E\rightarrow \mathbb{Z}_{>0}$ is a function, such that the following conditions hold. \\
$(f1)$ $L(e)\subseteq P(e)$ and $P(e)$ is a finite set for each $e\in E$. \\
$(f2)$ If $L(e_1)=L(e_2)$, then $P(g\cdot e_1)=P(g\cdot e_2)$. \\
$(f3)$ If $e_1$, $e_2$ belong to same $G$-orbit, then $d(e_1)=d(e_2)$. \\
$(f4)$ $P(e_1)=P(e_2)$ if and only if $P(g^{d(e_1)}\cdot e_1)=P(g^{d(e_2)}\cdot e_2)$. \\
$(f5)$ $L(e_1)=L(e_2)$ if and only if $L(g^{d(e_1)}\cdot e_1)=L(g^{d(e_2)}\cdot e_2)$. \\
$(f6)$ $L(g^{d(e)-1}\cdot e)\cdots L(g\cdot e)L(e)$ is not a proper subsequence of $L(g^{d(h)-1}\cdot h)\cdots L(g\cdot h)L(h)$ for all $e,h\in E$.
\end{Def}

According to \cite[Section 3]{LL}, the elements of $E$ are called {\it angles}, the $G$-orbits of $E$ are called {\it vertices}, the subsets of $E$ of the form $P(e)$ are called {\it polygons} (or an $n$-gon if $P(e)$ contains $n$ angles), and the function $d: E\rightarrow \mathbb{Z}_{>0}$ is called the {\it degree function}. If $v$ is a vertex of $E$ which is a finite set, define the {\it fractional-degree (abbr. f-degree)} $d_f(v)$ of $v$ to be $\frac{d(v)}{\mid v\mid}$; If the f-degree of each vertex of $E$ is an integer, then $E$ is said to have {\it integral f-degree}. Moreover, $E$ is called {\it f-degree-free} if $d_f(v)\equiv 1$. The permutation $\sigma: E\rightarrow E$, $e\rightarrow g^{d(e)}\cdot e$ on $E$ is called the {\it Nakayama automorphism} of $E$.

Note that if $E$ is an f-BC such that each polygon of $E$ has exactly two angles (or two half-edges), then we call $E$ a {\it fractional Brauer graph (abbr. $f$-BG)}. We call the polygons in an f-BG as edges.

According to \cite{LL}, every fractional Brauer configuration $E=(E,P,L,d)$ can be visualized as some diagram $\Gamma(E)$, where each $n$-gon in $\Gamma(E)$ corresponds to an $n$-gon (of the form $P(e)$) of $E$, and each $e\in E$ corresponds to an angle of the polygon $P(e)$ in $\Gamma(E)$. Note that if $E$ contains only $2$-gons then $\Gamma(E)$ is a graph.

\begin{Def} \label{f-BC algebra} {\rm(\cite[Definition 4.1 and Definition 4.4]{LL})}
For an f-BC $E=(E,P,L,d)$, the fractional Brauer configuration category (abbr. f-BCC) associated with $E$ is a $k$-category $\Lambda_{E}=kQ_{E}/I_{E}$, where $Q_{E}$ is a quiver defined as follows: $(Q_{E})_{0}=\{P(e)\mid e\in E\}$, $(Q_{E})_{1}=\{L(e)\mid e\in E\}$ with $s(L(e))=P(e)$ and $t(L(e))=P(g\cdot e)$, and $I_{E}$ is the ideal of path category $kQ_{E}$ generated by the relations $(fR1)$, $(fR2)$ and $(fR3)$:
\begin{itemize}
\item[$(fR1)$] $L(g^{d(e)-1-k}\cdot e)\cdots L(g\cdot e)L(e)-L(g^{d(h)-1-k}\cdot h)\cdots L(g\cdot h)L(h)$, where $k\geq 0$, $P(e)=P(h)$ and $L(g^{d(e)-i}\cdot e)=L(g^{d(h)-i}\cdot h)$ for $1\leq i\leq k$.
\item[$(fR2)$] Paths of the form $L(e_n)\cdots L(e_2)L(e_1)$ with $\bigcap_{i=1}^n g^{n-i}\cdot L(e_i)=\emptyset$ for $n>1$.
\item[$(fR3)$] Paths of the form $L(g^{n-1}\cdot e)\cdots L(g\cdot e)L(e)$ for $n>d(e)$.
\end{itemize}
Moreover, if $E$ is a finite $f$-BC, then we define $A_{E}=(\bigoplus_{x,y\in (Q_E)_0}\Lambda_{E}(x,y))^{op}$ (which is a finite dimensional $k$-algebra) and call $A_E$ a fractional Brauer configuration algebra (abbr. f-BCA).
\end{Def}

According to \cite[Theorem 4.7]{LL}, every fractional Brauer configuration category $\Lambda_E$ is a locally bounded $k$-category.

We also recall the definition of various types of fractional Brauer configurations from \cite{LL}. First we recall fractional Brauer configuration of type S (abbr. $f_s$-BC).

For an f-BC $E=(E,P,L,d)$, recall from \cite[Definition 3.10]{LL} that a {\it standard sequence} of $E$ is a sequence of the form $p=(g^{n-1}\cdot e,\cdots,g\cdot e,e)$, where $e\in E$ and $0\leq n\leq d(e)$ (we define $p=()_e$ when $n=0$ and call it the trivial sequence at $e$). The source $s(p)$ (resp. terminal $t(p)$) of a standard sequence $p=(g^{n-1}\cdot e,\cdots,g\cdot e,e)$ is defined to be $e$ (resp. $g^n\cdot e$) if $n>0$, and both the source $s(p)$ and the terminal $t(p)$ of the trivial sequence $p=()_e$ are defined to be $e$. By Definition \ref{morphism and covering of pre-configurations} below, the image of a standard sequence under a morphism of f-BCs is also a standard sequence.

Let $E$ be an f-BC. For a standard sequence $p=(g^{n-1}\cdot e,\cdots,g\cdot e,e)$ of $E$, we can define two associated standard sequences $$\prescript{\wedge}{}{p}=(g^{d(e)-1}\cdot e,\cdots,g^{n+1}\cdot e,g^{n}\cdot e), \quad p^{\wedge}=(g^{-1}\cdot e,g^{-2}\cdot e,\cdots,g^{n-d(e)}\cdot e),$$ and define a formal sequence $$L(p)=L(g^{n-1}\cdot e)\cdots L(g\cdot e)L(e),$$
if $p=()_e$ is a trivial sequence at $e$, we define $L(p)=1_{P(e)}$. For two standard sequences $p$, $q$, define $p\equiv q$ if $L(p)=L(q)$ (in this case $p,q$ are called {\it identical}). For a standard sequence $p$, denote by $[p]=\{$ standard sequence $q\mid q\equiv p\}$. For a set $\mathscr{X}$ of standard sequences, $[\mathscr{X}]$ can be defined in the same way, and denote by $\prescript{\wedge}{}{\mathscr{X}}=\{\prescript{\wedge}{}{p}\mid p\in \mathscr{X}\}$ (resp. $\mathscr{X}^{\wedge}=\{p^{\wedge}\mid p\in \mathscr{X}\}$).

\begin{Def} {\rm(\cite[Definition 3.13]{LL})}
An f-BC $E$ is said to be of type S (or $E$ is an $f_s$-BC in short) if it satisfies additionally the following condition:

$(f7)$ For standard sequences $p\equiv q$, $[[\prescript{\wedge}{}{p}]^{\wedge}]=[[\prescript{\wedge}{}{q}]^{\wedge}]$ or $[\prescript{\wedge}{}{[p^{\wedge}]}]=[\prescript{\wedge}{}{[q^{\wedge}]}]$.
\end{Def}

According to \cite[Theorem 4.21]{LL}, every fractional Brauer configuration category $\Lambda_E$ of an $f_s$-BC $E$ is a locally bounded Frobenius $k$-category. Moreover, if $E$ is a finite $f_s$-BC, then $A_E$ is a finite dimensional Frobenius algebra, and the Nakayama automorphism $\sigma$ of $E$ induces an algebra automorphism $\sigma$ of $A_E$, which is equal to the usual Nakayama automorphism of the Frobenius algebra $A_E$.

\begin{Def}{\rm(\cite[Definition 3.15]{LL})}\label{Def-fms-BC}
An f-BC $E=(E,P,L,d)$ is said to be of type MS (or $E$ is an $f_{ms}$-BC in short) if the partition $L$ of $E$ is trivial, that is, $L(e)=\{e\}$ for each $e\in E$.
\end{Def}

Clearly every f-BC of type MS is of type S. A finite $f_{ms}$-BC $E=(E,P,L,d)$ with integral f-degree and with no $1$-gons is called a {\it Brauer configuration (abbr. BC)}. Moreover, a BC containing only $2$-gons is called a {\it Brauer graph (abbr. BG)}. A Brauer graph $E$ whose diagram $\Gamma(E)$ is a tree, such that there exists at most one vertex with f-degree larger than $1$ (such a vertex is called the {\it exceptional vertex} of $E$), is called a {\it Brauer tree (abbr. BT)}.

Finally we recall from \cite{LL} the admissible ideal presentation of a fractional Brauer configuration category of type S.

\begin{Def}{\rm(\cite[Definition 4.9]{LL})}\label{relation R}
Let $E=(E,P,L,d)$ be an f-BC and let $$\mathscr{E}=\{L(p)\mid p\mbox{ is a standard sequence of }E\},$$ which is a set of paths of $Q_E$. Define a relation $R$ on $\mathscr{E}$ as follows: for $u$, $v\in\mathscr{E}$, $uRv$ if and only if there exist some standard sequences $p$, $q$ of $E$ such that $u=L(p)$, $v=L(q)$ and $\prescript{\wedge}{}{p}\equiv \prescript{\wedge}{}{q}$.
\end{Def}

Note that two paths $u$, $v\in\mathscr{E}$ with $uRv$ have the same source and the same terminal. If $E$ is an $f_s$-BC, it follows from \cite[Lemma 4.14]{LL} that $R$ is an equivalence relation on $\mathscr{E}$.

\begin{Def}{\rm(\cite[Definition 6.1]{LL})}\label{reduced-arrow}
Let $E$ be an $f_s$-BC. For a set $\mathscr{C}$ of paths in $Q_E$ and for two vertices $x$, $y$ of $Q_E$, denote by $\prescript{}{y}{\mathscr{C}}_x$ the subset of $\mathscr{C}$ consists of paths with source $x$ and terminal $y$.
\begin{itemize}
  \item[(1)] We call an arrow $\alpha$ of $Q_E$ from $x$ to $y$ reduced, if there exists a path $p\in\prescript{}{y}{\mathscr{E}}_x$ of length $\geq 2$ such that $\alpha R p$.
  \item[(2)] Denote by $\mathscr{N}$ a complete set of representatives of non-reduced arrows of $Q_E$ under the equivalence relation $R$, and define a subquiver $Q'_E$ of $Q_E$: $$(Q'_E)_0=(Q_E)_0, \quad (Q'_E)_1=\bigsqcup_{x,y\in (Q_E)_0}\prescript{}{y}{\mathscr{N}}_x.$$
  \item[(3)] Denote by $\rho:kQ'_E\rightarrow \Lambda_{E}=kQ_E/I_E$ the natural $k$-linear functor, and let $I'_E=\mathrm{ker}\rho$.
      \end{itemize}
\end{Def}

If $E$ is an $f_s$-BC, then it follows from \cite[Lemma 6.2]{LL} that $I'_E$ is an admissible ideal in $kQ'_E$ and $\Lambda_E$ is isomorphic to the category $kQ'_E/I'_E$.

\section{Covering theory for fractional Brauer configurations}

\subsection{Morphisms and coverings between f-BCs}
\

For two f-BCs $E$, $E'$, we define the morphism between them as follows.

\begin{Def}\label{morphism and covering of pre-configurations}
Let $E=(E,P,L,d)$, $E'=(E',P',L',d')$ be two f-BCs. A map $f:E\rightarrow E'$ is said to be a morphism of f-BCs if it satisfies the following three conditions:

(1) $f$ is a morphism of $G$-sets, that is, $f(g^i\cdot e)=g^i\cdot f(e)$ for each $e\in E$ and each $i\in\mathbb{Z}$;

(2) $f(P(e))\subseteq P'(f(e))$ and $f(L(e))\subseteq L'(f(e))$ for each $e\in E$;

(3) $f$ preserves the degree, that is, $d'(f(e))=d(e)$ for each $e\in E$.

Moreover,
\begin{itemize}
\item if $f$ has an inverse morphism from $E'$ to $E$, then $f$ is said to be an isomorphism of f-BCs;
\item if $f$ induces bijections $P(e)\rightarrow P'(f(e))$ and $L(e)\rightarrow L'(f(e))$ for all $e\in E$, then $f$ is said to be a covering of f-BCs.
\end{itemize}
\end{Def}

\begin{Ex1}\label{example-Nakayama-automorphism}
If $E=(E,P,L,d)$ is an f-BC, then by the conditions $(f3), (f4)$ and $(f5)$ in Definition \ref{f-BC}, the Nakayama automorphism $\sigma$ of $E$ is an automorphism of the f-BC $E$.
\end{Ex1}

\begin{Ex1}\label{example-covering}
Let $E=(E,P,L,d)$ be the f-BG given by the diagram

\vspace{0.5cm}
\begin{center}
\tikzset{every picture/.style={line width=0.75pt}}
\begin{tikzpicture}[x=15pt,y=15pt,yscale=1,xscale=1]
\fill (0,0) circle (0.5ex);
\fill (5,0) circle (0.5ex);
\node at(-1.5,1) {$1$};
\node at(-0.1,1) {$2$};
\node at(1,0.2) {$3$};
\node at(6.6,1) {$3'$};
\node at(5.2,1) {$2'$};
\node at(4,0.2) {$1'$};
\node at(7,0) {,};
\draw    (0,0) .. controls (-4,3) and (1,3) .. (5,0) ;
\draw    (0,0) .. controls (0,3) and (5,3) .. (5,0) ;
\draw    (0,0) .. controls (4,3) and (9,3) .. (5,0) ;
\end{tikzpicture}
\end{center}
where $L(e)=\{e\}$ and $d(e)=2$ for every $e\in E$; and let $E'=(E',P',L',d')$ be the f-BG given by the diagram

\vspace{0.5cm}
\begin{center}
\tikzset{every picture/.style={line width=0.75pt}}
\begin{tikzpicture}[x=15pt,y=15pt,yscale=1,xscale=1]
\fill (0,0) circle (0.5ex);
\fill (5,0) circle (0.5ex);
\node at(1,0.5) {$x$};
\node at(4,0.5) {$y$};
\node at(6,0) {,};
\draw    (0,0)--(5,0);
\end{tikzpicture}
\end{center}
where $L'(e)=\{e\}$ and $d'(e)=2$ for every $e\in E'$. Then there is a covering $f:E\rightarrow E'$ of f-BGs which is given by $f(1)=f(2)=f(3)=x$, $f(1')=f(2')=f(3')=y$.
\end{Ex1}

\begin{Ex1}\label{example-non-covering}
Let $E$ be the f-BG in Example \ref{example-covering}, and let $E''=(E'',P'',L'',d'')$ be the f-BG given by the diagram

\vspace{0.5cm}
\begin{center}
\tikzset{every picture/.style={line width=0.75pt}}
\begin{tikzpicture}[x=15pt,y=15pt,yscale=1,xscale=1]
\fill (0,0) circle (0.5ex);
\fill (5,0) circle (0.5ex);
\node at(1,0.5) {$x$};
\node at(4,0.5) {$y$};
\node at(6,0) {,};
\draw    (0,0)--(5,0);
\end{tikzpicture}
\end{center}
where $L''(x)=\{x,y\}$ and $d''(e)=2$ for every $e\in E'$. Then there is a morphism $f:E\rightarrow E''$ of f-BGs which is given by $f(1)=f(2)=f(3)=x$, $f(1')=f(2')=f(3')=y$. Note that $f$ is not a covering because $f$ does not induce a bijective map from $L(1)$ to $L''(x)$.
\end{Ex1}

We observe that a morphism $f:E\rightarrow E'$ of f-BCs induces naturally a map from standard sequences of $E$ to standard sequences of $E'$, and $f$ sends identical standard sequences of $E$ to identical standard sequences of $E'$. Moreover, if $f:E\rightarrow E'$ is a covering of f-BCs, then we have the following two lemmas.

\begin{Lem}\label{preserves parallel walk}
Let $E=(E,P,L,d)$ and $E'=(E',P',L',d')$ be f-BCs, and let $f:E\rightarrow E'$ be a covering of f-BCs. If $p$, $q$ are standard sequences of $E$ with $f(p)\equiv f(q)$ and $P(s(p))=P(s(q))$, then $p\equiv q$.
\end{Lem}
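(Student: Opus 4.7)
My plan is to show by induction on $i$ that $L(g^i\cdot e)=L(g^i\cdot e')$ for each admissible $i$, where I write $p=(g^{n-1}\cdot e,\ldots,g\cdot e, e)$ and $q=(g^{m-1}\cdot e',\ldots,g\cdot e', e')$ with $e=s(p)$, $e'=s(q)$. The hypothesis $f(p)\equiv f(q)$ says that the formal sequences $L'(g^{n-1}\cdot f(e))\cdots L'(f(e))$ and $L'(g^{m-1}\cdot f(e'))\cdots L'(f(e'))$ are equal, which forces $n=m$ and $L'(g^i\cdot f(e))=L'(g^i\cdot f(e'))$ for all $0\le i\le n-1$. (If $n=0$, both $p$ and $q$ are trivial, and $L(p)=1_{P(e)}=1_{P(e')}=L(q)$ by the assumption $P(e)=P(e')$, giving $p\equiv q$ immediately.)

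The core observation powering the induction is: whenever $P(g^i\cdot e)=P(g^i\cdot e')$ holds, we automatically obtain $L(g^i\cdot e)=L(g^i\cdot e')$. Indeed, since $f$ is a morphism of f-BCs, the common polygon $P(g^i\cdot e)=P(g^i\cdot e')$ maps into a single polygon of $E'$, which forces $P'(g^i\cdot f(e))=P'(g^i\cdot f(e'))$. The covering property then provides a bijection $f|_{P(g^i\cdot e)}\colon P(g^i\cdot e)\to P'(g^i\cdot f(e))$ that restricts to bijections $L(g^i\cdot e)\to L'(g^i\cdot f(e))$ and $L(g^i\cdot e')\to L'(g^i\cdot f(e'))$. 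Since $L'(g^i\cdot f(e))=L'(g^i\cdot f(e'))$ by the previous paragraph, the two subsets $L(g^i\cdot e)$ and $L(g^i\cdot e')$ of $P(g^i\cdot e)$ share the same image under an injective map, so they coincide.

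The base case $i=0$ is exactly the given $P(e)=P(e')$. For the inductive step, from $L(g^{i-1}\cdot e)=L(g^{i-1}\cdot e')$ axiom $(f2)$ of Definition \ref{f-BC} yields $P(g^i\cdot e)=P(g^i\cdot e')$, at which point the core observation applies to give $L(g^i\cdot e)=L(g^i\cdot e')$. Iterating up to $i=n-1$ gives $L(p)=L(q)$, i.e.\ $p\equiv q$. I do not anticipate any real obstacle: the proof is a careful bookkeeping exercise interleaving axiom $(f2)$ (which propagates equality of $L$-classes to equality of the next $P$-classes) with the two bijectivity conditions built into the definition of a covering (which let one promote equality of $L'$-classes back down to equality of $L$-classes).
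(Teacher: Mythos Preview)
Your proof is correct and follows essentially the same approach as the paper: both arguments induct along the sequence, using the bijectivity of $f$ on $P$-classes to deduce $L(g^i\cdot e)=L(g^i\cdot e')$ from $L'(g^i\cdot f(e))=L'(g^i\cdot f(e'))$, then invoke axiom $(f2)$ to propagate to the next $P$-class. The paper phrases this as induction on the length of $p$ (peeling off the first term and applying the inductive hypothesis to the remainder), while you phrase it as induction on the index $i$, but the underlying logic is identical.
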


\begin{proof}
Since $f(p)\equiv f(q)$, $p$ and $q$ have the same length. We shall give a proof by induction on the length $l(p)$ of $p$. If $l(p)=0$, then both $p$ and $q$ are trivial sequences. Since $P(s(p))=P(s(q))$, $p=()_{s(p)}$ and $q=()_{s(q)}$ are identical. Suppose that the conclusion holds for $l(p)<n$. When $l(p)=n$, let $p=(g^{n-1}\cdot e,\cdots,g\cdot e,e)$ and $q=(g^{n-1}\cdot h,\cdots,g\cdot h,h)$. Since $f$ is a covering, it induces bijections $P(e)\rightarrow P'(f(e))$ and $L(e)\rightarrow L'(f(e))$. Since $f(p)\equiv f(q)$, $L'(f(e))=L'(f(h))$ and $f(h)\in L'(f(e))$. Therefore $f(h)=f(h')$ for some $h'\in L(e)$. Since $h$, $h'\in P(e)$ and $f(h)=f(h')$, we have $h=h'$. Therefore $L(e)=L(h)$ and $P(g\cdot e)=P(g\cdot h)$. Let $p'=(g^{n-1}\cdot e,\cdots,g\cdot e)$ and $q'=(g^{n-1}\cdot h,\cdots,g\cdot h)$, by induction, $p'\equiv q'$. Therefore $p\equiv q$.
\end{proof}

\begin{Lem}\label{bijection}
Let $E=(E,P,L,d)$ and $E'=(E',P',L',d')$ be f-BCs, $f:E\rightarrow E'$ be a covering of f-BCs, and $p$ be a standard sequence of $E$. Then $f$ induces a bijection $[p]\rightarrow [f(p)]$.
\end{Lem}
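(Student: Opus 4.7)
The plan is to show that the natural assignment $\Phi \colon [p] \to [f(p)]$, $q \mapsto f(q)$, is a well-defined bijection. Well-definedness is immediate from the observation recorded just before Lemma \ref{preserves parallel walk}: any morphism of f-BCs carries identical standard sequences to identical standard sequences, so $q \equiv p$ implies $f(q) \equiv f(p)$.

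For injectivity, I would take $q_1, q_2 \in [p]$ with $f(q_1) = f(q_2)$ and write $q_i = (g^{n-1}\cdot e_i,\ldots, e_i)$. From $q_1 \equiv q_2 \equiv p$ one reads off $L(e_1) = L(e_2)$; since $L$ is a partition with $L(e_i) \subseteq P(e_i)$ and $P$ is also a partition, this forces $P(e_1) = P(e_2)$. On the other hand $f(q_1) = f(q_2)$ yields $f(e_1) = f(e_2)$, and since the covering $f$ restricts to a bijection $P(e_1) \to P'(f(e_1))$, we conclude $e_1 = e_2$, hence $q_1 = q_2$ because each sequence is determined by its source via the $G$-action.

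For surjectivity, given $q' = (g^{n-1}\cdot e',\ldots, e') \in [f(p)]$ and writing $p = (g^{n-1}\cdot e,\ldots, e)$, the relation $q' \equiv f(p)$ gives $L'(e') = L'(f(e))$, so $e' \in L'(f(e))$. Using the bijection $L(e) \to L'(f(e))$ induced by the covering, I would lift $e'$ to a unique $h \in L(e)$ with $f(h) = e'$. Degree-preservation of $f$ gives $d(h) = d'(e') \geq n$, so $q := (g^{n-1}\cdot h,\ldots, h)$ is a genuine standard sequence of $E$, and $G$-equivariance of $f$ shows $f(q) = q'$. Finally $h \in L(e) \subseteq P(e)$ yields $P(s(q)) = P(s(p))$, and combining this with $f(q) = q' \equiv f(p)$, Lemma \ref{preserves parallel walk} provides $q \equiv p$, i.e.\ $q \in [p]$.

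The only step requiring real care is the surjectivity construction, where one must simultaneously arrange that the lift $h$ lies in $L(e)$ (so that the resulting $q$ is close enough to $p$ to invoke Lemma \ref{preserves parallel walk}) and that its length $n$ does not exceed $d(h)$. Both issues are resolved at once by using the two defining features of a covering — the $L$-bijection and degree-preservation — in tandem, after which the verification reduces to an application of the preceding lemma.
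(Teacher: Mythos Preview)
Your argument is correct and matches the paper's: the same lift of the source via the $L$-bijection followed by an appeal to Lemma~\ref{preserves parallel walk} for surjectivity, and the same covering-bijectivity on a partition class for injectivity (you use the $P$-bijection where the paper uses the $L$-bijection, an immaterial variation; your explicit check that $n\le d(h)$ is a detail the paper leaves implicit). One caveat: your step ``$q_1\equiv q_2$ gives $L(e_1)=L(e_2)$'' and the analogous step in surjectivity presume the length is $\geq 1$, so, as the paper does, you should dispose of the length-zero case separately as obvious.
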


\begin{proof}
This is obviously true if the length $l(p)$ of $p$ is zero. Thus we may assume that $l(p)>0$. If $q\in[p]$, then $p\equiv q$ and $f(p)\equiv f(q)$, therefore $f(q)\in[f(p)]$. For
$$q_1=(g^{n-1}\cdot e,\cdots,g\cdot e,e), q_2=(g^{n-1}\cdot h,\cdots,g\cdot h,h)\in[p],$$
if $f(q_1)=f(q_2)$, then $f(g^{i}\cdot e)=f(g^{i}\cdot h)$ for all $0\leq i\leq n-1$. Since $L(g^{i}\cdot e)=L(g^{i}\cdot h)$ and $f$ induces bijections $L(g^{i}\cdot e)\rightarrow L'(f(g^{i}\cdot e))$ for all $0\leq i\leq n-1$, $g^{i}\cdot e=g^{i}\cdot h$ for all $0\leq i\leq n-1$. Then $q_1=q_2$ and $f:[p]\rightarrow [f(p)]$ is injective. For every $r\in[f(p)]$, since $L'(s(r))=L'(s(f(p)))$, there exists $e\in L(s(p))$ such that $f(e)=s(r)$. Let $q=(g^{n-1}\cdot e,\cdots,g\cdot e,e)$ be a standard sequence of $E$, where $n$ is the length of $r$. Then $f(q)=r\equiv f(p)$ and $P(s(q))=P(e)=P(s(p))$. By Lemma \ref{preserves parallel walk}, $p\equiv q$. Therefore $f:[p]\rightarrow [f(p)]$ is surjective.
\end{proof}

Using the above two lemmas we show that a covering of f-BCs preserves the property of being type S.

\begin{Prop}\label{covering preserves the property of being a configuration}
Let $E=(E,P,L,d)$ and $E'=(E',P',L',d')$ be two f-BCs, and let $f:E\rightarrow E'$ be a covering of f-BCs. If $E'$ is an $f_s$-BC, then so is $E$. Conversely, if $E$ is an $f_s$-BC and $f$ is surjective, then $E'$ is an $f_s$-BC.
\end{Prop}

\begin{proof}
Suppose that $E'$ is an $f_s$-BC. If $p\equiv q$ are standard sequences of $E$, then $f(p)\equiv f(q)$ and $[[\prescript{\wedge}{}{f(p)}]^{\wedge}]=[[\prescript{\wedge}{}{f(q)}]^{\wedge}]$. For $r\in[\prescript{\wedge}{}{p}]^{\wedge}$, we have $\prescript{\wedge}{}{r}\equiv\prescript{\wedge}{}{p}$. Therefore $\prescript{\wedge}{}{f(r)}=f(\prescript{\wedge}{}{r})\equiv f(\prescript{\wedge}{}{p})=\prescript{\wedge}{}{f(p)}$ and $f(r)\in [\prescript{\wedge}{}{f(p)}]^{\wedge}\subseteq [[\prescript{\wedge}{}{f(q)}]^{\wedge}]$. Then $f(r)\equiv l$ for some standard sequence $l$ of $E'$ with $\prescript{\wedge}{}{l}\equiv \prescript{\wedge}{}{f(q)}=f(\prescript{\wedge}{}{q})$. By Lemma \ref{bijection}, there exists some standard sequence $\prescript{\wedge}{}{v}$ of $E$ such that $\prescript{\wedge}{}{v}\equiv \prescript{\wedge}{}{q}$ and $f(\prescript{\wedge}{}{v})=\prescript{\wedge}{}{l}$. Since $\prescript{\wedge}{}{v}\equiv \prescript{\wedge}{}{q}$, $P(t(\prescript{\wedge}{}{v}))=P(t(\prescript{\wedge}{}{q}))$ and $P(s(v))=P(\sigma^{-1}( t(\prescript{\wedge}{}{v})))=P(\sigma^{-1}(t(\prescript{\wedge}{}{q})))=P(s(q))$, where $\sigma$ is the Nakayama automorphism of $E$. Similarly, $P(s(r))=P(s(p))$ since $\prescript{\wedge}{}{r}\equiv\prescript{\wedge}{}{p}$. Since $p\equiv q$, $P(s(p))=P(s(q))$. Therefore $P(s(v))=P(s(r))$. Since $f(v)=l\equiv f(r)$, by Lemma \ref{preserves parallel walk}, $v\equiv r$. Since $v\in [\prescript{\wedge}{}{q}]^{\wedge}$, $r\in [[\prescript{\wedge}{}{q}]^{\wedge}]$. Therefore $[\prescript{\wedge}{}{p}]^{\wedge}\subseteq[[\prescript{\wedge}{}{q}]^{\wedge}]$. Since $[[\prescript{\wedge}{}{q}]^{\wedge}]$ is closed under taking identical standard sequences, $[[\prescript{\wedge}{}{p}]^{\wedge}]\subseteq[[\prescript{\wedge}{}{q}]^{\wedge}]$. Dually, $[[\prescript{\wedge}{}{q}]^{\wedge}]\subseteq[[\prescript{\wedge}{}{p}]^{\wedge}]$. Therefore $E$ is an $f_s$-BC.

Conversely, Suppose that $E$ is an $f_s$-BC and $f$ is surjective. Let $p'\equiv q'$ be standard sequences of $E'$. Since $f$ is surjective, we can choose a standard sequence $p$ of $E$ such that $f(p)=p'$. By Lemma \ref{bijection}, there exists a standard sequence $q$ of $E$ such that $p\equiv q$ and $f(q)=q'$. Since $E$ is an $f_s$-BC, $[[\prescript{\wedge}{}{p}]^{\wedge}]=[[\prescript{\wedge}{}{q}]^{\wedge}]$. For $a'\in [\prescript{\wedge}{}{p'}]^{\wedge}$, we have $\prescript{\wedge}{}{a'}\equiv\prescript{\wedge}{}{p'}$. Since $f(\prescript{\wedge}{}{p})=\prescript{\wedge}{}{p'}$, by Lemma \ref{bijection}, there exists a standard sequence $\prescript{\wedge}{}{a}$ of $E$ such that $\prescript{\wedge}{}{a}\equiv \prescript{\wedge}{}{p}$ and $f(\prescript{\wedge}{}{a})=\prescript{\wedge}{}{a'}$. Then $a\in [\prescript{\wedge}{}{p}]^{\wedge}\subseteq [[\prescript{\wedge}{}{q}]^{\wedge}]$. Let $a\equiv b$ for standard sequence $b$ with $\prescript{\wedge}{}{b}\equiv \prescript{\wedge}{}{q}$. Then $f(\prescript{\wedge}{}{b})\equiv f(\prescript{\wedge}{}{q})=\prescript{\wedge}{}{f(q)}=\prescript{\wedge}{}{q'}$, and $f(b)=f(\prescript{\wedge}{}{b})^{\wedge}\in[\prescript{\wedge}{}{q'}]^{\wedge}$. Since $a'=f(\prescript{\wedge}{}{a})^{\wedge}=f(a)\equiv f(b)$, $a'\in[[\prescript{\wedge}{}{q'}]^{\wedge}]$. Therefore $[[\prescript{\wedge}{}{p'}]^{\wedge}]\subseteq[[\prescript{\wedge}{}{q'}]^{\wedge}]$. Dually, $[[\prescript{\wedge}{}{q'}]^{\wedge}]\subseteq[[\prescript{\wedge}{}{p'}]^{\wedge}]$. Therefore $E'$ is an $f_s$-BC.
\end{proof}

\subsection{Walks and fundamental group of f-BC}
\

Let $E$ be an f-BC. A {\it walk} $w$ of $E$ is a sequence of the form $$e_{n}\frac{\delta_{n}}{}e_{n-1}\frac{\delta_{n-1}}{}\cdots\frac{\delta_{3}}{}e_{2}\frac{\delta_{2}}{}e_{1}\frac{\delta_{1}}{}e_{0},$$
where $e_{0},e_{1},\cdots,e_{n}\in E$, $\delta_{1},\cdots,\delta_{n}\in\{g,g^{-1},\tau\}$, such that
\begin{equation*}
\begin{cases}
e_{i}=g\cdot e_{i-1}, & \text{ if } \delta_{i}=g; \\
e_{i}=g^{-1}\cdot e_{i-1}, & \text{ if } \delta_{i}=g^{-1}; \\
P(e_{i})=P(e_{i-1}), & \text{ if } \delta_{i}=\tau.
\end{cases}
\end{equation*}
We may also write $w$ as
$$(e_{n}|\delta_{n}|e_{n-1})\cdots(e_{2}|\delta_{2}|e_{1})(e_{1}|\delta_{1}|e_{0}),$$
or
$$(e_{n}|\delta_{n}\cdots\delta_{1}|e_{0})$$
for short. Define the length of the above walk $w$ to be $n$. Moreover, denote the walk of length $0$ at $e$ by $(e||e)$. For a walk $$w=e_{n}\frac{\delta_{n}}{}e_{n-1}\frac{\delta_{n-1}}{}\cdots\frac{\delta_{3}}{}e_{2}\frac{\delta_{2}}{}e_{1}\frac{\delta_{1}}{}e_{0},$$
define $s(w)=e_{0}$ and $t(w)=e_{n}$, where $s(w)$ and $t(w)$ are the source and the terminal of $w$, respectively. The composition of walks is defined in the usual way. Define the inverse of a walk
$$w=e_{n}\frac{\delta_{n}}{}e_{n-1}\frac{\delta_{n-1}}{}\cdots\frac{\delta_{3}}{}e_{2}\frac{\delta_{2}}{}e_{1}\frac{\delta_{1}}{}e_{0}$$
to be the walk $$w^{-1}=e_{0}\frac{\delta_{1}^{-1}}{}e_{1}\frac{\delta_{2}^{-1}}{}\cdots\frac{\delta_{n-2}^{-1}}{}e_{n-2}\frac{\delta_{n-1}^{-1}}{}e_{n-1}\frac{\delta_{n}^{-1}}{}e_{n},$$
where for any $1\leq i\leq n$, \begin{equation*}
\delta_i^{-1}:= \begin{cases}
g^{-1}, & \text{if } \delta_i=g; \\
g, & \text{if } \delta_i=g^{-1}; \\
\tau, & \text{if } \delta_i=\tau.
\end{cases}
\end{equation*}
$E$ is said to be {\it connected} if each two angles of $E$ can be connected by a walk.

\begin{Rem1}
(1) If $f:E\rightarrow E'$ is a morphism of f-BCs and $$w=e_{n}\frac{\delta_{n}}{}e_{n-1}\frac{\delta_{n-1}}{}\cdots\frac{\delta_{3}}{}e_{2}\frac{\delta_{2}}{}e_{1}\frac{\delta_{1}}{}e_{0}$$ is a walk of $E$, then $$f(w)=f(e_{n})\frac{\delta_{n}}{}f(e_{n-1})\frac{\delta_{n-1}}{}\cdots\frac{\delta_{3}}{}f(e_{2})\frac{\delta_{2}}{}f(e_{1})\frac{\delta_{1}}{}f(e_{0})$$ is a walk of $E'$. In particular, $w$ and $f(w)$ have the same length.

(2) If $f:E\rightarrow E'$ is a covering of f-BCs with $E$ nonempty and $E'$ connected, then $f$ is surjective.
\end{Rem1}

\begin{Def}\label{homotopy of walks}
Let $E$ be an f-BC. Define the homotopy relation $\sim$ on the set of all walks of $E$ as the smallest equivalence relation satisfying the conditions $(h1)$-$(h5)$:\\
$(h1)$ $(e|g^{-1}g|e)\sim(e|gg^{-1}|e)\sim(e|\tau|e)\sim(e||e)$. \\
$(h2)$ $(e_{2}|\tau|e_{1})(e_{1}|\tau|e_{0})\sim(e_{2}|\tau|e_{0})$. \\
$(h3)$ $(g^{d(h)}\cdot h|\tau|g^{d(e)}\cdot e)(g^{d(e)}\cdot e|g^{d(e)}|e)\sim (g^{d(h)}\cdot h|g^{d(h)}|h)(h|\tau|e)$ for $P(e)=P(h)$. \\
$(h4)$ $(g\cdot h|\tau|g\cdot e)(g\cdot e|g|e)\sim (g\cdot h|g|h)(h|\tau|e)$ for $L(e)=L(h)$. \\
$(h5)$ If $w_{1}\sim w_{2}$, then $uw_{1}\sim uw_{2}$ and $w_{1}v\sim w_{2}v$ whenever the compositions make sense.
\end{Def}

\begin{Rem1}
If two walks $u$, $v$ are homotopic, then $s(u)=s(v)$ and $t(u)=t(v)$.
\end{Rem1}

For a walk $w$, we denote by $\overline{w}$ the {\it homotopy class} of $w$. For two homotopy classes of walks $\overline{u}$, $\overline{v}$ with $t(v)=s(u)$, define {\it the composition} $\overline{u}\cdot\overline{v}:=\overline{uv}$. Since $w_{1}\sim w_{2}$ implies $uw_{1}\sim uw_{2}$ and $w_{1}v\sim w_{2}v$, this composition is well-defined.

\begin{Def} \label{definition-fundamental-group-of-f-BC}
Let $E$ be an f-BC and $e\in E$. The fundamental group $\Pi(E,e)$ of $E$ at $e$ is the group of homotopy classes of closed walks of $E$ at $e$, whose multiplication is given by the composition of homotopy classes of walks. Moreover, for a subset $A$ of $E$, the fundamental groupoid $\Pi(E,A)$ of $E$ on $A$ is the groupoid defined as follows: the object set of $\Pi(E,A)$ is $A$; for $x,y\in A$, $\Pi(E,A)(x,y)$ is the set of homotopy classes of walks of $E$ from $x$ to $y$; the composition of morphisms in $\Pi(E,A)$ is given by the composition of homotopy classes of walks.
\end{Def}

Note that the groupoid $\Pi(E,A)$ contains the information of fundamental groups $\Pi(E,e)$ for all $e\in A$ (since $\Pi(E,A)(e,e)\cong\Pi(E,e)$ for every $e\in A$).

\begin{Ex1}
Let $E$ be the Brauer graph

\begin{center}
\begin{tikzpicture}
\draw (0,0) circle (0.5);
\fill (0.5,0) circle (0.5ex);
\node at(0.5,0.4) {$e$};
\node at(0.5,-0.4) {$e'$};
\end{tikzpicture}
\end{center}
with a given half-edge $e$, where the f-degree of $E$ is free. Then $\Pi(E,e)$ is generated by $x=\overline{(e|g^2|e)}$ and $y=\overline{(e|\tau g|e)}$ with relation $xy=yx$ (see Lemma \ref{a calculation of fundamental group}).
\end{Ex1}

\begin{Ex1}
Let $E$ be the Brauer graph
\begin{center}
\begin{tikzpicture}
\draw (0,0)--(2,0);
\fill (0,0) circle (0.5ex);
\fill (2,0) circle (0.5ex);
\node at(0.4,0.15) {$e$};
\node at(1.6,0.2) {$e'$};
\end{tikzpicture}
\end{center}
with free f-degree, and let $A=\{e,e'\}=E$ be a subset of $E$. Let $u=(e|g|e)$, $v=(e'|\tau|e)$, $w=(e'|g|e')$ be three walks of $E$. Then $\overline{u}\in\Pi(E,A)(e,e)$, $\overline{v}\in\Pi(E,A)(e,e')$, $\overline{w}\in\Pi(E,A)(e',e')$. Moreover, $\Pi(E,A)$ is generated by the morphisms $\overline{u}$, $\overline{v}$, $\overline{w}$ with relation $\overline{v}\cdot\overline{u}=\overline{w}\cdot\overline{v}$ (see Lemma \ref{a calculation of fundamental groupoid}).
\end{Ex1}

\subsection{Covering theory for f-BCs}
\

In this subsection, we develop a covering theory for f-BCs. The main results are outlined as follows: $(1)$ We show that a morphism (resp. covering) of f-BCs induces a homomorphism (resp. an injective homomorphism) of their fundamental groups (Theorem \ref{covering-injective}). $(2)$ We define the universal cover of an f-BC and show that it is simply connected (Theorem \ref{universal-cover-description} and Proposition \ref{universal cover is simply connected}). $(3)$ We determine when there exists a morphism between two coverings $E_i\rightarrow E$ ($i=1,2$) of f-BCs (Proposition \ref{existence of morphism}) and classify coverings $E'\rightarrow E$ of connected f-BCs up to isomorphism (Proposition \ref{classification-of-coverings-by-fundamental-group}). $(4)$ We define regular covering and study its properties (Theorem \ref{regular-covering-explicit}).

\begin{Lem}\label{morphism preserves homotopic walks}
Let $f:E\rightarrow E'$ be a morphism of f-BCs, $u$, $v$ be two homotopic walks of $E$. Then $f(u)\sim f(v)$.
\end{Lem}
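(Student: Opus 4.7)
The plan is to reduce the statement to a check on the generating relations of $\sim$. Since the homotopy relation on walks is, by Definition \ref{homotopy of walks}, the smallest equivalence relation generated by (h1)--(h5), it suffices to verify that whenever $u$ and $v$ differ by a single elementary move of one of these five types, then $f(u) \sim f(v)$; the general case then follows by an obvious induction on the length of a chain of elementary moves connecting $u$ to $v$.

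The three defining properties of a morphism of f-BCs (see Definition \ref{morphism and covering of pre-configurations}) are exactly what is needed: $G$-equivariance ensures that the symbols $g$ and $g^{-1}$ in a walk are carried faithfully by $f$; preservation of the degree function ensures that the exponents $d(e)$ appearing in (h3) are carried to $d'(f(e))$; and the inclusions $f(P(e)) \subseteq P'(f(e))$ and $f(L(e)) \subseteq L'(f(e))$ ensure that the side conditions attached to (h2)--(h4) are preserved. For example, if $P(e) = P(h)$ then $f(h) \in f(P(e)) \subseteq P'(f(e))$, so $P'(f(e)) = P'(f(h))$, which is precisely the hypothesis of (h3) for the walks $f(u)$ and $f(v)$; similarly $L(e) = L(h)$ implies $L'(f(e)) = L'(f(h))$, yielding the hypothesis of (h4).

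With this, the verification is routine: (h1) and (h2) transport by direct substitution; (h3) transports by combining the degree-preservation with the $P$-compatibility above; (h4) transports by the $L$-compatibility; and (h5) is automatic because $f$ is applied pointwise and so commutes with composition of walks.

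The only real obstacle is the bookkeeping for (h3) and (h4), where one must ensure that the side conditions survive after applying $f$. Once the three compatibilities of a morphism of f-BCs are invoked in the way sketched above, each case collapses to a single-line check, and the lemma follows.
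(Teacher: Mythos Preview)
Your proposal is correct and takes essentially the same approach as the paper: reduce to the generating relations $(h1)$--$(h4)$ (with $(h5)$ handling compositions) and verify that a morphism of f-BCs preserves each. In fact, your write-up is more detailed than the paper's, which simply asserts that ``a morphism of f-BCs preserves all these relations'' without spelling out the role of $G$-equivariance, degree-preservation, and the $P$/$L$-inclusions.
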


\begin{proof}
By definition, two walks $u$, $v$ of $E$ are homotopic if and only if there exists a sequence $u=w_0$, $w_1$, $\cdots$, $w_{n}=v$ of walks of $E$, such that for each $1\leq i\leq n$, $w_i$ is obtained from $w_{i-1}$ by replacing a subwalk to another, using one of the relations $(h1)$, $(h2)$, $(h3)$ or $(h4)$ in Definition \ref{homotopy of walks}. Since a morphism of f-BCs preserves all these relations, it maps homotopic walks to homotopic walks.
\end{proof}

\begin{Lem}\label{covering preserves homotopic walks}
Let $E=(E,P,L,d)$, $E'=(E',P',L',d')$ be two f-BCs, $f:E\rightarrow E'$ be a covering of f-BCs, $u$, $v$ be two walks of $E$ with $s(u)=s(v)$ (or $t(u)=t(v)$) and $f(u)\sim f(v)$. Then $u\sim v$.
\end{Lem}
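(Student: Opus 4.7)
The plan is to prove a \textbf{path-lifting lemma} for coverings and then show that each elementary homotopy downstairs lifts to an elementary homotopy upstairs.

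First I would establish the lifting lemma: given any walk $w'=(e'_{n}|\delta_{n}\cdots\delta_{1}|e'_{0})$ of $E'$ and any $\tilde{e}_{0}\in E$ with $f(\tilde{e}_{0})=e'_{0}$, there is a unique walk $\tilde{w}=(\tilde{e}_{n}|\delta_{n}\cdots\delta_{1}|\tilde{e}_{0})$ of $E$ with $f(\tilde{w})=w'$. This is proved by induction on $n$: for $\delta_{i}=g^{\pm 1}$ the next angle is forced to be $g^{\pm 1}\cdot\tilde{e}_{i-1}$, and the $G$-equivariance of $f$ ensures $f(g^{\pm 1}\cdot\tilde{e}_{i-1})=e'_{i}$; for $\delta_{i}=\tau$ we need $\tilde{e}_{i}\in P(\tilde{e}_{i-1})$ with $f(\tilde{e}_{i})=e'_{i}$, which exists and is unique because the covering $f$ restricts to a bijection $P(\tilde{e}_{i-1})\to P'(f(\tilde{e}_{i-1}))=P'(e'_{i-1})$ containing $e'_{i}$. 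A completely analogous lemma holds lifting from a prescribed terminal point.

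Now assume $s(u)=s(v)$ (the case $t(u)=t(v)$ will follow by applying the source case to $u^{-1}$ and $v^{-1}$, noting $f(u^{-1})=f(u)^{-1}\sim f(v)^{-1}$). Write the hypothesized homotopy as a chain $f(u)=w'_{0},w'_{1},\ldots,w'_{n}=f(v)$ in which each $w'_{i}$ is obtained from $w'_{i-1}$ by replacing one subwalk by a subwalk with the same source and terminal, using a single instance of one of the relations $(h1)$--$(h4)$. By the lifting lemma I lift each $w'_{i}$ to a walk $w_{i}$ of $E$ starting at $s(u)$. Since lifts are unique, $w_{0}=u$ and $w_{n}=v$. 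It then suffices to show $w_{i-1}\sim w_{i}$ for each $i$. Because each rewriting downstairs only alters a subwalk whose source and terminal are preserved, uniqueness of lifting forces $w_{i-1}$ and $w_{i}$ to agree outside of the lifted subwalk and to have lifted subwalks sharing the same source and terminal. So the problem reduces to the four elementary cases.

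For each of $(h1)$--$(h4)$ I would verify that two walks upstairs with matched endpoints whose images agree via that relation are themselves related by the same relation, using only the covering bijections and the axioms $(f3)$--$(f5)$. For $(h1)$ and $(h2)$, this is immediate from the bijection $P(\tilde{e})\to P'(f(\tilde{e}))$ together with the identification of fibers. For $(h3)$: starting from a lift $\tilde{e}$ of $e$, the left-hand side lifts to $(g^{d(\tilde{e})}\cdot\tilde{e}|\tau|g^{d(\tilde{e})}\cdot\tilde{e})\cdot (g^{d(\tilde{e})}\cdot\tilde{e}|g^{d(\tilde{e})}|\tilde{e})$-style path, while the unique $\tilde{h}\in P(\tilde{e})$ with $f(\tilde{h})=h$ satisfies $g^{d(\tilde{h})}\cdot\tilde{h}\in P(g^{d(\tilde{e})}\cdot\tilde{e})$ by axiom $(f4)$ and has the correct image, hence equals the required lifted terminal by uniqueness; so the right-hand side lifts to the walk demanded by $(h3)$ upstairs. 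Case $(h4)$ is entirely analogous, using $(f5)$ and the covering bijection on $L(\tilde{e})$, together with degree preservation.

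The main obstacle is really the bookkeeping in the $(h3)$ and $(h4)$ cases, where one must check that the $\tilde{h}$ produced by lifting the $\tau$-step on one side and the terminal vertex produced by lifting the $g^{d}$-step on the other side match up; this is precisely what axioms $(f4)$ and $(f5)$, combined with $f$-bijectivity on polygons and local subsets, are designed to guarantee, so once the lifting lemma is in hand the verification is mechanical.
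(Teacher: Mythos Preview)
Your proposal is correct and follows essentially the same route as the paper: establish unique path lifting, lift the entire homotopy chain step by step, and verify that each elementary relation $(h1)$--$(h4)$ lifts to the same relation upstairs using the polygon and $L$-class bijections of a covering together with degree preservation. One small correction: in case $(h4)$ the relevant axiom upstairs is $(f2)$ (from $L(\tilde e)=L(\tilde h)$ conclude $P(g\cdot\tilde e)=P(g\cdot\tilde h)$), not $(f5)$; the $L$-bijection of the covering then pins down $\tilde h$ and the terminals match as you describe.
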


\begin{proof}
We only prove the case $s(u)=s(v)$. Since $f$ is a covering, for each walk $w'$ of $E'$ and for each $e\in E$ such that $f(e)=s(w')$, there exists a unique walk $w$ of $E$ such that $f(w)=w'$ and $s(w)=e$. Since $f(u)\sim f(v)$, there exists a sequence $f(u)=w'_0$, $w'_1$, $\cdots$, $w'_{n}=f(v)$ of walks of $E'$, such that for each $1\leq i\leq n$, $w'_i$ is obtained from $w'_{i-1}$ by replacing a subwalk to another, using one of the relations $(h1)$, $(h2)$, $(h3)$ or $(h4)$ in Definition \ref{homotopy of walks}. For each $0\leq i\leq n$, let $w_i$ be the walk of $E$ such that $s(w_i)=s(u)$ and $f(w_i)=w'_i$. Clearly we have $w_0=u$ and $w_n=v$. To show that $u\sim v$, it suffices to show that $w_{i-1}\sim w_i$ for all $1\leq i\leq n$. So we may assume that $n=1$. Therefore $f(u)$ is obtained from $f(v)$ by replacing a subwalk to another, using one of the relations $(h1)$, $(h2)$, $(h3)$ or $(h4)$ in Definition \ref{homotopy of walks}.

When the replacement is by using relation $(h1)$ or $(h2)$ in Definition \ref{homotopy of walks}, it can be shown straightforward that $v$ is also obtained from $u$ by replacing a subwalk to another, by using the same relation. When the replacement is by using relation $(h3)$ in Definition \ref{homotopy of walks}, assume that $f(u)=w'_{2}(g^{d'(h')}\cdot h'|\tau|g^{d'(e')}\cdot e')(g^{d'(e')}\cdot e'|g^{d'(e')}|e')w'_{1}$ and $f(v)=w'_{2}(g^{d'(h')}\cdot h'|g^{d'(h')}|h')(h'|\tau|e')w'_{1}$, where $w'_{1}$, $w'_{2}$ are two walks of $E'$ and $e'$, $h'\in E'$ with $P'(e')=P'(h')$. Let $w_{1}$ be the walk of $E$ with $s(w_{1})=s(u)$ and $f(w_{1})=w'_{1}$. Assume that $t(w_{1})=e$. Since $f$ induces a bijection $P(e)\rightarrow P'(e')$, there exists a unique element $h\in P(e)$ such that $f(h)=h'$. Since $f(g^{d(h)}\cdot h)=g^{d'(h')}\cdot h'$, there exists a unique walk $w_{2}$ of $E$ with $s(w_{2})=g^{d(h)}\cdot h$ and $f(w_{2})=w'_{2}$. Therefore $\widetilde{u}=w_{2}(g^{d(h)}\cdot h|\tau|g^{d(e)}\cdot e)(g^{d(e)}\cdot e|g^{d(e)}|e)w_{1}$, $\widetilde{v}=w_{2}(g^{d(h)}\cdot h|g^{d(h)}|h)(h|\tau|e)w_{1}$ are two walks of $E$ such that $s(\widetilde{u})=s(\widetilde{v})=s(u)$, $f(\widetilde{u})=f(u)$, $f(\widetilde{v})=f(v)$. Then $\widetilde{u}=u$ and $\widetilde{v}=v$. It implies that $u\sim v$. When the replacement is by using relation $(h4)$ in Definition \ref{homotopy of walks}, it can be shown similarly that $v$ is also obtained from $u$ by replacing a subwalk to another, by using the same relation. Therefore $u$ is  homotopic to $v$.
\end{proof}

\begin{Prop}\label{homotopy lifting}
Let $f:E\rightarrow E'$ be a covering of f-BCs, $u$, $v$ be two walks of $E$ with $s(u)=s(v)$ (or $t(u)=t(v)$). Then $u\sim v$ if and only if $f(u)\sim f(v)$.
\end{Prop}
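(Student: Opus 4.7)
The plan is to read off the proposition as an immediate corollary of the two preceding lemmas, doing only enough bookkeeping to handle the ``$s$ or $t$'' in the hypothesis.

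For the ``only if'' direction, I observe that a covering of f-BCs is in particular a morphism of f-BCs, so Lemma \ref{morphism preserves homotopic walks} applies and $u\sim v$ directly forces $f(u)\sim f(v)$. This half of the statement uses neither the hypothesis $s(u)=s(v)$ nor $t(u)=t(v)$.

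For the ``if'' direction, the case $s(u)=s(v)$ is exactly the statement of Lemma \ref{covering preserves homotopic walks}. The case $t(u)=t(v)$ is symmetric and I would handle it in one of two equivalent ways: either (i) rerun the lifting argument of that lemma with the dual unique path-lifting property (for each walk in $E'$ and each lift of its target, there is a unique lift of the walk having that target), or (ii) pass to inverse walks: since $s(u^{-1})=t(u)=t(v)=s(v^{-1})$ and $f(u^{-1})=f(u)^{-1}\sim f(v)^{-1}=f(v^{-1})$, Lemma \ref{covering preserves homotopic walks} applied to $u^{-1},v^{-1}$ yields $u^{-1}\sim v^{-1}$, whence $u\sim v$. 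Route (ii) requires the trivial auxiliary check that the homotopy relation is compatible with inversion, which is a routine case-by-case inspection of the generating moves $(h1)$--$(h4)$ of Definition \ref{homotopy of walks}: moves $(h1)$, $(h2)$, $(h4)$ are manifestly symmetric under inversion, while the inverse of $(h3)$ involves $g^{-d(e)}$-walks that can be brought back into the shape of $(h3)$ by repeated application of $(h1)$.

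Since both halves reduce to results already proved, there is no genuine obstacle; the proof amounts to assembling Lemmas \ref{morphism preserves homotopic walks} and \ref{covering preserves homotopic walks} in the appropriate direction and, for the target-anchored case, invoking symmetry.
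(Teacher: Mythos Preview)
Your proof is correct and matches the paper's approach, which is simply ``It follows from Lemma~\ref{morphism preserves homotopic walks} and Lemma~\ref{covering preserves homotopic walks}.'' Note that your extra work on the target-anchored case is unnecessary: the statement of Lemma~\ref{covering preserves homotopic walks} already includes the hypothesis ``$s(u)=s(v)$ (or $t(u)=t(v)$)'', so both cases are covered by that lemma directly.
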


\begin{proof}
It follows from Lemma \ref{morphism preserves homotopic walks} and Lemma \ref{covering preserves homotopic walks}.
\end{proof}

\medskip
Let $f:E\rightarrow E'$ be a morphism of f-BCs and $e\in E$. By Lemma \ref{morphism preserves homotopic walks}, $f$ maps homotopic walks to homotopic walks, therefore it induces a map $f_{*}:\Pi(E,e)\rightarrow \Pi(E',f(e))$, which is a group homomorphism. Moreover, for each subset $A$ of $E$ and each subset $A'$ of $E'$ with $f(A)\subseteq A'$, $f$ induces a functor $f_{*}:\Pi(E,A)\rightarrow \Pi(E',A')$.

According to Proposition \ref{homotopy lifting}, we have

\begin{Thm}\label{covering-injective}
If $f:E\rightarrow E'$ is a covering of f-BCs, then for any $e\in E$ the map $f_{*}:\Pi(E,e)\rightarrow \Pi(E',f(e))$ is injective.
\end{Thm}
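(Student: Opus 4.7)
The plan is to derive this directly from Proposition \ref{homotopy lifting}, which has already done the real work. Since $f_{*}:\Pi(E,e)\rightarrow \Pi(E',f(e))$ is a group homomorphism, it suffices to check that its kernel is trivial; equivalently, that any closed walk $u$ at $e$ whose image $f(u)$ is nullhomotopic must itself be nullhomotopic.

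Concretely, suppose $\overline{u}\in\Pi(E,e)$ satisfies $f_{*}(\overline{u})=\overline{f(u)}=\overline{(f(e)||f(e))}$, i.e. $f(u)\sim (f(e)||f(e))$. Consider the trivial walk $v=(e||e)$ at $e$. Then $f(v)=(f(e)||f(e))$, so $f(u)\sim f(v)$. Since $s(u)=e=s(v)$, Proposition \ref{homotopy lifting} applies and gives $u\sim v$, which means $\overline{u}$ is the identity of $\Pi(E,e)$. Hence $\ker f_{*}$ is trivial and $f_{*}$ is injective.

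There is no substantial obstacle left: the whole content has been packaged into Proposition \ref{homotopy lifting}, whose nontrivial direction (Lemma \ref{covering preserves homotopic walks}) required the unique walk-lifting property of a covering together with a careful case-by-case check that each of the relations $(h1)$--$(h4)$ lifts. Given that, the statement here reduces to a one-line application; one could equivalently phrase the same argument as: if $\overline{u}=\overline{u_1}$ and $\overline{v}=\overline{v_1}$ with $f_{*}(\overline{u_1})=f_{*}(\overline{v_1})$, then $f(u_1)\sim f(v_1)$ and $s(u_1)=s(v_1)=e$, so Proposition \ref{homotopy lifting} yields $u_1\sim v_1$ and thus $\overline{u_1}=\overline{v_1}$.
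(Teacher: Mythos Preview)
Your proof is correct and matches the paper's approach: the paper simply states that the theorem follows from Proposition \ref{homotopy lifting}, and your argument spells out precisely this one-line deduction.
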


Let $E=(E,P,L,d)$ be an f-BC and $e\in E$. Define a set $\widetilde{E}$ as follows: $\widetilde{E}=\{\overline{w}\mid s(w)=e\}$, where $\overline{w}$ denotes the homotopy class of the walk $w$ in $E$; define the action of $G=\langle g\rangle$ on $\widetilde{E}$ by the formula $$g^{n}\cdot \overline{w}=\overline{(g^{n}\cdot t(w)|g^{n}|t(w))w};$$
define two partitions $\widetilde{P}$, $\widetilde{L}$ of $\widetilde{E}$ by $\widetilde{P}(\overline{w})=\{\overline{(h|\tau|t(w))w}\mid h\in P(t(w))\}$ (resp. $\widetilde{L}(\overline{w})=\{\overline{(h|\tau|t(w))w}\mid h\in L(t(w))\}$); define a function $\widetilde{d}:\widetilde{E}\rightarrow\mathbb{Z}_{>0}$ by $\widetilde{d}(\overline{w})=d(t(w))$.

\begin{Thm} \label{universal-cover-description}
If $E=(E,P,L,d)$ is an f-BC (resp. $f_s$-BC), then $\widetilde{E}=(\widetilde{E},\widetilde{P},\widetilde{L},\widetilde{d})$ is an f-BC (resp. $f_s$-BC). Moreover, the map $p:\widetilde{E}\rightarrow E$, $\overline{w}\mapsto t(w)$ is a covering.
\end{Thm}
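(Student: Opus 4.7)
The plan is to verify the six axioms $(f1)$--$(f6)$ of Definition \ref{f-BC} for $\widetilde{E}$, check that $p:\widetilde{E}\to E$ is a morphism of f-BCs inducing bijections on each $\widetilde{P}$- and $\widetilde{L}$-class, and deduce the $f_s$-BC statement from Proposition \ref{covering preserves the property of being a configuration}.

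First I would confirm that all four pieces of data $(\widetilde{E},\widetilde{P},\widetilde{L},\widetilde{d})$ are well-defined on homotopy classes. The $G$-action is independent of representative by $(h5)$, and $\widetilde{d}$ is well-defined because homotopic walks share a common terminal. To show $\widetilde{P}$ and $\widetilde{L}$ are genuine partitions it suffices to verify $\widetilde{P}(\overline{u})=\widetilde{P}(\overline{w})$ whenever $\overline{u}\in\widetilde{P}(\overline{w})$ (and analogously for $\widetilde{L}$), which reduces via $(h2)$ to collapsing two consecutive $\tau$-steps into one.

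$G$-equivariance of $p$, the inclusions $p(\widetilde{P}(\overline{w}))\subseteq P(t(w))$ and $p(\widetilde{L}(\overline{w}))\subseteq L(t(w))$, and the equality $\widetilde{d}(\overline{w})=d(p(\overline{w}))$ are all immediate from the definitions. Surjectivity of the restricted maps $\widetilde{P}(\overline{w})\to P(t(w))$ and $\widetilde{L}(\overline{w})\to L(t(w))$ is by construction, and injectivity holds because $t$ is a homotopy invariant, so $\overline{(h_1|\tau|t(w))w}=\overline{(h_2|\tau|t(w))w}$ forces $h_1=h_2$. For the axioms: $(f1)$ follows from $L(t(w))\subseteq P(t(w))$ together with finiteness transferred through the bijection, and $(f3)$ holds because $\widetilde{d}(g^n\cdot\overline{w})=d(g^n\cdot t(w))=d(t(w))$. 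For $(f2)$: if $\widetilde{L}(\overline{w}_1)=\widetilde{L}(\overline{w}_2)$ then $\overline{w}_2=\overline{(h|\tau|t(w_1))w_1}$ for some $h\in L(t(w_1))$; applying relation $(h4)$ to a representative of $g\cdot\overline{w}_2$ together with $(f2)$ for $E$ places $g\cdot\overline{w}_2$ into $\widetilde{P}(g\cdot\overline{w}_1)$. The iff-axioms $(f4)$ and $(f5)$ are analogous, now using $(h3)$; the "only if" direction is as above, while the "if" direction is obtained by pulling back the data through the bijection $p$ together with the corresponding implication of $(f4)$ (resp. $(f5)$) for $E$ and the bijectivity of the Nakayama permutation. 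Axiom $(f6)$ holds because any proper-subsequence relation among standard $\widetilde{L}$-sequences would push forward under $p$—which preserves lengths and $L$-classes bijectively—to a proper-subsequence relation in $E$, contradicting $(f6)$ for $E$.

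Once $\widetilde{E}$ is known to be an f-BC, $p$ is a covering by construction. If in addition $E$ is an $f_s$-BC, then Proposition \ref{covering preserves the property of being a configuration} applied to $p$ immediately yields that $\widetilde{E}$ is an $f_s$-BC. The main obstacle is the translation step in $(f2)$, $(f4)$, $(f5)$: one must recognize that relations $(h3)$ and $(h4)$ are precisely the walk-level counterparts of the polygon- and $L$-compatibility axioms, so that two angles of $\widetilde{E}$ lying in the same $\widetilde{L}$- or $\widetilde{P}$-class produce, after applying the appropriate power of $g$, walks that can be rearranged via $(h3)$ or $(h4)$ into the form needed to witness the target incidence. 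Once this dictionary is made explicit, both directions of each iff follow cleanly from the corresponding axioms for $E$.
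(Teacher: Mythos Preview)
Your proposal is correct and follows essentially the same strategy as the paper: verify $(f1)$--$(f6)$ directly (the paper declares $(f1)$--$(f5)$ ``straightforward'' while you spell out which homotopy relations $(h2)$, $(h3)$, $(h4)$ drive each axiom), handle $(f6)$ by pushing a hypothetical proper-subsequence relation forward along $p$ to contradict $(f6)$ for $E$, check that $p$ is a covering by construction, and invoke Proposition \ref{covering preserves the property of being a configuration} for the $f_s$-BC claim. Your write-up is in fact more explicit than the paper's on the well-definedness and partition issues, but the underlying argument is the same.
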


\begin{proof}
When $E$ is an f-BC, it can be shown straightforward that $\widetilde{E}$ satisfies conditions $(f1)$ to $(f5)$ in Definition \ref{f-BC}. If $\widetilde{E}$ does not satisfy condition $(f6)$, then there exist $\overline{w}$, $\overline{v}\in\widetilde{E}$ such that $\widetilde{L}(g^{\widetilde{d}( \overline{v})-1}\cdot \overline{v})\cdots\widetilde{L}(g\cdot \overline{v})\widetilde{L}(\overline{v})$ is a proper subsequence of $\widetilde{L}(g^{\widetilde{d}( \overline{w})-1}\cdot \overline{w})\cdots\widetilde{L}(g\cdot \overline{w})\widetilde{L}(\overline{w})$. Therefore $\widetilde{d}(\overline{w})>\widetilde{d}(\overline{v})$. We may assume that $\widetilde{L}(\overline{v})=\widetilde{L}(\overline{w})$, $\widetilde{L}(g\cdot \overline{v})=\widetilde{L}(g\cdot \overline{w})$, $\cdots$, $\widetilde{L}(g^{\widetilde{d}(\overline{v})-1}\cdot \overline{v})=\widetilde{L}(g^{\widetilde{d}(\overline{v})-1}\cdot \overline{w})$. Let $t(w)=e$ and $t(v)=h$. For each $0\leq i\leq\widetilde{d}(\overline{v})-1$, since $\widetilde{L}(g^{i}\cdot\overline{v})=\widetilde{L}(g^{i}\cdot\overline{w})$, $$\overline{(g^{i}\cdot h|g^{i}|h)v}=\overline{(z_{i}|\tau|g^{i}\cdot e)(g^{i}\cdot e|g^{i}|e)w}$$
for some $z_i\in L(g^{i}\cdot e)$. Therefore $g^{i}\cdot h=z_i$ and $L(g^{i}\cdot h)=L(z_{i})=L(g^{i}\cdot e)$ for all $0\leq i\leq \widetilde{d}(\overline{v})-1$. Moreover, $d(h)=\widetilde{d}(\overline{v})$ and $d(e)=\widetilde{d}(\overline{w})$. Therefore $$L(g^{d(h)-1}\cdot h)\cdots L(g\cdot h)L(h)$$ is a proper subsequence of $$L(g^{d(e)-1}\cdot e)\cdots L(g\cdot e)L(e),$$ a contradiction. Then $\widetilde{E}$ is an f-BC.

Since two walks with different terminals are not homotopic, the map $p:\widetilde{E}\rightarrow E$, $\overline{w}\mapsto t(w)$ is well-defined. It can be shown straightforward that $p$ is a morphism of f-BCs and $p$ induces bijections $\widetilde{P}(\overline{w})\rightarrow P(t(w))$ and $\widetilde{L}(\overline{w})\rightarrow L(t(w))$. Therefore $p$ is a covering. By Proposition \ref{covering preserves the property of being a configuration}, $\widetilde{E}$ is an $f_s$-BC if $E$ is an $f_s$-BC.
\end{proof}

\medskip
We call $\widetilde{E}$ the {\it universal cover} of $E$ at $e$. If $E$ is connected, then $\widetilde{E}$ is independent to the choice of $e$ up to isomorphism (see the remarks after Corollary \ref{property of universal cover}). We first give a characterization of $\widetilde{E}$ by showing that $\widetilde{E}$ is simply connected.

\begin{Lem}\label{terminal}
Let $E=(E,P,L,d)$ be an f-BC, $\widetilde{E}=(\widetilde{E},\widetilde{P},\widetilde{L},\widetilde{d})$ be the universal cover of $E$ at $e$, $p:\widetilde{E}\rightarrow E$, $\overline{w}\mapsto t(w)$. If $\eta$ is a walk of $\widetilde{E}$ with $s(\eta)=\overline{w_1}$ and $t(\eta)=\overline{w_2}$, then $\overline{w_2}=\overline{p(\eta)w_1}$.
\end{Lem}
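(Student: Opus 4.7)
\medskip
\noindent\textbf{Proof proposal.} The plan is to proceed by induction on the length $n$ of the walk $\eta$, with the base case being immediate from the definition of homotopy. The key observation that drives the inductive step is that each elementary step in $\widetilde{E}$ unwinds, via the definitions of the $G$-action and of the partitions $\widetilde{P}$, $\widetilde{L}$ on $\widetilde{E}$, to a prepended elementary walk in $E$ whose data is exactly the $p$-image of that step.

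For the base case $n=0$, we have $\overline{w_1}=\overline{w_2}$ and $p(\eta)$ is the trivial walk $(t(w_1)||t(w_1))$, so $\overline{p(\eta)w_1}=\overline{w_1}=\overline{w_2}$ by relation $(h1)$ in Definition \ref{homotopy of walks}. For the inductive step, I would decompose
$$\eta=(\overline{w_2}\,|\,\delta\,|\,\overline{u})\,\eta',$$
where $\eta'$ is a walk of $\widetilde{E}$ from $\overline{w_1}$ to $\overline{u}$ of length $n-1$. By the inductive hypothesis, $\overline{u}=\overline{p(\eta')w_1}$. Since $p(\eta)=(t(w_2)\,|\,\delta\,|\,t(u))\,p(\eta')$, the desired conclusion $\overline{w_2}=\overline{p(\eta)w_1}$ reduces to the single-step identity
$$\overline{w_2}=\overline{(t(w_2)\,|\,\delta\,|\,t(u))\,u},$$
which I will verify in each of the three cases $\delta\in\{g,g^{-1},\tau\}$.

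If $\delta=g$, then by definition of a walk of $\widetilde{E}$ we have $\overline{w_2}=g\cdot\overline{u}$, which by the definition of the $G$-action on $\widetilde{E}$ equals $\overline{(g\cdot t(u)\,|\,g\,|\,t(u))\,u}$; applying $p$ also gives $t(w_2)=g\cdot t(u)$, and the identity follows. The case $\delta=g^{-1}$ is symmetric, rewriting $\overline{u}=g\cdot\overline{w_2}$ and cancelling via $(h1)$. If $\delta=\tau$, then $\overline{w_2}\in\widetilde{P}(\overline{u})$, so by definition of $\widetilde{P}$ there exists $h\in P(t(u))$ with $\overline{w_2}=\overline{(h\,|\,\tau\,|\,t(u))\,u}$; applying $p$ yields $h=t(w_2)$, and the identity again follows.

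I do not expect any genuine obstacle here: the statement is essentially a compatibility between the combinatorial definition of $\widetilde{E}$ and the combinatorial operation of composing walks. The only mild care needed is bookkeeping in the $\delta=g^{-1}$ case, where one must rewrite the step as an inverse $g$-step using $(h1)$ before applying the definition of the action, and ensuring throughout that the terminals of the walks match so that all compositions are defined.
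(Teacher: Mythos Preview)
Your proposal is correct and follows essentially the same approach as the paper's proof: induction on the length of $\eta$, peeling off the last step $(\overline{w_2}\,|\,\delta\,|\,\overline{u})$ and handling the three cases $\delta\in\{g,g^{-1},\tau\}$ via the definitions of the $G$-action and $\widetilde{P}$ on $\widetilde{E}$. The only cosmetic difference is that in the $\tau$ case the paper writes $\overline{w_2}=\overline{(t(w_2)\,|\,\tau\,|\,t(u))\,u}$ directly (using that homotopic walks share terminals), while you introduce an auxiliary $h\in P(t(u))$ and then identify $h=t(w_2)$.
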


\begin{proof}
We shall give a proof by induction on the length $l(\eta)$ of $\eta$. When $l(\eta)=0$ the conclusion holds. Assume it holds for $l(\eta)<n$. When $l(\eta)=n$, let $\eta=(\overline{w_2}|\delta|\overline{v})\eta'$, where $\delta\in\{g,g^{-1},\tau\}$, $\eta'$ is a walk of $\widetilde{E}$ of length $n-1$. By induction, we have $\overline{v}=\overline{p(\eta')w_1}$. If $\delta=g$, then
$$\overline{w_2}=g\cdot \overline{v}=\overline{(g\cdot t(v)|g|t(v))v}=\overline{(g\cdot t(v)|g|t(v))p(\eta')w_1}=\overline{p((\overline{w_2}|g|\overline{v}))p(\eta')w_1}=\overline{p(\eta)w_1}.$$ If $\delta=g^{-1}$, the proof is similar. If $\delta=\tau$, then $P(t(w_2))=P(t(v))$ and $\overline{w_2}=\overline{(t(w_2)|\tau|t(v))v}$. Since $\overline{v}=\overline{p(\eta')w_1}$,  $\overline{w_2}=\overline{(t(w_2)|\tau|t(v))p(\eta')w_1}=\overline{p((\overline{w_2}|\tau|\overline{v}))p(\eta')w_1}=\overline{p(\eta)w_1}$.
\end{proof}

\begin{Lem}\label{universal cover is connected}
Let $E$ be an f-BC, $\widetilde{E}$ be the universal cover of $E$ at $e$. Then $\widetilde{E}$ is connected.
\end{Lem}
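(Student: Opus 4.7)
The strategy is to pick a canonical basepoint in $\widetilde{E}$ and show that every other element of $\widetilde{E}$ is connected to it by a walk, which is a standard reduction since the walk-connectedness relation is an equivalence relation. The natural basepoint is $\tilde{e} := \overline{(e||e)}$, the class of the trivial walk at $e$.

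Given an arbitrary element $\overline{w} \in \widetilde{E}$, represented by a walk $w = (e_n|\delta_n|e_{n-1})\cdots(e_1|\delta_1|e_0)$ of $E$ with $e_0=e$, the plan is to lift $w$ to a walk in $\widetilde{E}$ from $\tilde{e}$ to $\overline{w}$ by using the homotopy classes of the prefixes of $w$ as vertices. Concretely, for $0\le i\le n$ let $w_i = (e_i|\delta_i|e_{i-1})\cdots(e_1|\delta_1|e_0)$ (with $w_0=(e||e)$ and $w_n=w$), so that $\overline{w_0}=\tilde{e}$ and $\overline{w_n}=\overline{w}$; then form the candidate walk
$$\tilde{w} = (\overline{w_n}|\delta_n|\overline{w_{n-1}})\cdots(\overline{w_1}|\delta_1|\overline{w_0})$$
in $\widetilde{E}$.

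The main content of the proof is verifying that each elementary step $(\overline{w_i}|\delta_i|\overline{w_{i-1}})$ is a legitimate step of $\widetilde{E}$, split into three routine cases according to the value of $\delta_i\in\{g,g^{-1},\tau\}$. For $\delta_i=g$ we have $e_i=g\cdot e_{i-1}$, and by the definition of the $G$-action on $\widetilde{E}$,
$$g\cdot\overline{w_{i-1}} = \overline{(g\cdot t(w_{i-1})|g|t(w_{i-1}))w_{i-1}} = \overline{(e_i|g|e_{i-1})w_{i-1}} = \overline{w_i},$$
so the $g$-step is valid; the case $\delta_i=g^{-1}$ is symmetric. For $\delta_i=\tau$ we have $P(e_i)=P(e_{i-1})$, and by the definition of $\widetilde{P}$ the class $\overline{(e_i|\tau|e_{i-1})w_{i-1}}=\overline{w_i}$ lies in $\widetilde{P}(\overline{w_{i-1}})$, confirming the $\tau$-step.

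Thus $\tilde{w}$ is indeed a walk of $\widetilde{E}$ connecting $\tilde{e}$ to $\overline{w}$, and since $\overline{w}$ was arbitrary this proves $\widetilde{E}$ is connected. There is no serious obstacle here — the argument is essentially that the definitions of the $G$-action and the partitions on $\widetilde{E}$ were cooked up precisely so that every walk in $E$ starting at $e$ lifts stepwise to a walk in $\widetilde{E}$ starting at $\tilde{e}$; the only mild care needed is to match the three types of elementary steps with the three defining clauses for $\widetilde{E}$.
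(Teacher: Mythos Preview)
Your proof is correct and follows essentially the same approach as the paper: both pick the basepoint $\overline{(e||e)}$, define the prefixes $w_i$ of $w$, and exhibit the walk $(\overline{w_n}|\delta_n|\overline{w_{n-1}})\cdots(\overline{w_1}|\delta_1|\overline{w_0})$ in $\widetilde{E}$. You actually give more detail than the paper in checking the three cases for the elementary steps, whereas the paper simply asserts that the displayed sequence is a walk of $\widetilde{E}$.
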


\begin{proof}
For $\overline{w}\in\widetilde{E}$, assume that $w=e_{n}\frac{\delta_{n}}{}e_{n-1}\frac{\delta_{n-1}}{}\cdots\frac{\delta_{3}}{}e_{2}\frac{\delta_{2}}{}e_{1}\frac{\delta_{1}}{}e_{0}$, where $\delta_{i}\in\{g,g^{-1},\tau\}$ for $1\leq i\leq n$ and $e_{0}=e$. Let $w_{i}=e_{i}\frac{\delta_{i}}{}e_{i-1}\frac{\delta_{i-1}}{}\cdots\frac{\delta_{3}}{}e_{2}\frac{\delta_{2}}{}e_{1}\frac{\delta_{1}}{}e_{0}$ for each $0\leq i\leq n$. Then
$$\overline{w_{n}}\frac{\delta_{n}}{}\overline{w_{n-1}}\frac{\delta_{n-1}}{}\cdots\frac{\delta_{3}}{}\overline{w_{2}}\frac{\delta_{2}}{}\overline{w_{1}}\frac{\delta_{1}}{}\overline{w_{0}}$$ is a walk of $\widetilde{E}$ connecting $\overline{w_{n}}=\overline{w}$ and $\overline{w_{0}}=\overline{(e||e)}$. Therefore $\widetilde{E}$ is connected.
\end{proof}

\begin{Lem}\label{independence}
If $E$ is a connected f-BC, then $\Pi(E,e)$ are isomorphic for different $e\in E$.
\end{Lem}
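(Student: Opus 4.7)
The plan is to imitate the standard topological argument for change of basepoint. Since $E$ is connected, for any $e_1,e_2\in E$ there exists a walk $\gamma$ with $s(\gamma)=e_1$ and $t(\gamma)=e_2$. I would then define a candidate isomorphism
$$\Phi:\Pi(E,e_1)\longrightarrow\Pi(E,e_2),\qquad \overline{w}\mapsto\overline{\gamma w\gamma^{-1}},$$
which is well-defined by relation $(h5)$ of Definition \ref{homotopy of walks}, since replacing $w$ by a homotopic walk yields a homotopic conjugate.

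The one nontrivial ingredient needed is a cancellation lemma: for every walk $\gamma$ of $E$ one has $\gamma\gamma^{-1}\sim(t(\gamma)\|t(\gamma))$ and $\gamma^{-1}\gamma\sim(s(\gamma)\|s(\gamma))$. I would prove this by induction on the length $l(\gamma)$. The case $l(\gamma)=1$ is handled directly from Definition \ref{homotopy of walks}: if $\gamma=(e_1|\delta|e_0)$ with $\delta\in\{g,g^{-1}\}$ then $\gamma\gamma^{-1}\sim(e_1\|e_1)$ by $(h1)$, while if $\delta=\tau$ then $\gamma\gamma^{-1}=(e_1|\tau|e_0)(e_0|\tau|e_1)\sim(e_1|\tau|e_1)\sim(e_1\|e_1)$ by $(h2)$ and $(h1)$. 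The inductive step peels off the outermost letter of $\gamma$, writing $\gamma=\gamma'(e_1|\delta|e_0)$, and applies $(h5)$ together with the base case and the inductive hypothesis for $\gamma'$.

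Granted the cancellation lemma, the homomorphism property is an immediate computation:
$$\Phi(\overline{u}\cdot\overline{v})=\overline{\gamma uv\gamma^{-1}}=\overline{\gamma u\gamma^{-1}\gamma v\gamma^{-1}}=\Phi(\overline{u})\cdot\Phi(\overline{v}),$$
where the middle equality uses $\gamma^{-1}\gamma\sim(e_1\|e_1)$ inserted between $u$ and $v$ together with $(h5)$. The inverse of $\Phi$ is the analogously defined map $\Psi:\Pi(E,e_2)\to\Pi(E,e_1)$ sending $\overline{v}$ to $\overline{\gamma^{-1}v\gamma}$; both compositions $\Phi\Psi$ and $\Psi\Phi$ reduce to the identity via a further application of cancellation. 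The only step requiring care is the cancellation lemma, and that is essentially a one-line induction, so no serious obstacle is expected.
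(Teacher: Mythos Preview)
Your proposal is correct and follows exactly the same change-of-basepoint conjugation argument as the paper's proof; the paper simply states the isomorphism $\overline{v}\mapsto\overline{wvw^{-1}}$ and its inverse without writing out the cancellation lemma $\gamma\gamma^{-1}\sim(t(\gamma)\|t(\gamma))$, which you supply explicitly by induction on length.
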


\begin{proof}
For $e_1$, $e_2\in E$, since $E$ is connected, there exists a walk $w$ of $E$ with $s(w)=e_1$ and $t(w)=e_2$. Then $\phi:\Pi(E,e_1)\rightarrow \Pi(E,e_2)$, $\overline{v}\mapsto\overline{wvw^{-1}}$ is a group isomorphism, whose inverse is $\psi:\Pi(E,e_2)\rightarrow \Pi(E,e_1)$, $\overline{v}\mapsto\overline{w^{-1}vw}$.
\end{proof}

\medskip
By Lemma \ref{independence}, for a connected f-BC $E$, we may write $\Pi(E,e)=\Pi(E)$, and call it the {\it fundamental group of $E$}.

\begin{Def}
A connected f-BC $E$ is said to be simply connected if $\Pi(E)=\{1\}$.
\end{Def}

\begin{Prop}\label{universal cover is simply connected}
Let $E$ be an f-BC, $\widetilde{E}$ be the universal cover of $E$ at $e$. Then $\widetilde{E}$ is simply connected.
\end{Prop}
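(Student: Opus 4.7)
The plan is to use the canonical base point $\tilde{e} = \overline{(e \| e)} \in \widetilde{E}$ (the homotopy class of the trivial walk at $e$) and show that $\Pi(\widetilde{E}, \tilde{e}) = \{1\}$. Connectedness of $\widetilde{E}$ is already guaranteed by Lemma \ref{universal cover is connected}, so the entire content of the statement reduces to verifying triviality of the fundamental group at $\tilde{e}$.

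First I would take an arbitrary closed walk $\eta$ of $\widetilde{E}$ based at $\tilde{e}$, so $s(\eta) = t(\eta) = \tilde{e}$. Applying Lemma \ref{terminal} with $w_1 = (e \| e)$ and $w_2 = (e \| e)$ gives the identity $\overline{(e \| e)} = \overline{p(\eta)\,(e \| e)} = \overline{p(\eta)}$ in $E$. In other words, the image walk $p(\eta)$ is homotopic in $E$ to the trivial walk $(e \| e)$. This is the crucial link: the definition of $\widetilde{E}$ as homotopy classes forces any closed lift at $\tilde{e}$ to project to a null-homotopic loop downstairs.

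Next I would lift that null-homotopy back to $\widetilde{E}$ using the covering property. By Theorem \ref{universal-cover-description}, the projection $p:\widetilde{E} \to E$ is a covering of f-BCs, and both $\eta$ and the trivial walk $(\tilde{e} \| \tilde{e})$ have the same source $\tilde{e}$. Since $p(\eta) \sim (e \| e) = p((\tilde{e} \| \tilde{e}))$ in $E$, Proposition \ref{homotopy lifting} (or directly Lemma \ref{covering preserves homotopic walks}) yields $\eta \sim (\tilde{e} \| \tilde{e})$ in $\widetilde{E}$. Hence every element of $\Pi(\widetilde{E}, \tilde{e})$ is trivial.

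I do not foresee a serious obstacle here, since the two ingredients have already been established: Lemma \ref{terminal} tells us exactly what the terminal of a lifted walk looks like in terms of the projection, and Lemma \ref{covering preserves homotopic walks} provides the unique path lifting property needed to descend a homotopy. The mild subtlety is simply the book-keeping to apply Lemma \ref{terminal} with the correct choice of $w_1, w_2$, and to remember that a different choice of base point in $\widetilde{E}$ would yield an isomorphic fundamental group by Lemma \ref{independence}, so choosing $\tilde{e} = \overline{(e \| e)}$ is only a matter of convenience.
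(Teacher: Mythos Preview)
Your proposal is correct and follows essentially the same argument as the paper's proof: use Lemma~\ref{universal cover is connected} for connectedness, apply Lemma~\ref{terminal} to a closed walk $\eta$ at $\overline{(e||e)}$ to conclude $p(\eta)\sim(e||e)$, and then invoke Proposition~\ref{homotopy lifting} to deduce $\eta\sim(\overline{(e||e)}||\overline{(e||e)})$. The reasoning and the sequence of lemmas are identical.
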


\begin{proof}
Denote by $p$ the covering $\widetilde{E}\rightarrow E$, $\overline{w}\mapsto t(w)$. By Lemma \ref{universal cover is connected}, $\widetilde{E}$ is connected. For $\overline{\eta}\in\Pi(\widetilde{E},\overline{(e||e)})$, by Lemma \ref{terminal}, $\overline{(e||e)}=t(\eta)=\overline{p(\eta)}s(\eta)=\overline{p(\eta)}\overline{(e||e)}=\overline{p(\eta)}$. Since $\eta$, $(\overline{(e||e)}||\overline{(e||e)})$ are two walks of $\widetilde{E}$ with  $s(\eta)=s((\overline{(e||e)}||\overline{(e||e)}))=\overline{(e||e)}$ and $p(\eta)\sim(e||e)=p((\overline{(e||e)}||\overline{(e||e)}))$, by Proposition \ref{homotopy lifting}, $\eta\sim(\overline{(e||e)}||\overline{(e||e)})$. Therefore $\Pi(\widetilde{E},\overline{(e||e)})$ is trivial.
\end{proof}

The following result determines when there exists a morphism between two coverings $E_{1}\rightarrow E$ and $E_{2}\rightarrow E$.

\begin{Prop}\label{existence of morphism}
Let $E=(E,P,L,d)$, $E_{1}=(E_1,P_1,L_1,d_1)$, $E_{2}=(E_2,P_2,L_2,d_2)$ be f-BCs with $E_{1}$ connected, $f_{1}:E_{1}\rightarrow E$ a morphism of f-BCs, $f_{2}:E_{2}\rightarrow E$ a covering of f-BCs. For $e_i\in E_{i}$ $(i=1,2)$ with $f_{1}(e_1)=f_{2}(e_2)$, there exists a morphism $\phi:E_{1}\rightarrow E_{2}$ of f-BCs such that $f_{1}=f_{2}\phi$ and $\phi(e_1)=e_2$ if and only if $f_{1*}(\Pi(E_1,e_1))\subseteq f_{2*}(\Pi(E_2,e_2))$.
\end{Prop}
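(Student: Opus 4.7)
The statement is the combinatorial analog of the classical lifting criterion in topological covering theory, and the same strategy adapts. The forward direction is immediate from the functoriality of $(-)_{*}$: if $\phi:E_1\to E_2$ satisfies $f_2\phi=f_1$ and $\phi(e_1)=e_2$, then $f_{1*}=(f_2)_{*}\circ\phi_{*}$, whence $f_{1*}(\Pi(E_1,e_1))\subseteq f_{2*}(\Pi(E_2,e_2))$.

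For the converse, I would construct $\phi$ by walk-lifting. Given $e\in E_1$, use connectedness of $E_1$ to choose a walk $w$ from $e_1$ to $e$; then $f_1(w)$ starts at $f_2(e_2)$ in $E$, and since $f_2$ is a covering (which induces bijections on $P$ and $L$) the walk $f_1(w)$ admits a unique lift $\widetilde w$ in $E_2$ with $s(\widetilde w)=e_2$. Define $\phi(e):=t(\widetilde w)$.

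The essential step is independence of the choice of $w$. If $w,w'$ are two walks from $e_1$ to $e$, then $(w')^{-1}w$ is closed at $e_1$, so $\overline{f_1(w')}^{-1}\cdot\overline{f_1(w)}\in f_{1*}(\Pi(E_1,e_1))\subseteq f_{2*}(\Pi(E_2,e_2))$. Choose a closed walk $\eta$ of $E_2$ at $e_2$ with $\overline{f_2(\eta)}=\overline{f_1(w')}^{-1}\cdot\overline{f_1(w)}$, equivalently $f_1(w)\sim f_1(w')\cdot f_2(\eta)$. The unique $e_2$-source lift of $f_1(w')\cdot f_2(\eta)$ is $\widetilde{w'}\cdot\eta$ (first lift the closed loop $f_2(\eta)$ at $e_2$, which is $\eta$ itself and returns to $e_2$, then lift $f_1(w')$ to $\widetilde{w'}$), ending at $t(\widetilde{w'})$; the $e_2$-source lift of $f_1(w)$ is $\widetilde w$. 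Proposition~\ref{homotopy lifting} applied with common source $e_2$ then yields $\widetilde w\sim\widetilde{w'}\cdot\eta$, and equality of terminals of homotopic walks forces $t(\widetilde w)=t(\widetilde{w'})$.

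It remains to verify $\phi$ is a morphism of f-BCs by lifting one-step extensions of walks. For $G$-equivariance, the unique $e_2$-lift of the walk $(g\cdot e|g|e)w$ is $(g\cdot\phi(e)|g|\phi(e))\widetilde w$, so $\phi(g\cdot e)=g\cdot\phi(e)$; the case $g^{-1}$ is symmetric. For preservation of $P$ and $L$, given $e'\in P_1(e)$ (resp.\ $L_1(e)$), extend $w$ by $(e'|\tau|e)$; since $f_2$ induces bijections $P_2(\phi(e))\to P(f_1(e))$ and $L_2(\phi(e))\to L(f_1(e))$, the unique lift of $(f_1(e')|\tau|f_1(e))$ at $\phi(e)$ has the form $(h|\tau|\phi(e))$ with $h\in P_2(\phi(e))$ (resp.\ $L_2(\phi(e))$) and $f_2(h)=f_1(e')$, whence $\phi(e')=h$ lies in the required subset. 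Degree preservation is automatic from $d_2(\phi(e))=d(f_2\phi(e))=d(f_1(e))=d_1(e)$. The principal obstacle is the well-definedness step, where the algebraic hypothesis on fundamental groups must be converted into equality of terminals of lifted walks; this is exactly where Proposition~\ref{homotopy lifting} is indispensable.
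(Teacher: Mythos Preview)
Your proof is correct and follows essentially the same approach as the paper: construct $\phi$ by lifting walks through the covering $f_2$, use the fundamental-group hypothesis together with Proposition~\ref{homotopy lifting} for well-definedness, and verify the morphism axioms by extending walks one step. The only cosmetic difference is in the well-definedness argument: the paper lifts the second walk $f_1(v_1)$ so that its \emph{terminal} agrees with $t(w_2)$ and then shows its source is $e_2$, whereas you lift $f_1(w')$ from the \emph{source} $e_2$ and show the terminals agree; these are dual arrangements of the same computation.
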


\begin{proof}
"$\Leftarrow$" Let $e=f_{1}(e_1)$. For each $h_{1}\in E_{1}$, since $E_1$ is connected, we choose a walk $w_1$ of $E_1$ with $s(w_1)=e_1$ and $t(w_1)=h_1$. Then $f_{1}(w_1)$ is a walk of $E$ with $s(f_{1}(w_1))=e$. Since $f_2$ is a covering, there exists a unique walk $w_2$ of $E_2$ such that $s(w_2)=e_2$ and $f_{2}(w_2)=f_{1}(w_1)$. Define $\phi(h_1)=t(w_2)$. To show $\phi$ is well-defined, we need to show that the value of $\phi(h_1)$ is independent of the choice of $w_1$: Let $v_1$ be another walk of $E_1$ from $e_1$ to $h_1$, then $\overline{(v_1)^{-1}w_1}\in\Pi(E_1,e_1)$ and $\overline{(f_{1}(v_1))^{-1}f_{1}(w_1)}\in f_{1*}(\Pi(E_1,e_1))\subseteq f_{2*}(\Pi(E_2,e_2))$. Therefore there exists a closed walk $z$ of $E_2$ at $e_2$ such that $f_{2}(z)\sim (f_{1}(v_1))^{-1}f_{1}(w_1)$. Let $w_2$ be the walk of $E_2$ such that $s(w_2)=e_2$ and $f_{2}(w_2)=f_{1}(w_1)$, $v_2$ be the walk of $E_2$ such that $t(v_2)=t(w_2)$ and $f_{2}(v_2)=f_{1}(v_1)$. Since $z$, $(v_2)^{-1}w_2$ are walks of $E_2$ with $s(z)=s((v_2)^{-1}w_2)$ and $f_{2}(z)\sim (f_{1}(v_1))^{-1}f_{1}(w_1)=f_{2}((v_2)^{-1}w_2)$, by Proposition \ref{homotopy lifting}, $z\sim (v_2)^{-1}w_2$. Therefore $(v_2)^{-1}w_2$ is a closed walk of $E_2$ at $e_2$, and $v_2$ is the walk of $E_2$ with $s(v_2)=e_2$ and $f_{2}(v_2)=f_{1}(v_1)$. Since $t(v_2)=t(w_2)$, $\phi(h_1)$ is well-defined. It can be checked that $\phi$ is a map of $G$-sets, and $\phi(P_1(h_1))\subseteq P_2(\phi(h_1))$ (resp. $\phi(L_1(h_1))\subseteq L_2(\phi(h_1))$) for each $h_1\in E_1$. Since $f_{1}=f_{2}\phi$ and both $f_1$ and $f_2$ preserve the degrees, $\phi$ also preserves the degrees. Then $\phi:E_{1}\rightarrow E_{2}$ is a morphism of f-BCs such that $f_{1}=f_{2}\phi$ and $\phi(e_1)=e_2$.

"$\Rightarrow$" Since $f_{1}=f_{2}\phi$, $f_{1*}(\Pi(E_1,e_1))=f_{2*}(\phi_{*}(\Pi(E_1,e_1)))\subseteq f_{2*}(\Pi(E_2,e_2))$.
\end{proof}

\begin{Rem1}\label{uniqueness}
If $\phi':E_{1}\rightarrow E_{2}$ is another morphism of f-BCs which satisfies the conditions $f_{1}=f_{2}\phi'$ and $\phi'(e_1)=e_2$ in Proposition \ref{existence of morphism}, then for each $h_{1}\in E_{1}$ and for each walk $w_1$ of $E_1$ from $e_1$ to $h_1$, $\phi'(w_1)$ is a walk of $E_2$ which start at $e_2$ satisfying $f_2(\phi'(w_1))=f_1(w_1)$. Then $\phi(h_1)=t(\phi'(w_1))=\phi'(h_1)$. Therefore such $\phi$ in Proposition \ref{existence of morphism} is unique.
\end{Rem1}

\begin{Cor}\label{property of universal cover}
Let $E=(E,P,L,d)$ be an f-BC, $\widetilde{E}=(\widetilde{E},\widetilde{P},\widetilde{L},\widetilde{d})$ be the universal cover of $E$ at $e$, $p:\widetilde{E}\rightarrow E$, $\overline{v}\mapsto t(v)$. Then for each covering $f:E'\rightarrow E$ and for all $\overline{w}\in\widetilde{E}$, $e'\in E'$ with $p(\overline{w})=f(e')$, there exists a unique covering $\phi:\widetilde{E}\rightarrow E'$ such that $\phi(\overline{w})=e'$ and $p=f\phi$.
\end{Cor}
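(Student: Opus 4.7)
The plan is to derive this corollary by applying Proposition~\ref{existence of morphism} to the pair $(p,f)$, invoking simple-connectedness of $\widetilde{E}$ to guarantee the subgroup-containment hypothesis, and then upgrading the resulting morphism $\phi$ to a covering by using the factorization $p=f\phi$ together with the fact that both $p$ and $f$ are coverings.

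More concretely, first I would check that Proposition~\ref{existence of morphism} applies with $E_1=\widetilde{E}$, $E_2=E'$, $f_1=p$, $f_2=f$, $e_1=\overline{w}$ and $e_2=e'$. By Theorem~\ref{universal-cover-description} the map $p:\widetilde{E}\rightarrow E$ is (in particular) a morphism of f-BCs, and by Lemma~\ref{universal cover is connected} the universal cover $\widetilde{E}$ is connected, so both hypotheses on $E_1$ and $f_1$ are met. The subgroup condition $p_{*}(\Pi(\widetilde{E},\overline{w}))\subseteq f_{*}(\Pi(E',e'))$ is automatic: by Proposition~\ref{universal cover is simply connected} combined with Lemma~\ref{independence}, $\Pi(\widetilde{E},\overline{w})$ is trivial, so $p_{*}(\Pi(\widetilde{E},\overline{w}))=\{1\}$ is contained in every subgroup. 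Hence Proposition~\ref{existence of morphism} yields a morphism $\phi:\widetilde{E}\rightarrow E'$ with $\phi(\overline{w})=e'$ and $p=f\phi$, and Remark~\ref{uniqueness} gives uniqueness of such a morphism.

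The remaining and only non-formal step is to show that $\phi$ is actually a covering, i.e.\ that for every $\overline{v}\in\widetilde{E}$ the induced maps $\widetilde{P}(\overline{v})\rightarrow P'(\phi(\overline{v}))$ and $\widetilde{L}(\overline{v})\rightarrow L'(\phi(\overline{v}))$ are bijections. Fix $\overline{v}\in\widetilde{E}$. For injectivity, note that the restriction $p|_{\widetilde{P}(\overline{v})}:\widetilde{P}(\overline{v})\rightarrow P(p(\overline{v}))$ is a bijection (since $p$ is a covering by Theorem~\ref{universal-cover-description}), and factors as $f|_{P'(\phi(\overline{v}))}\circ\phi|_{\widetilde{P}(\overline{v})}$; hence $\phi|_{\widetilde{P}(\overline{v})}$ is injective. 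For surjectivity, given $x\in P'(\phi(\overline{v}))$, the image $f(x)\in P(p(\overline{v}))$ admits a unique preimage $\overline{v'}\in\widetilde{P}(\overline{v})$ under the bijection $p|_{\widetilde{P}(\overline{v})}$; then $\phi(\overline{v'})$ and $x$ both lie in $P'(\phi(\overline{v}))$ and both map to $f(x)$ under the bijection $f|_{P'(\phi(\overline{v}))}$, so $\phi(\overline{v'})=x$. The argument for $\widetilde{L}(\overline{v})\rightarrow L'(\phi(\overline{v}))$ is identical, replacing $P,P',\widetilde{P}$ by $L,L',\widetilde{L}$.

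I do not expect any step to be a serious obstacle: the main work was done in Proposition~\ref{existence of morphism} and Proposition~\ref{universal cover is simply connected}. The only point that requires a little care is the covering-versus-morphism upgrade at the end, but it falls out immediately from the general principle that in a factorization $p=f\phi$ of a bijection through another bijection, the middle map is forced to be a bijection as well.
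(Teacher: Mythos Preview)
Your proposal is correct and follows essentially the same route as the paper: invoke simple connectedness of $\widetilde{E}$ to satisfy the subgroup hypothesis of Proposition~\ref{existence of morphism}, obtain $\phi$ (with uniqueness from Remark~\ref{uniqueness}), and then upgrade $\phi$ to a covering by factoring the bijection $p|_{\widetilde{P}(\overline{v})}$ through the bijection $f|_{P'(\phi(\overline{v}))}$. Your extra care in citing Lemma~\ref{independence} to transfer triviality of the fundamental group to the base point $\overline{w}$ is a nice touch that the paper leaves implicit.
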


\begin{proof}
By Proposition \ref{universal cover is simply connected}, $\Pi(\widetilde{E},\overline{w})$ is trivial. Therefore $p_{*}(\Pi(\widetilde{E},\overline{w}))=\{1\}\subseteq f_{*}(\Pi(E',e'))$. By Proposition \ref{existence of morphism}, there exists a morphism $\phi:\widetilde{E}\rightarrow E'$ such that $\phi(\overline{w})=e'$ and $p=f\phi$. Let $E'=(E',P',L',d')$. Since for each $\overline{v}\in\widetilde{E}$, $p:\widetilde{P}(\overline{v})\rightarrow P(t(v))$ is the composition of $\phi:\widetilde{P}(\overline{v})\rightarrow P'(\phi(\overline{v}))$ and $f:P'(\phi(\overline{v}))\rightarrow P(t(v))$, and since $p:\widetilde{P}(\overline{v})\rightarrow P(t(v))$, $f:P'(\phi(\overline{v}))\rightarrow P(t(v))$ are bijective, $\phi:\widetilde{P}(\overline{v})\rightarrow P'(\phi(\overline{v}))$ is also bijective. Similarly, $\phi:\widetilde{L}(\overline{v})\rightarrow L(\phi(\overline{v}))$ is bijective for each $\overline{v}\in\widetilde{E}$. Therefore $\phi$ is a covering of f-BCs. By Remark \ref{uniqueness}, such $\phi$ is unique.
\end{proof}

Therefore the universal covering map $p:\widetilde{E}\rightarrow E$ satisfies a universal property. Moreover, for a connected f-BC $E$, two universal covers of $E$ at different angles are isomorphic.

\begin{Def}
Let $E,E_1,E_2$ be f-BCs. Two coverings of f-BCs $f_{1}:E_{1}\rightarrow E$ and $f_{2}:E_{2}\rightarrow E$ are said to be isomorphic if there exists an isomorphism of f-BCs $\phi:E_1\rightarrow E_2$ such that $f_1=f_2 \phi$.
\end{Def}

\begin{Lem}\label{conjugacy class}
Let $f:E\rightarrow E'$ be a covering of f-BCs with $E$ connected and $e'\in E'$. If $f^{-1}(e')\neq\emptyset$ then the subgroups $f_{*}(\Pi(E,e))$ for $e\in f^{-1}(e')$ form a conjugacy class of subgroups of $\Pi(E',e')$.
\end{Lem}

\begin{proof}
For any $x,y\in f^{-1}(e')$, since $E$ is connected, we may choose a walk $w$ of $E$ from $x$ to $y$. Then $\Pi(E,y)=\overline{w}\Pi(E,x)\overline{w^{-1}}$ and $f_{*}(\Pi(E,y))=\overline{f(w)}f_{*}(\Pi(E,x))\overline{f(w)}^{-1}$. So $f_{*}(\Pi(E,x))$ and $f_{*}(\Pi(E,y))$ belong to the same conjugacy class. Conversely, for any $x\in f^{-1}(e')$ and for any $\overline{v'}\in\Pi(E',e')$, let $v$ be the walk of $E$ with $s(v)=x$ and $f(v)=v'$. Then the terminal $y$ of $v$ belongs to $f^{-1}(e')$ and $f_{*}(\Pi(E,y))=\overline{v'}f_{*}(\Pi(E,x))\overline{v'}^{-1}$.
\end{proof}

The following result shows that the isomorphism class of a covering $f:E\rightarrow E'$ can be determined by the image of the induced homomorphism $f_{*}:\Pi(E,e)\rightarrow\Pi(E',f(e))$ (up to conjugacy).

\begin{Prop}\label{classification-of-coverings-by-fundamental-group}
Let $E,E_1,E_2$ be connected f-BCs and $f_{i}:E_{i}\rightarrow E$ ($i=1,2$) be two coverings of f-BCs. Suppose that $e\in E$, then $f_1$ and $f_2$ are isomorphic if and only if the conjugacy classes $\{f_{1*}(\Pi(E_1,x))\mid x\in f_{1}^{-1}(e)\}$ and $\{f_{2*}(\Pi(E_2,y))\mid y\in f_{2}^{-1}(e)\}$ of subgroups of $\Pi(E,e)$ are equal.
\end{Prop}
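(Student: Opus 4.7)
The plan is to treat this as the standard covering-space classification result, using the tools already developed in the excerpt: Lemma \ref{conjugacy class} to pick compatible basepoints, Proposition \ref{existence of morphism} to build the comparison morphism in both directions, and Remark \ref{uniqueness} to verify that the two compositions are identities.

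The forward direction is easy: if $\phi:E_1\to E_2$ is an isomorphism of f-BCs with $f_1=f_2\phi$, then $\phi$ restricts to a bijection $f_1^{-1}(e)\to f_2^{-1}(e)$, and $\phi_{*}:\Pi(E_1,x)\to\Pi(E_2,\phi(x))$ is a group isomorphism for every $x\in f_1^{-1}(e)$. Therefore $f_{1*}(\Pi(E_1,x))=f_{2*}(\phi_{*}(\Pi(E_1,x)))=f_{2*}(\Pi(E_2,\phi(x)))$, so the two sets of subgroups of $\Pi(E,e)$ coincide, and in particular their conjugacy classes are equal.

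For the backward direction, I would begin by choosing basepoints so that the two subgroups are not merely conjugate but equal. Fix any $e_1\in f_1^{-1}(e)$ (which is nonempty by surjectivity of $f_1$ on a connected target) and any $y_0\in f_2^{-1}(e)$. By hypothesis there is some $\gamma\in\Pi(E,e)$ with $f_{1*}(\Pi(E_1,e_1))=\gamma^{-1}f_{2*}(\Pi(E_2,y_0))\gamma$. Choose a representative closed walk $v'$ of $\gamma$ at $e$, lift it to a walk $v$ of $E_2$ starting at $y_0$ via the unique-lifting property of the covering $f_2$, and set $e_2=t(v)$. Then $e_2\in f_2^{-1}(e)$, and by the argument in the proof of Lemma \ref{conjugacy class}, $f_{2*}(\Pi(E_2,e_2))=\gamma^{-1}f_{2*}(\Pi(E_2,y_0))\gamma=f_{1*}(\Pi(E_1,e_1))$.

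With this equality in hand, Proposition \ref{existence of morphism} applied in both directions produces morphisms of f-BCs $\phi:E_1\to E_2$ and $\psi:E_2\to E_1$ with $\phi(e_1)=e_2$, $\psi(e_2)=e_1$, $f_1=f_2\phi$, and $f_2=f_1\psi$. Then $\psi\phi:E_1\to E_1$ is a morphism satisfying $f_1=f_1(\psi\phi)$ and $(\psi\phi)(e_1)=e_1$, as does the identity $\mathrm{id}_{E_1}$; since $E_1$ is connected, Remark \ref{uniqueness} forces $\psi\phi=\mathrm{id}_{E_1}$. The symmetric argument, using connectedness of $E_2$ (which we also need and which follows from the setup), yields $\phi\psi=\mathrm{id}_{E_2}$. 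Hence $\phi$ is an isomorphism of f-BCs with $f_1=f_2\phi$, and $f_1,f_2$ are isomorphic coverings.

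The only delicate step is the basepoint adjustment at the start of the backward direction; once the basepoints are synchronized to give equality (not just conjugacy) of the image subgroups, both the construction of $\phi,\psi$ and the identification of their compositions with identities are immediate applications of Proposition \ref{existence of morphism} and Remark \ref{uniqueness}.
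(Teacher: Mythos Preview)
Your proof is correct and follows essentially the same approach as the paper: both directions use Proposition \ref{existence of morphism} and Remark \ref{uniqueness} in exactly the way you describe. The only cosmetic difference is that where you explicitly construct $e_2$ by lifting a representative of $\gamma$, the paper simply invokes Lemma \ref{conjugacy class} to say one may directly choose $x\in f_1^{-1}(e)$ and $y\in f_2^{-1}(e)$ with $f_{1*}(\Pi(E_1,x))=f_{2*}(\Pi(E_2,y))$.
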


\begin{proof}
Since both $E,E_1,E_2$ are connected, by Lemma \ref{conjugacy class}, $\{f_{i*}(\Pi(E_i,x))\mid x\in f_{i}^{-1}(e)\}$ forms a conjugacy class of subgroups of $\Pi(E,e)$ for $i=1,2$.

"$\Rightarrow$" Since $f_1$ and $f_2$ are isomorphic, there exists an isomorphism of f-BCs $\phi:E_1\rightarrow E_2$ such that $f_1=f_2 \phi$. Choose $x_0\in f_{1}^{-1}(e)$ and let $y_0=\phi(x_0)$. Then $f_{1*}(\Pi(E_1,x_0))=f_{2*}(\phi_{*}(\Pi(E_1,x_0)))=f_{2*}(\Pi(E_2,y_0))$ since $\phi$ is an isomorphism. Therefore the conjugacy classes $\{f_{1*}(\Pi(E_1,x))\mid x\in f_{1}^{-1}(e)\}$ and $\{f_{2*}(\Pi(E_2,y))\mid y\in f_{2}^{-1}(e)\}$ of subgroups of $\Pi(E,e)$ are equal.

"$\Leftarrow$" We may choose some $x\in f_{1}^{-1}(e)$ and $y\in f_{2}^{-1}(e)$ such that $f_{1*}(\Pi(E_1,x))=f_{2*}(\Pi(E_2,y))$. By Proposition \ref{existence of morphism}, there exist morphisms $\phi:E_1\rightarrow E_2$ and $\psi:E_2\rightarrow E_1$ of f-BCs with $\phi(x)=y$, $\psi(y)=x$ such that $f_1=f_2 \phi$ and $f_2=f_1 \psi$. By Remark \ref{uniqueness}, both $\psi\phi$ and $\phi\psi$ are identity maps, so $\phi$ is an isomorphism of f-BCs. Therefore $f_1$ and $f_2$ are isomorphic.
\end{proof}

We now define a more general notion than the universal covering, which can be described by the quotient of f-BCs.

\begin{Def}\label{regular covering}
Let $E,E'$ be connected f-BCs. A covering $f:E\rightarrow E'$ is said to be regular if $f_{*}(\Pi(E,e))$ is a normal subgroup of $\Pi(E',f(e))$ for some $e\in E$.
\end{Def}

\begin{Rem1}
If $f:E\rightarrow E'$ is a regular covering, then it can be shown that $f_{*}(\Pi(E,e))$ is a normal subgroup of $\Pi(E',f(e))$ for every $e\in E$.
\end{Rem1}

For a covering $f:E\rightarrow E'$ of f-BCs, define the {\it automorphism group} Aut$(f)$ of $f$:
$$\mathrm{Aut}(f):=\{\phi:E\rightarrow E\mid \phi \text{ is an isomorphism of f-BCs such that } f\phi=f\}.$$

The following result characterizes the automorphism group of a regular covering.

\begin{Prop}\label{automorphism-group-of-regular-covering}
Let $E,E'$ be connected f-BCs and let $f:E\rightarrow E'$ be a regular covering. Fix $e\in E$ and set $e'=f(e)$. Then there exists a group isomorphism $\mu:\mathrm{Aut}(f)\rightarrow\Pi(E',e')/f_{*}(\Pi(E,e))$ which is defined as follows: For every $\phi\in\mathrm{Aut}(f)$, choose a walk $w$ of $E$ from $\phi(e)$ to $e$, and define $\mu(\phi)=[\overline{f(w)}]$, where $[\overline{f(w)}]$ denotes the coset $\overline{f(w)}f_{*}(\Pi(E,e))$.
\end{Prop}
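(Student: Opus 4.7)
The plan is to verify in sequence that $\mu$ is well-defined, a group homomorphism, injective, and surjective. The three key inputs will be the normality of $H:=f_{*}(\Pi(E,e))$ in $\Pi(E',e')$ (from regularity), the lifting criterion Proposition \ref{existence of morphism} together with its uniqueness clause Remark \ref{uniqueness}, and the homotopy-lifting principle Proposition \ref{homotopy lifting}.

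For well-definedness, if $w_{1}, w_{2}$ are two walks from $\phi(e)$ to $e$, then $w_{1}w_{2}^{-1}$ is a closed walk at $e$ in $E$, and hence $\overline{f(w_{1})}\,\overline{f(w_{2})}^{-1}=\overline{f(w_{1}w_{2}^{-1})}\in H$. Normality of $H$ forces left and right cosets to agree, so $\overline{f(w_{1})}H=\overline{f(w_{2})}H$. For the homomorphism property, given $\phi_{1},\phi_{2}\in\mathrm{Aut}(f)$ and walks $w_{i}$ from $\phi_{i}(e)$ to $e$, the concatenation $w_{1}\cdot\phi_{1}(w_{2})$ runs from $\phi_{1}\phi_{2}(e)$ to $e$; using $f\phi_{1}=f$ one obtains $\mu(\phi_{1}\phi_{2})=[\overline{f(w_{1})f(w_{2})}]=\mu(\phi_{1})\mu(\phi_{2})$.

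For injectivity, suppose $\mu(\phi)=[1]$, so that $\overline{f(w)}=\overline{f(z)}$ for some closed walk $z$ of $E$ at $e$. The walks $w$ and $z$ both terminate at $e$, so Proposition \ref{homotopy lifting} (the same-terminal version) gives $w\sim z$, and in particular $\phi(e)=s(w)=s(z)=e$. Then $\phi$ and $\mathrm{id}_{E}$ are both morphisms $E\to E$ lifting $f$ along the covering $f$ and sending $e$ to $e$, so Remark \ref{uniqueness} forces $\phi=\mathrm{id}_{E}$.

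For surjectivity, given $[\overline{v'}]\in\Pi(E',e')/H$, lift the walk $(v')^{-1}$ of $E'$ starting at $e$ to a walk $u$ of $E$ (so $f(u)=(v')^{-1}$), and set $x=t(u)\in f^{-1}(e')$. By Lemma \ref{conjugacy class} applied to $u$, $f_{*}(\Pi(E,x))=\overline{v'}^{-1}H\,\overline{v'}$, which equals $H$ by regularity. Proposition \ref{existence of morphism} applied with basepoints $e\leftrightarrow x$ in both directions produces morphisms $\phi,\psi:E\to E$ with $\phi(e)=x$, $\psi(x)=e$ and $f\phi=f=f\psi$; Remark \ref{uniqueness} then forces $\psi\phi=\mathrm{id}_{E}=\phi\psi$, so $\phi\in\mathrm{Aut}(f)$. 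Finally, using $u^{-1}$ as the walk from $\phi(e)=x$ to $e$, one computes $\mu(\phi)=[\overline{f(u^{-1})}]=[\overline{v'}]$. The main obstacle is this surjectivity step, where regularity must be combined delicately with both the existence and the uniqueness of lifted morphisms in order to produce an actual automorphism rather than a mere endomorphism, and where the orientation of the lifted walk must be chosen carefully so that $\mu(\phi)$ lands exactly on $[\overline{v'}]$ rather than on its inverse.
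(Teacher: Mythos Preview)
Your proof is correct and follows essentially the same approach as the paper's: the same four-step verification (well-definedness, homomorphism, injectivity via homotopy lifting and uniqueness of lifts, surjectivity via the conjugacy lemma plus Proposition~\ref{existence of morphism} and Remark~\ref{uniqueness}). The only cosmetic differences are that you explicitly invoke normality in the well-definedness step (the paper leaves this implicit) and that you lift $(v')^{-1}$ starting at $e$ whereas the paper lifts $v'$ terminating at $e$, which amounts to the same thing.
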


\begin{proof}
If $v$ is another walk of $E$ from $\phi(e)$ to $e$, then $\overline{f(w)}\cdot\overline{f(v)}^{-1}=\overline{f(wv^{-1})}\in f_{*}(\Pi(E,e))$. So $[\overline{f(w)}]=[\overline{f(v)}]$ and $\mu$ is well-defined. For $\phi,\psi\in \mathrm{Aut}(f)$, choose a walk $w$ of $E$ from $\phi(e)$ to $e$ and choose a walk $v$ of $E$ from $\psi(e)$ to $e$. Since $w\phi(v)$ is a walk of $E$ from $\phi\psi(e)$ to $e$, $\mu(\phi\psi)=[\overline{f(w\phi(v))}]=[\overline{f(w)f(\phi(v))}]=[\overline{f(w)f(v)}]=\mu(\phi)\mu(\psi)$. Thus $\mu$ is a group homomorphism.

If $\phi\in\mathrm{ker}(\mu)$, then for every walk $w$ of $E$ from $\phi(e)$ to $e$, we have $\overline{f(w)}\in f_{*}(\Pi(E,e))$. Therefore there exists a closed walk $v$ of $E$ at $e$ such that $\overline{f(v)}=\overline{f(w)}$. According to Proposition \ref{homotopy lifting}, $v$ and $w$ are homotopic walks, so they have the same source. Then $\phi(e)=e$, and by Remark \ref{uniqueness}, $\phi=id_{E}$. Therefore $\mu$ is injective.

For each closed walk $w'$ of $E'$ at $e'$, let $w$ be the walk of $E$ with $t(w)=e$ and $f(w)=w'$. Since $f$ is a regular covering, by Lemma \ref{conjugacy class}, $f_{*}(\Pi(E,e))=f_{*}(\Pi(E,s(w)))$. Therefore by Proposition \ref{existence of morphism}, there exists a morphism $\phi:E\rightarrow E$ (resp. $\psi:E\rightarrow E$) such that $f\phi=f$ (resp. $f\psi=f$) and $\phi(e)=s(w)$ (resp. $\psi(s(w))=e$). According to Remark \ref{uniqueness}, $\psi$ is the inverse of $\phi$. Therefore $\phi\in\mathrm{Aut}(f)$, and $\mu(\phi)=[\overline{w'}]$, which implies that $\mu$ is surjective.
\end{proof}

\begin{Cor}
Let $E$ be a connected f-BC and $p:\widetilde{E}\rightarrow E$, $\overline{w}\mapsto t(w)$ be the universal cover of $E$ at $e$. Then Aut$(p)$ is isomorphic to $\Pi(E,e)$.
\end{Cor}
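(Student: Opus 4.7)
The plan is to deduce this corollary directly from the preceding proposition on the automorphism group of a regular covering, once we verify that the universal cover is in fact a regular covering whose associated subgroup is trivial.

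First I would fix a canonical base point in $\widetilde{E}$, namely $\overline{(e||e)}$, and observe that $p(\overline{(e||e)})=t((e||e))=e$, so the setup of Proposition \ref{automorphism-group-of-regular-covering} applies with $e'=e$. By Proposition \ref{universal cover is simply connected}, the fundamental group $\Pi(\widetilde{E},\overline{(e||e)})$ is trivial, hence $p_{*}(\Pi(\widetilde{E},\overline{(e||e)}))=\{1\}$, which is automatically a normal subgroup of $\Pi(E,e)$. Thus $p$ satisfies the condition of Definition \ref{regular covering}, and so $p$ is a regular covering.

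Since $E$ is connected by hypothesis and $\widetilde{E}$ is connected by Lemma \ref{universal cover is connected}, both source and target of $p$ are connected f-BCs, and the hypotheses of Proposition \ref{automorphism-group-of-regular-covering} are met. Applying that proposition yields a group isomorphism
$$\mathrm{Aut}(p)\;\cong\;\Pi(E,e)/p_{*}(\Pi(\widetilde{E},\overline{(e||e)}))\;=\;\Pi(E,e)/\{1\}\;\cong\;\Pi(E,e),$$
which is the desired conclusion.

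There is essentially no hard step here; the only thing to double-check is that the triviality of $p_{*}(\Pi(\widetilde{E},\overline{(e||e)}))$ really does make the subgroup condition in the definition of regular covering vacuous, but this is immediate. All nontrivial content has already been packaged into Proposition \ref{universal cover is simply connected} (triviality of the fundamental group of the universal cover) and Proposition \ref{automorphism-group-of-regular-covering} (the quotient description of $\mathrm{Aut}(f)$ for regular coverings), so this corollary is a short combination of these two facts.
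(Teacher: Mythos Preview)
Your proposal is correct and follows essentially the same approach as the paper: observe that the fundamental group of $\widetilde{E}$ is trivial (hence $p$ is regular), and then apply Proposition \ref{automorphism-group-of-regular-covering} to conclude $\mathrm{Aut}(p)\cong\Pi(E,e)$. The paper's proof is just a one-line version of what you wrote.
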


\begin{proof}
Since the fundamental group of $\widetilde{E}$ is trivial and since $p$ is regular, the result follows from Proposition \ref{automorphism-group-of-regular-covering}.
\end{proof}

Next we define the quotient f-BC $E/\Pi$ for an admissible group of automorphisms $\Pi$ of $E$.

\begin{Def}\label{admissible}
Let $E$ be an f-BC and $\Pi$ be a group of automorphisms of $E$. $\Pi$ is said to act admissibly on $E$ if each $\Pi$-orbit of $E$ meets each polygon of $E$ in at most one angle.
\end{Def}

Let $E=(E,P,L,d)$ be an f-BC, $\Pi$ be an admissible group of automorphisms of $E$. We can define the {\it quotient} $E/\Pi=(E/\Pi,P',L',d')$: $E/\Pi$ is the $G$-set of $\Pi$-orbits of $E$, where the $G$-set structure of $E/\Pi$ is inherited from the $G$-set structure of $E$; $P'$ (resp. $L'$) is a partition of $E/\Pi$ given by $P'([e])=\{[h]\mid h\in P(e)\}$ (resp. $L'([e])=\{[h]\mid h\in L(e)\}$); $d'$ is a function on $E/\Pi$ given by $d'([e])=d(e)$. Here we denote by $[e]$ the $\Pi$-orbit of $e\in E$.

\begin{Lem}\label{being f-BC}
$E/\Pi=(E/\Pi,P',L',d')$ is an f-BC.
\end{Lem}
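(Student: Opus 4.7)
My approach is to verify each of the six axioms of Definition \ref{f-BC} in turn for the quadruple $(E/\Pi, P', L', d')$. The first task is well-definedness: because every $\phi\in\Pi$ is an automorphism of the f-BC $E$, it is a $G$-equivariant map satisfying $\phi(P(e))=P(\phi(e))$, $\phi(L(e))=L(\phi(e))$, and $d(\phi(e))=d(e)$. This makes the $G$-action $g\cdot[e]=[g\cdot e]$, the value $d'([e])=d(e)$, and the sets $P'([e])$, $L'([e])$ all independent of the chosen representative. The collection $\{P'([e])\}$ is a partition of $E/\Pi$: if $[f]\in P'([e_1])\cap P'([e_2])$, choose $h_1\in P(e_1)$, $h_2\in P(e_2)$ and $\phi\in\Pi$ with $h_2=\phi(h_1)$; then $P(e_2)=P(h_2)=\phi(P(h_1))=\phi(P(e_1))$, so $P'([e_1])=P'([e_2])$. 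The argument for $L'$ is identical.

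Conditions (f1) and (f3) are then immediate: $L(e)\subseteq P(e)$ and finiteness of $P(e)$ give (f1), while $\Pi$-invariance of $d$ together with (f3) for $E$ gives (f3). Conditions (f2), (f4), (f5) follow a common representative-switching pattern. For (f2), from $L'([e_1])=L'([e_2])$ choose $h\in L(e_2)$ with $[h]=[e_1]$, write $h=\phi(e_1)$ for some $\phi\in\Pi$, note $L(\phi(e_1))=L(h)=L(e_2)$, apply (f2) for $E$ to get $P(g\cdot\phi(e_1))=P(g\cdot e_2)$, and push the resulting equality through $\Pi$ to obtain $P'(g\cdot[e_1])=P'(g\cdot[e_2])$. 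Both directions of (f4) and (f5) are handled analogously, using the corresponding biconditionals in $E$ after using $\Pi$ to align representatives.

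The remaining axiom (f6) is the main obstacle, because it compares entire sequences of $L'$-values rather than a single equation. Suppose for contradiction that $L'(g^{d'([e])-1}\cdot[e])\cdots L'([e])$ is a proper subsequence of $L'(g^{d'([h])-1}\cdot[h])\cdots L'([h])$; then $d(e)<d(h)$ and $L'(g^i\cdot[e])=L'(g^i\cdot[h])$ for $0\le i\le d(e)-1$. I would prove by induction on $i$ the existence of a single $\phi\in\Pi$, independent of $i$, with $L(g^i\cdot\phi(e))=L(g^i\cdot h)$ for all such $i$. At $i=0$ the relation $L'([e])=L'([h])$ provides $\phi\in\Pi$ with $\phi(e)\in L(h)$. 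For the inductive step, the hypothesis together with (f2) for $E$ yields $P(g^{i+1}\cdot\phi(e))=P(g^{i+1}\cdot h)$, while the orbit-level equality $L'(g^{i+1}\cdot[e])=L'(g^{i+1}\cdot[h])$ produces some $\psi\in\Pi$ with $\psi(g^{i+1}\cdot e)\in L(g^{i+1}\cdot h)\subseteq P(g^{i+1}\cdot h)$. Admissibility of $\Pi$ then forces $\phi(g^{i+1}\cdot e)=\psi(g^{i+1}\cdot e)$, since both lie in the same $\Pi$-orbit and in the single polygon $P(g^{i+1}\cdot h)$; hence $L(g^{i+1}\cdot\phi(e))=L(g^{i+1}\cdot h)$, closing the induction. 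The resulting sequence of equalities in $E$, together with $d(\phi(e))=d(e)<d(h)$, contradicts (f6) for $E$ applied to $\phi(e)$ and $h$.
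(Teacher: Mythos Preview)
Your proof is correct and follows essentially the same strategy as the paper's: verify well-definedness and the partition property, dispatch $(f1)$--$(f5)$ by switching representatives via $\Pi$, and handle $(f6)$ by lifting the orbit-level equalities $L'(g^i\cdot[e])=L'(g^i\cdot[h])$ to genuine equalities $L(g^i\cdot\phi(e))=L(g^i\cdot h)$ in $E$ using admissibility at each step of an induction. The only cosmetic differences are that the paper writes ``we may assume $L(e)=L(h)$'' rather than naming the automorphism $\phi$ explicitly, and it has the roles of $e$ and $h$ swapped (its $[e]$ has the larger degree), but the substance of the argument is identical.
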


\begin{proof}
Since each automorphism of $E$ commutes with the action of $G=\langle g\rangle$ on $E$, the definition of $g\cdot [e]$ is independent of the choice of representative of the $\Pi$-orbit $[e]$. If $x\in [e]$, assume $x=\phi(e)$ for some $\phi\in\Pi$. Since $\phi$ induces a bijection $P(e)\rightarrow P(x)$, $$P'([x])=\{[y]\mid y\in P(x)\}=\{[\phi(h)]\mid h\in P(e)\}=\{[h]\mid h\in P(e)\}=P'([e]).$$ Therefore the definition of $P'([e])$ is independent of the choice of representative of the $\Pi$-orbit $[e]$. Similarly, the definition of $L'([e])$ is also independent of the choice of representative of the $\Pi$-orbit $[e]$. To show $P'$ (resp. $L'$) is a partition of $E/\Pi$, we need to show that if $P'([e_1])\cap P'([e_2])\neq\emptyset$ (resp. $L'([e_1])\cap L'([e_2])\neq\emptyset$), then $P'([e_1])=P'([e_2])$ (resp. $L'([e_1])=L'([e_2])$). Let $[h]\in P'([e_1])\cap P'([e_2])$. Then $[h]=[h_1]=[h_2]$ for some $h_1\in P(e_1)$ and $h_2\in P(e_2)$. So $P'([e_1])=P'([h_1])=P'([h_2])=P'([e_2])$. If $L'([e_1])\cap L'([e_2])\neq\emptyset$, then we also have $L'([e_1])=L'([e_2])$. Since each automorphism of $E$ preserves the degree, the definition of $d'([e])$ is independent of the choice of representative of the $\Pi$-orbit $[e]$.

To show that $E/\Pi$ is an f-BC, we need to check the quadruple $(E/\Pi,P',L',d')$ satisfies $(f1)$ to $(f6)$ in Definition \ref{f-BC}. $(f1)$ follows from the definition of $P'$ and $L'$. If $L'([e_1])=L'([e_2])$, then $[e_2]=[h]$ for some $h\in L(e_1)$. Therefore $g\cdot h\in P(g\cdot e_1)$ and $$P'(g\cdot [e_2])=P'(g\cdot [h])=P'([g\cdot h])=P'([g\cdot e_1])=P'(g\cdot [e_1]),$$ so $(f2)$ holds. If $[e_1]$ and $[e_2]$ belong to the same $G$-orbit, then $[e_2]=[g^{n}\cdot e_1]$ for some $n\in\mathbb{Z}$, and $$d'([e_2])=d'([g^{n}\cdot e_1])=d(g^{n}\cdot e_1)=d(e_1)=d'([e_1]).$$ Therefore $(f3)$ holds. If $P'([e_1])=P'([e_2])$, then there exists $h\in P(e_1)$ such that $[h]=[e_2]$. Since $g^{d(h)}\cdot h\in P(g^{d(e_1)}\cdot e_1)$, $P'([g^{d(e_1)}\cdot e_1])=P'([g^{d(h)}\cdot h])$. Then
$$P'(g^{d'([e_1])}\cdot [e_1])=P'([g^{d(e_1)}\cdot e_1])=P'([g^{d(h)}\cdot h])=P'(g^{d'([h])}\cdot [h])=P'(g^{d'([e_2])}\cdot [e_2]).$$
Similarly, $P'(g^{d'([e_1])}\cdot [e_1])=P'(g^{d'([e_2])}\cdot [e_2])$ implies $P'([e_1])=P'([e_2])$. Therefore $(f4)$ holds, and $(f5)$ holds for the same reason. If the quadruple $(E/\Pi,P',L',m')$ does not satisfy $(f6)$, then there exist $[e]$, $[h]\in E/\Pi$ such that $d'([e])>d'([h])$ and $L'(g^i\cdot [e])=L'(g^i\cdot [h])$ for each $0\leq i\leq d'([h])-1$. Since $L'([e])=L'([h])$, we may assume that $L(e)=L(h)$. Since $L'(g\cdot [e])=L'(g\cdot [h])$, $[g\cdot h]=[h']$ for some $h'\in L(g\cdot e)$. Since $h\in L(e)$, $g\cdot h\in P(g\cdot e)$. Since both $g\cdot h$ and $h'$ belong to $P(g\cdot e)$ and since $\Pi$ acts admissibly on $E$, $g\cdot h=h'$. Therefore $L(g\cdot h)=L(g\cdot e)$. By induction, $L(g^i\cdot e)=L(g^i\cdot h)$ for each $0\leq i\leq d(h)-1$. Since $d(e)=d'([e])>d'([h])=d(h)$, $L(g^{d(h)-1}\cdot h)\cdots L(g\cdot h)L(h)$ is a proper subsequence of $L(g^{d(e)-1}\cdot e)\cdots L(g\cdot e)L(e)$, a contradiction. Therefore $(f6)$ holds for $(E/\Pi,P',L',d')$.
\end{proof}

\begin{Lem}\label{projection is a covering}
If $E=(E,P,L,d)$ is connected, then the natural projection $p:E\rightarrow E/\Pi$, $e\mapsto [e]$ is a regular covering of f-BCs.
\end{Lem}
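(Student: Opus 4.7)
The plan is to verify in turn that $p$ is a morphism of f-BCs, that it is a covering, and finally that it is regular, since these are the three things packaged into the statement. For the morphism conditions of Definition \ref{morphism and covering of pre-configurations}, everything reads off the construction of $E/\Pi$: the $G$-action on $E/\Pi$ was defined so that $p(g^{i}\cdot e) = g^{i}\cdot [e]$; the partitions $P'$ and $L'$ are defined by $P'([e]) = \{[h]\mid h\in P(e)\}$ and $L'([e]) = \{[h]\mid h\in L(e)\}$, giving $p(P(e))\subseteq P'([e])$ and $p(L(e))\subseteq L'([e])$; and the degree function $d'([e]) = d(e)$ is preservation by definition.

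To upgrade the morphism to a covering, I would show $p$ restricts to bijections $P(e)\to P'([e])$ and $L(e)\to L'([e])$. Surjectivity is built into the definitions of $P'$ and $L'$. For injectivity, if $h_1,h_2\in P(e)$ with $[h_1]=[h_2]$, then $h_1$ and $h_2$ lie in the same $\Pi$-orbit and in the same polygon $P(e)$; the admissibility hypothesis (Definition \ref{admissible}) then forces $h_1=h_2$. The same argument applies to $L(e)$ since $L(e)\subseteq P(e)$. At this stage, since $E$ is connected and $p$ is surjective (the fiber $p^{-1}([e])$ equals the $\Pi$-orbit of $e$, which is nonempty), applying $p$ to any walk of $E$ produces a walk of $E/\Pi$, so $E/\Pi$ is connected and the notion of "regular" applies.

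For regularity, the key observation is that each $\phi\in\Pi$ is by assumption an automorphism of $E$ and satisfies $p\phi = p$, so $\Pi\subseteq\mathrm{Aut}(p)$; moreover $\Pi$ acts transitively on each fiber $p^{-1}([e])$, precisely because that fiber is the $\Pi$-orbit of $e$. Given $x,y\in p^{-1}([e])$, choose $\phi\in\Pi$ with $\phi(x)=y$. Then $\phi_*\colon \Pi(E,x)\to\Pi(E,y)$ is a group isomorphism, and using $p\phi=p$ we get
$$p_*(\Pi(E,y)) = p_*\bigl(\phi_*(\Pi(E,x))\bigr) = (p\phi)_*(\Pi(E,x)) = p_*(\Pi(E,x)).$$
By Lemma \ref{conjugacy class}, the set $\{p_*(\Pi(E,x))\mid x\in p^{-1}([e])\}$ is a full conjugacy class of subgroups of $\Pi(E/\Pi,[e])$; but we have just shown this set is a singleton, which is exactly the condition that $p_*(\Pi(E,e))$ is normal in $\Pi(E/\Pi,[e])$. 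Hence $p$ is a regular covering.

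The main subtle point is the normality step; the covering portion is essentially bookkeeping against Definitions \ref{morphism and covering of pre-configurations} and \ref{admissible}, while the regularity argument depends on combining the inclusion $\Pi\subseteq\mathrm{Aut}(p)$ and the transitivity of $\Pi$ on fibers with Lemma \ref{conjugacy class} to collapse the conjugacy class to a single subgroup.
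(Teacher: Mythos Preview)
Your proof is correct. The morphism and covering verifications match the paper's argument essentially word for word: surjectivity of $P(e)\to P'([e])$ from the definition, injectivity from admissibility, and likewise for $L$.

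For regularity you take a different route than the paper. The paper fixes $e$ and directly checks that $p_*(\Pi(E,e))$ is closed under conjugation: given $\overline{w}\in p_*(\Pi(E,e))$ and $\overline{v}\in\Pi(E/\Pi,[e])$, it lifts $v$ to a walk $v'$ of $E$ ending at $e$, uses transitivity of $\Pi$ on the fiber to find $\phi\in\Pi$ with $\phi(e)=s(v')$, and exhibits $v'\phi(w')(v')^{-1}$ as a closed walk at $e$ whose image is $vwv^{-1}$. You instead invoke Lemma~\ref{conjugacy class} to know that $\{p_*(\Pi(E,x))\mid x\in p^{-1}([e])\}$ is already a full conjugacy class, and then use $\Pi\subseteq\mathrm{Aut}(p)$ together with transitivity on fibers to collapse that class to a single subgroup. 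Your argument is more structural and avoids the explicit lift-and-conjugate computation; the paper's version is more hands-on but self-contained (it does not appeal to Lemma~\ref{conjugacy class}). Both rely on the same underlying fact---transitivity of $\Pi$ on fibers---so the difference is one of packaging rather than depth. Your observation that $E/\Pi$ is connected (needed for Definition~\ref{regular covering} to apply) is a small point the paper leaves implicit.
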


\begin{proof}
It is straightforward to show that $p$ commutes with the action of $G=\langle g\rangle$ and preserves degrees. For $e\in E$, since $P'([e])=\{[h]\mid h\in P(e)\}$, $p$ induces a surjective map $P(e)\rightarrow P'([e])$. If $e_1$, $e_2\in P(e)$ and $p(e_1)=p(e_2)$, then $e_1$ and $e_2$ belong to the same $\Pi$-orbit. Since $\Pi$ acts admissibly on $E$, $e_1=e_2$. Therefore the map $P(e)\rightarrow P'([e])$ is bijective. It is straightforward to show that the map $L(e)\rightarrow L'([e])$ is also bijective, so $p$ is a covering.

To show that $p$ is regular, we fix $e\in E$. For each $\overline{w}\in p_{*}(\Pi(E,e))$ and for each $\overline{v}\in\Pi(E/\Pi,[e])$, let $\overline{w}=\overline{p(w')}$ with $w'$ a closed walk of $E$ at $e$, and let $v'$ be the walk of $E$ with $p(v')=v$ and $t(v')=e$. Since $[s(v')]=p(s(v'))=s(v)=[e]$, there exists $\phi\in\Pi$ such that $\phi(e)=s(v')$. Then $v'\phi(w')(v')^{-1}$ is a closed walk of $E$ at $e$ and $p(v'\phi(w')(v')^{-1})=p(v')p\phi(w')p(v')^{-1}=p(v')p(w')p(v')^{-1}=vp(w')v^{-1}\sim vwv^{-1}$. So $\overline{v}\cdot\overline{w}\cdot\overline{v}^{-1}=\overline{p(v'\phi(w')(v')^{-1})}=p_{*}(\overline{v'\phi(w')(v')^{-1}})\in p_{*}(\Pi(E,e))$, and $p_{*}(\Pi(E,e))$ is a normal subgroup of $\Pi(E/\Pi,[e])$.
\end{proof}

Now we characterize regular covering by using the quotient map.

\begin{Thm}\label{regular-covering-explicit}
Let $E,E'$ be connected f-BCs and let $f:E\rightarrow E'$ be a regular covering. Then Aut$(f)$ acts admissibly on $E$, and there exists an isomorphism $r:E/\mathrm{Aut}(f)\xrightarrow{\sim} E'$ such that the diagram
$$\xymatrix{
		& E \ar[dr]^{f}\ar[dl]_{p} &  \\
		E/\mathrm{Aut}(f)\ar[rr]_{r}^{\sim} & & E'
	}$$
commutes, where $p:E\rightarrow E/\mathrm{Aut}(f)$ is the natural projection.
\end{Thm}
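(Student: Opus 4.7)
The plan is to first verify that $\mathrm{Aut}(f)$ acts admissibly on $E$, so that Lemma \ref{being f-BC} gives us the quotient $E/\mathrm{Aut}(f)$ and Lemma \ref{projection is a covering} makes $p:E\to E/\mathrm{Aut}(f)$ a regular covering; then I would define $r$ on representatives and show it is an isomorphism of f-BCs. The crucial ingredient will be the regularity of $f$, which is exactly what forces $r$ to be injective.

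For admissibility, I would argue as follows: suppose $h_1,h_2\in P(e)$ belong to the same $\mathrm{Aut}(f)$-orbit, say $h_2=\phi(h_1)$ for some $\phi\in\mathrm{Aut}(f)$. Then $f(h_1)=f(\phi(h_1))=f(h_2)$, and both $h_1,h_2$ lie in $P(h_1)=P(e)$. Since $f$ is a covering, the restriction $f:P(h_1)\to P'(f(h_1))$ is a bijection, forcing $h_1=h_2$. Hence $\mathrm{Aut}(f)$ acts admissibly.

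Next, I would define $r:E/\mathrm{Aut}(f)\to E'$ by $r([e])=f(e)$. This is well-defined because $f\phi=f$ for every $\phi\in\mathrm{Aut}(f)$, and a direct check using that the elements of $\mathrm{Aut}(f)$ preserve the whole structure shows that $r$ is a morphism of f-BCs; the identity $rp=f$ is immediate. The key step is bijectivity. Surjectivity of $r$ follows from surjectivity of $f$: the image of $f$ is closed under the $G$-action (as $f$ is $G$-equivariant) and closed under taking polygons and lines (because of the bijections $P(e)\to P'(f(e))$ and $L(e)\to L'(f(e))$), so connectedness of $E'$ forces it to equal $E'$. For injectivity, suppose $f(e_1)=f(e_2)$. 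By Lemma \ref{conjugacy class}, $f_*(\Pi(E,e_1))$ and $f_*(\Pi(E,e_2))$ are conjugate subgroups of $\Pi(E',f(e_1))$; the regularity of $f$ forces both to be normal, hence equal. Then Proposition \ref{existence of morphism} yields morphisms $\phi,\psi:E\to E$ of f-BCs with $f\phi=f=f\psi$, $\phi(e_1)=e_2$ and $\psi(e_2)=e_1$, and Remark \ref{uniqueness} forces $\phi\psi=\psi\phi=\mathrm{id}_E$. Thus $\phi\in\mathrm{Aut}(f)$ and $[e_1]=[e_2]$.

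Finally, to upgrade $r$ from a bijective morphism to an isomorphism of f-BCs, I would use the factorization $f=rp$: on each polygon $P(e)$ the maps $p:P(e)\to P'([e])$ and $f:P(e)\to P'(f(e))$ are bijections (as $p$ and $f$ are coverings), so $r$ restricts to a bijection $P'([e])\to P'(f(e))$, and the same holds for $L$-sets; combined with degree preservation (which is automatic from $rp=f$), this shows that $r^{-1}$ is also a morphism of f-BCs. The main obstacle in this argument is the injectivity of $r$, which is precisely where the regularity hypothesis is indispensable and where Lemma \ref{conjugacy class} together with Proposition \ref{existence of morphism} does the decisive work.
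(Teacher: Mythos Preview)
Your proposal is correct and follows essentially the same route as the paper: the admissibility argument, the injectivity of $r$ via Lemma~\ref{conjugacy class} together with Proposition~\ref{existence of morphism} and Remark~\ref{uniqueness}, the surjectivity via connectedness of $E'$, and the upgrade of the bijection $r$ to an isomorphism through the factorization $f=rp$ are all exactly what the paper does. Your write-up is in fact slightly more explicit in a few places (e.g.\ spelling out why $f$ is surjective and how the covering property of $r$ follows from that of $p$ and $f$), but there is no substantive difference in strategy.
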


\begin{proof}
Suppose that $E=(E,P,L,d)$ and $E'=(E',P',L',d')$. If there exist $e_1,e_2\in E$ which belong to the same polygon of $E$ such that there exists some $\phi\in\mathrm{Aut}(f)$ with $\phi(e_1)=e_2$, then $f(e_1)=f\phi(e_1)=f(e_2)$. Since $f$ is a covering, it induces a bijection $P(e_1)\rightarrow P'(f(e_1))$. Therefore $e_1=e_2$ and Aut$(f)$  acts admissibly on $E$.

Since $f$ maps every two angles of $E$ belonging to the same Aut$(f)$-orbit to the same angle of $E'$, it induces a map $r:E/\mathrm{Aut}(f)\rightarrow E'$. For every two angles $e_1,e_2$ of $E$ with $f(e_1)=f(e_2)$, since $f$ is regular, by Lemma \ref{conjugacy class}, $f_{*}(\Pi(E,e_1))=f_{*}(\Pi(E,e_2))$. By Proposition \ref{existence of morphism} and Remark \ref{uniqueness}, $e_1,e_2$ belong to the same Aut$(f)$-orbit. So each fiber of $f$ is just an Aut$(f)$-orbit of $E$. Therefore $r$ is injective. Since $E'$ is connected, $f$ is surjective, and thus $r$ is also surjective. It is straightforward to show that $r$ is a covering of f-BCs. Since $r$ is bijective, its inverse is also a covering of f-BCs. Therefore $r$ is an isomorphism of f-BCs.
\end{proof}

\subsection{The universal cover of $f_{ms}$-BC}
\

In this subsection, we construct explicitly the universal cover of any $f_{ms}$-BC $E$ (Corollary \ref{universality}) and apply it to describe the homotopy classes of walks in $E$ (Proposition \ref{unique factorization}). Recall from Definition \ref{Def-fms-BC} that an f-BC $E=(E,P,L,d)$ is said to be of type MS (or $E$ is an $f_{ms}$-BC in short) if $L(e)=\{e\}$ for each $e\in E$.

\begin{Def} \label{special-walk}
Let $E=(E,P,L,d)$ be an $f_{ms}$-BC. A walk $w$ of $E$ is called special if it is of the form \begin{multline*} (g^{i_k}\cdot e_k|g^{i_k}|e_k)(e_k|\tau|g^{i_{k-1}}\cdot e_{k-1})(g^{i_{k-1}}\cdot e_{k-1}|g^{i_{k-1}}|e_{k-1})(e_{k-1}|\tau|g^{i_{k-2}}\cdot e_{k-2})\cdots \\
(e_2|\tau|g^{i_{1}}\cdot e_{1})(g^{i_1}\cdot e_1|g^{i_1}|e_1)(e_1|\tau|g^{i_{0}}\cdot e_{0})(g^{i_0}\cdot e_0|g^{i_0}|e_0),  \end{multline*}
where $0\leq i_0<d(e_0)$, $0\leq i_k<d(e_k)$, $0< i_l<d(e_l)$ for all $1\leq l\leq k-1$, and $e_j\neq g^{i_{j-1}}\cdot e_{j-1}$ for all $1\leq j\leq k$. Write $w=(g^{i_k}\cdot e_k|g^{i_k}|e_k)(e_k | \tau g^{i_{k-1}}|e_{k-1})(e_{k-1}| \tau g^{i_{k-2}}|e_{k-2})\cdots (e_2| \tau g^{i_1}|e_1)(e_1| \tau g^{i_0}|e_0)$ for short or simply $w=g^{i_k}\tau g^{i_{k-1}}\tau\cdots\tau g^{i_1}\tau g^{i_0}$ if there is no confusion.
\end{Def}

Note that if $k=0$ and $i_0=0$ in the definition above, then $w=(e_0||e_0)$ is the walk of length $0$ at $e_0$.

\begin{Rem1}
The special walk can be defined for any f-BC but we do not pursue it in this paper.
\end{Rem1}

For $e\in E$, define an $f_{ms}$-BC $B_{(E,e)}=(B_{(E,e)},P',L',d')$ as follows. $B_{(E,e)} =\{$ special walks of $E$ which start at $e\}$. The action of $G$ on $B_{(E,e)}$ is given by
\begin{equation*}
g(w)= \begin{cases}
g^{i_{k}+1}\tau g^{i_{k-1}}\tau\cdots\tau g^{i_1}\tau g^{i_0}, & \text{if } w=g^{i_k}\tau g^{i_{k-1}}\tau\cdots\tau g^{i_1}\tau g^{i_0} \text{ with } i_k <d(t(w))-1; \\
\tau g^{i_{k-1}}\tau\cdots\tau g^{i_1}\tau g^{i_0}, & \text{if } w=g^{i_k}\tau g^{i_{k-1}}\tau\cdots\tau g^{i_1}\tau g^{i_0} \text{ with } i_k =d(t(w))-1.
\end{cases}
\end{equation*}
The partition $P'$ is given as follows. If $w=(e||e)$ is the walk of length $0$ at $e$, then
$$P'(w)=\{(h|\tau|e)\mid h\in P(e)\};$$
if the length of $w$ is larger than $0$, then
$$P'(w)=\{w\}\cup \{(h|\tau|t(w))w\mid h\in P(t(w)) \text{ and } h\neq t(w)\}$$
for $w=g^{i_k}\tau g^{i_{k-1}}\tau\cdots\tau g^{i_1}\tau g^{i_0}$ with $i_k >0$, and $P'(w)=P'(w')$ for $w=g^{i_k}\tau g^{i_{k-1}}\tau\cdots\tau g^{i_1}\tau g^{i_0}$ with $i_k =0$ and $w'=g^{i_{k-1}}\tau\cdots\tau g^{i_1}\tau g^{i_0}$. The partition $L'$ is given by $L'(w)=\{w\}$ for each $w\in B_{(E,e)}$. The degree function $d'$ is defined by $d'(w)=d(t(w))$, that is, $B_{(E,e)}$ is f-degree-free.

\begin{Ex1}
Let $E=(E,P,L,d)$ be a Brauer tree with $n$ edges and an exceptional vertex $v$ of f-degree $m$. Fix a half-edge $e\in v$, $B_{(E,e)}$ can be described as follows: Let
$$B=\{(h,i)\mid h\in E, i\in\{1,2,\cdots,m\}=\mathbb{Z}/m\mathbb{Z}\}$$
be a $G$-set, where the action of $G=\langle g\rangle$ on $B$ is given by
\begin{equation*}
g\cdot (h,i)=\begin{cases}
(g\cdot h,i), &\text{ if } h\neq g^{-1}\cdot e; \\
(g\cdot h,i+1), &\text{ if } h=g^{-1}\cdot e.
\end{cases}
\end{equation*}
Let $P_B$ be a partition of $B$ given by $P_B(h,i)=\{(h,i),(h',i)\}$ for any $(h,i)\in B$, where $P(h)=\{h,h'\}$, and let $L_B$ be a partition of $B$ given by $L_B(h,i)=\{(h,i)\}$ for any $(h,i)\in B$. Let $d_B:B\rightarrow\mathbb{Z}_{>0}$ be a function given by $d_B(h,i)=d(h)$ for any $(h,i)\in B$. Then $B=(B,P_B,L_B,d_B)$ becomes a Brauer tree with free f-degree, and $B\cong B_{(E,e)}$. Note that $B$ is obtained by gluing $m$ copies of $E$ at their common exceptional vertices. For example, if $E$ is a Brauer tree given by the diagram
$$\begin{tikzpicture}
\draw (-1,0)--(1,0);
\fill (0,0) circle (0.5ex);
\fill (-1,0) circle (0.5ex);
\fill (1,0) circle (0.5ex);
\draw (-0.2,0.2) rectangle (0.2,0.6);
\node at(0,0.4) {$2$};
\node at(1.5,0) {,};
\end{tikzpicture}$$
where the f-degree of the middle vertex is $2$. Then $B$ is an f-degree-free Brauer tree given by the diagram
$$\begin{tikzpicture}
\draw (-1,0)--(1,0);
\draw (0,-1)--(0,1);
\fill (0,0) circle (0.5ex);
\fill (-1,0) circle (0.5ex);
\fill (1,0) circle (0.5ex);
\fill (0,-1) circle (0.5ex);
\fill (0,1) circle (0.5ex);
\node at(1.5,0) {.};
\end{tikzpicture}$$
\end{Ex1}

\begin{Ex1}\label{example-Brauer-graph}
Let $E$ be the Brauer graph
$$\begin{tikzpicture}
\draw (0,0) circle (0.5);
\fill (0.5,0) circle (0.5ex);
\end{tikzpicture}$$
with free f-degree. Fix a half-edge $e$ of $E$, $B_{(E,e)}$ is an infinite tree with free f-degree, which is given by the diagram
$$\begin{tikzpicture}
\fill (0,0) circle (0.5ex);
\draw (0,0)--(1,0);
\fill (1,0) circle (0.5ex);
\draw (1,0)--(2,0);
\fill (2,0) circle (0.5ex);
\draw (2,0)--(3,0);
\fill (3,0) circle (0.5ex);
\node at(-0.4,0) {$\cdot$};
\node at(-0.6,0) {$\cdot$};
\node at(-0.8,0) {$\cdot$};
\node at(3.4,0) {$\cdot$};
\node at(3.6,0) {$\cdot$};
\node at(3.8,0) {$\cdot$};
\end{tikzpicture}$$
\end{Ex1}

We remark that in general the underlying diagram of $B_{(E,e)}$ is not a tree if the $f_{ms}$-BC $E$ is not an $f_{ms}$-BG (that is, an $f_{ms}$-BC with each polygon containing exactly two elements).

It is straightforward to show that $B_{(E,e)}$ is a connected $f_{ms}$-BC. Denote by $\overline{\sigma}$ the element \\ $\overline{((e||e)|g^{d(e)}|(e||e))}$ of $\Pi(B_{(E,e)},(e||e))$.

\begin{Prop}\label{fundamental group of Be}
$\Pi(B_{(E,e)},(e||e))$ is an infinite cyclic group generated by $\overline{\sigma}$.
\end{Prop}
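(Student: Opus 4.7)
The argument splits into showing (a) that $\overline{\sigma}$ generates $\Pi(B_{(E,e)},(e||e))$, and (b) that $\overline{\sigma}$ has infinite order. I address (b) first since it admits a clean invariant-theoretic argument.

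For (b), I introduce a rational-valued invariant $W$ on walks of $B_{(E,e)}$ defined by $W(\eta)=\sum_\alpha \epsilon(\alpha)/d'(s(\alpha))$, summing over the $g^{\pm 1}$-moves $\alpha$ of $\eta$, where $\epsilon(\alpha)=+1$ for a $g$-move and $-1$ for a $g^{-1}$-move. A direct inspection of the homotopy relations in Definition~\ref{homotopy of walks} shows $W$ is preserved: relations $(h1)$ and $(h2)$ only involve $\tau$-moves or cancelling $g^{\pm 1}$-pairs, contributing $0$; $(h4)$ is vacuous since $B_{(E,e)}$ is $f_{ms}$; and in $(h3)$ each side contains one full rotation of total weight $d'(\cdot)/d'(\cdot)=1$ (using axiom $(f3)$, which ensures the degree is constant along a $G$-orbit), while the $\tau$-moves contribute $0$. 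Thus $W$ descends to a group homomorphism $\Pi(B_{(E,e)},(e||e))\to\mathbb{Q}$ with $W(\overline{\sigma})=1$, so $\overline{\sigma}$ has infinite order.

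For (a), I exploit the tree-like structure built into the construction of $B_{(E,e)}$: the special walks form a rooted tree with root $(e||e)$, where children are obtained by prepending a $\tau$-step into a new polygon, and correspondingly the polygon--vertex bipartite graph of $B_{(E,e)}$ is a tree. The first step is a conjugation lemma: since $B_{(E,e)}$ is f-degree trivial, its Nakayama automorphism is the identity, so $(h3)$ specialises to $(h|\tau|w)\cdot c_w\sim c_h\cdot(h|\tau|w)$ for any $h\in P'(w)$, where $c_w=(w|g^{d'(w)}|w)$. Inducting on walk length, for any walk $p$ from $(e||e)$ to $w$ there is a homotopy $p\cdot\sigma\sim c_w\cdot p$, equivalently $p^{-1}\cdot c_w\cdot p\sim \sigma$. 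The second step is a decomposition: any closed walk $\eta$ at $(e||e)$ can be rewritten, using the tree-likeness together with $(h1)$ and $(h2)$ to eliminate excursions into subtrees, as a product of conjugates $p_i^{-1}c_{w_i}p_i$. Combining the two steps gives $\overline{\eta}=\overline{\sigma}^n$ for some $n\in\mathbb{Z}$.

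The main obstacle is rigorously executing the decomposition in (a), which requires careful combinatorial organisation. The difficulty is that partial rotations $g^j$ with $0<j<d'(w')$ at intermediate vertices cannot be individually slid past $\tau$-steps via $(h3)$, which only exchanges a full rotation with a $\tau$-move. One must first pair each such $g^j$ with matching $g^{\pm 1}$-moves elsewhere in $\eta$ --- whose existence is forced by the tree structure of $B_{(E,e)}$ together with the closed-walk condition --- in order to assemble full rotations that $(h3)$ can then transport to the root. Performing this pairing by induction on the depth of the excursions from $(e||e)$ is the technical heart of the proof.
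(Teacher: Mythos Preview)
Your argument for (b) is essentially identical to the paper's: the paper defines the same rational-valued homotopy invariant (called $\alpha$ there, given by the same formula) and computes $\alpha(\sigma^n)=n$.

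For (a), however, you take a harder and less complete route than the paper. The paper's argument runs as follows. First, any walk in an $f_{ms}$-BC is homotopic to one of the form $(t(w)|g^{n\,d'(t(v))}|t(v))\,v$ with $v$ a special walk and $n\in\mathbb{Z}$; this normalisation is elementary. Applied to a closed walk $\xi$ at $(e||e)$, one gets $\xi\sim\sigma^n\cdot\eta$ with $\eta$ a special walk of $B_{(E,e)}$ starting at $(e||e)$. The key input is then Lemma~\ref{terminal of special walk}: applying $t(\cdot)$ termwise to such an $\eta$ yields a special walk $\phi(\eta)$ of $E$ that \emph{equals} the terminal $t(\eta)$ of $\eta$ (as an element of $B_{(E,e)}$, which is itself a special walk of $E$). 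Since $t(\eta)=(e||e)$ is the trivial special walk, $\phi(\eta)$ has length zero, hence so does $\eta$. Thus $\xi\sim\sigma^n$, with no decomposition-into-conjugates needed.

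Your conjugation lemma is correct, but the decomposition you rely on --- that every closed walk at the root is homotopic to a product of conjugates $p_i^{-1}c_{w_i}p_i$ --- is exactly the substantive step, and you have not carried it out; you have only diagnosed its difficulty. Pairing partial rotations at intermediate vertices so as to assemble full cycles amenable to $(h3)$ is a genuine combinatorial induction that would need to be written out. The paper's approach sidesteps this entirely by exploiting the defining feature of $B_{(E,e)}$: its elements \emph{are} the special walks of $E$ rooted at $e$, so a special walk of $B_{(E,e)}$ from the root is determined by (indeed equal to) its terminal. That observation turns the generation claim into a one-line argument, whereas your route would reprove it by hand.
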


\begin{Lem}\label{terminal of special walk}
For a special walk
$$\eta=w_{n}\frac{\delta_{n}}{}w_{n-1}\frac{\delta_{n-1}}{}\cdots\frac{\delta_{3}}{}w_{2}\frac{\delta_{2}}{}w_{1}\frac{\delta_{1}}{}w_{0}$$
of $B_{(E,e)}$ which starts at $(e||e)$, where $\delta_{i}\in \{g,\tau\}$ for $1\leq i\leq n$ and $w_{i}\in B_{(E,e)}$ for $0\leq i\leq n$, define a sequence $$\phi(\eta)=t(w_{n})\frac{\delta_{n}}{}t(w_{n-1})\frac{\delta_{n-1}}{}\cdots\frac{\delta_{3}}{}t(w_{2})\frac{\delta_{2}}{}t(w_{1})
\frac{\delta_{1}}{}t(w_{0}).$$
Then $\phi(\eta)$ is a special walk of $E$ and $\phi(\eta)=w_{n}$.
\end{Lem}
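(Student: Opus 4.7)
The plan is to proceed by induction on the length $n$ of $\eta$. The base case $n=0$ is immediate: $\eta=((e||e)||(e||e))$, and $\phi(\eta)=(e||e)=w_0=w_n$. For the inductive step, I would write $\eta=(w_n|\delta_n|w_{n-1})\eta'$ where $\eta'$ is a special walk of length $n-1$ ending at $w_{n-1}$; by the inductive hypothesis, $\phi(\eta')=w_{n-1}$ as a walk of $E$, hence $\phi(\eta)=(t(w_n)|\delta_n|t(w_{n-1}))\,w_{n-1}$, and the goal becomes to identify this walk of $E$ with $w_n$.

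The crux of the proof is a structural observation: in any special walk $\eta=g^{j_m}\tau g^{j_{m-1}}\tau\cdots\tau g^{j_0}$ of $B_{(E,e)}$ starting at $(e||e)$, every block-starting angle $e_l$ has outer index $0$ in its own $E$-shorthand. I would prove this by a secondary induction on $l$: the base $e_0=(e||e)$ is trivial; for $l\geq 1$, when the source $g^{j_{l-1}}\cdot e_{l-1}$ of the $\tau$-jump has positive outer index (which happens whenever $j_{l-1}>0$, using the secondary inductive hypothesis on $e_{l-1}$), the partition $P'$ of that source is given explicitly as the source together with the walks appended by a single $\tau$-step, so $e_l$ must be such an appended walk and therefore has outer index $0$; the boundary case $l=1$ with $j_0=0$ gives the same conclusion directly from the definition of $P'((e||e))$. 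An immediate consequence is that no wrap-around occurs inside any $g$-block of $\eta$: since $e_l$ has outer index $0$ and $j_l<d(t(e_l))$, applying $g^{j_l}$ only raises the outer index to $j_l<d(t(e_l))$, so the second branch of the $G$-action formula is never triggered.

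With this observation in place, the case analysis closes the inductive step. If $\delta_n=g$, the $G$-action on $B_{(E,e)}$ merely increments the outer index of $w_{n-1}$, so $w_n$ as a walk of $E$ is precisely $w_{n-1}$ with the single step $(t(w_n)|g|t(w_{n-1}))$ appended, matching $\phi(\eta)$. If $\delta_n=\tau$, then $j_m=0$ and $w_n=e_m$; moreover $w_{n-1}=g^{j_{m-1}}\cdot e_{m-1}$ has positive outer index (by the middle-index constraint $j_{m-1}>0$ when $m\geq 2$, and by direct inspection when $m=1$), so $P'(w_{n-1})$ is of the appended form and $w_n=(t(w_n)|\tau|t(w_{n-1}))\,w_{n-1}$, again matching $\phi(\eta)$. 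I expect the main obstacle to be the structural observation itself, which requires careful bookkeeping against the recursive definition of $P'$ (which descends through outer-zero walks) together with the two-sided constraints $0\leq j_0,j_m$ and $0<j_l$ for middle $l$ in the special walk condition on $B_{(E,e)}$; once the observation is secured, the rest of the argument is a direct verification against the definitions of the $G$-action and the $P'$-partition.
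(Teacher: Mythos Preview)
Your proposal is correct and follows essentially the same inductive strategy as the paper's proof. The only organizational difference is that in the case $\delta_n=g$ the paper peels off the \emph{entire} final $g$-block at once (truncating to $\eta'$ of length $k$ with $\delta_k=\tau$ or $k=0$, so that $w_k$ visibly has outer index $0$ and the no-wrap-around fact is immediate from $n-k<d'(w_k)$), whereas you peel off a single step and isolate the no-wrap-around fact as a separate structural lemma proved by a secondary induction on the block index $l$; the underlying argument is the same.
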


\begin{proof}
We shall give a proof by induction on the length $l(\eta)$ of $\eta$. If $l(\eta)=0$, then $\eta=((e||e)||(e||e))$ and $\phi(\eta)=(e||e)$. Assume it is true for $l(\eta)<n$. When $l(\eta)=n$, assume that $\eta=w_{n}\frac{\delta_{n}}{}w_{n-1}\frac{\delta_{n-1}}{}\cdots\frac{\delta_{3}}{}w_{2}\frac{\delta_{2}}{}w_{1}\frac{\delta_{1}}{}w_{0}$.

When $\delta_n =g$, let $k$ be the minimal number such that $\delta_i =g$ for all $k+1\leq i\leq n$. Then $$\eta'=w_{k}\frac{\delta_{k}}{}w_{k-1}\frac{\delta_{k-1}}{}\cdots\frac{\delta_{3}}{}w_{2}\frac{\delta_{2}}{}w_{1}\frac{\delta_{1}}{}w_{0}$$
is a special walk of $B_{(E,e)}$ starting at $(e||e)$, and $l(\eta')<n$. By induction, $$\phi(\eta')=t(w_{k})\frac{\delta_{k}}{}t(w_{k-1})\frac{\delta_{k-1}}{}\cdots\frac{\delta_{3}}{}t(w_{2})\frac{\delta_{2}}{}t(w_{1})
\frac{\delta_{1}}{}t(w_{0})$$
is a special walk of $E$ and $\phi(\eta')=w_{k}$. If $k=0$, then $\eta=(g^{n}\cdot(e||e)|g^n|(e||e))$. Since $\eta$ is a special walk of $B_{(E,e)}$, $n<d'((e||e))=d(e)$. Therefore $\phi(\eta)=(g^{n}\cdot e|g^{n}|e)$ is a special walk of $E$ and $\phi(\eta)=g^{n}\cdot(e||e)=w_n$. If $k>0$, then $\delta_k=\tau$. Since $\eta$ is a special walk of $B_{(E,e)}$, $n-k<d'(w_k)=d(t(w_k))$. By induction,
$$w_k=t(w_{k})\frac{\delta_{k}}{}t(w_{k-1})\frac{\delta_{k-1}}{}\cdots\frac{\delta_{3}}{}t(w_{2})\frac{\delta_{2}}{}t(w_{1})
\frac{\delta_{1}}{}t(w_{0}).$$
Since $\delta_k=\tau$ and $n-k<d(t(w_k))$, $w_{k+i}=g^{i}\cdot w_k=(g^{i}\cdot t(w_k)|g^i|t(w_k))w_k$ for $0\leq i\leq n-k$. Then
$$\phi(\eta)=(g^{n-k}\cdot t(w_k)|g^{n-k}|t(w_k))w_k=g^{n-k}\cdot w_k=w_n$$
is a special walk of $E$.

When $\delta_n=\tau$, let
$$\eta'=w_{n-1}\frac{\delta_{n-1}}{}\cdots\frac{\delta_{3}}{}w_{2}\frac{\delta_{2}}{}w_{1}\frac{\delta_{1}}{}w_{0}.$$
By induction,
$$\phi(\eta')=t(w_{n-1})\frac{\delta_{n-1}}{}\cdots\frac{\delta_{3}}{}t(w_{2})\frac{\delta_{2}}{}t(w_{1})
\frac{\delta_{1}}{}t(w_{0})$$
is a special walk of $E$ and $\phi(\eta')=w_{n-1}$. Since $P'(w_n)=P'(w_{n-1})$ and $w_n \neq w_{n-1}$, $P(t(w_n))=P(t(w_{n-1}))$ and $t(w_n) \neq t(w_{n-1})$. Since $\eta$ is a special walk of $B_{(E,e)}$ and $\delta_n=\tau$, we have $\delta_{n-1}=g$. Since
$$\phi(\eta')=t(w_{n-1})\frac{\delta_{n-1}}{}\cdots\frac{\delta_{3}}{}t(w_{2})\frac{\delta_{2}}{}t(w_{1})
\frac{\delta_{1}}{}t(w_{0})$$
is a special walk of $E$,
$$\phi(\eta)=t(w_{n})\frac{\delta_{n}}{}t(w_{n-1})\frac{\delta_{n-1}}{}\cdots\frac{\delta_{3}}{}t(w_{2})\frac{\delta_{2}}{}t(w_{1})
\frac{\delta_{1}}{}t(w_{0})$$
is also a special walk of $E$. Since $P'(w_n)=P'(w_{n-1})$, $t(w_n) \neq t(w_{n-1})$, and $$w_{n-1}=t(w_{n-1})\frac{\delta_{n-1}}{}\cdots\frac{\delta_{3}}{}t(w_{2})\frac{\delta_{2}}{}t(w_{1})
\frac{\delta_{1}}{}t(w_{0})$$
with $\delta_{n-1}=g$, we infer that $w_n=(t(w_n)|\tau|t(w_{n-1}))w_{n-1}$.
Therefore
$$\phi(\eta)=(t(w_n)|\delta_n|t(w_{n-1}))\phi(\eta')=(t(w_n)|\tau|t(w_{n-1}))w_{n-1}=w_n.$$
\end{proof}

\begin{proof}[{\bf Proof of Proposition \ref{fundamental group of Be}}]
It is straightforward to show that each walk $w$ of an $f_{ms}$-BC $\overline{E}=(\overline{E},\overline{P},\overline{L},\overline{d})$ is homotopic to a walk of the form $(t(w)|g^{n\overline{d}(t(v))}|t(v))v$, where $v$ is a special walk of $\overline{E}$ and $n$ is an integer. Let $\xi$ be a closed walk of $B_{(E,e)}$ at $(e||e)$. Then $\xi$ is homotopic to a walk of the form $((e||e)|g^{n d(e)}|(e||e))\eta$, where $\eta$ is a special walk of $B_{(E,e)}$ starting at $(e||e)$ and $n$ is an integer. By Lemma \ref{terminal of special walk}, $\phi(\eta)=t(\eta)=(e||e)$. Therefore $\eta$ is trivial and $\xi$ is homotopic to $((e||e)|g^{n d(e)}|(e||e))$. Then $\overline{\xi}=\overline{\sigma}^{n}$ and $\Pi(B_{(E,e)},(e||e))$ is generated by $\overline{\sigma}$.

To show the order of $\overline{\sigma}$ is infinite, we need to define a number $\alpha(w)$ for each walk $w$ of an $f_{ms}$-BC $\overline{E}=(\overline{E},\overline{P},\overline{L},\overline{d})$ by induction on the length of $w$. Define $\alpha(w)=0$ if $w$ is trivial. For a walk $w$ of length $\geq 1$, write $w=(e_2|\delta|e_1)v$, where $\delta\in \{g,g^{-1},\tau\}$, define
\begin{equation}\label{alpha-definition}
\alpha(w)= \begin{cases}
\alpha(v)+1/\overline{d}(t(v)), & \text{if } \delta=g; \\
\alpha(v)-1/\overline{d}(t(v)), & \text{if } \delta=g^{-1}; \\
\alpha(v), & \text{if } \delta=\tau.
\end{cases}
\end{equation}
The values of $\alpha$ on homotopic walks are equal. For walk $w=((e||e)|g^{n d(e)}|(e||e))$ of $B_{(E,e)}$, $\alpha(w)=n$, so $((e||e)|g^{n d(e)}|(e||e))$ is not homotopic to the trivial walk of $B_{(E,e)}$ at $(e||e)$ when $n\neq 0$. Therefore $\overline{\sigma}^{n}\neq 1$ for $n\neq 0$.
\end{proof}

Let $E=(E,P,L,d)$ be an $f_{ms}$-BC and $e\in E$, we define an $f_{ms}$-BC $\mathbb{Z}B_{(E,e)}=(\mathbb{Z}B_{(E,e)},\widetilde{P},\widetilde{L},\widetilde{d})$ as follows: $\mathbb{Z}B_{(E,e)}=\{(w,n)\mid w\in B_{(E,e)}$ and $n\in\mathbb{Z}\}$. For each special walk \\ $w=g^{i_k}\tau g^{i_{k-1}}\tau\cdots\tau g^{i_1}\tau g^{i_0}$ of $E$ starting at $e$ and for each $n\in \mathbb{Z}$, define the action of $G=\langle g\rangle$ on $(w,n)\in\mathbb{Z}B_{(E,e)}$ by
\begin{equation*}
g\cdot(w,n)= \begin{cases}
(g\cdot w,n), & \text{if } i_k <d(t(w))-1; \\
(g\cdot w,n+1), & \text{if } i_k =d(t(w))-1.
\end{cases}
\end{equation*}
Define the partition $\widetilde{P}$ of $\mathbb{Z}B_{(E,e)}$ by $\widetilde{P}(w,n)=\{(w',n)\mid w'\in P'(w)\}$, and define the partition $\widetilde{L}$ of $\mathbb{Z}B_{(E,e)}$ by $\widetilde{L}(w,n)=\{(w,n)\}$. Define the degree function $\widetilde{d}$ of $\mathbb{Z}B_{(E,e)}$ by $\widetilde{d}(w,n)=d'(w)=d(t(w))$.

For example, if $E$ is the Brauer graph in Example \ref{example-Brauer-graph}, then $\mathbb{Z}B_{(E,e)}$ is an infinite $f_{ms}$-BG, which is given by the diagram
\begin{center}
\tikzset{every picture/.style={line width=0.75pt}}
\begin{tikzpicture}[x=15pt,y=15pt,yscale=1,xscale=1]
\fill (-2.5,-5) circle (0.5ex);
\fill (2.5,-5) circle (0.5ex);
\fill (7.5,-5) circle (0.5ex);
\draw    (-2.5,-5) .. controls (-1,-2) and (6,-2.5) .. (2.5,-5) ;
\draw    (-2.5,-5) .. controls (0,-7) and (7,-6) .. (2.5,-5) ;
\draw    (-2.5,-5) .. controls (-1.5,-10) and (5,-10) .. (2.5,-5) ;
\draw    (2.5,-5) .. controls (3,-2) and (9,-1) .. (7.5,-5) ;
\draw    (2.5,-5) .. controls (5,-4) and (11,-4) .. (7.5,-5) ;
\draw    (2.5,-5) .. controls (4,-7) and (10,-7) .. (7.5,-5) ;
\draw    (7.5,-5) .. controls (8.2,-4) and (9,-3) .. (10,-3) ;
\draw    (7.5,-5) .. controls (8.5,-5.5) and (9,-5.5) .. (10,-5.5) ;
\draw    (7.5,-5) .. controls (8,-7) and (9,-7) .. (10,-7) ;
\draw    (-2.5,-5) .. controls (-2.1,-3.5) and (-2,-3) .. (-4,-3) ;
\draw    (-2.5,-5) .. controls (0,-4.5) and (-2,-4.3) .. (-4,-4.3) ;
\draw    (-2.5,-5) .. controls (-1.25,-6.5) and (-2,-7) .. (-4,-7) ;
\node at(-5,-5) {$\cdot$};
\node at(-5.2,-5) {$\cdot$};
\node at(-4.8,-5) {$\cdot$};
\node at(-2.5,-3.9) {$\cdot$};
\node at(-2.65,-3.85) {$\cdot$};
\node at(-2.8,-3.8) {$\cdot$};
\node at(-2.5,-6.1) {$\cdot$};
\node at(-2.65,-6.15) {$\cdot$};
\node at(-2.8,-6.2) {$\cdot$};
\node at(2.6,-3.9) {$\cdot$};
\node at(2.45,-3.85) {$\cdot$};
\node at(2.3,-3.8) {$\cdot$};
\node at(2.7,-6.5) {$\cdot$};
\node at(2.55,-6.55) {$\cdot$};
\node at(2.4,-6.6) {$\cdot$};
\node at(7.5,-3.9) {$\cdot$};
\node at(7.35,-3.85) {$\cdot$};
\node at(7.2,-3.8) {$\cdot$};
\node at(8,-6.7) {$\cdot$};
\node at(7.85,-6.75) {$\cdot$};
\node at(7.7,-6.8) {$\cdot$};
\node at(11,-5) {$\cdot$};
\node at(11.2,-5) {$\cdot$};
\node at(10.8,-5) {$\cdot$};
\node at(12,-9) {,};
\end{tikzpicture}
\end{center}
where the degree function of $\mathbb{Z}B_{(E,e)}$ is identically equal to $2$.

\begin{Prop}\label{ZB is simply connected}
$\mathbb{Z}B_{(E,e)}$ is simply connected.
\end{Prop}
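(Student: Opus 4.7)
The plan is to exhibit the natural projection $p \colon \mathbb{Z}B_{(E,e)} \to B_{(E,e)}$, $(w,n) \mapsto w$, as a covering of f-BCs, and then combine Theorem \ref{covering-injective} with Proposition \ref{fundamental group of Be} to deduce triviality of the fundamental group.

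First I would verify that $p$ is a covering. This is a direct check from the definitions: the partitions $\widetilde{P}(w,n) = \{(w',n) \mid w' \in P'(w)\}$ and $\widetilde{L}(w,n) = \{(w,n)\}$ project bijectively onto $P'(w)$ and $L'(w)$, while the $G$-action and degree $\widetilde{d}$ on $\mathbb{Z}B_{(E,e)}$ are defined precisely so as to descend to those of $B_{(E,e)}$. Next I would check that $\mathbb{Z}B_{(E,e)}$ is connected: given $(w,n) \in \mathbb{Z}B_{(E,e)}$, connectedness of $B_{(E,e)}$ supplies a walk in $B_{(E,e)}$ from $(e||e)$ to $w$, which lifts uniquely (starting at $((e||e),0)$) to a walk of $\mathbb{Z}B_{(E,e)}$ ending at some $(w,m)$; composing with $|n-m|$ lifts of the Nakayama loop $\sigma = ((e||e)|g^{d(e)}|(e||e))$ or its inverse then reaches $(w,n)$, because, as observed below, the lift of $\sigma$ starting at $((e||e),0)$ ends at $((e||e),1)$.

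For simple connectedness, Theorem \ref{covering-injective} embeds $\Pi(\mathbb{Z}B_{(E,e)}, ((e||e),0))$ into $\Pi(B_{(E,e)}, (e||e)) = \langle \overline{\sigma} \rangle$ by Proposition \ref{fundamental group of Be}, so it suffices to show that no nonzero $\overline{\sigma}^n$ lies in the image. By the same bookkeeping, the unique lift of $\sigma^n$ starting at $((e||e),0)$ terminates at $((e||e),n)$, hence fails to be closed whenever $n \neq 0$. Proposition \ref{homotopy lifting} then rules out the existence of any closed walk at $((e||e),0)$ whose $p$-image is homotopic to a nontrivial $\sigma^n$, and triviality of $\Pi(\mathbb{Z}B_{(E,e)}, ((e||e),0))$ follows.

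The main obstacle, and the only substantive calculation, is verifying that the $d(e)$-fold application of $g$ realizing the Nakayama loop in $\mathbb{Z}B_{(E,e)}$ triggers the wrap-around clause $i_k = d(t(w))-1$ of the $G$-action precisely once, thereby incrementing the $\mathbb{Z}$-coordinate by exactly $+1$. This is a short induction along the successive special walks $(e||e), g, g^2, \ldots, g^{d(e)-1}$, each of which lies in the non-wrap case ($i_k < d(e)-1$), and then closes up to $(e||e)$ through a single wrap step. Once this is in hand, the rest is a standard covering-space argument built from the tools already established in this section.
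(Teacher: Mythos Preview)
Your proposal is correct and follows essentially the same approach as the paper: exhibit the projection $(w,n)\mapsto w$ as a covering, use Theorem~\ref{covering-injective} to embed the fundamental group into $\Pi(B_{(E,e)},(e||e))=\langle\overline{\sigma}\rangle$, and then check via Proposition~\ref{homotopy lifting} that the lift of $\sigma^n$ starting at $((e||e),0)$ ends at $((e||e),n)$ and hence is not closed for $n\neq 0$. Your connectedness argument has a small imprecision (the Nakayama loop $\sigma$ you name lives at $(e||e)$, not at $w$, so the composition should be done at the basepoint before lifting, or---as the paper does---one simply observes that $(w,m)$ and $(w,n)$ lie in the same $G$-orbit), but the idea is the same and the argument goes through.
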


\begin{proof}
Define $f:\mathbb{Z}B_{(E,e)}\rightarrow B_{(E,e)}$, $(w,n)\mapsto w$. It can be shown that $f$ is a covering of f-BCs. For $(w,n)$, $(v,n')\in\mathbb{Z}B_{(E,e)}$, since $B_{(E,e)}$ is connected, we can choose a walk $\xi$ of $B_{(E,e)}$ from $w$ to $v$. Since $f$ is a covering, $\xi$ can be lifted to a walk $\Xi$ of $\mathbb{Z}B_{(E,e)}$ starting at $(w,n)$. Since $f(t(\Xi))=v$, $t(\Xi)=(v,l)$ for some $l\in\mathbb{Z}$. Since $(v,n')$ and $(v,l)$ are in the same $G$-orbit, they can be connected by a walk. Therefore $(w,n)$ and $(v,n')$ can be connected by a walk, which implies that $\mathbb{Z}B_{(E,e)}$ is connected. By Theorem \ref{covering-injective}, $f$ induces an injective group homomorphism $f_{*}:\Pi(\mathbb{Z}B_{(E,e)},((e||e),0))\rightarrow\Pi(B_{(E,e)},(e||e))$. For each closed walk $\Xi$ of $\mathbb{Z}B_{(E,e)}$ at $((e||e),0)$, by Proposition \ref{fundamental group of Be}, $\overline{f(\Xi)}=\overline{\sigma}^{n}$ for some $n\in\mathbb{Z}$. Therefore $f(\Xi)\sim ((e||e)|g^{n d(e)}|(e||e))$. By Proposition \ref{homotopy lifting}, $\Xi\sim (((e||e),n)|g^{n d(e)}|((e||e),0))$. We imply that $(((e||e),n)|g^{n d(e)}|((e||e),0))$ is a closed walk of $\mathbb{Z}B_{(E,e)}$, and $n=0$. Therefore $f_{*}(\Pi(\mathbb{Z}B_{(E,e)},((e||e),0)))$ is trivial, and $\Pi(\mathbb{Z}B_{(E,e)},((e||e),0))$ is also trivial.
\end{proof}

By the proposition above, we show that the f-BC $\mathbb{Z}B_{(E,e)}$ is the universal cover of $E$ at $e$.

\begin{Cor}\label{universality}
Let $E=(E,P,L,d)$ be an $f_{ms}$-BC and $e\in E$, and define $q:\mathbb{Z}B_{(E,e)}\rightarrow E$, $(w,n)\mapsto g^{n d(t(w))}\cdot t(w)$. Then $q$ is a covering of f-BCs. Let $\widetilde{E}$ be the universal cover of $E$ at $e$ and $p:\widetilde{E}\rightarrow E$, $\overline{v}\mapsto t(v)$. Then there exists a unique isomorphism of f-BCs $\phi:\mathbb{Z}B_{(E,e)}\rightarrow\widetilde{E}$ such that $q=p\phi$ and $\phi(((e||e),0))=\overline{(e||e)}$.
\end{Cor}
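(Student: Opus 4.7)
The proof naturally splits into two parts: first verifying that $q$ is a covering of f-BCs, and then constructing the isomorphism $\phi$ using the universality already established for universal covers.

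\textbf{Step 1: $q$ is a covering.} The routine checks are that $q$ preserves the degree (immediate from $\widetilde{d}(w,n)=d(t(w))$ and the fact that $d$ is constant on $G$-orbits) and maps the trivial partition $\widetilde{L}$ into the trivial partition $L$ bijectively (automatic, since $E$ is $f_{ms}$). The nontrivial checks are $G$-equivariance and that $q$ induces bijections $\widetilde{P}(w,n)\to P(q((w,n)))$. For $G$-equivariance, the case $i_k<d(t(w))-1$ is immediate; in the case $i_k=d(t(w))-1$, the terminal of $g\cdot w$ equals $e_k$ (the ``base'' angle of the last segment of $w$), so that $t(g\cdot w)=g^{1-d(t(w))}\cdot t(w)$, and a short computation gives $q(g\cdot(w,n))=g^{(n+1)d(e_k)}\cdot e_k=g\cdot q((w,n))$. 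For the bijection on $\widetilde{P}$, the key observation is that the Nakayama permutation $\sigma$ of $E$ satisfies $\sigma^n(h)=g^{nd(h)}\cdot h$ (using that $d$ is constant on $G$-orbits) and restricts to a bijection $P(t(w))\to P(\sigma^n(t(w)))$ by iterating $(f4)$. Under this identification, $q\colon\widetilde{P}(w,n)\to P(q((w,n)))$ is exactly $w'\mapsto \sigma^n(t(w'))$, which is a bijection.

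\textbf{Step 2: Existence and uniqueness of $\phi$.} By Proposition \ref{ZB is simply connected}, $\mathbb{Z}B_{(E,e)}$ is simply connected (hence in particular connected), and $q\colon \mathbb{Z}B_{(E,e)}\to E$ is now a morphism (even a covering) of f-BCs. Since $\Pi(\mathbb{Z}B_{(E,e)},((e||e),0))$ is trivial, we have
$$q_{*}(\Pi(\mathbb{Z}B_{(E,e)},((e||e),0)))=\{1\}\subseteq p_{*}(\Pi(\widetilde{E},\overline{(e||e)})).$$
By Proposition \ref{existence of morphism} together with Remark \ref{uniqueness}, there is a unique morphism of f-BCs $\phi\colon \mathbb{Z}B_{(E,e)}\to \widetilde{E}$ with $q=p\phi$ and $\phi(((e||e),0))=\overline{(e||e)}$.

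\textbf{Step 3: $\phi$ is an isomorphism.} Applying the same argument with the roles swapped — using that $\widetilde{E}$ is connected and simply connected by Lemma \ref{universal cover is connected} and Proposition \ref{universal cover is simply connected}, and that $q$ is a covering — Proposition \ref{existence of morphism} yields a unique morphism $\psi\colon \widetilde{E}\to \mathbb{Z}B_{(E,e)}$ with $p=q\psi$ and $\psi(\overline{(e||e)})=((e||e),0)$. The compositions $\psi\phi$ and $\phi\psi$ are then morphisms compatible with $q$ (resp.\ $p$) and fixing the chosen base points; by the uniqueness clause of Proposition \ref{existence of morphism} (Remark \ref{uniqueness}), both equal the respective identities, so $\phi$ is an isomorphism.

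\textbf{Main obstacle.} The only genuinely delicate point is Step~1, specifically the verification of $G$-equivariance at the ``wrap-around'' boundary $i_k=d(t(w))-1$ and the correct interpretation of $\widetilde{P}(w,n)\to P(q((w,n)))$ as the iterated Nakayama bijection. Once these bookkeeping issues are in place, Steps~2 and~3 are formal applications of the general covering machinery already developed (Propositions \ref{existence of morphism}, \ref{universal cover is simply connected}, \ref{ZB is simply connected} and Remark \ref{uniqueness}).
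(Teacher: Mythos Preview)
Your proof is correct and follows essentially the same route as the paper. The paper's own proof is extremely terse—``It is straightforward to show that $q$ is a covering of f-BCs. The fact that there is a unique isomorphism with the desired property follows from Proposition~\ref{existence of morphism}, Remark~\ref{uniqueness} and Proposition~\ref{ZB is simply connected}''—and your Steps~2--3 simply unpack this, additionally invoking Proposition~\ref{universal cover is simply connected} (which the paper needs implicitly for the reverse direction) to build the inverse $\psi$ and conclude via uniqueness that $\psi\phi$ and $\phi\psi$ are identities.
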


\begin{proof}
It is straightforward to show that $q$ is a covering of f-BCs. The fact that there is a unique isomorphism with the desired property follows from Proposition \ref{existence of morphism}, Remark \ref{uniqueness} and Proposition \ref{ZB is simply connected}.
\end{proof}

The following result describes the homotopy classes of walks in an $f_{ms}$-BC $E$, and it is useful when we calculate the fundamental groups of Brauer configurations in last section.

\begin{Prop}\label{unique factorization}
Let $E=(E,P,L,d)$ be an $f_{ms}$-BC, $w$ be a walk of $E$. Then there exist a unique special walk $v$ and a unique integer $n$ such that $w\sim (t(w)|g^{n d(t(w))}|t(v))v$.
\end{Prop}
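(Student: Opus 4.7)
The plan is to exploit the universal cover $q:\mathbb{Z}B_{(E,e)}\to E$ of Corollary \ref{universality}, where $e=s(w)$. Recall that $q((u,m))=g^{md(t(u))}\cdot t(u)$, so each element of the universal cover encodes precisely a pair (special walk, integer) of the sort we are trying to extract. The key point is that both existence and uniqueness will be reduced to the homotopy-lifting principle of Proposition \ref{homotopy lifting} together with a careful computation of the $G$-action on $\mathbb{Z}B_{(E,e)}$.

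For existence, first lift $w$ to the unique walk $\tilde{w}$ in $\mathbb{Z}B_{(E,e)}$ starting at $((e||e),0)$, and set $(v,n):=t(\tilde{w})$. Since $q(t(\tilde{w}))=t(w)$ we obtain $g^{nd(t(v))}\cdot t(v)=t(w)$; in particular $d(t(v))=d(t(w))$ by condition $(f3)$, so the walk $W:=(t(w)|g^{nd(t(w))}|t(v))v$ is a well-defined walk of $E$ from $e$ to $t(w)$. To conclude $w\sim W$ it suffices, by Proposition \ref{homotopy lifting}, to show that the lift $\tilde{W}$ of $W$ starting at $((e||e),0)$ also terminates at $(v,n)$.

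This second lift computation is the heart of the argument, and I would break it into two pieces. First, viewing $v=g^{i_k}\tau\cdots\tau g^{i_0}$ as a walk of $E$ starting at $e$, I claim its lift starting at $((e||e),0)$ terminates at $(v,0)$. This is proved by induction on $k$, using that the defining constraints $0\leq i_0<d(e_0)$, $0\leq i_k<d(e_k)$, $0<i_l<d(e_l)$ of a special walk ensure that no $g$-step during the lifting ever sits at the boundary case $i=d(t(u))-1$, so the integer coordinate never increments. Second, starting from $(v,0)$, each block of $d(t(v))$ consecutive $g$-steps restores the first coordinate to $v$ while crossing the boundary exactly once, so $g^{d(t(v))}\cdot(v,0)=(v,1)$ and inductively $g^{nd(t(v))}\cdot(v,0)=(v,n)$. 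Combining the two pieces, $\tilde{W}$ terminates at $(v,n)$, as required.

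For uniqueness, suppose $(t(w)|g^{n_i d(t(w))}|t(v_i))v_i\sim w$ for $i=1,2$. Then the two decomposition walks are homotopic to each other and share the source $e$, so by Proposition \ref{homotopy lifting} their lifts in $\mathbb{Z}B_{(E,e)}$ starting at $((e||e),0)$ have equal terminals. The computation of the previous paragraph identifies those terminals as $(v_1,n_1)$ and $(v_2,n_2)$, forcing $v_1=v_2$ and $n_1=n_2$. The main obstacle is the bookkeeping in the lift computation above: one must verify rigorously that lifting a special walk never crosses the integer-incrementing boundary and that a full Nakayama cycle crosses it exactly once; everything else reduces cleanly to results already established in this section.
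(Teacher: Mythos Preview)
Your proposal is correct and the uniqueness half matches the paper's proof essentially line for line: both lift the two candidate decompositions through the covering $q:\mathbb{Z}B_{(E,e)}\to E$, verify that the lift of a special walk $v$ lands at $(v,0)$ and that the lift of $(t(w)\mid g^{nd(t(w))}\mid t(v))v$ lands at $(v,n)$, and then conclude by comparing endpoints. Your description of why the integer coordinate stays at $0$ along the special part and increments exactly once per block of $d(t(v))$ steps is exactly the computation the paper encapsulates in writing down the explicit lift $V=(v_N,0)\cdots(v_0,0)$.

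The one place your route diverges is existence. The paper does not use the cover for this: it simply cites the observation (recorded inside the proof of Proposition \ref{fundamental group of Be}) that every walk of an $f_{ms}$-BC can be brought into the form $(t(w)\mid g^{nd(t(w))}\mid t(v))v$ by direct manipulation of the homotopy relations, and then devotes the proof of Proposition \ref{unique factorization} entirely to uniqueness. Your approach---read off $(v,n)$ as the terminal of the lift of $w$ and then argue $w\sim W$ because their lifts share endpoints---is perfectly valid, but note that Proposition \ref{homotopy lifting} alone is not enough for that step: equal endpoints of $\tilde{w}$ and $\tilde{W}$ give $\tilde{w}\sim\tilde{W}$ only after invoking the simple connectivity of $\mathbb{Z}B_{(E,e)}$ (Proposition \ref{ZB is simply connected}), and then Proposition \ref{homotopy lifting} pushes the homotopy down to $E$. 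With that citation added, your existence argument is a clean alternative that uses the universal cover uniformly for both halves.
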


\begin{proof}
Since each walk $w$ of $E$ is homotopic to a walk of this form, we only need to show that such $v$ and $n$ are unique. Let
$$w\sim (t(w)|g^{n d(t(w))}|t(v))v \sim (t(w)|g^{n' d(t(w))}|t(v'))v',$$
where $v$, $v'$ are special walks and $n$, $n'\in\mathbb{Z}$. Let $e=s(w)$ and $q:\mathbb{Z}B_{(E,e)}\rightarrow E$, $(u,k)\mapsto g^{k d(t(u))}\cdot t(u)$ be the covering in Corollary \ref{universality}. Let
$$v=e_{N}\frac{\delta_{N}}{}e_{N-1}\frac{\delta_{N-1}}{}\cdots\frac{\delta_{3}}{}e_{2}\frac{\delta_{2}}{}e_{1}\frac{\delta_{1}}{}e_{0},$$
where $e_{0}=e,e_{1},\cdots,e_{N}\in E$, $\delta_{1},\cdots,\delta_{N}\in\{g,\tau\}$. Define $$v_i=e_{i}\frac{\delta_{i}}{}e_{i-1}\frac{\delta_{i-1}}{}\cdots\frac{\delta_{2}}{}e_{1}\frac{\delta_{1}}{}e_{0}$$
for each $0\leq i\leq N$. Since $v$ is a special walk of $E$, each $v_i$ is a special walk of $E$. It can be shown that $v$ lifts to a special walk $$V=(v_{N},0)\frac{\delta_{N}}{}(v_{N-1},0)\frac{\delta_{N-1}}{}\cdots\frac{\delta_{3}}{}(v_{2},0)\frac{\delta_{2}}{}(v_{1},0)
\frac{\delta_{1}}{}(v_{0},0)$$
of $\mathbb{Z}B_{(E,e)}$ starting at $((e||e),0)$ with $t(V)=(v,0)$. Similarly, $v'$ lifts to a special walk $V'$ of $\mathbb{Z}B_{(E,e)}$ starting at $((e||e),0)$ with $t(V')=(v',0)$. Therefore $(t(w)|g^{n d(t(w))}|t(v))v$ \\ (resp. $(t(w)|g^{n' d(t(w))}|t(v'))v'$) lifts to a walk $((v,n)|g^{n d(t(w))}|(v,0))V$ \\ (resp. $((v',n')|g^{n' d(t(w))}|(v',0))V'$) of $\mathbb{Z}B_{(E,e)}$ starting at $((e||e),0)$. Since
$$(t(w)|g^{n d(t(w))}|t(v))v \sim (t(w)|g^{n' d(t(w))}|t(v'))v',$$
by Proposition \ref{homotopy lifting},
$$((v,n)|g^{n d(t(w))}|(v,0))V\sim ((v',n')|g^{n' d(t(w))}|(v',0))V.$$
Therefore $(v,n)=(v',n')$.
\end{proof}

\section{The fundamental group of an $f_s$-BC $E$ is isomorphic to the fundamental group of the quiver with relations $(Q_E,I_E)$}\label{sec:isomorphic-fundamental-groups}

In this section, we first recall from Martinez-Villa and de la Pe\~na \cite{MV-DLP} (see also Bongartz-Gabriel \cite{BG}) the definition of the fundamental group $\Pi(Q,I)$ for a quiver with relations $(Q,I)$, and then show that for a connected $f_s$-BC $E$ and $e\in E$, there is a natural isomorphism between the fundamental group $\Pi(Q_E,I_E)$ (where $(Q_E,I_E)$ is defined in Definition \ref{f-BC algebra}) and the fundamental group $\Pi(E,e)$ of $E$ which is defined in Definition \ref{definition-fundamental-group-of-f-BC}. Moreover, we show that the fundamental group $\Pi(Q_E,I_E)$ is isomorphic to $\Pi(Q'_E,I'_E)$ where $(Q'_E,I'_E)$ is an admissible ideal presentation of $\Lambda_E$ (see Definition \ref{reduced-arrow}).

\begin{Def} {\rm(\cite[Definition 1.3]{MV-DLP})} \label{minimal-relation}
Let $Q$ be a locally finite quiver and $I$ be an ideal of the path category $kQ$ (not necessarily admissible). A relation $\rho=\sum_{i=1}^{n}\lambda_i u_i\in I(x,y)$ with $\lambda_i\in k^{*}$ and $u_1,u_2,\ldots,u_n$ pairwise distinct paths from $x$ to $y$, is a minimal relation if $n\geq 2$ and for every non-empty proper subset $K$ of $\{1,\cdots,n\}$, $\sum_{i\in K}\lambda_i u_i\notin I(x,y)$.
\end{Def}

Denote by $m(I)$ the set of minimal relations of $I$.

For a quiver $Q$, we denote by $Q_0$ and $Q_1$ its vertex set and arrow set respectively, and denote by $Q_1^{-1}$ the set of all formal inverses of arrows in $Q_1$. A {\it walk} of $Q$ is a sequence of the form $\alpha_n\cdots\alpha_2\alpha_1$, where each $\alpha_i\in Q_1\cup Q_1^{-1}$, such that the source of $\alpha_i$ is equal to the terminal of $\alpha_{i-1}$ for each $i$. Define an equivalence relation $\sim$ on the set of walks of $Q$: $\sim$ is generated by
\begin{itemize}
	\item $\alpha^{-1}\alpha\sim 1_{s(\alpha)}$ and $\alpha\alpha^{-1}\sim 1_{t(\alpha)}$ for every arrow $\alpha$ of $Q$;
    \item If $w_1\sim w_2$, then $uw_1 v\sim uw_2 v$ for every two walks $u,v$ of $Q$ such that the compositions make sense.
\end{itemize}
For a walk $w$ of $Q$, we denote by $[w]$ the equivalence class of $w$.

For a vertex $x$ of $Q$, denote by $\Pi(Q,x)$ the {\it fundamental group of $Q$ at $x$}, which is the group of equivalence classes of closed walks of $Q$ at $x$. Similarly, for a subset $A$ of $Q_0$, we denote by $\Pi(Q,A)$ the {\it fundamental groupoid of $Q$ on $A$}. When $A=Q_0$, we denote $\Pi(Q,A)$ as $\Pi(Q)$, which is called the {\it fundamental groupoid of the quiver $Q$}.

\begin{Def} {\rm(\cite[Section 1]{MV-DLP})} \label{fundamental-group-of-quiver-with-relations}
Let $Q$ be a connected locally finite quiver and $I$ be an ideal of the path category $kQ$ (not necessarily admissible). Define $\Pi(Q,I):=\Pi(Q,x)/N(Q,m(I),x)$ the fundamental group of $(Q,I)$, where $x$ is a vertex of $Q$ and $N(Q,m(I),x)$ is the normal subgroup of $\Pi(Q,x)$ generated by elements of the form $[\gamma^{-1}u^{-1}v\gamma]$, where $\gamma$ is a walk of $Q$ from $x$ to $y$ and $u,v$ are paths of $Q$ from $y$ to $z$ such that there exists some $\rho=\sum^{n}_{i=1}\lambda_i p_i\in m(I)$ with $u=p_1$ and $v=p_2$.
\end{Def}

Since $Q$ is connected, the isomorphism class of $\Pi(Q,I)$ does not depend on the choice of the vertex $x$ of $Q$. We denote by $\overline{w}$ the image of $[w]\in\Pi(Q,x)$ in $\Pi(Q,I)$.

Let $E$ be an f-BC and
$$w=e_{n}\frac{\delta_{n}}{}e_{n-1}\frac{\delta_{n-1}}{}\cdots\frac{\delta_{3}}{}e_{2}\frac{\delta_{2}}{}e_{1}\frac{\delta_{1}}{}e_{0}$$
be a walk of $E$, where $e_{0},e_{1},\cdots,e_{n}\in E$, $\delta_{1},\cdots,\delta_{n}\in\{g,g^{-1},\tau\}$. We define a walk $f(w)$ of $Q_E$: $f(w)=f(e_n|\delta_n|e_{n-1})\cdots f(e_2|\delta_2|e_1)f(e_1|\delta_1|e_0)$, where
\begin{equation*}
f(e_i|\delta_i|e_{i-1})= \begin{cases}
L(e_{i-1}), &\text{ if } \delta_i=g; \\
L(e_i)^{-1}, &\text{ if } \delta_i=g^{-1}; \\
1_{P(e_i)}, &\text{ if } \delta_i=\tau.
\end{cases}
\end{equation*}

\begin{Thm}\label{fundamental-group-of-fs-BC-and-fundamental-group-of-algebra-and-isomorphic}
Let $E$ be a connected $f_s$-BC and $e\in E$. Then there is an isomorphism $\Pi(E,e)\rightarrow \Pi(Q_E,I_E)$, $\overline{w}\mapsto\overline{f(w)}$.
\end{Thm}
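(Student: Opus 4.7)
The proof strategy is to verify that $\phi\colon\overline{w}\mapsto\overline{f(w)}$ is a well-defined group homomorphism, and then to establish surjectivity and injectivity separately.

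\emph{Well-definedness.} Since $f$ on walks visibly commutes with concatenation, once it descends to homotopy classes it automatically becomes a group homomorphism. I would check descent on each generator of the homotopy relation of Definition~\ref{homotopy of walks}. Relations (h1) and (h2) involve only cancellations of inverse arrows and compositions of identity morphisms, both trivial already in $\Pi(Q_E)$. Relation (h4) sends both sides to the \emph{same} walk in $Q_E$, since $L(e)=L(h)$ means a single common arrow is used. The substantive case is (h3): the two images differ exactly by replacing $L(g^{d(e)-1}\cdot e)\cdots L(e)$ by $L(g^{d(h)-1}\cdot h)\cdots L(h)$, which is the relation (fR1) with $k=0$, a minimal relation in $I_E$, and so trivial in $\Pi(Q_E,I_E)$ by construction.

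\emph{Surjectivity.} Given any walk $\alpha_n\cdots\alpha_1$ of $Q_E$ starting at $P(e)$, I would lift it inductively to a walk of $E$ by representing each arrow $L(h)$ as $(g\cdot h\mid g\mid h)$ and each inverse arrow $L(h)^{-1}$ as $(h\mid g^{-1}\mid g\cdot h)$, inserting a $\tau$-step whenever the terminal angle of one lift and the source angle of the next belong to a common polygon but are distinct. For a closed walk at $P(e)$ the lift ends at some angle $e'\in P(e)$, and appending $(e\mid\tau\mid e')$ produces a closed walk of $E$ at $e$ whose image under $f$ agrees with the original walk up to $\tau$-factors, which $f$ collapses to identities.

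\emph{Injectivity.} This step is the main obstacle, and is where the $f_s$-BC hypothesis enters decisively. Suppose $\overline{f(w)}=1$ in $\Pi(Q_E,I_E)$, so that $[f(w)]$ lies in the normal subgroup generated by walks $\gamma^{-1}u^{-1}v\gamma$ with $u,v$ two distinct paths appearing in some minimal relation $\rho\in m(I_E)$. The plan is to classify the minimal relations: the monomial families (fR2) and (fR3) never contribute since their defining relations have a single summand, so every minimal relation reduces, modulo consequences of (fR1), to a difference $L(p)-L(q)$ for standard sequences $p\equiv q$ with $P(s(p))=P(s(q))$. For each such pair the closed walk $L(q)^{-1}L(p)$ in $Q_E$ can be lifted, via the surjectivity construction, to a closed walk $\tilde q^{-1}\tilde p$ in $E$, and a single application of homotopy (h3) contracts it to the trivial walk; conjugating by a lift of $\gamma$ and combining the resulting null-homotopies one factor at a time exhibits $w$ as null-homotopic in $E$.

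The hard technical point behind injectivity is precisely the classification of the minimal relations of $I_E$ in the $f_s$-setting. Here the axiom (f7), together with Lemmas~\ref{preserves parallel walk} and~\ref{bijection}, should ensure that differences $L(p)-L(q)$ indexed by identical standard sequences with common source polygon exhaust the minimal relations, and that each such difference corresponds to a single elementary (h3) move in $E$; establishing this compatibility cleanly is the crux of the argument.
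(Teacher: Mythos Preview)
Your treatment of well-definedness and surjectivity matches the paper's. The injectivity argument, however, has real gaps.

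First, your description of the minimal relations is off. If $p\equiv q$ then $L(p)=L(q)$ literally, so $L(p)-L(q)=0$ is not a minimal relation. The correct condition (from \cite[Lemma~4.16]{LL}, which is where the $f_s$-hypothesis actually enters) is that the minimal relations of $I_E$ are combinations $\sum\lambda_i u_i$ with $u_i\in\mathscr{E}$ and $u_iRu_j$, i.e.\ $u_i=L(p_i)$ for standard sequences with $\prescript{\wedge}{}{p_i}\equiv\prescript{\wedge}{}{p_j}$ (the \emph{complements} are identical). Lemmas~\ref{preserves parallel walk} and~\ref{bijection} concern coverings of f-BCs and are irrelevant here.

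Second, a single application of (h3) does not contract a lifted $\widetilde q^{-1}\widetilde p$. Relation (h3) applies only to full sequences of length $d(e)$, whereas $p,q$ may be shorter. One needs (h3) once and then (h4) repeatedly along the identical tail $\prescript{\wedge}{}{p}\equiv\prescript{\wedge}{}{q}$ to slide the $\tau$-step into place; this is exactly the computation the paper carries out in the proof of Lemma~\ref{injectivity}.

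Third, even granting the above, your plan to lift a product decomposition $[f(w)]=\prod[\gamma_i^{-1}u_i^{-1}v_i\gamma_i]$ in $\Pi(Q_E,P(e))$ still leaves the (E1) cancellations to be handled: that identity holds only modulo free reductions in $Q_E$, so at the level of walks $f(w)$ differs from the concatenation by a chain of $\alpha^{-1}\alpha\sim 1$ moves, each of which must itself be lifted to a homotopy in $E$. Doing this carefully amounts to proving the paper's Lemma~\ref{injectivity} anyway: given any walk $\gamma\sim_{I_E}f(w)$, there exists $w'\sim w$ in $E$ with $f(w')=\gamma$. The paper establishes this by induction on a chain of elementary (E1)/(E2) moves, lifting each one; applying it with $\gamma=1_{P(e)}$ forces $w'$ to be a pure $\tau$-walk at $e$, hence trivial. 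That step-by-step lifting is both cleaner and what your argument would have to reconstruct to become complete.
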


We need some preparations before we prove Theorem \ref{fundamental-group-of-fs-BC-and-fundamental-group-of-algebra-and-isomorphic}. First we recall an equivalent definition of fundamental group of a quiver with relations.

\begin{Def}\label{homotopy-relation-with-respect-to-ideal}
Let $Q$ be a locally finite quiver and $I$ be an ideal of the path category $kQ$, let $\sim_{I}$ be the equivalence relation on the set of walks of $Q$ generated by
\begin{itemize}
	\item[$(E1)$] $\alpha^{-1}\alpha\sim_{I} 1_{s(\alpha)}$ and $\alpha\alpha^{-1}\sim_{I} 1_{t(\alpha)}$ for every arrow $\alpha$ of $Q$;
    \item[$(E2)$] $w_1\sim_{I} w_2$ if there exists some $\rho=\sum^{n}_{i=1}\lambda_i p_i\in m(I)$ with $w_1=p_1$ and $w_2=p_2$;
    \item[$(E3)$] If $w_1\sim_{I} w_2$, then $uw_1 v\sim_{I} uw_2 v$ for every two walks $u,v$ of $Q$ such that the compositions make sense.
\end{itemize}
\end{Def}

For a connected quiver and for a vertex $x$ of $Q$, it is straightforward to show that the fundamental group $\Pi(Q,I)$ is isomorphic to the group of equivalence classes of closed walks of $Q$ at $x$ under the equivalence relation $\sim_{I}$. We shall identify these two definitions for $\Pi(Q,I)$.

\begin{Lem}\label{injectivity}
Let $E=(E,P,L,d)$ be an $f_s$-BC and let $w$ be a walk of $E$. If $\gamma$ is a walk of $Q_E$ with $\gamma\sim_{I_E}f(w)$, then there exists some walk $w'$ of $E$ such that $w\sim w'$ and $f(w')=\gamma$.
\end{Lem}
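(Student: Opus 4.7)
The plan is to proceed by induction on the length $m$ of a shortest chain $f(w)=\gamma_0,\gamma_1,\ldots,\gamma_m=\gamma$ of walks of $Q_E$ in which each $\gamma_{i+1}$ is obtained from $\gamma_i$ by a single application of rule $(E1)$ or $(E2)$ of Definition \ref{homotopy-relation-with-respect-to-ideal} to a subwalk. The base case $m=0$ is immediate by taking $w'=w$. For the inductive step I apply the hypothesis to $\gamma_{m-1}\sim_{I_E}f(w)$ to obtain $w''$ with $f(w'')=\gamma_{m-1}$ and $w\sim w''$; it then suffices to lift the final step $\gamma_{m-1}\to\gamma_m$ to a homotopy $w''\sim w'$ with $f(w')=\gamma_m$. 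The proof thus reduces to handling each type of elementary step.

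For an $(E1)$-step, suppose $\gamma_m$ is obtained from $\gamma_{m-1}$ by inserting $\alpha^{-1}\alpha$ with $\alpha=L(e)$ at a slot of $\gamma_{m-1}$ labelled by $P(e)$ (the $\alpha\alpha^{-1}$ and deletion variants being entirely symmetric). This slot corresponds to a position in $w''$ between two consecutive atomic steps, at some angle $h$ with $P(h)=P(e)$. I splice in the closed walk
\[
w_{\mathrm{ins}}=(h|\tau|e)(e|g^{-1}|g\cdot e)(g\cdot e|g|e)(e|\tau|h)
\]
at this position: a direct calculation gives $f(w_{\mathrm{ins}})=\alpha^{-1}\alpha$, and the relations $(h1)$ and $(h2)$ yield $w_{\mathrm{ins}}\sim(h||h)$, so the resulting walk $w'$ satisfies the requirements.

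For an $(E2)$-step, where a subpath $p_1$ is replaced by $p_2$ via a minimal relation $\rho=\sum_i\lambda_i p_i\in m(I_E)$, I first observe that since the generators $(fR2)$ and $(fR3)$ of $I_E$ are monomial zero relations, a $k$-basis of $\Lambda_E=kQ_E/I_E$ is indexed by the equivalence classes of nonzero paths of $Q_E$ under the binomial $(fR1)$-identifications; a standard basis argument then forces the summands of any minimal relation to lie in a single such equivalence class, so $p_1$ and $p_2$ differ by a finite chain of elementary $(fR1)$-replacements. It suffices to lift one such elementary replacement $L(s_1)\to L(s_2)$, where $P(s(s_1))=P(s(s_2))$ and $\prescript{\wedge}{}{s_1}\equiv\prescript{\wedge}{}{s_2}$ (the $(fR1)$ matching data). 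Writing $w_1$ for the subwalk of $w''$ mapping to $L(s_1)$, I show that $w_1$ is homotopic to the analogous subwalk $w_2$ mapping to $L(s_2)$ framed by the same boundary angles: one applies $(h3)$ to exchange the full Nakayama cycle based at the source of $s_1$ for that based at the source of $s_2$, uses iterated $(h4)$ along the matching $L$-classes provided by the $\prescript{\wedge}{}{s_1}\equiv\prescript{\wedge}{}{s_2}$ data (the $f_s$-BC axiom $(f7)$ being invoked here to guarantee compatibility of the successive matchings), and finally absorbs the boundary $\tau$-steps via $(h1)$ and $(h2)$. Substituting $w_2$ for $w_1$ in $w''$ produces the required $w'$.

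The principal difficulty is Case $(E2)$: rigorously tracking the chain of $(fR1)$-moves on the algebraic side and their counterparts as $(h3)$- and $(h4)$-moves on walks of $E$ requires careful bookkeeping of boundary $\tau$-steps, with the $f_s$-BC axiom $(f7)$ essential for ensuring that the successive $L$-class matchings needed by $(h4)$ remain compatible throughout. By contrast, the induction machinery and Case $(E1)$ are essentially routine.
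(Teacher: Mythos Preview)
Your approach is essentially the paper's: reduce to a single elementary step, then handle $(E1)$ and $(E2)$ separately. Your treatment of $(E1)$ matches (the paper does the deletion direction, you do insertion; these are symmetric).

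For $(E2)$ there are two points to correct. First, your ``standard basis argument'' is exactly the content of \cite[Lemma 4.16]{LL}, which the paper invokes directly: for an $f_s$-BC, every minimal relation of $I_E$ has the form $\sum_i\lambda_iu_i$ with all $u_i\in\mathscr{E}$ and $u_iRu_j$ for all $i,j$. This is where the $f_s$ hypothesis $(f7)$ genuinely enters, via \cite[Lemma 4.14]{LL} which makes $R$ an equivalence relation. In particular you get $p_1Rp_2$ in one step, so no chain of $(fR1)$-moves is needed.

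Second, and more to the point, your claim that $(f7)$ is needed at the homotopy level ``to guarantee compatibility of the successive matchings'' for the $(h4)$-moves is wrong. Given a single relation with $P(e)=P(h)$ and $\prescript{\wedge}{}{l_1}\equiv\prescript{\wedge}{}{l_2}$, the homotopy is direct: first normalize the subwalk $u$ of $w$ with $f(u)=L(l_1)$ (using $(h2)$ and $(h4)$) to the form $(t(u)|\tau|g^{r}\cdot e)(g^{r}\cdot e|g^{r}|e)(e|\tau|s(u))$; then one application of $(h3)$ (using $P(e)=P(h)$) together with $k$ applications of $(h4)$ along the $L$-class equalities $L(g^{d(e)-i}\cdot e)=L(g^{d(h)-i}\cdot h)$ already encoded in $\prescript{\wedge}{}{l_1}\equiv\prescript{\wedge}{}{l_2}$ yields the walk $(t(u)|\tau|g^{s}\cdot h)(g^{s}\cdot h|g^{s}|h)(h|\tau|s(u))$. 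No appeal to $(f7)$ is required for this computation; the matching data is explicit in the $(fR1)$ relation itself.
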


\begin{proof}
Since $\gamma\sim_{I_E}f(w)$, there exists a sequence $\gamma_0=f(w)$, $\gamma_1$, $\cdots$, $\gamma_{k-1}$, $\gamma_{k}=\gamma$ of walks of $Q_E$ such that for each $1\leq i\leq k$, $\gamma_i$ is obtained from $\gamma_{i-1}$ by replacing a subwalk to another, using the relation $(E1)$ or the relation $(E2)$ in Definition \ref{homotopy-relation-with-respect-to-ideal}. By induction, we may assume that $k=1$. So $\gamma$ is obtained from $f(w)$ by replacing a subwalk to another, using the relation $(E1)$ or the relation $(E2)$ in Definition \ref{homotopy-relation-with-respect-to-ideal}.

If $\gamma$ is obtained from $f(w)$ by replacing a subwalk to another using relation $(E1)$, we first assume that $f(w)=\beta_1\alpha^{-1}\alpha\beta_0$ and $\gamma=\beta_1\beta_0$, where $\beta_0,\beta_1$ are two walks of $Q_E$ and $\alpha$ is an arrow of $Q_E$. Let $$w=e_{n}\frac{\delta_{n}}{}e_{n-1}\frac{\delta_{n-1}}{}\cdots\frac{\delta_{3}}{}e_{2}\frac{\delta_{2}}{}e_{1}\frac{\delta_{1}}{}e_{0},$$
where $e_{0},e_{1},\cdots,e_{n}\in E$, $\delta_{1},\cdots,\delta_{n}\in\{g,g^{-1},\tau\}$. Then we may assume that $\beta_0=f(w_0)$, $\beta_1=f(w_1)$, $\alpha=f(e_{r+1}|\delta_{r+1}|e_r)$, $\alpha^{-1}=f(e_{l}|\delta_{l}|e_{l-1})$, where
$$w_0=e_{r}\frac{\delta_{r}}{}e_{r-1}\frac{\delta_{r-1}}{}\cdots\frac{\delta_{2}}{}e_{1}\frac{\delta_{1}}{}e_{0},$$
$$w_1=e_{n}\frac{\delta_{n}}{}e_{n-1}\frac{\delta_{n-1}}{}\cdots\frac{\delta_{l+1}}{}e_{l},$$
$l\geq r+2$, and $\delta_i=\tau$ for every integer $r+1<i<l$. Therefore $\delta_{r+1}=g$, $\delta_l=g^{-1}$, and $e_{r+1}=g\cdot e_r$, $e_{l-1}=g\cdot e_{l}$. Moreover, we have $\alpha=L(e_r)=L(e_l)$. Since $\delta_i=\tau$ for every integer $r+1<i<l$, $P(e_{r+1})=P(e_{l-1})$. So we have
\begin{multline*}
(e_l|\delta_l|e_{l-1})\cdots(e_{r+2}|\delta_{r+2}|e_{r+1})(e_{r+1}|\delta_{r+1}|e_r)\sim(e_l|g^{-1}|e_{l-1})(e_{l-1}|\tau|e_{r+1})(e_{r+1}|g|e_r) \\
\sim(e_l|g^{-1}|e_{l-1})(e_{l-1}|g|e_l)(e_l|\tau|e_r)\sim(e_l|\tau|e_r).
\end{multline*}
Let $w'=w_1(e_l|\tau|e_r)w_0$. Then $w'\sim w$ and $f(w')=f(w_1)f(w_0)=\beta_1\beta_0=\gamma$. Dually, it can be shown that the result is also true when $f(w)=\beta_1\beta_0$ and $\gamma=\beta_1\alpha^{-1}\alpha\beta_0$, where $\alpha$ is an arrow of $Q_E$. When $\gamma$ is obtained from $f(w)$ by replacing a subwalk to another using the relation $\alpha\alpha^{-1}\sim_{I} 1_{t(\alpha)}$, the proof is similar.

If $\gamma$ is obtained from $f(w)$ by replacing a subwalk to another using relation $(E2)$, then we may assume that $f(w)=\beta_1 p_1\beta_0$ and $\gamma=\beta_1 p_2\beta_0$, where $\beta_0,\beta_1$ are two walks of $Q_E$ and $p_1,p_2$ are two paths of $Q_E$ such that there exists some $\rho=\sum^{n}_{i=1}\lambda_i q_i\in m(I_E)$ with $q_1=p_1$, $q_2=p_2$. Since $\rho$ is a minimal relation of $I_E$, according to \cite[Lemma 4.16]{LL}, each $q_i$ is a path in $\mathscr{E}$ and $q_i R q_j$ for every $1\leq i,j\leq n$ (for the definition of the set $\mathscr{E}$ and the equivalence relation $R$, see \cite[Definition 4.9]{LL}). Then there exist some standard sequences $l_1,l_2$ of $E$ such that $p_1=L(l_1)$, $p_2=L(l_2)$ and $\prescript{\wedge}{}{l_1}\equiv \prescript{\wedge}{}{l_2}$. Since $f(w)=\beta_1 p_1\beta_0$, we may assume that $w=w_1 u w_0$, such that $f(w_0)=\beta_0$, $f(w_1)=\beta_1$ and $f(u)=p_1$. Suppose that $l_1=(g^{r-1}\cdot e,\cdots,g\cdot e,e)$, where $0\leq r\leq d(e)$. Since $f(u)=p_1=L(l_1)$, the walk $u$ of $E$ is of the form
\begin{multline}\label{walk-u}
(e_{r+1,m_{r+1}}|\tau|e_{r+1,m_{r+1}-1})\cdots(e_{r+1,2}|\tau|e_{r+1,1})(g\cdot h_r|g|h_r)(e_{r,m_r}|\tau|e_{r,m_r-1})\cdots(e_{r2}|\tau|e_{r1})\cdots \\
(g\cdot h_2|g|h_2)(e_{2,m_2}|\tau|e_{2,m_2-1})\cdots(e_{22}|\tau|e_{21})(g\cdot h_1|g|h_1)(e_{1,m_1}|\tau|e_{1,m_1-1})\cdots(e_{12}|\tau|e_{11}),
\end{multline}
such that $P(e_{11})=P(e)$, $P(e_{r+1,m_{r+1}})=P(g^{r}\cdot e)$, and $L(h_i)=L(g^{i-1}\cdot e)$ for each $1\leq i\leq r$. Then up to homotopy, we may assume that $u$ is a walk of the form (\ref{walk-u}), where $P(e_{11})=P(e)$, $P(e_{r+1,m_{r+1}})=P(g^{r}\cdot e)$, and $h_i=g^{i-1}\cdot e$ for each $1\leq i\leq r$. Therefore $u$ is homotopic to $v$, where
$$v=(e_{r+1,m_{r+1}}|\tau|g^{r}\cdot e)(g^{r}\cdot e|g|g^{r-1}\cdot e)\cdots (g^{2}\cdot e|g|g\cdot e)(g\cdot e|g|e)(e|\tau|e_{11}).$$

Suppose that $l_2=(g^{s-1}\cdot h,\cdots,g\cdot h,h)$, where $0\leq s\leq d(h)$. Since $\prescript{\wedge}{}{l_1}\equiv \prescript{\wedge}{}{l_2}$, the walk
$$ (g^{s}\cdot h|\tau|g^{r}\cdot e)(g^{r}\cdot e|g|g^{r-1}\cdot e)\cdots (g^{2}\cdot e|g|g\cdot e)(g\cdot e|g|e) $$
is homotopic to the walk
$$ (g^{s}\cdot h|g|g^{s-1}\cdot h)\cdots (g^{2}\cdot h|g|g\cdot h)(g\cdot h|g|h)(h|\tau|e). $$
Therefore $v$ is homotopic to $v'$, where
$$v'=(e_{r+1,m_{r+1}}|\tau|g^{s}\cdot h)(g^{s}\cdot h|g|g^{s-1}\cdot h)\cdots (g^{2}\cdot h|g|g\cdot h)(g\cdot h|g|h)(h|\tau|e_{11}).$$
Let $w'=w_1 v' w_0$. Since $u\sim v'$, $w=w_1 u w_0\sim w_1 v' w_0=w'$. We have $f(w')=f(w_1)f(v')f(w_0)=\beta_1 p_2 \beta_0=\gamma$.
\end{proof}

\begin{proof}[{\bf Proof of Theorem \ref{fundamental-group-of-fs-BC-and-fundamental-group-of-algebra-and-isomorphic}}]
First we need to show that the map $\overline{w}\mapsto\overline{f(w)}$ is well-defined. For two walks $u,v$ of $E$ with $u\sim v$, there exists a sequence $u=w_0$, $w_1$, $\cdots$, $w_{n}=v$ of walks of $E$, such that for each $1\leq i\leq n$, $w_i$ is obtained from $w_{i-1}$ by replacing a subwalk to another, using one of the relations $(h1)$, $(h2)$, $(h3)$ or $(h4)$ in Definition \ref{homotopy of walks}. To show that $f(u)\sim_{I_E} f(v)$, we may assume that $n=1$, so $v$ is obtained from $u$ by replacing a subwalk to another, using one of the relations $(h1)$, $(h2)$, $(h3)$ or $(h4)$ in Definition \ref{homotopy of walks}.

If $v$ is obtained from $u$ by replacing a subwalk to another using one of the relations $(h1)$, $(h2)$ or $(h4)$ in Definition \ref{homotopy of walks}, then it is straightforward to show that $f(u)\sim_{I_E} f(v)$. If $v$ is obtained from $u$ by replacing a subwalk to another using relation $(h3)$ in Definition \ref{homotopy of walks}, then $f(v)$ is obtained from $f(u)$ by replacing a subwalk of the form $L(p)$ to a subwalk of the form $L(q)$, where $p,q$ are full sequences of $E$ with $L(p)RL(q)$. According to \cite[Lemma 4.16]{LL}, $L(p)-L(q)$ is a minimal relation of $I_E$, then we have $f(u)\sim_{I_E} f(v)$.

For any walk $\gamma=\alpha_r\cdots\alpha_2\alpha_1$ of $Q_E$, where $\alpha_i\in (Q_E)_1\cup (Q_E)_{1}^{-1}$, we define a walk $w_i$ of $E$ for each $1\leq i\leq r$:
\begin{equation*}
w_i=\begin{cases}
(g\cdot e_i|g|e_i), &\text{ if } \alpha_i=L(e_i) \text{ for some } e_i\in E; \\
(e_i|g^{-1}|g\cdot e_i), &\text{ if } \alpha_i=L(e_i)^{-1} \text{ for some } e_i\in E.
\end{cases}
\end{equation*}
Since the source of $\alpha_{i+1}$ is equal to the terminal of $\alpha_i$ for every $1\leq i\leq r-1$, the source of $w_{i+1}$ and the terminal of $w_i$ belong to the same polygon of $E$ for every $1\leq i\leq r-1$. Denote by $x_i$ (resp. $y_i$) the source (resp. terminal) of $w_i$, then
$$w=w_r(x_r|\tau|y_{r-1})w_{r-1}(x_{r-1}|\tau|y_{r-2})\cdots (x_3|\tau|y_2)w_2(x_2|\tau|y_1)w_1$$
is a walk of $E$ such that $f(w)=\gamma$. Therefore the map $\Pi(E,e)\rightarrow \Pi(Q_E,I_E)$, $\overline{w}\mapsto\overline{f(w)}$ is surjective. To show the injectivity, let $\overline{w}\in\Pi(E,e)$ with $\overline{f(w)}=1$. Then $f(w)\sim_{I_E} 1_{P(e)}$, and by Lemma \ref{injectivity}, there exists some walk $w'$ of $E$ such that $w\sim w'$ and $f(w')=1_{P(e)}$. So $w'$ is of the form
$$(e_k|\tau|e_{k-1})\cdots (e_3|\tau|e_2)(e_2|\tau|e_1),$$
where $e_1,e_2,\cdots,e_k\in P(e)$. Since $w\sim w'$, the source (resp. terminal) of $w$ is equal to the source (resp. terminal) of $w'$, and we have $e_1=e_k=e$. Therefore $w\sim w'\sim (e||e)$.
\end{proof}

Let $E$ be an $f_s$-BC. Recall that we have defined a set of paths $\mathscr{E}$ of $Q_E$ and an equivalence relation $R$ on $\mathscr{E}$ in Definition \ref{relation R}. Moreover, we have constructed a quiver with admissible relations $(Q'_E,I'_E)$ of $\Lambda_E$ in Definition \ref{reduced-arrow}, where $Q'_E$ is the subquiver of $Q_E$ with $(Q'_E)_0=(Q_E)_0$ and $(Q'_E)_1$ is a complete set of representatives of non-reduced arrows of $Q_E$ under the equivalence relation $R$.

\begin{Lem}\label{replace-reduced-arrow}
Let $E$ be an $f_s$-BC. Then for every reduced arrow $\alpha$ of $Q_E$ there exists a path $v\in\mathscr{E}'$ such that $\alpha R v$, where $\mathscr{E}'$ denotes the set of paths of $Q'_E$ which belong to $\mathscr{E}$.
\end{Lem}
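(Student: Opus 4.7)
The plan is to prove the lemma by a maximality argument on the length of $R$-representatives of $\alpha$. Given a reduced arrow $\alpha$, by definition there is at least one $u\in\mathscr{E}$ of length $\geq 2$ with $\alpha R u$; I would show that among all such $u$ a maximum-length representative exists, and that every arrow occurring in any maximum-length representative must already be non-reduced (so the representative lies in $\mathscr{E}'$).

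First I would recall the characterization of $R$ from \cite[Definition 4.9]{LL} and the $f_s$-BC axiom $(f7)$: on $\mathscr{E}$, $L(p)R L(q)$ is controlled by the identification $[[\prescript{\wedge}{}{p}]^{\wedge}]=[[\prescript{\wedge}{}{q}]^{\wedge}]$ (or the dual version), so in particular $\prescript{\wedge}{}{p}\equiv\prescript{\wedge}{}{q}$ at the level of standard sequences. Writing $\alpha=L(e)$, any $L(q)\in\mathscr{E}$ with $\alpha R L(q)$ therefore satisfies $\prescript{\wedge}{}{(e)}\equiv\prescript{\wedge}{}{q}$; if $q$ starts at $h$, this pins down the length of $q$ to be $d(h)-d(e)+1$. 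Thus the possible lengths of such representatives are controlled by the set of starting vertices $h$ whose Nakayama cycle begins with the prescribed tail $\prescript{\wedge}{}{(e)}$. Axioms $(f6)$ and $(f7)$ force this set of $h$-values to give only finitely many (indeed bounded) values of $d(h)$, since two Nakayama cycles cannot have one a proper continuation of the other, and so a maximum-length representative $v$ exists.

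Next I would carry out the core step: if $v=L(q)$ is such a maximum-length representative and some arrow $\beta=L(g^{i-1}\cdot h)$ of $v$ were reduced, then $\beta R L(p'')$ for a standard sequence $p''$ of length $\geq 2$. I would establish a splicing lemma showing that the local expansion $\beta\rightsquigarrow L(p'')$ can be globalized inside $q$: using $(f7)$ applied at the splice point $g^{i-1}\cdot h$, one builds a single standard sequence $\widetilde q$, starting at a vertex of strictly larger $d$-value, whose associated path $L(\widetilde q)$ is strictly longer than $L(q)$, lies in $\mathscr{E}$, and still satisfies $\prescript{\wedge}{}{\widetilde q}\equiv\prescript{\wedge}{}{(e)}$, hence $\alpha R L(\widetilde q)$. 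This contradicts the maximality of $v$.

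The main obstacle is the splicing step: one must show that expanding one reduced arrow inside $L(q)$ does not break the property of being of the form $L(\widetilde q)$ for a genuine standard sequence $\widetilde q$. The natural way to handle this is to track the Nakayama cycle at each angle appearing in $q$ and apply $(f7)$ successively; the $f_s$-BC condition is designed precisely so that identifications on short tails propagate coherently to identifications on longer tails, which is what makes the local substitution assemble into a global standard sequence. Once this splicing lemma is in place, the rest of the argument is a clean finite descent on the number of reduced arrows in the chosen representative.
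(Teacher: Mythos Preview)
Your overall strategy---take a maximum-length $R$-representative of $\alpha$ and show it cannot contain a reduced arrow---is exactly the paper's approach. Two points deserve comment.

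First, your justification for the existence of a maximum-length representative is slightly off. Invoking $(f6)$ and $(f7)$ is unnecessary: once you observe (as you do) that any $L(q)$ with $\alpha R L(q)$ must have $s(q)\in P(e)$, finiteness of $P(e)$ (axiom $(f1)$) immediately bounds $d(s(q))$ and hence $l(q)$. The paper instead uses that $\Lambda_E$ is locally bounded, so all nonzero paths from $s(\alpha)$ have bounded length; either route works.

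Second, and more substantively, your splicing step and your final conclusion both have gaps. For the splicing, you propose to argue directly via $(f7)$ that expanding a reduced arrow $\beta$ inside $u=L(q)$ yields $L(\widetilde{q})$ for a genuine standard sequence $\widetilde{q}$. This is where the paper's argument is much cleaner: rather than building $\widetilde{q}$ by hand, the paper simply observes that the spliced path $w$ satisfies $w=u$ in $\Lambda_E$ (since $\beta R u'$ forces $\beta=u'$ in $\Lambda_E$), hence $w\neq 0$, and then invokes \cite[Lemma 4.16]{LL} to conclude $w\in\mathscr{E}$ and $wRu$. You are essentially proposing to reprove a special case of that lemma, which is doable but laborious. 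For the final step, you claim that once every arrow of $u$ is non-reduced, $u\in\mathscr{E}'$. This is not quite right: $Q'_E$ contains only one representative from each $R$-class of non-reduced arrows, so a non-reduced arrow of $u$ need not lie in $(Q'_E)_1$. The paper handles this with an extra step: replace each arrow $\beta$ of $u$ by the unique $\beta'\in(Q'_E)_1$ with $\beta R\beta'$, obtaining a path $v$ of $Q'_E$ with $v=u$ in $\Lambda_E$, and apply \cite[Lemma 4.16]{LL} once more to get $v\in\mathscr{E}'$ and $vRu$.
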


\begin{proof}
Let $\alpha$ be a reduced arrow of $Q_E$. Since $\Lambda_E$ is locally bounded, there exists a positive integer $N$ such that each path $\delta$ of $Q_E$ with $s(\delta)=s(\alpha)$ of length $>N$ is equal to zero in $\Lambda_E$. Since every path in $\mathscr{E}$ is nonzero in $\Lambda_E$, there exists a longest path $u\in\mathscr{E}$ such that $\alpha R u$. Suppose that $u$ contains a reduced arrow $\beta$, then $\beta R u'$ for some path $u'\in\mathscr{E}$ of length $\geq 2$. Let $w$ be the path obtained from $u$ by replacing the arrow $\beta$ with the path $u'$. Since $\beta R u'$, $\beta=u'$ in $\Lambda_E$, and therefore $w=u$ in $\Lambda_E$. According to \cite[Lemma 4.16]{LL}, $w\in\mathscr{E}$ and $w R u$. Then $\alpha R w$, which contradicts the fact that $u$ is the longest path in $\mathscr{E}$ such that $\alpha R u$.

Therefore every arrow of $u$ is non-reduced. By replacing each arrow $\beta$ of $u$ with the arrow $\beta'$ of $Q'_E$ with $\beta R \beta'$, we obtain a path $v$ of $Q'_E$ such that $v=u$ in $\Lambda_E$. According to \cite[Lemma 4.16]{LL}, $v\in\mathscr{E}$ and $v R u$. Then we have $\alpha R v$ and $v\in\mathscr{E}'$.
\end{proof}

\begin{Prop}\label{iso-of-fundamental-gp-between-two-quiver-with-relation}
Let $E$ be a connected $f_s$-BC. Then $\Pi(Q_E,I_E)\cong\Pi(Q'_E,I'_E)$.
\end{Prop}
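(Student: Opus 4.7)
The plan is to construct explicit mutually inverse homomorphisms between $\Pi(Q_E,I_E)$ and $\Pi(Q'_E,I'_E)$, one induced by the inclusion $\iota: Q'_E\hookrightarrow Q_E$ and the other by a retraction that replaces each reduced arrow of $Q_E$ by a representative path in $Q'_E$ as supplied by Lemma \ref{replace-reduced-arrow}. Fix a vertex $x\in(Q_E)_0=(Q'_E)_0$ throughout.

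First I would define $\phi:\Pi(Q'_E,I'_E)\to\Pi(Q_E,I_E)$ by viewing a walk of $Q'_E$ as a walk of $Q_E$ via $\iota$. Well-definedness amounts to showing that every minimal relation of $I'_E$ yields a trivial class in $\Pi(Q_E,I_E)$. Since $I'_E=\ker(\rho)$ and $\rho$ factors through the inclusion $kQ'_E\hookrightarrow kQ_E$, each minimal relation $\sum_{i=1}^{n}\lambda_i p_i\in m(I'_E)$ lies in $I_E\cap kQ'_E$. Using the description of minimal relations of $I_E$ from \cite[Lemma 4.16]{LL} together with the fact that paths in $\mathscr{E}'$ are exactly $R$-equivalence classes of paths in $\mathscr{E}$ lying in $Q'_E$, one shows that any such relation is either itself a minimal relation of $I_E$, or decomposes as a sum of minimal relations in $I_E$ all sharing the same source and terminal; in either case the corresponding classes $\overline{p_j^{-1}p_i}$ are trivial in $\Pi(Q_E,I_E)$.

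Next I would define the retraction. For each non-reduced arrow $\alpha$ of $Q_E$, let $v_\alpha\in\prescript{}{t(\alpha)}{\mathscr{N}}_{s(\alpha)}\subseteq(Q'_E)_1$ be its $R$-class representative (so $v_\alpha=\alpha$ when $\alpha$ is already in $Q'_E$); for each reduced arrow $\alpha$, fix once and for all a path $v_\alpha\in\mathscr{E}'$ with $\alpha R v_\alpha$ as in Lemma \ref{replace-reduced-arrow}. Extend by $v_{\alpha^{-1}}=v_\alpha^{-1}$ and concatenation to obtain $\psi$ on walks of $Q_E$. To see that $\psi$ descends to $\Pi(Q_E,I_E)\to\Pi(Q'_E,I'_E)$, I would check that each minimal relation $L(p_1)-L(p_2)$ of $I_E$ (with $p_1\equiv p_2$, by \cite[Lemma 4.16]{LL}) becomes, after substitution, a relation contained in $I'_E$. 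This follows from the isomorphism $kQ'_E/I'_E\xrightarrow{\sim}kQ_E/I_E$ of \cite[Lemma 6.2]{LL}: after substitution the two paths represent the same element of $\Lambda_E$, so their difference lies in $I'_E$; and again using Lemma 4.16 (applied to $I'_E$ via the isomorphism) one shows that this difference is a minimal relation of $I'_E$, or a sum of such.

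Finally I would verify that $\phi$ and $\psi$ are mutually inverse. On each arrow $\beta\in(Q'_E)_1$ the composite $\psi\phi$ returns $v_\beta=\beta$, so $\psi\phi=\mathrm{id}$. For $\phi\psi$, on a non-reduced arrow $\alpha$ we have $\phi(\psi(\alpha))=v_\alpha$ with $\alpha R v_\alpha$, so $\overline{\alpha}=\overline{v_\alpha}$ in $\Pi(Q_E,I_E)$ because $\alpha-v_\alpha\in I_E$ is (up to adding further $R$-pairs) a minimal relation; on a reduced arrow $\alpha$ the same argument applies using the path $v_\alpha\in\mathscr{E}'$. The main obstacle will be the well-definedness step for $\psi$: one must check that after replacing every arrow by its $\psi$-image, a minimal relation $L(p_1)-L(p_2)$ of $I_E$ is not merely some element of $I'_E$ but in fact a boundary of minimal relations in $I'_E$, so that it represents the trivial element of $\Pi(Q'_E,I'_E)$. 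This requires a careful induction on the length of the substituted paths, decomposing the substituted relation as a telescoping sum of minimal relations of $I'_E$ arising from pairs of identical standard sequences of $E$ whose $L$-values lie in $Q'_E$.
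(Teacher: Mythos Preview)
Your approach is the same as the paper's: define $\psi$ (the paper calls it $\overline{\mu}$) by substituting $v_\alpha$ for each arrow not in $Q'_E$, define $\phi$ (the paper's $\overline{\nu}$) by inclusion, and show they are mutually inverse. However, you are making the well-definedness checks considerably harder than necessary.

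For the well-definedness of $\phi$, there is no case analysis or decomposition: by \cite[Lemma~4.16]{LL} and \cite[Lemma~6.3]{LL}, every minimal relation of $I'_E$ is already a minimal relation of $I_E$, so $u\sim_{I'_E}v$ implies $u\sim_{I_E}v$ directly.

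For the well-definedness of $\psi$, your proposed ``telescoping induction'' is unnecessary. Given a minimal relation $\sum\lambda_i p_i\in m(I_E)$ with $p_1=u$, $p_2=u'$, \cite[Lemma~4.16]{LL} gives $u,u'\in\mathscr{E}$ with $uRu'$ (so $\prescript{\wedge}{}{p}\equiv\prescript{\wedge}{}{q}$ for suitable $p,q$, not $p\equiv q$ as you wrote). Since each $v_\alpha$ satisfies $\alpha R v_\alpha$, one has $\mu(v)=v$ in $\Lambda_E$ for every path $v$, whence $\mu(u)=\mu(u')\neq 0$ in $\Lambda_E$. Now invoke \cite[Lemma~6.3]{LL}: two paths of $Q'_E$ equal and nonzero in $\Lambda_E$ must lie in $\mathscr{E}'$ and be $R$-related. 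Therefore $\mu(u)-\mu(u')$ is \emph{itself} a minimal relation of $I'_E$, and $\mu(w)\sim_{I'_E}\mu(w')$ follows immediately. The obstacle you anticipated does not arise once Lemma~6.3 is in hand.
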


\begin{proof}
For each arrow $\alpha\in (Q_E)_0-(Q'_E)_0$, we define a path $v_{\alpha}$ of $Q'_E$: if $\alpha$ is reduced, let $v_{\alpha}$ be a path in $\mathscr{E}'$ such that $\alpha R v_{\alpha}$ (such $v_{\alpha}$ exists according to Lemma \ref{replace-reduced-arrow}); if $\alpha$ is non-reduced, let $v_{\alpha}$ be the arrow of $Q'_E$ such that $\alpha R v_{\alpha}$.

Fix a vertex $x$ of $Q_E$ and set $\Pi(Q_E,I_E)=\Pi(Q_E,x)/N(Q_E,m(I_E),x)$, \\ $\Pi(Q'_E,I'_E)=\Pi(Q'_E,x)/N(Q'_E,m(I'_E),x)$. For every walk $w$ of $Q_E$, denote by $\mu(w)$ the walk of $Q'_E$ obtained from $w$ by replacing each arrow $\alpha$ (resp. inverse arrow $\alpha^{-1}$) with $\alpha\in (Q_E)_0-(Q'_E)_0$ with the path $v_{\alpha}$ (resp. the inverse path $v_{\alpha}^{-1}$). We need to show that the map $\overline{\mu}:\Pi(Q_E,I_E)\rightarrow\Pi(Q'_E,I'_E)$, $\overline{w}\mapsto\overline{\mu(w)}$ is an isomorphism.

$\overline{\mu}$ is well-defined: Let $w,w'$ be two closed walks of $Q_E$ at $x$ such that $w'$ is obtained from $w$ by replacing a subwalk to another using the relation $(E1)$ or the relation $(E2)$ in Definition \ref{homotopy-relation-with-respect-to-ideal}. We need to show that $\mu(w)\sim_{I'_E}\mu(w')$. If $w'$ is obtained from $w$ by replacing a subwalk to another using the relation $(E1)$, then clearly $\mu(w)\sim_{I'_E}\mu(w')$. If $w'$ is obtained from $w$ by replacing a subwalk to another using the relation $(E2)$, then we may assume that $w=w_1 u w_0$ and $w'=w_1 u' w_0$, such that there exists some $\rho=\sum_{i=1}^{n}\lambda_i p_i\in m(I_E)$ with $p_1=u$, $p_2=u'$. According to \cite[Lemma 4.16]{LL}, $u,u'\in\mathscr{E}$ and $u R u'$. By the definition of $\mu$, we have $\mu(v)=v$ in $\Lambda_E$ for every path $v$ of $Q_E$. Then $\mu(u)=u=u'=\mu(u')$ in $\Lambda_E$. Since $u\in\mathscr{E}$, we have $\mu(u)=\mu(u')\neq 0$ in $\Lambda_E$. According to \cite[Lemma 6.3]{LL}, $\mu(u),\mu(u')\in\mathscr{E}'$ and $\mu(u) R \mu(u')$. Therefore $\mu(u)-\mu(u')\in m(I'_E)$ and $\mu(w)=\mu(w_1)\mu(u)\mu(w_0)\sim_{I'_E}\mu(w_1)\mu(u')\mu(w_0)=\mu(w')$.

$\overline{\mu}$ is an isomorphism of groups: Clearly $\overline{\mu}$ is a group homomorphism. To show that $\overline{\mu}$ is bijective, we need to construct its inverse. According to \cite[Lemma 4.16]{LL} and \cite[Lemma 6.3]{LL}, every minimal relation of $I'_E$ is also a minimal relation of $I_E$. So for every two walks $u,v$ of $Q'_E$, $u\sim_{I'_E}v$ implies $u\sim_{I_E}v$. Then there is a well-defined map $\overline{\nu}:\Pi(Q'_E,I'_E)\rightarrow\Pi(Q_E,I_E)$, $\overline{u}\mapsto\overline{u}$. Clearly $\overline{\mu}\circ\overline{\nu}=id$. Note that for every two paths $u,v\in\mathscr{E}$ with $uRv$, we have $u\sim_{I_E}v$ and $u^{-1}\sim_{I_E}v^{-1}$. Then for every walk $w$ of $Q_E$, we have $w\sim_{I_E}\mu(w)$. Therefore $\overline{\nu}\circ\overline{\mu}(\overline{w})=\overline{\mu(w)}=\overline{w}$ and $\overline{\nu}\circ\overline{\mu}=id$.
\end{proof}

Combining Theorem \ref{fundamental-group-of-fs-BC-and-fundamental-group-of-algebra-and-isomorphic} and Proposition \ref{iso-of-fundamental-gp-between-two-quiver-with-relation}, we have

\begin{Cor}\label{iso-of-fundamental-gp-of-f-BC-and-fundamental-gp-of-quiver-with-aadmissible-relation}
Let $E$ be a connected $f_s$-BC. Then the fundamental group $\Pi(E)$ of $E$ is isomorphic to $\Pi(Q'_E,I'_E)$.
\end{Cor}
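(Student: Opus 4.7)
The plan is simply to chain the two isomorphisms already constructed immediately above. By Theorem \ref{fundamental-group-of-fs-BC-and-fundamental-group-of-algebra-and-isomorphic}, there is a group isomorphism $\Pi(E,e) \xrightarrow{\sim} \Pi(Q_E, I_E)$ given by $\overline{w} \mapsto \overline{f(w)}$, and by Proposition \ref{iso-of-fundamental-gp-between-two-quiver-with-relation} there is a further isomorphism $\Pi(Q_E, I_E) \xrightarrow{\sim} \Pi(Q'_E, I'_E)$ given by $\overline{w} \mapsto \overline{\mu(w)}$. First I would invoke Lemma \ref{independence} to justify writing $\Pi(E)$ without reference to a basepoint (this is legitimate because $E$ is assumed connected), and then compose the two isomorphisms to obtain $\Pi(E) \cong \Pi(Q'_E, I'_E)$.

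No additional work is required beyond this composition, and there is no genuine obstacle: the composed map sends the homotopy class of a closed walk $w$ of $E$ at $e$ to the class of $\mu(f(w))$ in $\Pi(Q'_E, I'_E)$, which concretely is the walk in $Q'_E$ obtained from $f(w)$ by replacing each arrow $\alpha \notin (Q'_E)_1$ (respectively each inverse arrow $\alpha^{-1}$) by the chosen path $v_\alpha$ (resp.\ $v_\alpha^{-1}$) in $Q'_E$. If one wished to avoid the intermediate group $\Pi(Q_E, I_E)$ and produce a direct proof, one could try to define this composite map on walks of $E$ at the outset and verify well-definedness under the homotopy relation of Definition \ref{homotopy of walks} and invertibility by lifting walks of $Q'_E$ back through $f$ and $\mu$; but since both ingredients are already available and both are shown to be group isomorphisms with explicit descriptions, composing them is by far the cleanest route, and this is the route I would take.
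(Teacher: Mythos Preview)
Your proposal is correct and matches the paper's approach exactly: the paper simply states ``Combining Theorem \ref{fundamental-group-of-fs-BC-and-fundamental-group-of-algebra-and-isomorphic} and Proposition \ref{iso-of-fundamental-gp-between-two-quiver-with-relation}, we have'' before the corollary, with no further argument. Your additional remark about invoking Lemma \ref{independence} to drop the basepoint is a reasonable clarification, and your discussion of the explicit composite map is accurate but unnecessary for the proof itself.
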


\section{A covering of fractional Brauer configurations induces a covering of fractional Brauer configuration categories}

In this section, we first recall (by a slight modifying) from \cite{MV-DLP} the definition of a covering of quivers with relations, and then show that a (regular) covering $\phi: E\rightarrow E'$ of fractional Brauer configurations of type S induces a (Galois) covering $f: (Q_E,I_E)\rightarrow (Q_{E'},I_{E'})$ of quivers with relations. These results are the contents of Theorem \ref{covering of f-BCs induces covering functor} and Proposition \ref{induce-Galois-covering-of-categories}. We also obtain some general results about covering theory, such as a criterion for a locally bounded category to be simply connected, which is a beneficial complement of classical results. As an application, we show that every fractional Brauer configuration category of type MS is standard in the sense of Skowro\'nski \cite{Sk}.

\subsection{A covering of f-BCs induces a covering of f-BCCs}

\begin{Def} {\rm(\cite[Section I.10.1]{E})}
Let $Q$, $Q'$ be (locally finite) quivers and $f:Q\rightarrow Q'$ be a quiver morphism. $f$ is said to be a covering if for each $x\in Q_0$, $f$ induces a bijection between the set of arrows of $Q$ starting (ending) at $x$ and the set of arrows of $Q'$ starting (ending) at $f(x)$.
\end{Def}

By this definition, if $f: Q\rightarrow Q'$ is a covering map such that $Q$, $Q'$ have no double arrows, then for each $x\in Q_0$, $f$ induces a bijection between the set $x^+$ (resp. $x^-$) of successors (resp. predecessors) of $x$ and the set $f(x)^+$ (resp. $f(x)^-$) of successors (resp. predecessors) of $f(x)$.

It is important that a covering map $f$ has the unique lifting of paths property, that is, for a path $p'$ in $Q'$ and for a vertex $x\in Q_0$ with $f(x)$ the starting (resp. the ending) of $p'$, there exists a unique path $p$ starting (resp. ending) at $x$ in $Q$ such that $f(p)=p'$. Moreover, by the unique lifting of paths property, it is easy to see that if $f: Q\rightarrow Q'$ is a covering map such that $Q'$ is connected, then $f$ is surjective both on vertices and on arrows.

Recall from \cite{BG} that a $k$-linear functor $F: M\rightarrow N$ between two $k$-categories is said to be a covering functor if for every two objects $a,b$ of $N$, $F$ induces isomorphisms $\oplus_{z/a}M(z,y)\rightarrow N(a,b)$ and $\oplus_{z/b}M(x,z)\rightarrow N(a,b)$, where $x,y$ are objects of $M$ with $a=F(x)$, $b=F(y)$. The following result is well-known.

\begin{Lem}\label{covering of quivers induces covering functor}
If $f:Q\rightarrow Q'$ is a covering of quivers, then $f$ induces a covering functor $kf:kQ\rightarrow kQ'$ between the associated path categories.
\end{Lem}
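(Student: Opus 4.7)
The strategy is the standard unique path lifting argument for quiver coverings, which will directly translate into the required isomorphisms defining a covering functor. First I note that $kf:kQ\to kQ'$ is a well-defined $k$-linear functor because $f$, being a morphism of quivers, preserves sources and targets of arrows and hence extends $k$-linearly to paths (identifying the identity of $x$ with $1_{f(x)}$). The task is therefore to verify the two isomorphism conditions in the definition of a covering functor.

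The key lemma, which I would prove by induction on the length of paths, is a unique path lifting property: given a path $p'=\alpha'_n\cdots\alpha'_1$ in $Q'$ from $a$ to $b$ and any vertex $x\in f^{-1}(a)$, there is a unique path $p=\alpha_n\cdots\alpha_1$ in $Q$ with $s(p)=x$ and $f(p)=p'$; moreover, $t(p)\in f^{-1}(b)$. The base case (length zero) is trivial. For the inductive step, the bijection between arrows of $Q$ starting at the current vertex and arrows of $Q'$ starting at its image (given by the covering hypothesis) lifts $\alpha'_i$ uniquely to an arrow of $Q$ whose source matches the terminal of the partial lift. Dually, fixing $y\in f^{-1}(b)$ instead, there is a unique lift ending at $y$, obtained by using the bijection between arrows of $Q$ ending at a vertex and arrows of $Q'$ ending at its image.

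With unique lifting in hand, fix objects $a,b\in (Q')_0$ and any $x\in f^{-1}(a)$. The space $kQ'(a,b)$ has as $k$-basis the set of paths from $a$ to $b$ in $Q'$, while $\bigoplus_{y\in f^{-1}(b)} kQ(x,y)$ has as $k$-basis the set of paths in $Q$ starting at $x$ and ending at some vertex of $f^{-1}(b)$. The functor $kf$ sends each basis element (a path $p$ starting at $x$) to the path $f(p)$, which starts at $a$ and ends at $f(t(p))\in f^{-1}(b)$, hence lies in the target space. By the unique lifting lemma this assignment is a bijection on basis sets, and thus induces the required $k$-linear isomorphism
\[
\bigoplus_{y\in f^{-1}(b)} kQ(x,y)\;\xrightarrow{\;\sim\;}\;kQ'(a,b).
\]
The dual isomorphism $\bigoplus_{x\in f^{-1}(a)} kQ(x,y)\xrightarrow{\sim}kQ'(a,b)$ follows by the same argument using lifting to paths with prescribed terminal.

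There is no real obstacle here beyond carefully formulating the unique lifting; once that is done, both verifications reduce to observing that $kf$ restricts to a bijection on the path bases. I would keep the write-up short, stating the unique lifting lemma explicitly and then deducing the two isomorphisms in a single paragraph.
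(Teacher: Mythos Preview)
Your proposal is correct and follows essentially the same approach as the paper: establish unique path lifting from the arrow-bijection hypothesis, then observe that $kf$ restricts to a bijection between the path bases of $\bigoplus_{y\in f^{-1}(b)} kQ(x,y)$ and $kQ'(a,b)$ (and dually). One small slip: where you write that $f(p)$ ``ends at $f(t(p))\in f^{-1}(b)$'', you mean $f(t(p))=b$ (since $t(p)\in f^{-1}(b)$); this is just a typo and does not affect the argument.
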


\begin{proof}
Clearly, any morphism $f:Q\rightarrow Q'$ induces a functor $kQ\rightarrow kQ'$ of path categories which we denote by $kf$. Since $f$ is a covering of quivers, for each $x\in Q_0$ and $a\in Q'_0$ with $f(x)=a$, $f$ maps paths of $Q$ which start at $x$ bijectively to paths of $Q'$ which start at $a$. Therefore for each $b\in Q'_0$, $f$ induces a bijection between $\mathscr{B}=\{$paths $p$ of $Q$ with $s(p)=x$ and $t(p)\in f^{-1}(b)\}$ and $\mathscr{B'}=\{$paths $p'$ of $Q'$ with $s(p')=a$ and $t(p')=b\}$. Since $\mathscr{B}$ is a basis of $\bigoplus_{y/b}kQ(x,y)$ and $\mathscr{B'}$ is a basis of $kQ'(a,b)$, $kf$ induces a bijection $\bigoplus_{y/b}kQ(x,y)\rightarrow kQ'(a,b)$. Similarly, for each $y\in Q_0$ and $a,b\in Q'_0$ with $f(y)=b$, $kf$ induces a bijection $\bigoplus_{x/a}kQ(x,y)\rightarrow kQ'(a,b)$.
\end{proof}

We now recall from \cite{MV-DLP} the definition of a covering of quivers with relations. Our definition here is slightly more general than the original one. Indeed, we only request that the relations are contained in the ideal generated by arrows and we do not assume that the covering is induced by an admissible group of automorphisms. Note that the original definition in \cite{MV-DLP} is precisely our definition of a Galois covering of quivers with (admissible) relations and admissible group (see Definition \ref{galois-covering}).

\begin{Def} \label{morphism-and-covering-of-quivers-with-relations}
Let $(Q,I)$ and $(Q',I')$ be quivers with relations, where $I$ (resp. $I'$) is contained in the ideal of $kQ$ (resp. $kQ'$) generated by arrows.
\begin{itemize}
\item A morphism of quivers with relations $f:(Q,I)\rightarrow (Q',I')$ is a morphism of quivers $f:Q\rightarrow Q'$ such that the $k$-linear functor $kf:kQ\rightarrow kQ'$ induced by $f$ maps the morphisms in $I$ to $I'$;
\item if, moreover, $f$ is a covering of quivers, such that for any $\rho'\in I'(a',b')$ with $\rho'$ a minimal relation or a zero relation of $I'$ and for any $a\in f^{-1}(a')$ (resp. $b\in f^{-1}(b')$), there exist some $b\in f^{-1}(b')$ (resp. $a\in f^{-1}(a')$) and some $\rho\in I(a,b)$ such that $(kf)(\rho)=\rho'$, then $f$ is called a covering of quivers with relations.
\end{itemize}
\end{Def}

\begin{Lem} \label{preserve-minimal-relation}
Let $f:(Q,I)\rightarrow (Q',I')$ be a covering of quivers with relations. Let $\rho\in I(a,b)$ be a relation of $I$ and let $\rho'=(kf)(\rho)\in I'(f(a),f(b))$. Then $\rho$ is a minimal relation of $I$ if and only if $\rho'$ is a minimal relation of $I'$. In particular, for a covering of quivers with relations, we have also the unique lifting of minimal relations property.
\end{Lem}

\begin{proof}
Assume that the relation $\rho'$ of $I'$ is minimal. Suppose that $\rho=\sum_{i=1}^{n}\lambda_i p_i$, where $\lambda_i\in k^{*}$ and $p_i$'s are distinct paths of $Q$ from $a$ to $b$, is not minimal, then there exists a non-empty proper subset $K$ of $\{1,\cdots ,n\}$ such that $\rho_1=\sum_{i\in K}\lambda_i p_i\in I$. Then $\rho'=\sum_{i=1}^{n}\lambda_i f(p_i)\in I'$, where $f(p_i)$'s are distinct paths of $Q'$, and $\sum_{i\in K}\lambda_i f(p_i)\in I'$. This contradicts the minimality of $\rho'$.

Conversely, let $\rho=\sum_{i=1}^{n}\lambda_i p_i$, where $\lambda_i\in k^{*}$ and $p_i$'s are distinct paths of $Q$ from $a$ to $b$; we call it the standard form of $\rho$. Since $f:Q\rightarrow Q'$ is a covering of quivers, $f(p_i)$'s are distinct paths of $Q'$, and $\rho'=\sum_{i=1}^{n}\lambda_i f(p_i)$ is the standard form of $\rho'$. Suppose that $\rho'$ is not minimal, then there exists a non-empty proper subset $K$ of $\{1,\cdots ,n\}$ such that $\rho''=\sum_{i\in K}\lambda_i f(p_i)\in I'$. We may assume that $\rho''$ is a minimal relation or a zero relation of $I'$, so there exists some $c\in Q_0$ with $f(c)=f(b)$ and some $r\in I(a,c)$ such that $(kf)(r)=\rho''$. If $r=\sum_{j=1}^{m}\mu_j q_j$ is the standard form of $r$, then $\sum_{j=1}^{m}\mu_j f(q_j)$ is the standard form of $\rho''$. So there exists a bijection $\sigma:\{1,\cdots,m\}\rightarrow K$ such that $\mu_j=\lambda_{\sigma(j)}$ and $f(q_j)=f(p_{\sigma(j)})$ for each $j\in\{1,\cdots,m\}$. Since $f:Q\rightarrow Q'$ is a covering of quivers and since $s(q_j)=s(p_{\sigma(j)})$ for each $j\in\{1,\cdots,m\}$, we have $q_j=p_{\sigma(j)}$ for each $j\in\{1,\cdots,m\}$. Then $r=\sum_{i\in K}\lambda_i p_i\in I$, which contradicts the minimality of $\rho$.
\end{proof}

The following result gives a criterion for whether the functor induced by a covering functor on the quotient categories is again a covering functor. Note that there is a similar but different result which was stated without proof in \cite[Section 3.1]{BG}.

\begin{Prop}\label{covering of quotient categories}
Let $M$, $N$ be $k$-categories and $F:M\rightarrow N$ be a covering functor. Let $R$ (resp. $R'$) be a class of morphisms of $M$ (resp. $N$), $I_M$ (resp. $I_N$) be the ideal of $M$ (resp. $N$) generated by $R$ (resp. $R'$). If the following conditions hold:
\begin{enumerate}
\item for every $f\in R$, $F(f)\in R'$;
\item for every $f'\in N(a,b)\cap R'$ and for each $x\in F^{-1}(a)$, there exist $y\in F^{-1}(b)$ and $f\in M(x,y)\cap R$ such that $F(f)=f'$;
\item for every $f'\in N(a,b)\cap R'$ and for each $y\in F^{-1}(b)$, there exist $x\in F^{-1}(a)$ and $f\in M(x,y)\cap R$ such that $F(f)=f'$,
\end{enumerate}
then $F$ induces a covering functor $\widetilde{F}:M/I_M\rightarrow N/I_N$.
\end{Prop}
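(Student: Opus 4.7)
The plan is to show two things: first, that $F$ descends to a well-defined $k$-linear functor $\widetilde{F}\colon M/I_M\to N/I_N$; second, that $\widetilde{F}$ inherits the covering property from $F$.

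The first point is immediate from condition $(1)$: since $F$ is $k$-linear and sends each generator $f\in R$ into $R'\subseteq I_N$, it maps the ideal $I_M$ generated by $R$ into $I_N$, hence induces $\widetilde{F}$.

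For the second point, fix objects $a,b$ of $N$ and $x\in F^{-1}(a)$. Because $F$ is a covering functor, the map
\[ \Phi\colon\bigoplus_{y\in F^{-1}(b)} M(x,y)\longrightarrow N(a,b), \quad (f_y)_y\mapsto\sum_y F(f_y) \]
is a $k$-linear isomorphism. Since $\bigoplus_{y/b}(M/I_M)(x,y)\cong\big(\bigoplus_{y/b} M(x,y)\big)\big/\bigoplus_{y/b}I_M(x,y)$, the induced map $\bigoplus_{y/b}(M/I_M)(x,y)\to (N/I_N)(a,b)$ will be an isomorphism as soon as $\Phi$ restricts to a bijection between $\bigoplus_{y/b}I_M(x,y)$ and $I_N(a,b)$. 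The inclusion $\Phi\big(\bigoplus_{y/b}I_M(x,y)\big)\subseteq I_N(a,b)$ follows from condition $(1)$. The injectivity of this restriction follows from the injectivity of $\Phi$, so the only real task is to show the restriction is surjective onto $I_N(a,b)$.

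To prove surjectivity, take an arbitrary element $f'\in I_N(a,b)$ and write it as a finite sum $f'=\sum_i g'_i\,r'_i\,h'_i$ with $r'_i\in R'(c'_i,d'_i)$, $h'_i\in N(a,c'_i)$, $g'_i\in N(d'_i,b)$. Using that $F$ is a covering functor, decompose each $h'_i$ uniquely as $h'_i=\sum_j F(h_{ij})$ with $h_{ij}\in M(x,z_{ij})$ and $z_{ij}\in F^{-1}(c'_i)$. For each pair $(i,j)$, apply condition $(2)$ to lift $r'_i$ starting at $z_{ij}$ to some $r_{ij}\in M(z_{ij},w_{ij})\cap R$ with $F(w_{ij})=d'_i$ and $F(r_{ij})=r'_i$. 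Then decompose each $g'_i$, viewed as a morphism out of $w_{ij}$, as $g'_i=\sum_k F(g_{ijk})$ with $g_{ijk}\in M(w_{ij},y_{ijk})$ and $y_{ijk}\in F^{-1}(b)$. The element $\tilde f=\sum_{i,j,k} g_{ijk}\,r_{ij}\,h_{ij}$ lies in $\bigoplus_{y/b} I_M(x,y)$ because each summand contains the factor $r_{ij}\in R$, and a routine computation using the decompositions above gives $\Phi(\tilde f)=\sum_i g'_i\,r'_i\,h'_i=f'$. This proves surjectivity and hence the first half of the covering property.

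For the symmetric statement, fix $y\in F^{-1}(b)$ and consider the map $\bigoplus_{x\in F^{-1}(a)}(M/I_M)(x,y)\to(N/I_N)(a,b)$. The argument is entirely dual, with condition $(3)$ playing the role of condition $(2)$: one starts from a decomposition $g'_i=\sum_j F(g_{ij})$ with $g_{ij}\in M(w_{ij},y)$, lifts $r'_i$ ending at $w_{ij}$ using condition $(3)$, and then decomposes $h'_i$ accordingly. The main technical obstacle, both here and in the preceding paragraph, is not conceptual but notational: one must bookkeep the three levels of indexing (the summands of $f'$, and the decompositions of $h'_i$ and $g'_i$ via $F$) carefully so that the lifted sum genuinely lies in the direct sum indexed by fibers of $F$, rather than in a single hom-space.
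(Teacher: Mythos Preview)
Your proof is correct and follows essentially the same approach as the paper: reduce to showing that $\Phi$ restricts to a bijection between $\bigoplus_{y/b}I_M(x,y)$ and $I_N(a,b)$, observe that injectivity is inherited from $F$, and prove surjectivity by decomposing the right factor via the covering property, lifting the middle factor via condition~(2), and then decomposing the left factor. The only cosmetic difference is that the paper treats a single generator $u'f'v'$ and then appeals to linearity, whereas you carry the full sum $\sum_i g'_i r'_i h'_i$ through the argument at once.
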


\begin{proof}
By condition (1), $F$ maps morphisms in $I_M$ to morphisms in $I_N$, therefore it induces a functor $\widetilde{F}:M/I_M\rightarrow N/I_N$. To show $\widetilde{F}$ is a covering functor, it suffices to show that $F$ induces bijections $\bigoplus_{t/b}I_M(x,t)\rightarrow I_N(Fx,b)$ and $\bigoplus_{z/a}I_M(z,y)\rightarrow I_N(a,Fy)$ for every objects $a$, $b$ of $N$ and for every objects $x$, $y$ of $M$. Since $F$ is a covering functor, the above maps are injective.

For every objects $a$, $b$ of $N$ and for every object $x$ of $M$ such that $Fx=a$, let $g'\in I_N(a,b)$ be a morphism of the form $u'f'v'$, where $u'\in N(d,b)$, $v'\in N(a,c)$ and $f'\in N(c,d)\cap R'$. Since $F$ induces a bijection $\bigoplus_{z/c}M(x,z)\rightarrow N(a,c)$, there exists $(v_z)_{z/c}\in\bigoplus_{z/c}M(x,z)$ such that $\sum_{z/c}F(v_z)=v'$. By condition (2), for each $z\in F^{-1}(c)$, there exist $t_z\in F^{-1}(d)$ and $f_z\in M(z,t_z)\cap R$ such that $F(f_z)=f'$. For each $z\in F^{-1}(c)$, since $F$ induces a bijection $\bigoplus_{y/b}M(t_z,y)\rightarrow N(d,b)$, there exists $(u_{zy})_{y/b}\in\bigoplus_{y/b}M(t_z,y)$ such that $\sum_{y/b}F(u_{zy})=u'$. For each $y\in F^{-1}(b)$, let $g_y=\sum_{z/c}u_{zy}f_{z}v_{z}$. Then $g_y\in I_M(x,y)$ and \begin{multline*}\sum_{y/b}F(g_y)=\sum_{y/b}\sum_{z/c}F(u_{zy})F(f_z)F(v_z)=\sum_{z/c}(\sum_{y/b}F(u_{zy}))F(f_z)F(v_z)\\ =\sum_{z/c}u'F(f_z)F(v_z)=\sum_{z/c}u'f'F(v_z)=u'f'v'=g'.\end{multline*}
Since each morphism in $I_N(a,b)$ is a sum of morphisms of the form $u'f'v'$, where $f'\in R'$, we imply that the map $\bigoplus_{y/b}I_M(x,y)\rightarrow I_N(a,b)$ is surjective. For every objects $a$, $b$ of $N$ and for every object $y$ of $M$ such that $Fy=b$, it can be shown similarly that the map $\bigoplus_{x/a}I_M(x,y)\rightarrow I_N(a,b)$ is also surjective.
\end{proof}

\begin{Cor}\label{Cor}
If $f:(Q,I)\rightarrow (Q',I')$ is a covering of quivers with relations, then $f$ induces a covering functor $kQ/I\rightarrow kQ'/I'$.
\end{Cor}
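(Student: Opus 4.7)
The plan is to apply Proposition \ref{covering of quotient categories} with $M=kQ$, $N=kQ'$, $F=kf$, and with $R$ (resp.\ $R'$) taken to be the set of all minimal relations and all zero relations (that is, single paths lying in the ideal) of $I$ (resp.\ $I'$). Lemma \ref{covering of quivers induces covering functor} already tells us $F$ is a covering functor, so it will remain to identify $I$, $I'$ as the ideals generated by $R$, $R'$ respectively, and to verify the three compatibility conditions of that proposition.

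First I would check that the $k$-linear span of $R$ equals $I$ (and similarly for $R'$ and $I'$). Given any $\rho\in I(a,b)$ in standard form $\rho=\sum_{i=1}^{n}\lambda_i p_i$ with $\lambda_i\in k^{*}$ and distinct paths $p_i$ from $a$ to $b$, choose a non-empty subset $K\subseteq\{1,\dots,n\}$ of minimal cardinality such that $\sigma:=\sum_{i\in K}\lambda_i p_i\in I$. Then $\sigma$ is a scalar multiple of a zero relation when $|K|=1$ and a minimal relation when $|K|\geq 2$, so $\sigma\in\operatorname{span}_k(R)$. Since $\rho-\sigma\in I$ has strictly fewer terms than $\rho$, a straightforward induction yields $\rho\in\operatorname{span}_k(R)$, so $I$ is even the $k$-linear span of $R$, and in particular agrees with the ideal it generates.

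Next I would verify conditions $(1)$, $(2)$, $(3)$ of Proposition \ref{covering of quotient categories}. Condition $(1)$ is immediate: if $\rho\in R$ is a zero relation, i.e.\ a single path $p\in I$, then $(kf)(\rho)=f(p)$ is again a single path in $I'$ and hence a zero relation of $I'$; and if $\rho$ is a minimal relation, then Lemma \ref{preserve-minimal-relation} gives that $(kf)(\rho)$ is a minimal relation of $I'$. For condition $(2)$, given $\rho'\in R'\cap I'(a',b')$ and $x\in f^{-1}(a')$, Definition \ref{morphism-and-covering-of-quivers-with-relations} furnishes some $y\in f^{-1}(b')$ and some $\rho\in I(x,y)$ with $(kf)(\rho)=\rho'$, and I still need to upgrade this $\rho$ to lie in $R$; condition $(3)$ is dual.

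The main technical point is this final upgrade. Writing $\rho=\sum_{j}\lambda_j q_j$ and $\rho'=\sum_{i}\mu_i p'_i$ in standard form, I would use the fact that the covering $f$ of quivers induces a bijection between paths of $Q$ starting at $x$ and paths of $Q'$ starting at $a'=f(x)$ (as in the proof of Lemma \ref{covering of quivers induces covering functor}); consequently the $f(q_j)$ are pairwise distinct, and comparing standard forms matches the $q_j$'s bijectively with the $p'_i$'s, with equal coefficients. If $\rho'$ is a zero relation, then only one $q_j$ appears in $\rho$, so $\rho$ is a single path in $I$. If $\rho'$ is a minimal relation, then any nonempty proper sub-sum of $\rho$ lying in $I$ would be carried by $kf$ to a nonempty proper sub-sum of $\rho'$ in $I'$, contradicting the minimality of $\rho'$; thus $\rho$ is itself minimal. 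Either way $\rho\in R$, and Proposition \ref{covering of quotient categories} then delivers the desired covering functor $kQ/I\to kQ'/I'$.
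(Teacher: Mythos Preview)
Your proof is correct and follows essentially the same route as the paper: apply Proposition \ref{covering of quotient categories} to $kf:kQ\to kQ'$ with $R,R'$ the minimal and zero relations, using Lemma \ref{covering of quivers induces covering functor} and Lemma \ref{preserve-minimal-relation}. In fact you are more careful than the paper, which simply asserts that conditions $(2)$ and $(3)$ follow from Definition \ref{morphism-and-covering-of-quivers-with-relations}, whereas you explicitly verify the ``upgrade'' that the lifted $\rho\in I$ provided by that definition is itself a minimal or zero relation.
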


\begin{proof}
By Lemma \ref{covering of quivers induces covering functor}, $f$ induces a covering functor $kf:kQ\rightarrow kQ'$ of associated path categories. Let $R$ (resp. $R'$) be the set of minimal relations together with zero relations in $I$ (resp. $I'$). Then $R$ (resp. $R'$) generated $I$ (resp. $I'$). According to Lemma \ref{preserve-minimal-relation}, for every $\rho\in R$, we have $(kf)(\rho)\in R'$, so condition $(1)$ in Proposition \ref{covering of quotient categories} holds. Since $f:(Q,I)\rightarrow (Q',I')$ is a covering of quivers with relations, condition $(2)$ and condition $(3)$ in Proposition \ref{covering of quotient categories} also hold. Therefore $f$ induces a covering functor $kQ/I\rightarrow kQ'/I'$.
\end{proof}

For an f-BC $E=(E,P,L,d)$, recall from Definition \ref{relation R} that we have defined a set of paths $\mathscr{E}$ of $Q_E$ and a relation $R$ on $\mathscr{E}$. When $E$ is an $f_s$-BC, it follows from \cite[Lemma 4.14]{LL} that $R$ is an equivalence relation on $\mathscr{E}$, and according to \cite[Lemma 4.16]{LL}, each zero relation of $I_E$ is a relation of type $(fR2)$ or $(fR3)$ in Definition \ref{f-BC algebra}, and each minimal relation of $I_E$ is a relation of the form $\sum_{i=1}^{n}\lambda_i u_i$, where $\lambda_i\in k^{*}$ with $\sum_{i=1}^{n}\lambda_i=0$ and $\sum_{i\in K}\lambda_i\neq 0$ for any proper subset $K$ of $\{1,\cdots,n\}$, and $u_i$'s are pairwise distinct paths of $Q_E$ in $\mathscr{E}$ with $u_i R u_j$ for all $1\leq i,j\leq n$.

\begin{Lem}\label{L(e)=L(h)}
Let $E=(E,P,L,d)$, $E'=(E',P',L',d')$ be f-BCs, $\phi:E\rightarrow E'$ be a covering. For $e$, $h\in E$, if $P(g\cdot e)=P(g\cdot h)$ and $L'(\phi(e))=L'(\phi(h))$, then $L(e)=L(h)$.
\end{Lem}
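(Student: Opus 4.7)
The plan is to lift the equality $L'(\phi(e))=L'(\phi(h))$ back to $E$ along the covering $\phi$, and then use the $G$-translate polygon $P(g\cdot e)=P(g\cdot h)$ to force two preimages of $\phi(h)$ to coincide.

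First, since $\phi$ is a covering it induces a bijection $L(e)\rightarrow L'(\phi(e))$. The hypothesis $L'(\phi(e))=L'(\phi(h))$ means $\phi(h)\in L'(\phi(e))$, so there is a unique $h'\in L(e)$ with $\phi(h')=\phi(h)$. The goal will then be reduced to showing $h=h'$: once this is established, $L(h)=L(h')=L(e)$.

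Next, I apply axiom $(f2)$ in $E$ to $h'$: since $L(h')=L(e)$, we get $P(g\cdot h')=P(g\cdot e)$. Combined with the given equality $P(g\cdot e)=P(g\cdot h)$, this produces the chain
\[
P(g\cdot h')=P(g\cdot e)=P(g\cdot h),
\]
so both $g\cdot h$ and $g\cdot h'$ lie in the single polygon $P(g\cdot h)$ of $E$.

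Finally, $\phi$ is $G$-equivariant, so $\phi(g\cdot h)=g\cdot \phi(h)=g\cdot \phi(h')=\phi(g\cdot h')$. Because $\phi$ is a covering it induces a bijection (in particular an injection) $P(g\cdot h)\rightarrow P'(\phi(g\cdot h))$, and since $g\cdot h$ and $g\cdot h'$ both sit in $P(g\cdot h)$ and have the same image under $\phi$, they must coincide. Cancelling $g$ (which acts as a permutation on $E$) gives $h=h'$, and therefore $L(e)=L(h)$. There is no real obstacle here; the only non-obvious move is to pass to the $g$-translates in order to exploit the polygon equality that is actually given, rather than trying to work directly in $P(e)$ where no such equality is available.
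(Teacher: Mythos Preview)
Your proof is correct and follows essentially the same idea as the paper's: both arguments push the problem into the polygon $P(g\cdot e)=P(g\cdot h)$ via axiom $(f2)$ and then use that the covering $\phi$ is injective on polygons. The only cosmetic difference is that the paper works set-wise (showing $\phi(g\cdot L(e))=\phi(g\cdot L(h))$ as subsets of $P'(\phi(g\cdot e))$ and deducing $g\cdot L(e)=g\cdot L(h)$), whereas you work element-wise by lifting $\phi(h)$ to a single $h'\in L(e)$ and showing $h'=h$.
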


\begin{proof}
For each $e'\in L(e)$ we have $P(g\cdot e')=P(g\cdot e)$. Therefore $g\cdot L(e)\subseteq P(g\cdot e)$ (resp. $g\cdot L(h)\subseteq P(g\cdot h)$). Since $\phi$ is a covering, it maps $L(e)$ bijectively onto $L'(\phi(e))$. Then $\phi(g\cdot L(e))=g\cdot\phi(L(e))=g\cdot L'(\phi(e))$ (resp. $\phi(g\cdot L(h))=g\cdot\phi(L(h))=g\cdot L'(\phi(h))$). Since $L'(\phi(e))=L'(\phi(h))$, we have $\phi(g\cdot L(e))=g\cdot L'(\phi(e))=g\cdot L'(\phi(h))=\phi(g\cdot L(h))$. Since $g\cdot L(e)$, $g\cdot L(h)$ are subsets of $P(g\cdot e)=P(g\cdot h)$, and since $\phi$ maps $P(g\cdot e)$ bijectively onto $P'(\phi(g\cdot e))$, we have $g\cdot L(e)=g\cdot L(h)$. Therefore $L(e)=L(h)$.
\end{proof}

\begin{Thm}\label{covering of f-BCs induces covering functor}
Let $E=(E,P,L,d)$, $E'=(E',P',L',d')$ be connected $f_s$-BCs, $\phi:E\rightarrow E'$ be a covering. Then $\phi$ induces a covering $f:(Q_E,I_E)\rightarrow (Q_{E'},I_{E'})$ of quivers with relations. Especially, $\phi$ induces a covering functor $F:\Lambda_{E}\rightarrow\Lambda_{E'}$.
\end{Thm}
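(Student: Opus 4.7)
The plan is to define the induced map $f\colon Q_E\to Q_{E'}$ by $P(e)\mapsto P'(\phi(e))$ on vertices and $L(e)\mapsto L'(\phi(e))$ on arrows. Well-definedness is immediate because $\phi$ preserves the partitions $P$ and $L$, and $f$ is a covering of quivers because the arrows of $Q_E$ starting (resp.\ ending) at $P(e)$ correspond to the $L$-classes contained in $P(e)$ (resp.\ in $g^{-1}\cdot P(e)$), and the bijection $\phi\colon P(e)\to P'(\phi(e))$ refines to a bijection of these $L$-classes with the analogous $L'$-classes, by $G$-equivariance and the covering hypothesis on $\phi$.

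Next I would verify that $(kf)(I_E)\subseteq I_{E'}$ by checking the three generating families $(fR1)$, $(fR2)$, $(fR3)$ of $I_E$. Relations of types $(fR1)$ and $(fR3)$ map to relations of the same type in $E'$ because $\phi$ preserves $P$, $L$ and $d$. For $(fR2)$, assume $\bigcap_i g^{n-i}\cdot L(e_i)=\emptyset$; if the image $L'(\phi(e_n))\cdots L'(\phi(e_1))$ were nonzero in $\Lambda_{E'}$, then it would represent a standard sequence $(g^{n-1}\cdot h',\ldots,h')$ of $E'$, and lifting $h'$ to some $h\in P(e_1)$ with $\phi(h)=h'$ and inductively applying the $L$-class bijection $\phi|_{P(e_i)}$ together with condition $(f2)$ would force $L(g^{i-1}\cdot h)=L(e_i)$ for all $i$, so $g^{n-1}\cdot h\in\bigcap_i g^{n-i}\cdot L(e_i)$---a contradiction.

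The heart of the theorem is the lifting property in Definition \ref{morphism-and-covering-of-quivers-with-relations}. Zero relations of type $(fR2)$ or $(fR3)$ in $I_{E'}$ can be lifted entry-by-entry via the polygon bijections $\phi|_{P(e)}$, and the outcome stays in $I_E$ by the reverse of the previous argument. For a minimal relation $\rho'=\sum_i\lambda_i L'(p_i')$, recall from \cite[Lemma~4.16]{LL} that $\prescript{\wedge}{}{p_i'}\equiv\prescript{\wedge}{}{p_j'}$ for all $i,j$. Given a prescribed vertex $a\in f^{-1}(a')$, first lift $p_1'$ to a standard sequence $p_1$ with $s(p_1)\in a$ and set $b=P(t(p_1))$. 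Then apply Lemma \ref{bijection} to $\prescript{\wedge}{}{p_1}$: the resulting bijection $[\prescript{\wedge}{}{p_1}]\to[\prescript{\wedge}{}{p_1'}]$ pulls each $\prescript{\wedge}{}{p_j'}$ back to some $q_j\in[\prescript{\wedge}{}{p_1}]$, and one then defines $p_j$ as the unique standard sequence in $E$ with $\prescript{\wedge}{}{p_j}=q_j$. The critical point is that members of $[\prescript{\wedge}{}{p_1}]$ share the same source polygon and the same terminal polygon (by iterated use of $(f2)$), which forces $P(t(p_j))=P(s(q_j))=b$, and applying $(f4)$ to the Nakayama automorphism via $t(q_j)=\sigma(s(p_j))$ forces $P(s(p_j))=a$. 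Then the paths $u_j=L(p_j)\in Q_E(a,b)$ satisfy $f(u_j)=L'(p_j')$ and are pairwise $R$-related, so $\rho:=\sum_j\lambda_j u_j$ lies in $I_E(a,b)$ (because $\sum_j\lambda_j=0$ and all $u_j$ coincide in $\Lambda_E$) and maps to $\rho'$ under $kf$. The dual case of a prescribed target $b\in f^{-1}(b')$ is handled symmetrically, lifting through $[(p_1)^{\wedge}]$ instead of $[\prescript{\wedge}{}{p_1}]$ and invoking condition $(f5)$ to convert the identity $(p_j)^{\wedge}\equiv(p_1)^{\wedge}$ into $\prescript{\wedge}{}{p_j}\equiv\prescript{\wedge}{}{p_1}$, since $L(\prescript{\wedge}{}{q})$ and $L((q)^{\wedge})$ differ only by coordinatewise application of the Nakayama automorphism.

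Once $f\colon(Q_E,I_E)\to(Q_{E'},I_{E'})$ is established as a covering of quivers with relations, Corollary \ref{Cor} immediately yields the induced covering functor $F\colon\Lambda_E\to\Lambda_{E'}$. The main obstacle in the whole argument is the coordination of sources and targets when lifting a minimal relation, and it is precisely at this step that the $f_s$-BC structure---through Lemma \ref{bijection} and the Nakayama compatibility furnished by $(f4)$ and $(f5)$---is essential.
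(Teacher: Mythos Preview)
Your proposal is correct and follows the same overall architecture as the paper's proof: define $f$ on vertices and arrows via $\phi$, verify it is a covering of quivers, check $(kf)(I_E)\subseteq I_{E'}$ on the generating relations, and then establish the lifting property for zero and minimal relations so that Corollary~\ref{Cor} applies.

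The one substantive divergence is in the mechanism for lifting a minimal relation $\rho'=\sum_i\lambda_i L'(p_i')$ to a prescribed fibre vertex. The paper lifts each path $u_i'=L'(p_i')$ directly to the unique path $u_i$ of $Q_E$ with the given source (using that $f$ is a covering of quivers), and then verifies $u_pRu_q$ by an induction based on Lemma~\ref{L(e)=L(h)}: from $P(g^{d(e)}\cdot e)=P(g^{d(h)}\cdot h)$ and $L'(\phi(g^{d(e)-i}\cdot e))=L'(\phi(g^{d(h)-i}\cdot h))$ one deduces $L(g^{d(e)-i}\cdot e)=L(g^{d(h)-i}\cdot h)$ step by step. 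Your route is instead to invoke Lemma~\ref{bijection} once on $[\prescript{\wedge}{}{p_1}]$, which hands you all the $\prescript{\wedge}{}{p_j}$ simultaneously and with the identity $\prescript{\wedge}{}{p_j}\equiv\prescript{\wedge}{}{p_1}$ already built in; you then only need the polygon bookkeeping via $(f2)$ and $(f4)$ to confirm $s(L(p_j))=a$ and $t(L(p_j))=b$. Both arguments are sound and of comparable length: the paper's is more elementary (it needs only Lemma~\ref{L(e)=L(h)}, not Lemma~\ref{bijection}), whereas yours packages the inductive step into a single application of an earlier lemma and makes the role of the Nakayama automorphism more visible. Note, incidentally, that in both approaches the $f_s$-hypothesis enters only through \cite[Lemma~4.16]{LL}, which supplies the structural form of minimal relations in $I_{E'}$; Lemma~\ref{bijection} itself holds for arbitrary f-BCs.
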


\begin{proof}
Let $f:Q_E \rightarrow Q_{E'}$ be the quiver morphism which maps each vertex $P(e)$ of $Q_E$ to the vertex $P'(\phi(e))$ of $Q_{E'}$ and maps each arrow $L(e)$ of $Q_E$ to the arrow $L'(\phi(e))$ of $Q_{E'}$. We need to show that $f$ is a covering of quivers.

For each vertex $P(e)$ of $Q_E$, the set of arrows of $Q_E$ starting at $P(e)$ is $\{L(h)\mid h\in P(e)\}$, and the set of arrows of $Q_{E'}$ starting at $P'(\phi(e))$ is $\{L'(h')\mid h'\in P'(\phi(e))\}$. Since $\phi$ is a covering of f-BCs, it induces bijections $P(h)\rightarrow P'(\phi(h))$ and $L(h)\rightarrow L'(\phi(h))$ for all $h\in E$. Therefore $f$ maps the set of arrows starting at $P(e)$ bijectively onto the set of arrows starting at $f(P(e))=P'(\phi(e))$. Let $L(h_1)$, $L(h_2)$ be two arrows of $Q_E$ ending at $P(e)$ such that $L'(\phi(h_1))=L'(\phi(h_2))$. Since $g\cdot h_1$, $g\cdot h_2\in P(e)$, by Lemma \ref{L(e)=L(h)}, $L(h_1)=L(h_2)$. For each arrow $L'(h')$ of $Q_{E'}$ ending at $P'(\phi(e))$, we have $g\cdot h'\in P'(\phi(e))$. Since $\phi$ is a covering of f-BCs, there exists $\widetilde{h}\in P(e)$ such that $\phi(\widetilde{h})=g\cdot h'$. Let $h=g^{-1}\cdot\widetilde{h}$. Then $L(h)$ is an arrow of $Q_E$ ending at $P(e)$ and $f(L(h))=L'(\phi(h))=L'(h')$. Therefore $f$ maps the set of arrows of $Q_E$ ending at $P(e)$ bijectively onto the set of arrows of $Q_{E'}$ ending at $f(P(e))=P'(\phi(e))$.

It is straightforward to show that the functor $kf:kQ_E\rightarrow kQ_{E'}$ maps each morphism in $I_E$ to a morphism in $I_{E'}$. Let $C$ (resp. $C'$) be the set of minimal relations together with zero relations of $I_E$ (resp. $I_{E'})$. To show that $f:(Q_E,I_E)\rightarrow (Q_{E'},I_{E'})$ is a covering of quivers with relations, it suffices to show that the following conditions hold: for each $\rho'\in C'$ starting (resp. ending) at $a\in (Q_{E'})_0$ and for each $x\in (Q_{E})_0$ with $f(x)=a$, there exists $\rho\in C$ starting (resp. ending) at $x$ such that $(kf)(\rho)=\rho'$.

Let $\rho'=L'(e'_n)\cdots L'(e'_2)L'(e'_1)\in C'$ be a zero relation of $I_{E'}$ starting at $P'(e')\in (Q_{E'})_0$, where $P'(g\cdot e'_i)=P'(e'_{i+1})$ for each $1\leq i\leq n-1$. For any $P(e)\in (Q_{E})_0$ such that $(kf)(P(e))=P'(e')$, since $f$ is a covering of quivers, there exists a unique path $\rho=L(e_n)\cdots L(e_2)L(e_1)$ of $Q_E$ starting at $P(e)$, such that
$$kf(L(e_n)\cdots L(e_2)L(e_1))=L'(e'_n)\cdots L(e'_2)L(e'_1).$$
If $\rho'$ is a relation of $I_{E'}$ of type $(fR3)$, it is straightforward to show that $\rho$ is a relation of $I_{E}$ of type $(fR3)$. If $\rho'$ is a relation of $I_{E'}$ of type $(fR2)$, then $\bigcap_{i=1}^n g^{n-i}\cdot L'(e'_i)=\emptyset$. Suppose that $\rho$ is not a relation of $I_E$ of type $(fR2)$, then there exists $h\in E$ such that
$$L(e_n)\cdots L(e_2)L(e_1)=L(g^{n-1}\cdot h)\cdots L(g\cdot h)L(h).$$
We have
$$L'(g^{i-1}\cdot\phi(h))=L'(\phi(g^{i-1}\cdot h))=f(L(g^{i-1}\cdot h))=f(L(e_i))=L'(e'_i)$$
for each $1\leq i\leq n$. Then $g^{n-1}\cdot\phi(h)\in\bigcap_{i=1}^n g^{n-i}\cdot L'(e'_i)$, which contradicts $\bigcap_{i=1}^n g^{n-i}\cdot L'(e'_i)=\emptyset$. Therefore $\rho$ is a relation of $I_E$ of type $(fR2)$. If $\rho'=L'(e'_n)\cdots L(e'_2)L(e'_1)\in C'$ is a zero relation of $I_{E'}$ ending at $P'(e')\in (Q_{E'})_0$, then for each $P(e)\in (Q_{E})_0$ such that $(kf)(P(e))=P'(e')$, it can be shown similarly that there exists a zero relation $\rho=L(e_n)\cdots L(e_2)L(e_1)\in C$ ending at $P(e)$ such that
$(kf)(\rho)=\rho'$.

Denote
$$\mathscr{E}=\{L(p)\mid p\mbox{ is a standard sequence of }E\}$$
and
$$\mathscr{E}'=\{L'(p')\mid p'\mbox{ is a standard sequence of }E'\},$$
and let $R$ (resp. $R'$) be the equivalence relation on $\mathscr{E}$ (resp. $\mathscr{E}'$) given in Definition \ref{relation R}. Let $\rho'=\sum_{i=1}^{n}\lambda_i u'_i\in C'$ be a minimal relation of $I_{E'}$ starting at $P'(x')\in (Q_{E'})_0$, where $\lambda_i\in k^{*}$ with $\sum_{i=1}^{n}\lambda_i=0$ and $\sum_{i\in K}\lambda_i\neq 0$ for any proper subset $K$ of $\{1,\cdots,n\}$ and $u'_i$'s are pairwise distinct paths of $Q_{E'}$ in $\mathscr{E}'$ with $u'_i R' u'_j$ for all $1\leq i,j\leq n$. Let $P(x)$ be a vertex of $Q_{E}$ such that $f(P(x))=P'(x')$, and let $u_i$ be the unique path of $Q_E$ starting at $P(x)$ such that $f(u_i)=u'_i$. We need to show that $\rho=\sum_{i=1}^{n}\lambda_i u_i$ is a minimal relation of $I_{E}$ starting at $P(x)$, that is, to show that each $u_i$ is a path of $Q_E$ in $\mathscr{E}$ and $u_p R u_q$ for all $1\leq p,q\leq n$.

For any $1\leq p,q\leq n$, since $u'_p R' u'_q$, $u'_p-u'_q$ is a relation of $I_{E'}$ of type $(fR1)$. Assume that
$$u'_p=L'(g^{d'(e')-1-k}\cdot e')\cdots L'(g\cdot e')L'(e')$$
and
$$u'_q=L'(g^{d'(h')-1-k}\cdot h')\cdots L'(g\cdot h')L'(h'),$$
where $P'(e')=P'(h')=P'(x')$, $0\leq k<\mathrm{min}\{d'(e'),d'(h')\}$ and $L'(g^{d'(e')-i}\cdot e')=L'(g^{d'(h')-i}\cdot h')$ for each $1\leq i\leq k$. Since $\phi$ induces a bijection $P(x)\rightarrow P'(x')$, there exist $e,h\in P(x)$ such that $\phi(e)=e'$ and $\phi(h)=h'$. We have
$$u_p=L(g^{d(e)-1-k}\cdot e)\cdots L(g\cdot e)L(e)$$
and
$$u_q=L(g^{d(h)-1-k}\cdot h)\cdots L(g\cdot h)L(h).$$
So $u_p,u_q$ are paths of $Q_E$ in $\mathscr{E}$. Since $P(e)=P(h)$, $P(g^{d(e)}\cdot e)=P(g^{d(h)}\cdot h)$. Since \begin{multline*}L'(\phi(g^{d(e)-1}\cdot e))=L'(g^{d'(\phi(e))-1}\cdot\phi(e))=L'(g^{d'(e')-1}\cdot e') \\ =L'(g^{d'(h')-1}\cdot h')=L'(g^{d'(\phi(h))-1}\cdot\phi(h))=L'(\phi(g^{d(h)-1}\cdot h)),\end{multline*}
by Lemma \ref{L(e)=L(h)}, $L(g^{d(e)-1}\cdot e)=L(g^{d(h)-1}\cdot h)$. By induction, $L(g^{d(e)-i}\cdot e)=L(g^{d(h)-i}\cdot h)$ for all $1\leq i\leq k$. Therefore $u_p-u_q$ is a relation of $I_E$ of type $(fR1)$, and $u_p R u_q$.

Let $\rho'\in C'$ be a minimal relation of $I_{E'}$ ending at $P'(x')\in (Q_{E'})_0$ and let $P(x)$ be a vertex of $Q_{E}$ with $f(P(x))=P'(x')$. It can be shown similarly that there exists a minimal relation $\rho\in C$ of $I_{E}$ ending at $P(x)$ such that $(kf)(\rho)=\rho'$.
\end{proof}

Recall from Definition \ref{admissible} that a group $\Pi$ of automorphisms of an f-BC $E$ is said to act admissibly on $E$ if each $\Pi$-orbit of $E$ meets each polygon of $E$ in at most one angle.

\begin{Lem}\label{act-freely-on-quiver}
Let $E=(E,P,L,d)$ be a connected f-BC and $\Pi$ be an admissible group of automorphisms of $E$. For any $\phi\in\Pi$, if the automorphism $f$ of $Q_E$ induced by $\phi$ satisfies $f(a)=a$ for some $a\in (Q_E)_0$, then $\phi=id_{E}$.
\end{Lem}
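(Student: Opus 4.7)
The plan is to exploit the admissibility hypothesis to pin down $\phi$ on a single polygon, and then propagate the conclusion through $E$ by connectedness via walks.

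First, I would unpack what $f(a)=a$ means at the level of $E$. Since $a\in(Q_E)_0$, we have $a=P(e)$ for some $e\in E$, and $f$ sends $P(e)$ to $P(\phi(e))$, so the hypothesis is $P(\phi(e))=P(e)$. Because $\phi\in\Pi$, the angles $\phi(e)$ and $e$ lie in a common $\Pi$-orbit; since they also lie in the same polygon $P(e)$, admissibility (Definition \ref{admissible}) forces $\phi(e)=e$. Moreover, for any other angle $h\in P(e)$, we have $P(\phi(h))=\phi(P(h))=\phi(P(e))=P(\phi(e))=P(e)=P(h)$ (using that $\phi$, being an automorphism of f-BCs, maps $P(h)$ bijectively onto $P(\phi(h))$); again by admissibility, $\phi(h)=h$. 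Thus $\phi$ fixes every angle of the polygon $P(e)$ pointwise.

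Next, I would propagate this to all of $E$ using connectedness. For an arbitrary angle $h\in E$, choose a walk
$$h=h_{n}\frac{\delta_{n}}{}h_{n-1}\frac{\delta_{n-1}}{}\cdots\frac{\delta_{1}}{}h_{0}$$
with $h_0=e$, where each $\delta_i\in\{g,g^{-1},\tau\}$. I will show by induction on $i$ that $\phi(h_i)=h_i$. For $i=0$ this is Step 1. For the inductive step, assuming $\phi(h_{i-1})=h_{i-1}$: if $\delta_i=g^{\pm 1}$, then $\phi(h_i)=g^{\pm 1}\cdot\phi(h_{i-1})=g^{\pm 1}\cdot h_{i-1}=h_i$ since $\phi$ is $G$-equivariant; if $\delta_i=\tau$, then $P(h_i)=P(h_{i-1})$, and the same computation as in Step 1 yields $P(\phi(h_i))=\phi(P(h_i))=\phi(P(h_{i-1}))=P(h_{i-1})=P(h_i)$, so admissibility again gives $\phi(h_i)=h_i$. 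Connectedness of $E$ then yields $\phi=\mathrm{id}_E$.

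There is no serious obstacle: the argument is entirely driven by the admissibility hypothesis combined with $G$-equivariance of $\phi$ and the fact that coverings induce bijections on each $P(\cdot)$. The only point requiring slight care is verifying the identity $\phi(P(h))=P(\phi(h))$ used twice above, which follows because $\phi$ is an isomorphism (in particular a covering) of f-BCs.
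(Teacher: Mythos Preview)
Your proof is correct and follows essentially the same approach as the paper: use admissibility to get $\phi(e)=e$ from $f(P(e))=P(e)$, then propagate along a walk by induction, treating the $g^{\pm1}$-steps via $G$-equivariance and the $\tau$-steps via admissibility again. The only difference is that you additionally verify $\phi$ fixes every angle of $P(e)$, which is harmless but unnecessary since the base case $\phi(e)=e$ already suffices for the induction.
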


\begin{proof}
Suppose that $a=P(e)$ for some $e\in E$. Since $f(a)=a$, $\phi(e)\in P(e)$. Since $\Pi$ is admissible, we have $\phi(e)=e$. For every $h\in E$, since $E$ is connected, there exists a walk $w$ of $E$ from $e$ to $h$, we will show that $\phi(h)=h$ by induction on the length $l(w)$ of $w$. If $l(w)=0$, then $w$ is trivial and $\phi(h)=h$. If $l(w)>0$, let $w=(h|\delta|h')w'$, where $w'$ is a walk of $E$ from $e$ to $h'$ and $\delta\in\{g,g^{-1},\tau\}$. By induction we have $\phi(h')=h'$. If $\delta=g$ or $g^{-1}$, $\phi(h)=\phi(\delta(h'))=\delta(\phi(h'))=\delta(h')=h$. If $\delta=\tau$, then $h\in P(h')$. Since $\phi$ is an automorphism of $E$, $\phi(h)\in P(\phi(h'))=P(h')$. Since $\Pi$ is admissible, $\phi(h)=h$. Therefore $\phi=id_{E}$.
\end{proof}

Recall that a group $\Pi$ of $(Q,I)$-automorphisms is said to act freely on $Q$ if for every $f\in\Pi$ and for every $a\in Q_0$, $f(a)=a$ implies $f=id_{Q}$.

\begin{Lem}\label{admissible-free}
Let $E$ be a connected f-BC and let $\Pi$ be an admissible group of automorphisms of $E$. Then $\Pi$ induces a group $\Pi'$ of $(Q_E,I_E)$-automorphisms which acts freely on $Q_E$.
\end{Lem}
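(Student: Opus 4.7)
The plan is to produce a group homomorphism $\Pi\to\mathrm{Aut}(Q_E,I_E)$ from $\phi\in\Pi$ to a $(Q_E,I_E)$-automorphism $f_\phi$, let $\Pi'$ be its image, and invoke Lemma~\ref{act-freely-on-quiver} to conclude that $\Pi'$ acts freely on $Q_E$.

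First I would define $f_\phi$ on vertices and arrows by $f_\phi(P(e))=P(\phi(e))$ and $f_\phi(L(e))=L(\phi(e))$. Well-definedness is the only thing that needs a brief check: if $P(e_1)=P(e_2)$, then $\phi(e_1)\in\phi(P(e_1))=\phi(P(e_2))\subseteq P(\phi(e_2))$, and also $\phi(e_1)\in P(\phi(e_1))$, so $P(\phi(e_1))=P(\phi(e_2))$; the argument for $L$ is identical. That $f_\phi$ respects source and target uses $G$-equivariance of $\phi$, namely $s(L(\phi(e)))=P(\phi(e))=f_\phi(s(L(e)))$ and $t(L(\phi(e)))=P(g\cdot\phi(e))=P(\phi(g\cdot e))=f_\phi(t(L(e)))$. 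Bijectivity of $f_\phi$ on vertices and arrows follows because $\phi^{-1}\in\Pi$ gives the inverse map.

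Next I would verify that the induced $k$-linear functor $kf_\phi$ sends $I_E$ into itself, so that $f_\phi$ is an automorphism of the quiver with relations $(Q_E,I_E)$. This is a routine inspection of the three types of generators $(fR1)$, $(fR2)$, $(fR3)$ of $I_E$ recalled in Definition~\ref{f-BC algebra}: each of them is expressed purely in terms of the partitions $P$, $L$ and the degree $d$, all of which are preserved by a morphism of f-BCs (Definition~\ref{morphism and covering of pre-configurations}), so applying $\phi$ to the data defining a relation produces a relation of the same type. Hence $f_\phi\in\mathrm{Aut}(Q_E,I_E)$. Since $\phi\mapsto f_\phi$ is obviously a group homomorphism (composition of automorphisms corresponds to composition of the induced quiver maps), setting $\Pi':=\{f_\phi\mid \phi\in\Pi\}$ yields a group of $(Q_E,I_E)$-automorphisms.

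Finally, to see that $\Pi'$ acts freely on $Q_E$, let $f_\phi\in\Pi'$ satisfy $f_\phi(a)=a$ for some $a\in(Q_E)_0$. Then Lemma~\ref{act-freely-on-quiver} applied to $\phi\in\Pi$ forces $\phi=\mathrm{id}_E$, and therefore $f_\phi=\mathrm{id}_{Q_E}$. The only step that requires any real attention is showing $kf_\phi(I_E)\subseteq I_E$; everything else is formal, and even that step is a direct unwinding of the generators of $I_E$, so I do not anticipate a genuine obstacle.
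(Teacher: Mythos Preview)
Your proposal is correct and follows exactly the same approach as the paper: construct the induced $(Q_E,I_E)$-automorphism $f_\phi$ and then apply Lemma~\ref{act-freely-on-quiver} to obtain freeness. The paper's proof simply declares the first part ``obvious'' and moves directly to the invocation of Lemma~\ref{act-freely-on-quiver}, whereas you spell out the well-definedness and the preservation of $I_E$, but there is no substantive difference.
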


\begin{proof}
It is obvious that every automorphism of $E$ induces a $(Q_E,I_E)$-automorphism. For $\phi\in\Pi$, denote by $f\in\Pi'$ the $(Q_E,I_E)$-automorphism induced by $\phi$. If $f(a)=a$ for some $a\in (Q_E)_0$, by Lemma \ref{act-freely-on-quiver}, $\phi=id_{E}$. Then $f=id_{Q_E}$.
\end{proof}

Let $\Pi$ be a group of $(Q,I)$-automorphisms which acts freely on $Q$. Then we can define the {\it orbit quiver} $\overline{Q}:=Q/\Pi$. For any vertex $x$ of $Q$, we denote by $\overline{x}$ the $\Pi$-orbit of $x$. Moreover, let $\overline{I}$ be the image of $I$ under the natural projection $kQ\rightarrow k\overline{Q}$. That is, for every two vertices $\overline{x},\overline{y}$ of $\overline{Q}$, $\overline{I}(\overline{x},\overline{y})$ is the $k$-vector space generated by morphisms $\overline{\rho}$, where $\rho\in I(a,b)$ with $a\in\overline{x}$, $b\in\overline{y}$ and $\overline{\rho}$ denotes the image of $\rho$ under the natural projection $kQ\rightarrow k\overline{Q}$.

\begin{Def} {\rm(\cite[Section 1]{MV-DLP})} \label{galois-covering}
A morphism $f:(Q,I)\rightarrow (Q',I')$ of quivers with relations is said to be a Galois covering if there exist a group $\Pi$ of $(Q,I)$-automorphisms which acts freely on $Q$ and an isomorphism $\nu:(Q/\Pi,\overline{I})\xrightarrow{\sim}(Q',I')$ of quivers with relations such that the diagram
$$\xymatrix{
		& (Q,I) \ar[dr]^{f}\ar[dl]_{\pi} &  \\
		(Q/\Pi,\overline{I})\ar[rr]_{\nu}^{\sim} & & (Q',I')
	}$$
commutes, where $\pi:(Q,I)\rightarrow (Q/\Pi,\overline{I})$ is the natural projection.
\end{Def}

According to \cite[Proposition 1.4]{MV-DLP}, a Galois covering of quivers with relations is a covering of quivers with relations in the sense of Definition \ref{morphism-and-covering-of-quivers-with-relations}.

\begin{Lem}\label{covering-of-quivers-with-relations}
Let $E,E'$ be connected f-BCs and let $\phi:E\rightarrow E'$ be a regular covering. Then $\phi$ induces a Galois covering $(Q_E,I_E)\rightarrow (Q_{E'},I_{E'})$ of quivers with relations with group $\mathrm{Aut}(\phi)$.
\end{Lem}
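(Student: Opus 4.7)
The plan is to take $\Pi := \mathrm{Aut}(\phi)$ as the candidate Galois group. By Theorem~\ref{regular-covering-explicit}, $\Pi$ acts admissibly on $E$ and there exists an isomorphism of f-BCs $r : E/\Pi \xrightarrow{\sim} E'$ such that $\phi = r \circ p$, where $p : E \to E/\Pi$ is the natural projection. By Lemma~\ref{admissible-free}, the action of $\Pi$ on $E$ induces a group $\Pi'$ of $(Q_E,I_E)$-automorphisms which acts freely on $Q_E$; moreover, the natural map $\Pi \to \Pi'$ is a group isomorphism, since its kernel consists of automorphisms of $E$ that induce the identity on $Q_E$, forcing them to fix every polygon and thus, by admissibility plus connectedness (cf.\ the argument in Lemma~\ref{act-freely-on-quiver}), to equal $\mathrm{id}_E$.

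Next I would construct the required isomorphism as a composite $\nu = \nu_2 \circ \nu_1$, where
\[
\nu_1 : (Q_E/\Pi', \overline{I_E}) \xrightarrow{\sim} (Q_{E/\Pi}, I_{E/\Pi}), \qquad \nu_2 : (Q_{E/\Pi}, I_{E/\Pi}) \xrightarrow{\sim} (Q_{E'}, I_{E'}).
\]
The map $\nu_2$ is simply the isomorphism induced by $r$, which is immediate from the functoriality of $E \mapsto (Q_E,I_E)$. For $\nu_1$, note that by Lemma~\ref{projection is a covering} the projection $p$ is a regular covering of f-BCs, so by Theorem~\ref{covering of f-BCs induces covering functor} it induces a covering of quivers with relations $(kp) : (Q_E,I_E) \to (Q_{E/\Pi},I_{E/\Pi})$. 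Unwinding the definitions of $Q_E$ and $Q_{E/\Pi}$, together with admissibility of the $\Pi$-action, shows that two vertices $P(e_1), P(e_2)$ (resp.\ arrows $L(e_1), L(e_2)$) of $Q_E$ are identified by $kp$ if and only if they lie in the same $\Pi'$-orbit. This gives a bijection on vertices and on arrows, hence an isomorphism of quivers $Q_E/\Pi' \xrightarrow{\sim} Q_{E/\Pi}$.

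The main obstacle I expect is to verify that $\nu_1$ identifies $\overline{I_E}$ with $I_{E/\Pi}$. The inclusion $\nu_1(\overline{I_E}) \subseteq I_{E/\Pi}$ is immediate, since by definition $\overline{I_E}$ is the image of $I_E$ under the projection $kQ_E \to k(Q_E/\Pi')$, and $(kp)(I_E) \subseteq I_{E/\Pi}$. The reverse inclusion is the delicate part: one needs that $I_{E/\Pi}$ is generated by images, under $kp$, of morphisms in $I_E$. This is supplied precisely by the covering property in Definition~\ref{morphism-and-covering-of-quivers-with-relations}: every minimal relation or zero relation of $I_{E/\Pi}$ starting at a prescribed vertex lifts through $kp$ to such a relation of $I_E$. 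Since $I_{E/\Pi}$ is generated by these relations (this is what was used in proving Theorem~\ref{covering of f-BCs induces covering functor}), every generator of $I_{E/\Pi}$ lies in $\nu_1(\overline{I_E})$, giving equality.

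Finally, to conclude that $f$ is a Galois covering in the sense of Definition~\ref{galois-covering}, it remains to check commutativity of the diagram $f = \nu \circ \pi$, where $\pi : (Q_E,I_E) \to (Q_E/\Pi',\overline{I_E})$ is the canonical projection. But applying the functor $E \mapsto Q_E$ to the equality $\phi = r \circ p$ yields exactly $kf = (k r) \circ (kp)$, and tracing through the definitions of $\nu_1,\nu_2$ shows that $(kp)$ factors as $\nu_1 \circ \pi$, whence $kf = \nu_2 \circ \nu_1 \circ \pi = \nu \circ \pi$. This identifies $f$ with the Galois covering associated with the free action of $\Pi' \cong \mathrm{Aut}(\phi)$ on $(Q_E,I_E)$.
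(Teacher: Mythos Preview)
Your overall strategy matches the paper's: reduce via Theorem~\ref{regular-covering-explicit} to the projection $E\to E/\Pi$ with $\Pi=\mathrm{Aut}(\phi)$, invoke Lemma~\ref{admissible-free} to get the free action of $\Pi'$ on $Q_E$, and then identify $(Q_E/\Pi',\overline{I_E})$ with $(Q_{E/\Pi},I_{E/\Pi})$. The paper simply declares this last identification ``straightforward'' and leaves it at that; you supply an argument, which is commendable, but the argument you give has a genuine gap in the stated generality.

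The problem is your appeal to Theorem~\ref{covering of f-BCs induces covering functor} to obtain that $(kp):(Q_E,I_E)\to(Q_{E/\Pi},I_{E/\Pi})$ is a covering of quivers with relations in the sense of Definition~\ref{morphism-and-covering-of-quivers-with-relations}. That theorem is stated and proved only for connected $f_s$-BCs, whereas Lemma~\ref{covering-of-quivers-with-relations} is stated for general connected f-BCs. The proof of Theorem~\ref{covering of f-BCs induces covering functor} uses the characterization of minimal relations of $I_E$ from \cite[Lemma~4.16]{LL}, which is an $f_s$-result; without the $f_s$ hypothesis you have no such description of $m(I_{E/\Pi})$, and hence your lifting argument for the reverse inclusion $I_{E/\Pi}\subseteq\nu_1(\overline{I_E})$ does not go through as written.

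The fix is to bypass minimal relations entirely and argue directly with the \emph{defining} generators of $I_{E/\Pi}$, namely the relations of types $(fR1)$, $(fR2)$, $(fR3)$. Each such relation in $kQ_{E/\Pi}$ is written in terms of angles $[e]\in E/\Pi$ and their $g$-translates; choosing a representative $e\in E$ and using that $p$ is a covering (so $P(e)\to P'([e])$ and $L(e)\to L'([e])$ are bijections and degrees are preserved), one checks by hand that the relation lifts to a relation of the same type in $I_E$. This is presumably the ``straightforward'' verification the paper has in mind, and it requires no $f_s$ hypothesis. With this correction your proof is complete and agrees with the paper's.
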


\begin{proof}
According to Theorem \ref{regular-covering-explicit}, we may assume that $\phi$ is the natural projection $E\rightarrow E/\Pi$, where $\Pi$ is an admissible group of automorphisms of $E$. According to Lemma \ref{admissible-free}, $\Pi$ induces a group $\Pi'$ of $(Q_E,I_E)$-automorphisms which acts freely on $Q_E$. It is straightforward to show that the natural projection $(Q_E,I_E)\rightarrow (Q_{E/\Pi},I_{E/\Pi})$ induces an isomorphism $(Q_{E}/\Pi',\overline{I_E})\xrightarrow{\sim} (Q_{E/\Pi},I_{E/\Pi})$. Then $(Q_E,I_E)\rightarrow (Q_{E'},I_{E'})$ is a Galois covering of quivers with relations with group $\Pi'$, where $\Pi'\cong\Pi\cong\mathrm{Aut}(\phi)$.
\end{proof}

\begin{Def} {\rm(\cite[Section 3.1]{G})} \label{galois-covering-functor}
Let $M,N$ be locally bounded categories. A covering functor $E:M\rightarrow N$ is called a Galois covering if $E$ is surjective on objects and there exists a group $G$ of $k$-linear automorphisms of $M$ which acts freely on $M$, such that $E\circ g=E$ for any $g\in G$ and $G$ acts transitively on $E^{-1}(a)$ for each object $a$ of $N$.
\end{Def}

The following result is well-known.

\begin{Lem}\label{induce-Galois-covering}
Let $(Q,I)$, $(Q',I')$ be quivers with relations such that $kQ/I$, $kQ'/I'$ are locally bounded categories. A Galois covering of quivers with relations $f:(Q,I)\rightarrow (Q',I')$ with group $\Pi$ induces a Galois covering $kQ/I\rightarrow kQ'/I'$ of associated categories with the isomorphic group.
\end{Lem}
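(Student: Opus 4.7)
The plan is to unpack the two definitions and verify the four axioms of a Galois covering functor one by one, using the group $\Pi$ supplied by Definition \ref{galois-covering}. By Corollary \ref{Cor}, the Galois covering $f:(Q,I)\rightarrow (Q',I')$ already induces a covering functor $F:kQ/I\rightarrow kQ'/I'$ of the associated $k$-categories (both locally bounded by hypothesis), so what remains is to exhibit a group of $k$-linear automorphisms of $kQ/I$ which acts freely, commutes with $F$, and acts transitively on each fiber of $F$ on objects.

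By Definition \ref{galois-covering} we are given a group $\Pi$ of $(Q,I)$-automorphisms acting freely on $Q$, together with an isomorphism $\nu:(Q/\Pi,\overline{I})\xrightarrow{\sim}(Q',I')$ such that $f=\nu\pi$, where $\pi$ denotes the natural projection. Each $\phi\in\Pi$ is a quiver automorphism satisfying $(k\phi)(I)\subseteq I$, so it descends to a $k$-linear automorphism $\widetilde{\phi}$ of $kQ/I$; I would then set $G:=\{\widetilde{\phi}\mid\phi\in\Pi\}$ and note that the assignment $\phi\mapsto\widetilde{\phi}$ is a group homomorphism which is moreover injective by the freeness argument below.

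The four axioms of Definition \ref{galois-covering-functor} would then be verified as follows. (i) $F$ is surjective on objects since $f:Q\rightarrow Q'$ is surjective on vertices (being a covering). (ii) $G$ acts freely on $kQ/I$: if $\widetilde{\phi}(x)=x$ for some $x\in Q_0$, then $\phi(x)=x$ in $Q$, and freeness of the $\Pi$-action on $Q$ forces $\phi=\mathrm{id}_{Q}$, hence $\widetilde{\phi}=\mathrm{id}$. (iii) $F\widetilde{\phi}=F$ for each $\widetilde{\phi}\in G$, since $\pi\phi=\pi$ together with $f=\nu\pi$ yields $f\phi=f$, and this identity descends from $kQ$ to $kQ/I$. (iv) $G$ acts transitively on each fiber $F^{-1}(a)$: the vertex-fiber $f^{-1}(a)\subseteq Q_{0}$ equals $\pi^{-1}(\nu^{-1}(a))$, i.e. is a single $\Pi$-orbit in $Q_0$, so $G$ acts transitively on $F^{-1}(a)$.

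The main obstacle I anticipate is essentially bookkeeping rather than substantive difficulty: one must be careful that every element of $\Pi$ genuinely descends to a well-defined $k$-linear functor on the quotient $kQ/I$ (which is precisely why Definition \ref{galois-covering} demands $(Q,I)$-automorphisms rather than mere quiver automorphisms), and that the orbit-structure on vertices of $Q$ matches the fiber-structure of $F$ on objects of $kQ/I$. Once this dictionary between the quiver-theoretic language of Definition \ref{galois-covering} and the category-theoretic language of Definition \ref{galois-covering-functor} is carefully set up, the conclusion follows immediately from the four verifications above.
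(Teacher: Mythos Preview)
Your proposal is correct and follows essentially the same route as the paper: invoke Corollary~\ref{Cor} to obtain the covering functor, push the group $\Pi$ of $(Q,I)$-automorphisms down to a group $\widetilde{\Pi}$ of $k$-linear automorphisms of $kQ/I$, and then verify the axioms of Definition~\ref{galois-covering-functor} (free action, $F\circ\widetilde{\phi}=F$, transitivity on fibers). One small caveat: surjectivity of $f$ on vertices is not a consequence of ``being a covering'' in general, but follows here from the factorization $f=\nu\pi$ with $\pi$ the quotient projection and $\nu$ an isomorphism.
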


\begin{proof}
We may assume that $f$ is the natural projection $(Q,I)\rightarrow (\overline{Q},\overline{I})$, where $\overline{Q}=Q/\Pi$ with $\Pi$ a group of $(Q,I)$-automorphisms which acts freely on $Q$. Since a Galois covering of quivers with relations is a covering of quivers with relations, by Corollary \ref{Cor}, $f$ induces a covering functor $E:kQ/I\rightarrow k\overline{Q}/\overline{I}$ of associated categories.

Denote by $\widetilde{\Pi}$ the group of automorphisms of $kQ/I$ induced by $\Pi$. Since $\Pi$ acts freely on $Q$, we see that $\widetilde{\Pi}$ acts freely on $kQ/I$ and is isomorphic to $\Pi$. It is obvious that $E\circ g=E$ for any $g\in \widetilde{\Pi}$ and $\widetilde{\Pi}$ acts transitively on $E^{-1}(a)$ for each object $a$ of $k\overline{Q}/\overline{I}$. Therefore $E:kQ/I\rightarrow k\overline{Q}/\overline{I}$ is a Galois covering with group $\widetilde{\Pi}\cong\Pi$.
\end{proof}

Combining Lemma \ref{covering-of-quivers-with-relations} and Lemma \ref{induce-Galois-covering}, we have

\begin{Prop}\label{induce-Galois-covering-of-categories}
If $\phi: E\rightarrow E'$ is a regular covering of connected f-BCs, then $\phi$ induces a Galois covering $\Lambda_E\rightarrow \Lambda_{E'}$ of locally bounded categories with group Aut$(\phi)$.
\end{Prop}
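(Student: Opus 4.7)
The plan is to chain together the two preceding lemmas, viewing the proof as a two-step descent: first transport the covering data from f-BCs to quivers with relations, and then transport it from quivers with relations to $k$-categories. No new input should be required beyond the regularity of $\phi$ and the fact that each $\Lambda_E$ is a locally bounded $k$-category (which is \cite[Theorem 4.7]{LL}, already recalled in the paper).

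First, I would apply Lemma \ref{covering-of-quivers-with-relations} to $\phi$: since $\phi:E\to E'$ is a regular covering of connected f-BCs, it induces a Galois covering of quivers with relations $f:(Q_E,I_E)\to(Q_{E'},I_{E'})$ whose Galois group is $\mathrm{Aut}(\phi)$ (more precisely, the group $\Pi'$ of $(Q_E,I_E)$-automorphisms induced by $\mathrm{Aut}(\phi)$, which is isomorphic to $\mathrm{Aut}(\phi)$ by the construction in Lemma \ref{admissible-free} combined with Theorem \ref{regular-covering-explicit}). Here I am using the fact that a regular covering is, up to isomorphism, the quotient by an admissible group, so $\mathrm{Aut}(\phi)$ plays the role of $\Pi$.

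Next, I would apply Lemma \ref{induce-Galois-covering} to the Galois covering $f$. Since both $\Lambda_E=kQ_E/I_E$ and $\Lambda_{E'}=kQ_{E'}/I_{E'}$ are locally bounded $k$-categories by \cite[Theorem 4.7]{LL}, Lemma \ref{induce-Galois-covering} applies and produces a Galois covering of $k$-categories $F:\Lambda_E\to\Lambda_{E'}$ whose Galois group is the group of automorphisms of $\Lambda_E$ induced by the Galois group $\Pi'$ of $f$. Under the isomorphism $\Pi'\cong\mathrm{Aut}(\phi)$ from the previous paragraph, this Galois group is exactly $\mathrm{Aut}(\phi)$, which finishes the proof.

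There is essentially no obstacle here, since both technical lemmas have already been established; the only small bookkeeping point is to make sure that the group appearing in Definition \ref{galois-covering-functor} is identified with $\mathrm{Aut}(\phi)$ rather than with one of its avatars ($\Pi$ acting on $E$, or $\Pi'$ acting on $Q_E$), and this identification is immediate from Theorem \ref{regular-covering-explicit} and Lemma \ref{admissible-free}. The whole argument is therefore a one-line reduction: combine Lemma \ref{covering-of-quivers-with-relations} and Lemma \ref{induce-Galois-covering}, and track the Galois group through both steps.
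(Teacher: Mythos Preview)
Your proposal is correct and matches the paper's own proof exactly: the paper simply states that the proposition follows by combining Lemma \ref{covering-of-quivers-with-relations} and Lemma \ref{induce-Galois-covering}, which is precisely the two-step reduction you describe. Your additional care in tracking the identification of the Galois group with $\mathrm{Aut}(\phi)$ via Theorem \ref{regular-covering-explicit} and Lemma \ref{admissible-free} is more detail than the paper provides, but entirely in the same spirit.
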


\subsection{A universal cover of $f_s$-BCs induces a universal cover of the Gabriel quiver with admissible relations of the associated $f_s$-BCCs}
\

In the subsection, let $\phi: E\rightarrow E'$ be a covering of f-BCs and $f: Q_E\rightarrow Q_{E'}$ be the covering of quivers induced by $\phi$. Denote by $\mathscr{E}$ (resp. $\mathscr{E}'$) the set of paths in $Q_E$ (resp. $Q_{E'}$) which are given by standard sequences, and denote by $R$ (resp. $R'$) the relation on $\mathscr{E}$ (resp. $\mathscr{E}'$) given in Definition \ref{relation R}.

\begin{Lem}\label{lift-the-relation-R}
If $u$ is a path of $Q_E$ in $\mathscr{E}$ and $v'$ is a path of $Q_{E'}$ in $\mathscr{E}'$ with $f(u)R'v'$, then there exists a path $v$ of $Q_E$ which belongs to $\mathscr{E}$ such that $uRv$ and $f(v)=v'$.
\end{Lem}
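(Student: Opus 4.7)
The plan is to pull the witnesses of $f(u) R' v'$ in $E'$ back to witnesses of $u R v$ in $E$, using the bijection on identical-sequence classes supplied by Lemma~\ref{bijection}. Since $R$ is defined via the data of standard sequences and their $\prescript{\wedge}{}{(-)}$-operations, lifting standard sequences is exactly the tool we need.

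First I would fix a standard sequence $p$ of $E$ with $u=L(p)$, so that $f(u)=L(\phi(p))$. By the definition of $R'$, choose standard sequences $p', q'$ of $E'$ with $L(p')=f(u)$, $L(q')=v'$, and $\prescript{\wedge}{}{p'}\equiv\prescript{\wedge}{}{q'}$. Since $L(p')=L(\phi(p))$ we have $p'\equiv\phi(p)$, so $p'\in[\phi(p)]$. By Lemma~\ref{bijection}, the map $\phi\colon[p]\to[\phi(p)]$ is bijective, so there is a unique $\tilde p\in[p]$ with $\phi(\tilde p)=p'$. Because $\phi$ commutes with the action of $g$ and preserves the degree function, the operation $\prescript{\wedge}{}{(-)}$ commutes with $\phi$, so $\phi(\prescript{\wedge}{}{\tilde p})=\prescript{\wedge}{}{p'}\equiv\prescript{\wedge}{}{q'}$. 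Applying Lemma~\ref{bijection} to the standard sequence $\prescript{\wedge}{}{\tilde p}$, I obtain $\tilde q\in[\prescript{\wedge}{}{\tilde p}]$ with $\phi(\tilde q)=\prescript{\wedge}{}{q'}$.

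Now $\prescript{\wedge}{}{(-)}$ is itself a bijection on the set of standard sequences of $E$: if $\tilde q=(g^{k-1}\cdot y,\ldots,y)$ with $0\le k\le d(y)$, then $\prescript{\wedge}{}{q}=\tilde q$ holds for the unique standard sequence $q=(g^{m-1}\cdot x,\ldots,x)$ with $x=g^{-m}\cdot y$ and $m=d(y)-k$. Define $q$ this way from our $\tilde q$ and set $v:=L(q)\in\mathscr{E}$. Then $u R v$ is witnessed by the pair $\tilde p, q$: we have $L(\tilde p)=L(p)=u$, $L(q)=v$, and $\prescript{\wedge}{}{\tilde p}\equiv\tilde q=\prescript{\wedge}{}{q}$. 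For $f(v)=v'$, I compute $\prescript{\wedge}{}{\phi(q)}=\phi(\prescript{\wedge}{}{q})=\phi(\tilde q)=\prescript{\wedge}{}{q'}$; equality of the $\prescript{\wedge}{}{(-)}$'s of two standard sequences forces the originals to coincide (matching sources puts $s(\phi(q))$ and $s(q')$ in a common $\langle g\rangle$-orbit, where $d'$ is constant by $(f3)$, so their lengths and then their sources agree), giving $\phi(q)=q'$ and $f(v)=L(q')=v'$.

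The only mildly subtle point is the final matching step, namely that identical $\prescript{\wedge}{}{(-)}$'s of two standard sequences imply they coincide, which ultimately relies on orbit-invariance of the degree function from axiom $(f3)$. Everything else amounts to two consecutive applications of Lemma~\ref{bijection} combined with the commutativity of $\phi$ with $\prescript{\wedge}{}{(-)}$, so I do not expect any real obstacle.
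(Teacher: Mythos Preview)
Your proof is correct and takes essentially the same approach as the paper, using Lemma~\ref{bijection} together with the commutation of $\phi$ with $\prescript{\wedge}{}{(-)}$ to lift the standard-sequence witnesses of $f(u)R'v'$. The only difference is organizational: the paper lifts $p'$ and $q'$ directly via the polygon bijection $P(e)\to P'(\phi(e))$ and then invokes Lemma~\ref{bijection} to verify $\prescript{\wedge}{}{p}\equiv\prescript{\wedge}{}{q}$, whereas you apply Lemma~\ref{bijection} twice (to $[p]$ and to $[\prescript{\wedge}{}{\tilde p}]$) and then check $\phi(q)=q'$ using injectivity of $\prescript{\wedge}{}{(-)}$.
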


\begin{proof}
Write $E=(E,P,L,d)$ and $E'=(E',P',L',d')$. Let $p',q'$ be two standard sequences of $E'$ with $f(u)=L'(p')$, $v'=L'(q')$ and $\prescript{\wedge}{}{p'}\equiv\prescript{\wedge}{}{q'}$. Let $h'\in E'$ (resp. $k'\in E'$) be the source of $p'$ (resp. $q'$). Since $\prescript{\wedge}{}{p'}\equiv\prescript{\wedge}{}{q'}$, $P'(h')=P'(k')$. Assume that $s(u)=P(e)$ with $e\in E$, then $P'(\phi(e))=f(P(e))=s(f(u))=s(L'(p'))=P'(h')$. Since $\phi$ is a covering of f-BCs, there exist $h,k\in P(e)$ such that $\phi(h)=h'$ and $\phi(k)=k'$. Let $p$ (resp. $q$) be the standard sequence of $E$ with source $h$ (resp. $k$) such that $\phi(p)=p'$ (resp. $\phi(q)=q'$). Since $f(L(p))=L'(\phi(p))=L'(p')=f(u)$ and since the paths $L(p)$ and $u$ have the same source, $L(p)=u$.

Let $v=L(q)$, and we have $f(v)=f(L(q))=L'(\phi(q))=L'(q')=v'$. To show that $uRv$, we need to show that $\prescript{\wedge}{}{p}\equiv\prescript{\wedge}{}{q}$. Since $\phi(\prescript{\wedge}{}{p})=\prescript{\wedge}{}{\phi(p)}=\prescript{\wedge}{}{p'}\equiv \prescript{\wedge}{}{q'}$, by Lemma \ref{bijection}, there exists a standard sequence $l$ of $E$ such that $\prescript{\wedge}{}{p}\equiv l$ and $\phi(l)=\prescript{\wedge}{}{q'}$. Then $\phi(l^{\wedge})=q'$. Let $a\in E$ be the source of $l^{\wedge}$. Since $\prescript{\wedge}{}{p}\equiv l$, the terminals of $\prescript{\wedge}{}{p}$ and $l$ belong to the same polygon of $E$. Therefore the sources of $p$ and $l^{\wedge}$ also belong to the same polygon of $E$, and $P(k)=P(h)=P(a)$. Since $\phi(l^{\wedge})=q'=\phi(q)$, we have $\phi(a)=\phi(k)$. Since $\phi$ is a covering of f-BCs, $a=k$. Since the standard sequences $l^{\wedge}$ and $q$ have the same source and the same length, they are equal. Then we have $l=\prescript{\wedge}{}{q}$ and $\prescript{\wedge}{}{p}\equiv \prescript{\wedge}{}{q}$.
\end{proof}

In the remaining of this section, we assume that $E,E'$ are connected $f_s$-BCs. Let $\mathscr{N}'$ be a complete set representatives of non-reduced arrows of $Q_{E'}$ under the equivalence relation $R'$ (for the definition of non-reduced arrows, see Definition \ref{reduced-arrow}), and let $\mathscr{N}$ be the preimage of $\mathscr{N}'$ under the map $f:(Q_E)_1\rightarrow (Q_{E'})_1$.

\begin{Lem}
$\mathscr{N}$ is a complete set representatives of non-reduced arrows of $Q_E$ under the equivalence relation $R$.
\end{Lem}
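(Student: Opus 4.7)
The plan is to verify the three defining conditions: (i) every element of $\mathscr{N}$ is a non-reduced arrow of $Q_E$; (ii) every non-reduced arrow of $Q_E$ is $R$-equivalent to some element of $\mathscr{N}$; (iii) distinct elements of $\mathscr{N}$ lie in distinct $R$-classes. The key technical fact to establish at the outset is that the covering morphism $f:Q_E \to Q_{E'}$ sends the relation $R$ into $R'$. Indeed, if $u=L(p)$ and $v=L(q)$ with $\prescript{\wedge}{}{p}\equiv\prescript{\wedge}{}{q}$, then applying $\phi$ commutes with the operation $p\mapsto\prescript{\wedge}{}{p}$ (since $\phi$ preserves degree) and preserves the identity relation $\equiv$ (since it sends $L$-values to $L'$-values compatibly with $f$); hence $f(u)\,R'\,f(v)$.

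For (i), suppose some $\alpha\in\mathscr{N}$ were reduced, so $\alpha\,R\,u$ for a path $u\in\mathscr{E}$ of length $\geq 2$. Applying $f$ and using the observation above, $f(\alpha)\,R'\,f(u)$ with $f(u)\in\mathscr{E}'$ still of length $\geq 2$, contradicting that $f(\alpha)\in\mathscr{N}'$ is non-reduced. For (ii), take a non-reduced $\beta\in (Q_E)_1$. First I would show $f(\beta)$ is non-reduced in $Q_{E'}$: if $f(\beta)\,R'\,v'$ for some $v'\in\mathscr{E}'$ of length $\geq 2$, then Lemma \ref{lift-the-relation-R} (with $u=\beta$) yields $v\in\mathscr{E}$ of length $\geq 2$ with $\beta\,R\,v$, contradicting the non-reducedness of $\beta$. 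Since $\mathscr{N}'$ is a complete set of representatives, there exists $\alpha'\in\mathscr{N}'$ with $f(\beta)\,R'\,\alpha'$. A second application of Lemma \ref{lift-the-relation-R}, starting from $\beta\in\mathscr{E}$ and $\alpha'\in\mathscr{E}'$, yields some $\alpha\in\mathscr{E}$ with $\beta\,R\,\alpha$ and $f(\alpha)=\alpha'$; the lift $\alpha$ is automatically an arrow because the construction in Lemma \ref{lift-the-relation-R} produces a path of the same length as $\alpha'$. Thus $\alpha\in f^{-1}(\mathscr{N}')=\mathscr{N}$.

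For (iii), let $\alpha_1,\alpha_2\in\mathscr{N}$ with $\alpha_1\,R\,\alpha_2$. Then $f(\alpha_1),f(\alpha_2)\in\mathscr{N}'$ and $f(\alpha_1)\,R'\,f(\alpha_2)$, so by the complete representative property of $\mathscr{N}'$ we have $f(\alpha_1)=f(\alpha_2)$. Writing $\alpha_1=L(e)$ and $\alpha_2=L(h)$, the relation $\alpha_1\,R\,\alpha_2$ combined with axiom $(f4)$ forces $P(e)=P(h)$, so $\alpha_1$ and $\alpha_2$ are two arrows of $Q_E$ with common source $P(e)$ and common image under $f$. Since $f$ is a covering of quivers, its restriction to arrows starting at $P(e)$ is injective, giving $\alpha_1=\alpha_2$.

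The arguments are essentially bookkeeping once Lemma \ref{lift-the-relation-R} is in place; the main subtlety is the length-matching in step (ii), which I would handle by reading off from the proof of Lemma \ref{lift-the-relation-R} that the lifted path has the same length as the target path.
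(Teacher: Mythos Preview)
Your proposal is correct and follows essentially the same route as the paper's proof: both first note that $f$ carries $R$ into $R'$, then use this for (i) and (iii), and use Lemma~\ref{lift-the-relation-R} for (ii) and for showing that $f$ preserves non-reducedness. Your justification that $\alpha_1 R \alpha_2$ forces a common source via $(f4)$ is a bit more explicit than the paper's bare assertion, and your remark on length-matching is exactly what the paper uses when it writes ``Then $l(p)=1$ and $p\in\mathscr{N}$''.
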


\begin{proof}
It is straightforward to show that if $u,v$ are two paths of $Q_E$ which belong to $\mathscr{E}$ with $uRv$, then $f(u),f(v)$ are two paths of $Q_{E'}$ which belong to $\mathscr{E}'$ and $f(u)R'f(v)$.

For each $\alpha\in\mathscr{N}$, if $\alpha$ is reduced, then $\alpha R p$ for some path $p$ of $Q_E$ in $\mathscr{E}$ with $l(p)\geq 2$, so we have $f(\alpha) R' f(p)$, which contradicts the fact that $f(\alpha)$ is a non-reduced arrows of $Q_E'$. Then $\mathscr{N}$ is a set of non-reduced arrows of $Q_E$. Note that the arrows in $\mathscr{N}$ are pairwise non-equivalent: For every two arrows $\alpha,\beta\in\mathscr{N}$ with $\alpha R \beta$, we have $f(\alpha) R' f(\beta)$. Since the arrows in $\mathscr{N}$ are pairwise non-equivalent, $f(\alpha)=f(\beta)$. Since $\alpha R \beta$, the arrows $\alpha,\beta$ have the same source. Since $f$ is a covering of quivers, we have $\alpha=\beta$.

If there exists a non-reduced arrow $\alpha$ of $Q_E$ such that $f(\alpha)$ is a reduced arrow of $Q_{E'}$, then $f(\alpha) R' p'$ for some path $p'$ of $Q_{E'}$ in $\mathscr{E}'$ with $l(p')\geq 2$. By Lemma \ref{lift-the-relation-R}, there exists some path $p$ of $Q_E$ in $\mathscr{E}$ with $\alpha R p$ and $f(p)=p'$. Then $l(p)\geq 2$ and $\alpha$ is reduced, a contradiction. Therefore the image of every non-reduced arrow of $Q_E$ under $f$ is also a non-reduced arrow of $Q_{E'}$.

For every non-reduced arrow $\alpha$ of $Q_E$, let $\beta'$ be the unique arrow in $\mathscr{N}'$ such that $f(\alpha) R' \beta'$. By Lemma \ref{lift-the-relation-R}, there exists some path $p$ of $Q_E$ in $\mathscr{E}$ with $\alpha R p$ and $f(p)=\beta'$. Then $l(p)=1$ and $p\in\mathscr{N}$. So $\mathscr{N}$ contains a representative for each equivalence class of non-reduced arrows of $Q_E$ under the equivalence relation $R$.
\end{proof}

Let $Q'_E$ (resp. $Q'_{E'}$) be the subquiver of $Q_E$ (resp. $Q_{E'}$) given by $(Q'_E)_0=(Q_E)_0$ and $(Q'_E)_1=\mathscr{N}$ (resp. $(Q'_{E'})_0=(Q_{E'})_0$ and $(Q'_{E'})_1=\mathscr{N}'$), and let $I'_E$ (resp. $I'_{E'}$) be the kernel of the natural $k$-linear functor $\rho:kQ'_E\rightarrow \Lambda_{E}=kQ_E/I_E$ (resp. $\rho':kQ'_{E'}\rightarrow \Lambda_{E'}=kQ_{E'}/I_{E'}$). According to \cite[Lemma 6.2]{LL}, $\rho$ (resp. $\rho'$) induces an isomorphism between $kQ'_E/I'_E$ (resp. $kQ'_{E'}/I'_{E'}$) and the associate fractional Brauer configuration category $\Lambda_E=kQ_E/I_E$ (resp. $\Lambda_{E'}=kQ_{E'}/I_{E'}$) of $E$ (resp. $E'$).

\begin{Lem}\label{covering-of-admissible-quivers-with-relations}
If the covering $\phi:E\rightarrow E'$ is regular, then it induces a Galois covering of quivers with relations $(Q'_E,I'_E)\rightarrow (Q'_{E'},I'_{E'})$.
\end{Lem}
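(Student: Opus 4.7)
The plan is to realize $(Q'_E,I'_E)\to (Q'_{E'},I'_{E'})$ as the Galois covering arising from the natural action of $\mathrm{Aut}(\phi)$, by restricting the Galois covering $f:(Q_E,I_E)\to (Q_{E'},I_{E'})$ supplied by Lemma \ref{covering-of-quivers-with-relations}. I would carry this out in three stages.

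First, I would construct the restricted morphism $f':Q'_E\to Q'_{E'}$ of quivers. Since the vertex sets of $Q'_E$ and $Q'_{E'}$ coincide with those of $Q_E$ and $Q_{E'}$ respectively, and since by the preceding lemma $\mathscr{N}=f^{-1}(\mathscr{N}')$, the morphism $f$ restricts to a quiver morphism $f'$ which inherits the covering property from $f$. Moreover $kf'$ maps $I'_E$ to $I'_{E'}$: indeed both ideals are kernels of the natural functors $kQ'_E\to\Lambda_E$ and $kQ'_{E'}\to\Lambda_{E'}$ from \cite[Lemma 6.2]{LL}, and these fit into a commutative square with the covering functor $F:\Lambda_E\to\Lambda_{E'}$ furnished by Theorem \ref{covering of f-BCs induces covering functor}.

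Second, I would show that $\mathrm{Aut}(\phi)$ acts on $(Q'_E,I'_E)$ by automorphisms, freely on $Q'_E$. For every $\psi\in\mathrm{Aut}(\phi)$, the induced automorphism $\psi_*$ of $(Q_E,I_E)$ satisfies $f\psi_*=f$, so $\psi_*$ preserves $\mathscr{N}=f^{-1}(\mathscr{N}')$ and restricts to an automorphism of $Q'_E$. Freeness on $Q'_E$ is inherited from freeness on $Q_E$, which is Lemma \ref{admissible-free}. Preservation of $I'_E$ follows from the $\mathrm{Aut}(\phi)$-equivariance of $\rho:kQ'_E\to\Lambda_E$, since the action on $\Lambda_E$ is induced functorially from its action on $E$ and therefore compatible with the action on $kQ'_E$.

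Third, I would identify $(Q'_E/\mathrm{Aut}(\phi),\overline{I'_E})$ with $(Q'_{E'},I'_{E'})$. At the quiver level this is immediate: one restricts the identification $Q_E/\mathrm{Aut}(\phi)\cong Q_{E'}$ established in Lemma \ref{covering-of-quivers-with-relations} to obtain the bijection $\mathscr{N}/\mathrm{Aut}(\phi)\cong\mathscr{N}'$. The inclusion $\overline{I'_E}\subseteq I'_{E'}$ follows from $kf'(I'_E)\subseteq I'_{E'}$. The hard part will be the reverse inclusion $I'_{E'}\subseteq\overline{I'_E}$, which amounts to lifting every element of $I'_{E'}$ to one in $I'_E$ supported on arrows of $\mathscr{N}$. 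I would deduce this from Proposition \ref{induce-Galois-covering-of-categories}, which asserts that $F:\Lambda_E\to\Lambda_{E'}$ is a Galois covering with group $\mathrm{Aut}(\phi)$: combined with $\Lambda_E\cong kQ'_E/I'_E$ and $\Lambda_{E'}\cong kQ'_{E'}/I'_{E'}$, together with the fact that forming an orbit category under a free group action commutes with taking $k$-linear quotients of path categories, this will force $kQ'_{E'}/I'_{E'}\cong k(Q'_E/\mathrm{Aut}(\phi))/\overline{I'_E}$ and hence $I'_{E'}=\overline{I'_E}$, completing the verification.
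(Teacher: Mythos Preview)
Your overall strategy is the same as the paper's: restrict the Galois covering $f:(Q_E,I_E)\to(Q_{E'},I_{E'})$ from Lemma~\ref{covering-of-quivers-with-relations} to the subquivers $Q'_E$, $Q'_{E'}$, check that $\mathrm{Aut}(\phi)$ acts freely on $Q'_E$ preserving $I'_E$, and then identify the quotient. The first two stages match the paper essentially verbatim.

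The difference is in your third stage, the identification $\overline{I'_E}=I'_{E'}$. The paper argues this directly and elementarily: viewing $Q'_E/\Pi''$ as a subquiver of $Q_E/\Pi'$, one checks that $\overline{I'_E}=\overline{I_E}\cap k(Q'_E/\Pi'')$ (this is immediate from the covering property of $kQ_E\to k(Q_E/\Pi')$, since a morphism in $\overline{I_E}$ that lies in $k(Q'_E/\Pi'')$ lifts uniquely to $\bigoplus_g kQ_E(a,gb)$, and the lift must lie in both $\bigoplus_g I_E(a,gb)$ and $\bigoplus_g kQ'_E(a,gb)$). Then one transports via $\nu$ to get $I'_{E'}=I_{E'}\cap kQ'_{E'}$, which is just the definition of $I'_{E'}$.

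Your route via orbit categories of $\Lambda_E$ is correct in principle but the final ``hence $I'_{E'}=\overline{I'_E}$'' is not automatic: an abstract isomorphism $kQ'_{E'}/I'_{E'}\cong kQ'_{E'}/\overline{I'_E}$ does not by itself force equality of ideals. You need either to verify that all isomorphisms in your chain are compatible with the natural surjection $kQ'_{E'}/\overline{I'_E}\twoheadrightarrow kQ'_{E'}/I'_{E'}$ (they are, but this has to be checked), or to run a dimension count on morphism spaces using that $\Lambda_{E'}$ is locally bounded. Either fix is routine, but the paper's intersection argument avoids the detour entirely.
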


\begin{proof}
According to Theorem \ref{regular-covering-explicit}, we may assume that $\phi$ is the natural projection $E\rightarrow E/\Pi$, where $\Pi$ is an admissible group of automorphisms of $E$. By Lemma \ref{admissible-free}, $\Pi$ induces a group $\Pi'$ of $(Q_E,I_E)$-automorphisms which acts freely on $Q_E$. By the definition of $\mathscr{N}$ and $\mathscr{N}'$, each automorphism $g'\in\Pi'$ induces a $(Q'_E,I'_E)$-automorphism $g''$. Denote by $\Pi''$ the group of $(Q'_E,I'_E)$-automorphisms induced from $\Pi'$. Since $\Pi'$ acts freely on $Q_E$, $\Pi''$ also acts freely on $Q'_E$. According to Lemma \ref{covering-of-quivers-with-relations}, $\phi$ induces a Galois covering of quivers with relations $f:(Q_E,I_E)\rightarrow (Q_{E/\Pi},I_{E/\Pi})$, and there is an isomorphism $\nu:(Q_E/\Pi',\overline{I_E})\xrightarrow{\sim}(Q_{E/\Pi},I_{E/\Pi})$ of quivers with relations such that the diagram
$$\xymatrix{
		& (Q_E,I_E) \ar[dr]^{f}\ar[dl]_{\pi} &  \\
		(Q_E/\Pi',\overline{I_E})\ar[rr]_{\nu}^{\sim} & & (Q_{E/\Pi},I_{E/\Pi})
	}$$
commutes, where $\pi:(Q_E,I_E)\rightarrow (Q_E/\Pi',\overline{I_E})$ is the natural projection.

Let $f':(Q'_E,I'_E)\rightarrow (Q'_{E/\Pi},I'_{E/\Pi})$ be the morphism of quivers with relations induced by $f$, and let $\pi':(Q'_E,I'_E)\rightarrow (Q'_E/\Pi'',\overline{I'_E})$ be the natural projection. We may consider $Q'_E/\Pi''$ as a subquiver of $Q_E/\Pi'$. It is straightforward to show that $\overline{I'_E}(\overline{a},\overline{b})=\overline{I_E}(\overline{a},\overline{b})\cap k(Q'_E/\Pi'')(\overline{a},\overline{b})$ for each vertices $\overline{a},\overline{b}$ of $Q'_E/\Pi''$. So the quiver isomorphism $\nu':Q'_E/\Pi''\xrightarrow{\sim}Q'_{E/\Pi}$ induced by $\nu$ gives an isomorphism of quivers of relations $(Q'_E/\Pi'',\overline{I'_E})\xrightarrow{\sim}(Q'_{E/\Pi},I'_{E/\Pi})$, and we have a commutative diagram
$$\xymatrix{
		& (Q'_E,I'_E) \ar[dr]^{f'}\ar[dl]_{\pi'} &  \\
		(Q'_E/\Pi'',\overline{I'_E})\ar[rr]_{\nu'}^{\sim} & & (Q'_{E/\Pi},I'_{E/\Pi}).
	}$$
\end{proof}

The following result shows that a universal covering map $\widetilde{E}\rightarrow E$ of an $f_s$-BCs $E$ induces a universal cover $(Q'_{\widetilde{E}},I'_{\widetilde{E}})\rightarrow (Q'_E,I'_E)$ of associated quivers with admissible relations.

\begin{Prop}
Let $E$ be a connected $f_s$-BC and let $p:\widetilde{E}\rightarrow E$ be a universal cover of $E$. Then $p$ induces a universal cover $\rho:(Q'_{\widetilde{E}},I'_{\widetilde{E}})\rightarrow (Q'_E,I'_E)$ of $(Q'_E,I'_E)$, that is, for any covering of quivers with relations $\mu:(Q,I)\rightarrow (Q'_E,I'_E)$ and for every $x\in(Q'_{\widetilde{E}})_0$, $y\in Q_0$ with $\rho(x)=\mu(y)$, there exists a unique covering of quivers with relations $\nu:(Q'_{\widetilde{E}},I'_{\widetilde{E}})\rightarrow (Q,I)$ such that $\mu\nu=\rho$ and $\nu(x)=y$.
\end{Prop}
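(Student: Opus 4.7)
The plan is to adapt the proof of Corollary \ref{property of universal cover} to the setting of quivers with relations. First I verify that $\rho$ is a covering of quivers with relations: by Proposition \ref{universal cover is simply connected}, $\Pi(\widetilde E)$ is trivial, so $p_{*}(\Pi(\widetilde E))=\{1\}$ is a normal subgroup of $\Pi(E)$, showing that $p$ is regular (Definition \ref{regular covering}). Lemma \ref{covering-of-admissible-quivers-with-relations} then gives that the induced map $\rho:(Q'_{\widetilde E},I'_{\widetilde E})\rightarrow (Q'_E,I'_E)$ is Galois, in particular a covering of quivers with relations. Moreover, by Corollary \ref{iso-of-fundamental-gp-of-f-BC-and-fundamental-gp-of-quiver-with-aadmissible-relation}, $\Pi(Q'_{\widetilde E},I'_{\widetilde E})\cong\Pi(\widetilde E)$ is trivial.

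The main ingredient I would establish is a homotopy-lifting lemma analogous to Lemma \ref{covering preserves homotopic walks}: for any covering $\mu:(Q,I)\rightarrow (Q',I')$ of quivers with relations and any walks $u,v$ of $Q$ with $s(u)=s(v)$, one has $u\sim_I v$ if and only if $\mu(u)\sim_{I'}\mu(v)$. The ``only if'' is clear. The ``if'' goes by induction on the number of elementary moves (E1), (E2) of Definition \ref{homotopy-relation-with-respect-to-ideal} realizing $\mu(u)\sim_{I'}\mu(v)$: moves of type (E1) are handled by the underlying quiver-covering property of $\mu$ (an elementary cancellation $\alpha^{-1}\alpha$ lifts uniquely to one with prescribed source), while moves of type (E2) are handled by Lemma \ref{preserve-minimal-relation} together with the lifting-of-minimal-relations condition built into Definition \ref{morphism-and-covering-of-quivers-with-relations}, which allows each minimal relation of $I'$ to be lifted uniquely to one in $I$ with a prescribed source.

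With these tools in hand, given $\mu:(Q,I)\rightarrow (Q'_E,I'_E)$ and base points $x,y$ with $\rho(x)=\mu(y)$, I construct $\nu$ by path lifting: for each $z\in (Q'_{\widetilde E})_0$, pick a walk $w$ of $Q'_{\widetilde E}$ from $x$ to $z$; since $\mu$ is a covering of the underlying quivers, $\rho(w)$ has a unique lift $\widetilde w$ to a walk of $Q$ with $s(\widetilde w)=y$, and I set $\nu(z):=t(\widetilde w)$. On an arrow $\alpha:z_1\rightarrow z_2$ of $Q'_{\widetilde E}$, $\nu(\alpha)$ is the unique arrow of $Q$ with source $\nu(z_1)$ lying above $\rho(\alpha)$ under $\mu$. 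Well-definedness: if $w,w'$ are both walks of $Q'_{\widetilde E}$ from $x$ to $z$, then $w^{-1}w'$ is closed at $x$, hence trivial in $\Pi(Q'_{\widetilde E},I'_{\widetilde E})$, so $\rho(w)\sim_{I'_E}\rho(w')$, and the homotopy-lifting lemma applied to $\mu$ forces the two lifts starting at $y$ to share the same terminal.

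It remains to check that $\nu$ is a covering of quivers with relations, and is unique. That $\nu$ is a covering of the underlying quivers follows from $\mu\nu=\rho$ combined with the fact that $\mu$ and $\rho$ are both quiver coverings. For the relation side: given a minimal (or zero) relation $r$ of $I$ at $\nu(a')=a$, its image $\mu(r)$ is a minimal (or zero) relation of $I'_E$ at $\rho(a')$ by Lemma \ref{preserve-minimal-relation}, which lifts through $\rho$ to a minimal relation $r'$ of $I'_{\widetilde E}$ at $a'$; unpacking $r'$ as a combination of paths and using $\mu\nu=\rho$ together with the quiver-covering property of $\mu$ forces $\nu(r')=r$. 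Uniqueness of $\nu$ is immediate from unique path lifting through $\mu$. The main obstacle will be the homotopy-lifting lemma; its treatment of (E2)-moves demands careful bookkeeping of lifts of minimal relations with prescribed source vertex.
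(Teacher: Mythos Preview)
Your proposal is correct and follows essentially the same route as the paper. The paper's proof is terse: it observes that $p$ is regular (since $\Pi(\widetilde E)$ is trivial), invokes Lemma \ref{covering-of-admissible-quivers-with-relations} to get that $\rho$ is a Galois covering, uses Corollary \ref{iso-of-fundamental-gp-of-f-BC-and-fundamental-gp-of-quiver-with-aadmissible-relation} to see $\Pi(Q'_{\widetilde E},I'_{\widetilde E})$ is trivial, and then simply writes ``the rest of the proof is similar to that of Corollary \ref{property of universal cover}''. Your homotopy-lifting lemma for coverings of quivers with relations and the subsequent path-lifting construction of $\nu$ are precisely what that deferred step unfolds to, so you have made explicit what the paper leaves implicit. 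One small point to tidy: you verify the relation-lifting half of the covering condition for $\nu$, but you should also record the dual direction, that $k\nu$ sends every minimal or zero relation of $I'_{\widetilde E}$ into $I$; this follows by the same trick (push down via $\rho$, lift via $\mu$ with the prescribed source, and use injectivity of the quiver covering $\mu$ on paths with fixed source to identify the lift with $\nu(r')$).
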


\begin{proof}
Since the fundamental group of $\widetilde{E}$ is trivial, $p$ is regular. By Lemma \ref{covering-of-admissible-quivers-with-relations}, $\rho$ is a Galois covering of quivers with relations. According to Corollary \ref{iso-of-fundamental-gp-of-f-BC-and-fundamental-gp-of-quiver-with-aadmissible-relation}, the fundamental group of $(Q'_{\widetilde{E}},I'_{\widetilde{E}})$ is trivial. The rest of the proof is similar to that of Corollary \ref{property of universal cover}.
\end{proof}

\subsection{Fractional Brauer configuration categories of type MS are standard}
\

As an application of above discussions, we show that for a connected $f_{ms}$-BC $E$, the associated fractional Brauer configuration category $\Lambda_E$ is standard. Note that we have proved in \cite{LL} that any representation-finite fractional Brauer configuration category of type S is also standard.

\begin{Def}  {\rm(\cite[Section 1]{Sk})} \label{simply-connected-and-standard}
Let $\Lambda$ be a connected locally bounded category with Gabriel quiver $Q$.
\begin{itemize}
\item $\Lambda$ is called simply connected if $Q$ contains no oriented cycles and for any admissible ideal $I$ of $kQ$ with $\Lambda\cong kQ/I$, $\Pi(Q,I)=\{1\}$;
\item $\Lambda$ is called standard if there exists some simply connected locally bounded category $\widetilde{\Lambda}$ such that there exists a Galois covering $\widetilde{\Lambda}\rightarrow\Lambda$.
\end{itemize}
\end{Def}

Note that if $\Lambda$ is a locally representation-finite category, then $\Lambda$ is standard if and only if $\mathrm{ind}\Lambda\cong k(\Gamma_\Lambda)$ where $k(\Gamma_\Lambda)$ denotes the mesh category of the Auslander-Reiten quiver $\Gamma_\Lambda$ of $\Lambda$ (cf. \cite{BG,Br-G}). However, for general case we have to define the standardness by using covering language as above.

\begin{Lem}\label{criterion-simply-connected}
Let $\Lambda=kQ/I$ be a connected locally bounded category, where $Q$ is the Gabriel quiver of $\Lambda$ and $I$ is an admissible ideal in $kQ$, such that $\Pi(Q,I)=\{1\}$ and $Q$ contains no oriented cycles. If for each arrow $x\xrightarrow{\alpha}y$ of $Q$, $\alpha$ is the only path of $Q$ from $x$ to $y$, then $\Lambda$ is simply connected.
\end{Lem}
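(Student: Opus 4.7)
The strategy is to show that $\Pi(Q,I')=\Pi(Q,I)$ for every admissible ideal $I'$ of $kQ$ with $kQ/I'\cong\Lambda$, so that the triviality of $\Pi(Q,I)$ transfers to every admissible presentation. Since $\Lambda$ is basic and connected with Gabriel quiver $Q$, a classical result of Bongartz--Gabriel (see \cite{BG}) implies that any two such admissible ideals $I$, $I'$ are related by an automorphism $\phi$ of the path category $kQ$ satisfying $\phi(I)=I'$; after composing with a quiver automorphism $\sigma$ of $Q$ (which induces a canonical isomorphism $\Pi(Q,I')\cong\Pi(Q,\sigma^{-1}(I'))$), one may assume that $\phi$ fixes every object of $Q$.

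Next, I would exploit the hypothesis. Because $\phi$ is a $k$-linear automorphism of $kQ$ preserving the radical filtration, for every arrow $\alpha:x\to y$ one has $\phi(\alpha)\in\mathrm{rad}(kQ)(x,y)$ whose image in $\mathrm{rad}/\mathrm{rad}^{2}$ is a nonzero scalar multiple of the image of $\alpha$. Under the assumption that $\alpha$ is the only path from $x$ to $y$, the morphism space $kQ(x,y)$ is one-dimensional and spanned by $\alpha$; hence $\phi(\alpha)=\lambda_{\alpha}\alpha$ for some $\lambda_{\alpha}\in k^{*}$. Extending multiplicatively, $\phi$ acts diagonally on every path: $\phi(p)=\lambda_{p}\cdot p$, where $\lambda_{p}$ is the product of the scalars $\lambda_{\alpha}$ along the arrows of $p$.

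Consequently, if $\rho=\sum_{i=1}^{n}c_{i}p_{i}$ is a minimal relation of $I$, then $\phi(\rho)=\sum_{i=1}^{n}c_{i}\lambda_{p_{i}}p_{i}$ lies in $I'$ and is supported on the same paths $p_{1},\dots,p_{n}$; since each nontrivial partial sum of $\phi(\rho)$ is the $\phi$-image of the corresponding partial sum of $\rho$ (which is not in $I$), $\phi(\rho)$ is itself a minimal relation of $I'$. Applying $\phi^{-1}$ symmetrically, one obtains a bijection between $m(I)$ and $m(I')$ that preserves the underlying family of paths. Inspecting Definition \ref{fundamental-group-of-quiver-with-relations}, the normal subgroup $N(Q,m(I),x)$ depends only on the family of unordered pairs of paths co-occurring in a common minimal relation; hence $N(Q,m(I),x)=N(Q,m(I'),x)$, so $\Pi(Q,I')=\Pi(Q,I)=\{1\}$. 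Combined with the assumption that $Q$ has no oriented cycles, this shows that $\Lambda$ is simply connected.

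The main obstacle is the first step: locating a clean reference (or giving a short direct argument) for the fact that two admissible presentations of a basic connected locally bounded $k$-category with the same Gabriel quiver are related by an automorphism of the path category, and correctly handling the possible permutation of objects by a quiver automorphism of $Q$. Once this is in place, the ``unique path on arrow'' hypothesis forces $\phi$ to act diagonally, and the equality of the two normal subgroups generating the fundamental groups becomes a direct bookkeeping.
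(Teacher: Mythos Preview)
Your core strategy---using the unique-path hypothesis to force a diagonal action on paths, hence a bijection between minimal relations of the two ideals that preserves underlying paths---is exactly the paper's strategy. The difference lies in how you set it up, and this difference matters because it resolves the very obstacle you flag at the end.

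You try to first lift the isomorphism $kQ/J\cong kQ/I$ to an automorphism $\phi$ of $kQ$ (citing Bongartz--Gabriel), and only afterwards invoke the unique-path hypothesis to see that $\phi$ is diagonal. That lifting statement is not in \cite{BG} in the form you need, and proving it in general is nontrivial; you correctly identify this as the weak point. The paper sidesteps the lift entirely. It takes the composite $F:kQ\to kQ/J\xrightarrow{E} kQ/I$ and observes that for each arrow $\alpha:x\to y$, the element $F(\alpha)$ lies in $\mathscr R(F(x),F(y))\setminus\mathscr R^2(F(x),F(y))$; hence there is an arrow $\mu_\alpha:F(x)\to F(y)$. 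By the unique-path hypothesis applied at $(F(x),F(y))$, $\mu_\alpha$ is the \emph{only} path between these vertices, so $(kQ/I)(F(x),F(y))=k\cdot\overline{\mu_\alpha}$ is one-dimensional and $F(\alpha)=\lambda_\alpha\overline{\mu_\alpha}$. This simultaneously produces the quiver automorphism $f:Q\to Q$ (via $\alpha\mapsto\mu_\alpha$) and the diagonal scalars, with no lifting to $kQ$ required. From there the argument is identical to yours: $\sum c_i p_i\in J$ iff $\sum c_i\lambda_{p_i}f(p_i)\in I$, so minimal relations match and $f$ induces $\Pi(Q,J)\cong\Pi(Q,I)$.

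In short: your argument is sound once the lift exists, but the paper shows that the lift is unnecessary---the unique-path hypothesis already pins down $F$ at the level of $kQ/I$, which is all that is needed.
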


\begin{proof}
Suppose that $\Lambda\cong kQ/J$ for some admissible ideal $J$ of $kQ$. Let $E:kQ/J\rightarrow kQ/I$ be an isomorphism functor and let $F:kQ\rightarrow kQ/I$ be the functor induced by $E$. For each arrow $x\xrightarrow{\alpha}y$ of $Q$, since $F(\alpha)\in\mathscr{R}(F(x),F(y))-\mathscr{R}^2(F(x),F(y))$, where $\mathscr{R}$ denotes the radical of the category $kQ/I$, there exists some arrow $\mu_{\alpha}$ of $Q$ from $F(x)$ to $F(y)$. Then $\mu_{\alpha}$ is the only path of $Q$ from $F(x)$ to $F(y)$, and we have $F(\alpha)=\lambda_{\alpha}\overline{\mu_{\alpha}}$ for some $\lambda_{\alpha}\in k^{*}$, where $\overline{\mu_{\alpha}}$ denotes the residue class $\mu_{\alpha}+I(F(x),F(y))$. Denote by $f:Q\rightarrow Q$ the quiver automorphism of $Q$ given by $f(x)=F(x)$ for every $x\in Q_0$ and $f(\alpha)=\mu_{\alpha}$ for every $\alpha\in Q_1$. For every path $p=\alpha_r\cdots\alpha_2\alpha_1$ of $Q$ with $\alpha_1,\cdots,\alpha_r\in Q_1$, let $\lambda_p=\lambda_{\alpha_r}\cdots\lambda_{\alpha_1}$. Then $F(p)=\lambda_p \overline{f(p)}$ for each path $p$ of $Q$. For every $x,y\in Q_0$ and for every $\sum_{i=1}^{n}c_i p_i\in kQ(x,y)$ with $c_i\in k^{*}$ and $p_i$'s pairwise distinct paths of $Q$ from $x$ to $y$, we have $\sum_{i=1}^{n}c_i p_i\in J(x,y)$ if and only if $\sum_{i=1}^{n}c_i\lambda_{p_i}f(p_i)\in I(f(x),f(y))$. In particular, $\sum_{i=1}^{n}c_i p_i$ is a minimal relation of $J$ if and only if $\sum_{i=1}^{n}c_i\lambda_{p_i}f(p_i)$ is a minimal relation of $I$. Therefore two paths $p,q$ of $Q$ belong to the same minimal relation of $J$ if and only if $f(p),f(q)$ belong to the same minimal relation of $I$, and $f$ induces a group isomorphism $\Pi(Q,J)\rightarrow\Pi(Q,I)$. So $\Pi(Q,J)=\{1\}$ and $\Lambda$ is simply connected.
\end{proof}

\begin{Prop}\label{f-BCC-of-simply-connected-f-BC-is-simply-connected}
If $E=(E,P,L,d)$ is a simply connected $f_{ms}$-BC, then $\Lambda_{E}$ is simply connected.
\end{Prop}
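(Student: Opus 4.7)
The plan is to apply Lemma~\ref{criterion-simply-connected} to $(Q'_E,I'_E)$, which by \cite[Lemma~6.2]{LL} is an admissible presentation of $\Lambda_E$. Three items must then be verified: $(a)$ $\Pi(Q'_E,I'_E)=\{1\}$; $(b)$ $Q'_E$ contains no oriented cycle; and $(c)$ for every arrow $\alpha\colon x\to y$ of $Q'_E$, $\alpha$ is the only path from $x$ to $y$ in $Q'_E$.

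Item $(a)$ is Corollary~\ref{iso-of-fundamental-gp-of-f-BC-and-fundamental-gp-of-quiver-with-aadmissible-relation} applied under the hypothesis $\Pi(E)=\{1\}$. For $(b)$, I will argue in $Q_E\supseteq Q'_E$: an oriented cycle $L(e_n)\cdots L(e_1)$, with $P(g\cdot e_i)=P(e_{i+1})$ for $1\le i\le n-1$ and $P(e_1)=P(g\cdot e_n)$, gives rise to the closed walk
\[
w \;=\; (e_1|\tau|g\cdot e_n)(g\cdot e_n|g|e_n)(e_n|\tau|g\cdot e_{n-1})\cdots(g\cdot e_1|g|e_1)
\]
in $E$ based at $e_1$. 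The additive homotopy invariant $\alpha(\cdot)$ introduced in the proof of Proposition~\ref{fundamental group of Be}—which is defined on walks of any $f_{ms}$-BC and constant on homotopy classes—satisfies $\alpha(w)=\sum_{i=1}^{n}1/d(e_i)>0$, so $w$ is not null-homotopic, contradicting $\Pi(E)=\{1\}$.

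For $(c)$ I will exploit the hypothesis that $E$ is simply connected to identify $E\cong\mathbb{Z}B_{(E,e)}$ via Corollary~\ref{universality}, so that every polygon of $E$ has the explicit form $\widetilde{P}(w,n)=P'(w)\times\{n\}$. A direct inspection of the definition of $P'(\cdot)$ and of the $g$-action on special walks shows that on each such polygon the assignment $a\mapsto\widetilde{P}(g\cdot a)$ is injective, because the base representative of $P'(g\cdot a)$ is determined by $a$; consequently $Q_E$ has no pair of parallel arrows. To rule out a hypothetical longer path $\beta$ in $Q'_E$ parallel to an arrow $\alpha$, I will splice $\alpha$ and $\beta$ by $\tau$-moves inside their common source and target polygons into a closed walk $w'$ of $E$, apply Proposition~\ref{unique factorization} to describe its homotopy class by a special walk together with a Nakayama shift, and combine $\Pi(E)=\{1\}$ with the combinatorial characterization of the set $\mathscr{N}$ of non-reduced arrows in \cite[Section~6]{LL} to force the special walk to be trivial, hence $\beta=\alpha$.

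The main obstacle is item $(c)$: the $\alpha$-invariant alone cannot separate two parallel composable sequences of equal total $1/d$-weight, so the verification requires a careful structural analysis inside $\mathbb{Z}B_{(E,e)}$, drawing on the restrictive combinatorics of special walks together with the homotopy relation $(h3)$.
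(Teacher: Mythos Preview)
Your overall plan---reduce to Lemma~\ref{criterion-simply-connected} applied to $(Q'_E,I'_E)$ and verify (a), (b), (c)---is exactly the paper's strategy, and your arguments for (a) and (b) coincide with the paper's. The difference, and the weakness, lies in (c).

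First, you overlook a degenerate case that the paper isolates at the outset: if some polygon $P(e)$ has $d(h)=1$ for all $h\in P(e)$, then connectedness forces $d\equiv 1$ on $E$, and in that situation every angle of a polygon yields an arrow to the \emph{same} target polygon, so $Q_E$ is full of parallel arrows. Your claim that ``$Q_E$ has no pair of parallel arrows'' via the $\mathbb{Z}B_{(E,e)}$ description is therefore false as stated. The paper disposes of this case directly by identifying $\Lambda_E$ with $kQ/I$ for the infinite linear $A_\infty^\infty$ quiver and invoking Lemma~\ref{criterion-simply-connected}; only afterwards does it assume every polygon contains an angle with $d>1$, which is exactly what makes $(Q'_E)_1=\{L(e):d(e)>1\}$ and gives the crucial strict inequality below.

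Second, once that reduction is made, your detour through the $\mathbb{Z}B_{(E,e)}$ model and a separate parallel-arrow analysis is unnecessary, and your diagnosis that ``the $\alpha$-invariant alone cannot separate'' such paths is mistaken. The paper handles an arbitrary path $p$ from $P(e)$ to $P(g\cdot e)$ in one stroke: write $p$ in grouped form, form the associated walk $w$, and compare it with the walk $v$ coming from the single arrow $L(e)$. Simple connectedness gives $w\sim v$, hence $\alpha(w)=\alpha(v)=1/d(e)<1$ because $d(e)>1$. This bound forces every block exponent $n_i<d(e_i)$, so $w$ is already a \emph{special} walk; Proposition~\ref{unique factorization} then pins down $w$ uniquely and yields $r=1$, $n_1=1$, $e_1=e$, i.e.\ $p=L(e)$. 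No structural analysis of $\mathbb{Z}B_{(E,e)}$, no separate treatment of length-one versus longer paths, and no appeal to the combinatorics of $\mathscr{N}$ are needed.
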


\begin{proof}
If there exists a polygon $P(e)$ of $E$ such that $d(h)=1$ for all $h\in P(e)$, then it can be shown that $d(h)=1$ for all $h\in E$ since $E$ is connected. Therefore as a set, $E$ is a union of polygons $\sigma^i(P(e))$ with $i\in\mathbb{Z}$, where $\sigma$ denotes the Nakayama automorphism of $E$. If there exists some positive integer $n$ such that $\sigma^n(P(e))=P(e)$, then $\sigma^n$ induces a permutation on $P(e)$, and therefore $\sigma^{mn}(e)=e$ for some positive integer $m$. According to Proposition \ref{unique factorization}, $\overline{(e|g^{mn}|e)}\neq 1$, contradicts the assumption that $E$ is simply connected. Then as a set, $E$ is a disjoint union of the polygons $\sigma^i(P(e))$, $i\in\mathbb{Z}$. It is straightforward to show that $\Lambda_E$ is isomorphic to $kQ/I$, where $Q$ is the quiver
$$\begin{tikzpicture}
\fill (0,0) circle (0.5ex);
\draw[->] (0.2,0)--(0.8,0);
\fill (1,0) circle (0.5ex);
\draw[->] (1.2,0)--(1.8,0);
\fill (2,0) circle (0.5ex);
\draw[->] (2.2,0)--(2.8,0);
\fill (3,0) circle (0.5ex);
\node at(-0.2,0) {$\cdot$};
\node at(-0.4,0) {$\cdot$};
\node at(-0.6,0) {$\cdot$};
\node at(3.2,0) {$\cdot$};
\node at(3.4,0) {$\cdot$};
\node at(3.6,0) {$\cdot$};
\end{tikzpicture}$$
and $I$ is generated by all paths of $Q$ of length $2$. By Lemma \ref{criterion-simply-connected}, $\Lambda_E$ is simply connected. Therefore we may assume that each polygon of $E$ contains an angle $e$ with $d(e)>1$.

Let $Q'_E$ be the subquiver of $Q_E$ with $(Q'_E)_0=(Q_E)_0$ and
$$(Q'_E)_1=\{L(e)\mid e\in E\text{ with } d(e)>1\},$$
and let $I'_E$ be the kernel of the natural functor $\rho:kQ'_E\rightarrow kQ_E/I_E$. According to \cite[Lemma 6.2]{LL}, $Q'_E$ is the Gabriel quiver of $\Lambda_E$ and $I'_E$ is an admissible ideal of $kQ'_E$ with $\Lambda_E\cong kQ'_E/I'_E$. Note also that according to Corollary \ref{iso-of-fundamental-gp-of-f-BC-and-fundamental-gp-of-quiver-with-aadmissible-relation}, $\Pi(Q'_E,I'_E)\cong\Pi(E)=\{1\}$.

If $Q_E$ contains an oriented cycle $p$, we may assume that
$$p=L(g^{n_r-1}\cdot e_r)\cdots L(g\cdot e_r)L(e_r)\cdots L(g^{n_1-1}\cdot e_1)\cdots L(g\cdot e_1)L(e_1),$$
where $e_1,\cdots,e_r\in E$, $n_1,\cdots,n_r\in\mathbb{Z}_{>0}$, and $P(e_{i+1})=P(g^{n_i}\cdot e_{i})$ with $e_{i+1}\neq g^{n_i}\cdot e_{i}$ for each $1\leq i\leq r-1$. Since $p$ is an oriented cycle, we have $P(e_1)=P(g^{n_r}\cdot e_r)$. Since $E$ is simply connected, the walk
$$w=(g^{n_r}\cdot e_r|g^{n_r}|e_r)(e_r|\tau|g^{n_{r-1}}\cdot e_{r-1})\cdots(e_3|\tau|g^{n_2}\cdot e_2)(g^{n_2}\cdot e_2|g^{n_2}|e_2)(e_2|\tau|g^{n_1}\cdot e_1)(g^{n_1}\cdot e_1|g^{n_1}|e_1)$$
of $E$ is homotopic to the walk $v=(g^{n_r}\cdot e_r|\tau|e_1)$ of $E$. Let $\alpha$ be the function on the set of walks of $E$ defined by the formula (\ref{alpha-definition}) in the proof of Proposition \ref{fundamental group of Be}. Since $w\sim v$, we have $\alpha(w)=\alpha(v)=0$. So $r=0$ and $p$ is a trivial path of $Q_E$, a contradiction. Since $Q'_E$ is a subquiver of $Q_E$, $Q'_E$ also contains no oriented cycles.

For each arrow $P(e)\xrightarrow{L(e)}P(g\cdot e)$ of $Q'_E$, if $p$ is a path of $Q'_E$ from $P(e)$ to $P(g\cdot e)$, we may assume that
$$p=L(g^{n_r-1}\cdot e_r)\cdots L(g\cdot e_r)L(e_r)\cdots L(g^{n_1-1}\cdot e_1)\cdots L(g\cdot e_1)L(e_1),$$
where $e_1,\cdots,e_r\in E$, $n_1,\cdots,n_r\in\mathbb{Z}_{>0}$, and $P(e_{i+1})=P(g^{n_i}\cdot e_{i})$ with $e_{i+1}\neq g^{n_i}\cdot e_{i}$ for each $1\leq i\leq r-1$. Since $s(p)=P(e)$ and $t(p)=P(g\cdot e)$, we have $P(e)=P(e_1)$ and $P(g\cdot e)=P(g^{n_r}\cdot e_r)$. Since $E$ is simply connected, the walk
$$w=(g^{n_r}\cdot e_r|g^{n_r}|e_r)(e_r|\tau|g^{n_{r-1}}\cdot e_{r-1})\cdots(e_3|\tau|g^{n_2}\cdot e_2)(g^{n_2}\cdot e_2|g^{n_2}|e_2)(e_2|\tau|g^{n_1}\cdot e_1)(g^{n_1}\cdot e_1|g^{n_1}|e_1)$$
of $E$ is homotopic to the walk $v=(g^{n_r}\cdot e_r|\tau|g\cdot e)(g\cdot e|g|e)(e|\tau|e_1)$ of $E$. Since $L(e)\in (Q'_E)_1$, $d(e)>1$. So $\alpha(v)=\frac{1}{d(e)}<1$. Since $w\sim v$, we have $\alpha(w)=\alpha(v)$. Since $\alpha(w)=\frac{n_1}{d(e_1)}+\cdots+\frac{n_r}{d(e_r)}$, we imply that $n_i<d(e_i)$ for all $1\leq i\leq r$. Then $w$ is a special walk of $E$. According to Proposition \ref{unique factorization}, we have $e=e_1$, $g^{n_r}\cdot e_r=g\cdot e$, $r=1$ and $n_1=1$. So $p=L(e_1)=L(e)$, and $L(e)$ is the only path of $Q'_E$ from $P(e)$ to $P(g\cdot e)$. According to Lemma \ref{criterion-simply-connected}, $\Lambda_E\cong kQ'_E/I'_E$ is simply connected.
\end{proof}

\begin{Cor} \label{type-ms-is-standard}
If $E$ is a connected $f_{ms}$-BC and $\widetilde{E}$ is its universal cover, then the induced universal cover $\Lambda_{\widetilde{E}}$ of $\Lambda_E$ is simply connected. Especially, $\Lambda_E$ is standard.
\end{Cor}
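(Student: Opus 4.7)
The plan is to reduce this directly to two pieces already established in the paper: the fact that the universal cover $\widetilde{E}$ is simply connected as an f-BC (Proposition \ref{universal cover is simply connected}), together with Proposition \ref{f-BCC-of-simply-connected-f-BC-is-simply-connected} which converts simple-connectedness of an $f_{ms}$-BC into simple-connectedness of its associated $f_{ms}$-BCC; and then Proposition \ref{induce-Galois-covering-of-categories} which converts a regular covering of f-BCs into a Galois covering of the associated categories.

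First I would observe that $\widetilde{E}$ is itself an $f_{ms}$-BC. Indeed, the covering $p:\widetilde{E}\to E$ induces, for each $\overline{w}\in\widetilde{E}$, a bijection $\widetilde{L}(\overline{w})\to L(t(w))$; since $E$ is of type MS the right-hand side is a singleton, hence so is $\widetilde{L}(\overline{w})$. Thus $\widetilde{E}$ is again of type MS. By Proposition \ref{universal cover is simply connected}, $\widetilde{E}$ is simply connected. Applying Proposition \ref{f-BCC-of-simply-connected-f-BC-is-simply-connected} to the simply connected $f_{ms}$-BC $\widetilde{E}$ then yields that $\Lambda_{\widetilde{E}}$ is simply connected, which is the first claim.

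For the second assertion, I would note that the covering $p:\widetilde{E}\to E$ is regular: since $\Pi(\widetilde{E})=\{1\}$, the subgroup $p_{*}(\Pi(\widetilde{E}))$ is the trivial subgroup of $\Pi(E)$, which is certainly normal (Definition \ref{regular covering}). Proposition \ref{induce-Galois-covering-of-categories} therefore yields a Galois covering of locally bounded $k$-categories $\Lambda_{\widetilde{E}}\to\Lambda_{E}$. Combining with the first step, $\Lambda_{E}$ admits a Galois covering from a simply connected locally bounded category, which is exactly the definition of standardness (Definition \ref{simply-connected-and-standard}).

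The only nontrivial step is the verification that $\widetilde{E}$ is of type MS, and even this is essentially automatic from the definition of a covering; the rest is direct citation of results already in place. There is no real obstacle to overcome — the work was done in Proposition \ref{f-BCC-of-simply-connected-f-BC-is-simply-connected} and Proposition \ref{induce-Galois-covering-of-categories}, and this corollary is simply the concatenation of those with the existence and properties of the universal cover.
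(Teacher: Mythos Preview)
Your proof is correct and follows exactly the same route as the paper, which simply cites Proposition \ref{f-BCC-of-simply-connected-f-BC-is-simply-connected} and Proposition \ref{induce-Galois-covering-of-categories} without further comment. The intermediate observations you spell out (that $\widetilde{E}$ inherits type MS from the covering, and that $p$ is regular because $p_*(\Pi(\widetilde{E}))$ is trivial) are implicit in the paper's one-line proof and are handled correctly.
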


\begin{proof}
It follows directly from Proposition \ref{f-BCC-of-simply-connected-f-BC-is-simply-connected} and Proposition \ref{induce-Galois-covering-of-categories}.
\end{proof}

From the discussion above, it is interesting to know whether every fractional Brauer configuration category of type S is standard.

\section{The fundamental group of a Brauer configuration}

In this section, we calculate the fundamental group of any connected Brauer configuration. Recall that a Brauer configuration (abbr. BC) is a finite $f_{ms}$-BC $E=(E,P,L,d)$ with integral f-degree and containing no $1$-gons. We first define the notion of sub-f-BC and prove an analogy of the Van Kampen theorem for f-BC (Proposition \ref{Van-Kampen}). Then we use this proposition to reduce the calculation of the fundamental group of a BC to the calculation of the fundamental group of a Brauer graph (abbr. BG), that is, a BC with each polygon containing two elements (Proposition \ref{iso. of direct systems} and Corollary \ref{fundamental group of BC and BG}). The main result of this section is Theorem \ref{fundamental group of BC}.

We first recall the definition of simplicial complex (cf. \cite[Definition 1.1]{Sc}). Let $X$ be a finite set. A {\it simplicial complex} $K$ on $X$ is a collection of nonempty  subsets of $X$ such that
\begin{itemize}
\item $\{x\}\in K$ for each $x\in X$;
\item each nonempty subset of each element of $K$ also belongs to $K$,
\end{itemize}
where the elements of $K$ are called simplices. A walk of $K$ is a sequence $(x_0,x_1,\cdots,x_n)$ of elements of $X$ ($n\geq 0$) such that for each $1\leq i\leq n$, there exists some simplex $\sigma$ of $K$ such that $x_{i-1},x_i\in \sigma$. The homotopy relation on the set of all walks of $K$ is an equivalence relation generated by
\begin{itemize}
\item[(i)] $(x_0$, $x_1$, $x_2)$ is homotopic to $(x_0$, $x_2)$, if there exists a simplex $\sigma$ of $K$ with $x_0$, $x_1$, $x_2\in\sigma$;
\item[(ii)] $(x$, $x)$ is homotopic to the trivial walk $(x)$ for each element $x$ of $X$;
\item[(iii)] $uw_{1}v$ is homotopic to $uw_{2}v$ if $w_1$ homotopic to $w_2$, where $w_1$, $w_2$, $u$, $v$ are walks of $K$ such that the compositions make sense.
\end{itemize}
For each $x\in X$, define the fundamental group $\Pi(K,x)$ of $K$ at $x$ as the group of homotopy classes of closed walks of $K$ at $x$, where the multiplication is given by the composition of walks. $K$ is said to be connected if every two elements of $X$ can be connected by a walk of $K$, and $K$ is said to be simply connected if $K$ is connected and the fundamental group of $K$ is trivial.

\begin{Def}\label{sub-pre-configuration}
Let $E$ be an f-BC. A sub-f-BC $F$ of $E$ is a $G$-invariant subset of $E$ together with an f-BC structure, such that the inclusion map $F\hookrightarrow E$ is a morphism of f-BCs.
\end{Def}

\medskip
Equivalently, a sub-f-BC $F$ of an f-BC $E=(E,P,L,d)$ is a subset of $E$ which is a union of some $G$-orbits, together with two partitions $P_{F}$, $L_{F}$ of $F$, such that \\
(1) $P_{F}(e)\subseteq P(e)$ and $L_{F}(e)\subseteq L(e)$ for all $e\in F$. \\
(2) $L_{F}(e)\subseteq P_{F}(e)$ for all $e\in F$. \\
(3) $L_{F}(e_1)=L_{F}(e_2)$ implies $P_{F}(g\cdot e_1)=P_{F}(g\cdot e_2)$ for $e_1$, $e_2\in F$. \\
(4) $P_{F}(e_1)=P_{F}(e_2)$ if and only if $P_{F}(g^{d(e_1)}\cdot e_1)=P_{F}(g^{d(e_2)}\cdot e_2)$. \\
(5) $L_{F}(e_1)=L_{F}(e_2)$ if and only if $L_{F}(g^{d(e_1)}\cdot e_1)=L_{F}(g^{d(e_2)}\cdot e_2)$. \\
The $G$-set structure and the degree function on $F$ are inherited from $E$.

\medskip
Let $\{E_{\alpha}=(E_{\alpha},P_{\alpha},L_{\alpha},d)\}_{\alpha\in I}$ be a family of sub-f-BCs of $E=(E,P,L,d)$. For $\alpha\in I$ and $e$, $h\in E_{\alpha}$, define $e\sim_{\alpha}h$ if $P_{\alpha}(e)=P_{\alpha}(h)$. For $e$, $h\in \bigcup_{\alpha\in I}E_{\alpha}$, define $e\sim h$ if there exists a sequence of elements $e=e_0$, $e_1$, $\cdots$, $e_{n-1}$, $e_n=h$ in $\bigcup_{\alpha\in I}E_{\alpha}$, such that for each $1\leq i\leq n$, there exists some $\alpha\in I$ with $e_{i-1}$, $e_i\in E_{\alpha}$ and $e_{i-1}\sim_{\alpha}e_i$. Then $\sim$ is an equivalence relation on $\bigcup_{\alpha\in I}E_{\alpha}$. Define a partition $P'$ of $\bigcup_{\alpha\in I}E_{\alpha}$ as $P'(e)=\{h\in\bigcup_{\alpha\in I}E_{\alpha}\mid e\sim h\}$ for each $e\in\bigcup_{\alpha\in I}E_{\alpha}$. Similarly, one can define a partition $L'$ of $\bigcup_{\alpha\in I}E_{\alpha}$ using the partitions $L_{\alpha}$'s. Then $\bigcup_{\alpha\in I}E_{\alpha}$ together with partitions $P'$, $L'$ forms a sub-f-BC of $E$, which is said to be the {\it union} of sub-f-BCs $\{E_{\alpha}\}_{\alpha\in I}$ of $E$. Moreover, let $P''$ (resp. $L''$) be the partition of $\bigcap_{\alpha\in I}E_{\alpha}$ given by $P''(e)=\bigcap_{\alpha\in I}P_{\alpha}(e)$ (resp. $L''(e)=\bigcap_{\alpha\in I}L_{\alpha}(e)$) for each $e\in\bigcap_{\alpha\in I}E_{\alpha}$. Then $(\bigcap_{\alpha\in I}E_{\alpha},P'',L'',d)$ is also a sub-f-BC of $E$, which is said to be the {\it intersection} of sub-f-BCs $\{E_{\alpha}\}_{\alpha\in I}$ of $E$.

Let $E=(E,P,L,d)$ be an f-BC, which is a union of the family of sub-f-BCs \\ $\{E_{\alpha}=(E_{\alpha},P_{\alpha},L_{\alpha},d)\}_{\alpha\in I}$ which is closed under finite intersections. For each $e\in E$, let $K_e$ be the simplicial complex on $P(e)$ generated by simplices of the form $P_{\alpha}(h)$, where $\alpha\in I$ and $h\in P(e)\cap E_{\alpha}$ (that is, $K_e$ consists of simplices of the form $P_{\alpha}(h)$ together with their nonempty subsets). We call $E$ an {\it admissible union} of sub-f-BCs $\{E_{\alpha}\}_{\alpha\in I}$ if the simplicial complex $K_e$ is simply connected for each $e\in E$.

The following proposition is an analogy of the Van Kampen theorem (cf. \cite{M}). Note that the assumption that each simplicial complex $K_e$ is simply connected is necessary in the construction of the functor $F$ in defining the direct limit, and this assumption is clearly satisfied when each polygon of $E$ contains at most two angles.

\begin{Prop}\label{Van-Kampen}
Let $E$ be an f-BC, which is an admissible union of a family of sub-f-BCs $\{E_{\alpha}\}_{\alpha\in I}$. Let $A$ be a subset of $\bigcap_{\alpha\in I}E_{\alpha}$ such that for each $\alpha\in I$, $A$ meets each connected component of $E_{\alpha}$. Then the groupoid $\Pi(E,A)$ is the direct limit of the groupoids $\Pi(E_{\alpha},A)$.
\end{Prop}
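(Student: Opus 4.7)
The plan is to verify that $\Pi(E,A)$ satisfies the universal property characterising the colimit of the diagram $\{\Pi(E_\alpha,A)\}_{\alpha\in I}$. The inclusions $E_\alpha\hookrightarrow E$ are morphisms of f-BCs and therefore induce functors $i_\alpha:\Pi(E_\alpha,A)\to\Pi(E,A)$; since the family is closed under finite intersections, these assemble into a cocone over the diagram. What has to be shown is that for any groupoid $\mathcal{G}$ equipped with a compatible family of functors $F_\alpha:\Pi(E_\alpha,A)\to\mathcal{G}$, there is a unique functor $F:\Pi(E,A)\to\mathcal{G}$ with $F\circ i_\alpha=F_\alpha$ for every $\alpha$.

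The first step is a decomposition lemma for walks: any walk $w$ in $E$ between two elements of $A$ is homotopic to a composition $v_n\cdots v_1$ in which each $v_i$ lies entirely in some $E_{\alpha_i}$ and has both endpoints in $A$. I would establish this in two stages. First, break $w$ into elementary steps: a $g^{\pm 1}$-step out of $e$ lies automatically in every $E_\alpha$ containing $e$ (each $E_\alpha$ is a union of $G$-orbits), whereas a $\tau$-step from $e$ to $h$ with $P(e)=P(h)$ can, by the very definition of the partition $P$ on the union, be refined into a sequence of $\tau$-steps each lying in a single $E_{\alpha_i}$. Second, push the breakpoints into $A$: between consecutive subwalks in $E_\alpha$ and $E_\beta$, the common endpoint $p$ lies in $E_\alpha\cap E_\beta$, which is again in the family, so $A$ meets the connected component of $p$ in $E_\alpha\cap E_\beta$; inserting a walk $c$ from $p$ to an element of $A$ in that component together with its reverse $c^{-1}$ absorbs into the two adjacent subwalks without leaving either $E_\alpha$ or $E_\beta$.

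Using the decomposition lemma, $F$ is defined on representatives by $F(v_n\cdots v_1):=F_{\alpha_n}(\overline{v_n})\cdots F_{\alpha_1}(\overline{v_1})$, and uniqueness of $F$ is then automatic from the equation $F\circ i_\alpha=F_\alpha$. The content is well-definedness, which I plan to prove by showing that each elementary homotopy from Definition \ref{homotopy of walks} can, after further subdivision, be realised inside a single $E_\alpha$. Relations (h1), (h2), and (h4) each involve angles belonging to one $G$-orbit or to one polygon of some $E_\alpha$, and hence lift into a single $E_\alpha$ directly. The genuinely global case is (h3), where I would invoke the admissible union hypothesis: two refinements of a single $\tau$-step $e\to h$ correspond to two walks in the simplicial complex $K_e$ from $e$ to $h$, and since $K_e$ is simply connected they are related by a sequence of elementary simplicial moves, each of which lies in a single simplex $P_\alpha(f)$ and hence translates into an instance of (h2) or (h3) performed inside $E_\alpha$.

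The main obstacle will be precisely this last step: matching two different refinements of a single $\tau$-step, and more subtly two different refinements of the simultaneous $\tau$-and-$G$-shift pattern appearing in (h3), inside the colimit. The combinatorial heart is a double subdivision argument that first reduces any pair of refinements of a walk in $K_e$ to a common refinement via the simply-connectedness of $K_e$, and then observes that each elementary simplicial move corresponds to the relation (h2) (or (h3)) applied inside some $E_\alpha$ and is therefore respected by the functors $F_\alpha$. Once this combinatorial core is in place, the functor $F$ is well-defined on homotopy classes, the universal property follows, and the identification of $\Pi(E,A)$ with the direct limit of the $\Pi(E_\alpha,A)$ is complete.
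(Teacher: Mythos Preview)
Your overall strategy matches the paper's: verify the universal property by decomposing walks into pieces lying in individual $E_\alpha$, define $F$ piecewise, and check compatibility with each elementary homotopy. However, two points are misidentified and would leave real holes.

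First, your claim that $(h2)$ and $(h4)$ ``lift into a single $E_\alpha$ directly'' is false. In $(h4)$ the hypothesis is $L(e)=L(h)$ \emph{in $E$}, and the partition $L$ on the union is itself generated by the $L_\alpha$'s: one only knows there is a chain $e=h_0,h_1,\ldots,h_n=h$ with $L_{\gamma_i}(h_{i-1})=L_{\gamma_i}(h_i)$ for suitable $\gamma_i$. The paper handles this (Case~4 of Step~5) by interpolating the chain and moving the $g$-step across one link at a time, each move being an $(h4)$ inside a single $E_{\gamma_i}$. The same issue arises for $(h2)$: three angles in one polygon of $E$ need not lie in one polygon of any $E_\alpha$, and one must concatenate connecting sequences (Case~2). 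Likewise $(h3)$ needs the chain argument plus the observation that the Nakayama automorphism carries a connecting sequence for $P(e_1)=P(e_2)$ to one for $P(\sigma e_1)=P(\sigma e_2)$.

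Second, you attach the simple-connectedness of $K_e$ to $(h3)$, but in the paper it is used earlier and for a different purpose (Step~4): to show that two different refinements of a \emph{single} $\tau$-step $(y|\tau|x)$ into a chain of elementary $\tau$-moves give the same value of $F$. This is logically prior to checking any homotopy relation, since without it $F$ is not even defined on walks. You also do not mention the common-refinement argument (Step~2) showing independence from the choice of division of a divisible walk; this is separate from, and precedes, the homotopy-invariance check.
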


\begin{proof}
Denote by $E=(E,P,L,d)$ and $E_{\alpha}=(E_{\alpha},P_{\alpha},L_{\alpha},d)$ for each $\alpha\in I$. We need to show that for each groupoid $\mathscr{G}$ together with a functor $F_{\alpha}:\Pi(E_{\alpha},A)\rightarrow \mathscr{G}$ for each $\alpha\in I$, such that $F_{\alpha}=F_{\beta}j_{\beta\alpha *}$ for all $\alpha$, $\beta\in I$ with $E_{\alpha}$ a sub-f-BC of $E_{\beta}$, where $j_{\beta\alpha}:E_{\alpha}\rightarrow E_{\beta}$ is the inclusion map, there exists a unique functor $F:\Pi(E,A)\rightarrow \mathscr{G}$ such that $F_{\alpha}=Fj_{\alpha *}$ for all $\alpha\in I$, where $j_{\alpha}:E_{\alpha}\rightarrow E$ is the inclusion map. For each $a\in A$, define $F(a)=F_{\alpha}(a)$ for some $\alpha\in I$. Since the family of sub-f-BCs $\{E_{\alpha}\}_{\alpha\in I}$ of $E$ is closed under finite intersections, the value of $F(a)$ does not depend on the choice of $\alpha\in I$. To define the value of $F$ on morphisms of $\Pi(E,A)$, and to show $F$ is the unique functor which satisfies the conditions above, we will divide our proof into seven steps.

A walk $w$ of $E$ is called divisible if it can be written as $w=w_n \cdots w_2 w_1$, where $w_i$ is a walk of some $E_{\alpha_i}$. Call such a tuple $(w_1,\cdots,w_n,E_{\alpha_1},\cdots,E_{\alpha_n})$ a division of $w$. For a divisible walk $w$ of $E$ with $s(w)$, $t(w)\in A$, let $(w_1,\cdots,w_n,E_{\alpha_1},\cdots,E_{\alpha_n})$ be a division of $w$, and let $e_i=t(w_i)$ for each $1\leq i\leq n-1$. Since $\{E_{\alpha}\}_{\alpha\in I}$ is closed under finite intersections, for each $1\leq i\leq n-1$, there exists some $\beta_i\in I$ such that $E_{\beta_i}=E_{\alpha_i}\cap E_{\alpha_{i+1}}$. Then $e_i\in E_{\beta_i}$ for each $1\leq i\leq n-1$. Since $A$ meets each connected component of $E_{\beta_i}$, there exists some $a_i\in A$ such that there exists a walk $u_i$ of $E_{\beta_i}$ from $a_i$ to $e_i$. Since $F_{\alpha_{i}}(a_i)=F_{\beta_i}(a_i)=F_{\alpha_{i+1}}(a_i)$ for $1\leq i\leq n-1$, we can define a morphism $F(w)$ of $\mathscr{G}$ as $F(w)=F_{\alpha_n}(\overline{w_n u_{n-1}})F_{\alpha_{n-1}}(\overline{u_{n-1}^{-1}w_{n-1}u_{n-2}})\cdots F_{\alpha_{2}}(\overline{u_{2}^{-1}w_{2}u_{1}})F_{\alpha_{1}}(\overline{u_{1}^{-1}w_{1}})$.

\medskip
{\it Step 1: To show that the value of $F(w)$ does not depend on the choices of $a_i$ together with the walks $u_i$ for $1\leq i\leq n-1$.}

Let $v_i$ be another walk of $E_{\beta_i}=E_{\alpha_i}\cap E_{\alpha_{i+1}}$ from $b_i$ to $e_i$ for each $1\leq i\leq n-1$, where $b_i\in A$. Then
\begin{multline*}
F_{\alpha_n}(\overline{w_n v_{n-1}})F_{\alpha_{n-1}}(\overline{v_{n-1}^{-1}w_{n-1}v_{n-2}})\cdots F_{\alpha_{2}}(\overline{v_{2}^{-1}w_{2}v_{1}})F_{\alpha_{1}}(\overline{v_{1}^{-1}w_{1}}) \\
=F_{\alpha_n}(\overline{w_n u_{n-1}})F_{\alpha_n}(\overline{u_{n-1}^{-1}v_{n-1}})
F_{\alpha_{n-1}}(\overline{v_{n-1}^{-1}u_{n-1}})F_{\alpha_{n-1}}(\overline{u_{n-1}^{-1}w_{n-1}u_{n-2}})F_{\alpha_{n-1}}(\overline{u_{n-2}^{-1}v_{n-2}})
\cdots \\
F_{\alpha_{2}}(\overline{v_{2}^{-1}u_{2}})F_{\alpha_{2}}(\overline{u_{2}^{-1}w_{2}u_{1}})F_{\alpha_{2}}(\overline{u_{1}^{-1}v_{1}})
F_{\alpha_{1}}(\overline{v_{1}^{-1}u_{1}})F_{\alpha_{1}}(\overline{u_{1}^{-1}w_{1}}) \\
=F_{\alpha_n}(\overline{w_n u_{n-1}})F_{\beta_{n-1}}(\overline{u_{n-1}^{-1}v_{n-1}})
F_{\beta_{n-1}}(\overline{v_{n-1}^{-1}u_{n-1}})F_{\alpha_{n-1}}(\overline{u_{n-1}^{-1}w_{n-1}u_{n-2}})F_{\beta_{n-2}}(\overline{u_{n-2}^{-1}v_{n-2}})
\cdots \\
F_{\beta_{2}}(\overline{v_{2}^{-1}u_{2}})F_{\alpha_{2}}(\overline{u_{2}^{-1}w_{2}u_{1}})F_{\beta_{1}}(\overline{u_{1}^{-1}v_{1}})
F_{\beta_{1}}(\overline{v_{1}^{-1}u_{1}})F_{\alpha_{1}}(\overline{u_{1}^{-1}w_{1}}) \\
=F_{\alpha_n}(\overline{w_n u_{n-1}})F_{\alpha_{n-1}}(\overline{u_{n-1}^{-1}w_{n-1}u_{n-2}})\cdots F_{\alpha_{2}}(\overline{u_{2}^{-1}w_{2}u_{1}})F_{\alpha_{1}}(\overline{u_{1}^{-1}w_{1}}).
\end{multline*}

\medskip
{\it Step 2: To show that the value of $F(w)$ does not depend on the choice of the division \\ $(w_1,\cdots,w_n,E_{\alpha_1},\cdots,E_{\alpha_n})$ of $w$.}

Define a refinement of a division $(w_1,\cdots,w_n,E_{\alpha_1},\cdots,E_{\alpha_n})$ of $w$ as a division \\ $(w_{1,1},\cdots,w_{1,k_{1}},\cdots,w_{n,1},\cdots,w_{n,k_{n}}, E_{\alpha_{1,1}},\cdots,E_{\alpha_{1,k_{1}}},\cdots,E_{\alpha_{n,1}}
,\cdots,E_{\alpha_{n,k_{n}}})$ of $w$, such that $w_i =w_{i,k_{i}}\cdots w_{i,1}$ for each $1\leq i\leq n$ and $E_{\alpha_{i,1}}$, $\cdots$, $E_{\alpha_{i,k_{i}}}$ are sub-f-BCs of $E_{\alpha_{i}}$ for each $1\leq i\leq n$. Since $\{E_{\alpha}\}_{\alpha\in I}$ is closed under finite intersections, for two divisions $(u_1,\cdots,u_n,E_{\alpha_1},\cdots,E_{\alpha_n})$ and $(v_1,\cdots,v_m,E_{\beta_1},\cdots,E_{\beta_m})$ of $w$, there exists a common refinement $(w_1,\cdots,w_k,E_{\gamma_1},\cdots,E_{\gamma_k})$. Therefore it suffices to show the value of $F(w)$ given by a refinement \\ $(w_{1,1},\cdots,w_{1,k_{1}},\cdots,w_{n,1},\cdots,w_{n,k_{n}}, E_{\alpha_{1,1}},\cdots,E_{\alpha_{1,k_{1}}},\cdots,E_{\alpha_{n,1}}
,\cdots,E_{\alpha_{n,k_{n}}})$ \\ of the division $(w_1,\cdots,w_n,E_{\alpha_1},\cdots,E_{\alpha_n})$ of $w$ is the same as the value of $F(w)$ given by the division $(w_1,\cdots,w_n,E_{\alpha_1},\cdots,E_{\alpha_n})$ of $w$.

For $1\leq i\leq n$ and $1\leq j< k_i$, let $u_{i,j}$ be a walk of $E_{\alpha_{i,j}}\cap E_{\alpha_{i,j+1}}$ from $a_{i,j}$ to $t(w_{i,j})$ where $a_{i,j}\in A$. For $1\leq i\leq n-1$, let $u_{i}$ be a walk of $E_{\alpha_{i,k_i}}\cap E_{\alpha_{i+1,1}}$ from $a_i$ to $t(w_i)$ where $a_i\in A$. Then $u_i$ is also a walk of $E_{\alpha_{i}}\cap E_{\alpha_{i+1}}$. The value of $F(w)$ given by the division \\ $(w_{1,1},\cdots,w_{1,k_{1}},\cdots,w_{n,1},\cdots,w_{n,k_{n}}, E_{\alpha_{1,1}},\cdots,E_{\alpha_{1,k_{1}}},\cdots,E_{\alpha_{n,1}}
,\cdots,E_{\alpha_{n,k_{n}}})$ is

\begin{multline*}
F_{\alpha_{n,k_n}}(\overline{w_{n,k_n}u_{n,k_{n}-1}})\cdots F_{\alpha_{n,2}}(\overline{u_{n,2}^{-1}w_{n,2}u_{n,1}})F_{\alpha_{n,1}}(\overline{u_{n,1}^{-1}w_{n,1}u_{n-1}})\cdots \\
F_{\alpha_{2,k_2}}(\overline{u_{2}^{-1}w_{2,k_2}u_{2,k_{2}-1}})\cdots F_{\alpha_{2,2}}(\overline{u_{2,2}^{-1}w_{2,2}u_{2,1}})F_{\alpha_{2,1}}(\overline{u_{2,1}^{-1}w_{2,1}u_1}) \\
F_{\alpha_{1,k_1}}(\overline{u_{1}^{-1}w_{1,k_1}u_{1,k_{1}-1}})\cdots F_{\alpha_{1,2}}(\overline{u_{1,2}^{-1}w_{1,2}u_{1,1}})F_{\alpha_{1,1}}(\overline{u_{1,1}^{-1}w_{1,1}}) \\
=F_{\alpha_{n}}(\overline{w_{n,k_n}u_{n,k_{n}-1}})\cdots F_{\alpha_{n}}(\overline{u_{n,2}^{-1}w_{n,2}u_{n,1}})F_{\alpha_{n}}(\overline{u_{n,1}^{-1}w_{n,1}u_{n-1}})\cdots \\
F_{\alpha_{2}}(\overline{u_{2}^{-1}w_{2,k_2}u_{2,k_{2}-1}})\cdots F_{\alpha_{2}}(\overline{u_{2,2}^{-1}w_{2,2}u_{2,1}})F_{\alpha_{2}}(\overline{u_{2,1}^{-1}w_{2,1}u_1}) \\
F_{\alpha_{1}}(\overline{u_{1}^{-1}w_{1,k_1}u_{1,k_{1}-1}})\cdots F_{\alpha_{1}}(\overline{u_{1,2}^{-1}w_{1,2}u_{1,1}})F_{\alpha_{1}}(\overline{u_{1,1}^{-1}w_{1,1}}) \\
=F_{\alpha_{n}}(\overline{w_{n,k_n}u_{n,k_{n}-1}\cdots u_{n,2}^{-1}w_{n,2}u_{n,1}u_{n,1}^{-1}w_{n,1}u_{n-1}})\cdots \\
F_{\alpha_{2}}(\overline{u_{2}^{-1}w_{2,k_2}u_{2,k_{2}-1}\cdots u_{2,2}^{-1}w_{2,2}u_{2,1}u_{2,1}^{-1}w_{2,1}u_1}) \\
F_{\alpha_{1}}(\overline{u_{1}^{-1}w_{1,k_1}u_{1,k_{1}-1}\cdots u_{1,2}^{-1}w_{1,2}u_{1,1}u_{1,1}^{-1}w_{1,1}}) \\
=F_{\alpha_n}(\overline{w_n u_{n-1}})\cdots F_{\alpha_{2}}(\overline{u_{2}^{-1}w_{2}u_{1}})F_{\alpha_{1}}(\overline{u_{1}^{-1}w_{1}}),
\end{multline*}
which is equal to the value of $F(w)$ given by the division $(w_1,\cdots,w_n,E_{\alpha_1},\cdots,E_{\alpha_n})$.

As a summery, for each $a_1$, $a_2\in A$, we have defined a map $F:\{$divisible walks $w$ of $E$ with $s(w)=a_1$ and $t(w)=a_2\}\rightarrow \mathscr{G}(F(a_1),F(a_2))$.

\medskip
{\it Step 3: To extend the domain of $F$ to all walks $w$ of $E$ with $s(w)$, $t(w)\in A$.}

Let $v=(y|\delta|x)$ be a walk of $E$ of length $1$, where $\delta\in \{g,g^{-1},\tau\}$. When $\delta=g$ or $\delta=g^{-1}$, $v$ is also a walk of some $E_{\alpha}$. When $\delta=\tau$, we have $P(x)=P(y)$, so there exists a sequence of elements $e_0 =x$, $e_1$, $\cdots$, $e_n =y$ of $P(x)$ such that for each $1\leq i\leq n$, $P_{\alpha_i}(e_{i-1})=P_{\alpha_i}(e_{i})$ for some $\alpha_i\in I$. Call a sequence of this form a {\it connecting sequence} from $x$ to $y$. Note that such a connecting sequence defines a walk $(e_0,e_1,\cdots,e_n)$ of the simplicial complex $K_x$.

Let $w$ be a walk of $E$ with $s(w)$, $t(w)\in A$. For each subwalk $v=(y|\tau|x)$ of $w$ of length $1$, choose a connecting sequence $e_0 =x$, $e_1$, $\cdots$, $e_n =y$ from $x$ to $y$ and replace the subwalk $v=(y|\tau|x)$ of $w$ by $(e_n|\tau|e_{n-1})\cdots(e_2|\tau|e_1)(e_1|\tau|e_0)$. Then we obtain a divisible walk $w'$ of $E$ with $s(w')$, $t(w')\in A$, which is homotopic to $w$. Define $F(w):=F(w')$.

\medskip
{\it Step 4: To show that for each walk $w$ of $E$ with $s(w)$, $t(w)\in A$, the value of $F(w)$ is independent of the choices of connecting sequences $e_0 =x$, $e_1$, $\cdots$, $e_n =y$ for each subwalk $v=(y|\tau|x)$ of $w$.}

It suffices to show the following fact: Let $w'=u(e_n|\tau|e_{n-1})\cdots(e_2|\tau|e_1)(e_1|\tau|e_0)v$ and  $w''=u(h_m|\tau|h_{m-1})\cdots(h_2|\tau|h_1)(h_1|\tau|h_0)v$ be two divisible walks with $s(v),t(u)\in A$, where $e_0 =x$, $e_1$, $\cdots$, $e_n =y$ and $h_0 =x$, $h_1$, $\cdots$, $h_m =y$ are two connecting sequences from $x$ to $y$. Then $F(w')=F(w'')$.

Since $K_x$ is simply connected, $(e_{0}$, $e_{1}$, $\cdots$, $e_{n})$ and $(h_{0}$, $h_{1}$, $\cdots$, $h_{m})$ are homotopic walks of $K_x$. Then we can assume that the walk $(h_{0}$, $h_{1}$, $\cdots$, $h_{m})$ is obtained from $(e_{0}$, $e_{1}$, $\cdots$, $e_{n})$ by replacing a subwalk with another, using relation (i) or relation (ii) in the definition of homotopy relations of walks of a simplicial complex (see the beginning of Section 5). When $(h_{0}$, $h_{1}$, $\cdots$, $h_{m})$ is obtained from $(e_{0}$, $e_{1}$, $\cdots$, $e_{n})$ by replacing a subwalk with another using relation (i), then $m=n-1$. We may assume that $h_{0}=e_{0}$ and $h_{k}=e_{k+1}$ for $1\leq k\leq m$ with $e_{0}$, $e_{1}$, $e_{2}$ belong to a simplex $P_{\alpha}(z)$ of $K_x$ for some $\alpha\in I$ and $z\in P(x)\cap E_{\alpha}$. Then
\begin{multline*}
F(w'')=F(u(h_m|\tau|h_{m-1})\cdots(h_2|\tau|h_1)(h_1|\tau|h_0)v) \\
=F(u(e_n|\tau|e_{n-1})\cdots(e_3|\tau|e_2)(e_2|\tau|e_0)v) \\
=F(u(e_n|\tau|e_{n-1})\cdots(e_3|\tau|e_2)(e_2|\tau|e_1)(e_1|\tau|e_0)v)=F(w'),
\end{multline*}
where the third identity follows from the fact that $e_0$, $e_1$, $e_2\in P_{\alpha}(z)$. When $(h_{0}$, $h_{1}$, $\cdots$, $h_{m})$ is obtained from $(e_{0}$, $e_{1}$, $\cdots$, $e_{n})$ by replacing a subwalk with another using relation (ii), the proof of $F(w')=F(w'')$ is similar.

\medskip
{\it Step 5: To show that $F(w)=F(\widetilde{w})$ for homotopic walks $w$, $\widetilde{w}$ of $E$ with $s(w)$, $t(w)\in A$.}

We may assume that $\widetilde{w}$ is obtained from $w$ by replacing a subwalk with another, using one of the relations $(h1)$, $(h2)$, $(h3)$ or $(h4)$ in Definition \ref{homotopy of walks}.

\medskip
{\it Case 1: $\widetilde{w}$ is obtained from $w$ by replacing a subwalk with another, using relation $(h1)$ in Definition \ref{homotopy of walks}.}

Let $w=u(h|g^{-1}g|h)v$ and $\widetilde{w}=uv$ for some walks $u$, $v$ of $E$ with $t(u)$, $s(v)\in A$. As in Step 3, we may replace each subwalk of $u$ and $v$ of the form $(y|\tau|x)$ by a walk of the form $(e_n|\tau|e_{n-1})\cdots(e_2|\tau|e_1)(e_1|\tau|e_0)$, where $e_0 =x$, $e_1$, $\cdots$, $e_n =y$ is a connecting sequence from $x$ to $y$. Then we obtain divisible walks $u'$ and $v'$. Therefore $F(w)=F(u'(h|g^{-1}g|h)v')$ and $F(\widetilde{w})=F(u'v')$, where $u'(h|g^{-1}g|h)v'$ and $u'v'$ are divisible walks. Choose a division $(u_1,\cdots,u_n,E_{\alpha_1},\cdots,E_{\alpha_n})$ of $u'$ and a division $(v_1,\cdots,v_m,E_{\beta_1},\cdots,E_{\beta_m})$ of $v'$. Since $h\in E_{\alpha_1}$, the walk $u_1(h|g^{-1}g|h)$ of $E$ is also a walk of $E_{\alpha_1}$. Therefore $$(v_1,\cdots,v_m,u_1(h|g^{-1}g|h),u_2,\cdots,u_n,E_{\beta_1},\cdots,E_{\beta_m},E_{\alpha_1},E_{\alpha_2},\cdots,E_{\alpha_n})$$
is a division of $u'(h|g^{-1}g|h)v'$ and
$$(v_1,\cdots,v_m,u_1,\cdots,u_n,E_{\beta_1},\cdots,E_{\beta_m},E_{\alpha_1},\cdots,E_{\alpha_n})$$
is a division of $u'v'$. Let $r_i$ be a walk of $E_{\beta_i}\cap E_{\beta_{i+1}}$ from $a_i\in A$ to $t(v_i)$ for $1\leq i\leq m-1$, $r_m$ be a walk of $E_{\beta_m}\cap E_{\alpha_1}$ from $a_m\in A$ to $h$, and $r_{m+j}$ be a walk of $E_{\alpha_j}\cap E_{\alpha_{j+1}}$ from $a_{m+j}\in A$ to $t(u_j)$ for $1\leq j\leq n-1$. Then
\begin{multline*}
F(w)=F(u'(h|g^{-1}g|h)v') \\
=F_{\alpha_n}(\overline{u_n r_{m+n-1}})\cdots F_{\alpha_{2}}(\overline{r_{m+2}^{-1}u_{2}r_{m+1}})F_{\alpha_{1}}(\overline{r_{m+1}^{-1}u_{1}(h|g^{-1}g|h)r_{m}})F_{\beta_m}(\overline{r_{m}^{-1}v_m r_{m-1}})\cdots \\
F_{\beta_{2}}(\overline{r_{2}^{-1}v_{2}r_{1}})F_{\beta_{1}}(\overline{r_{1}^{-1}v_{1}}) \\
=F_{\alpha_n}(\overline{u_n r_{m+n-1}})\cdots F_{\alpha_{2}}(\overline{r_{m+2}^{-1}u_{2}r_{m+1}})F_{\alpha_{1}}(\overline{r_{m+1}^{-1}u_{1}r_{m}})F_{\beta_m}(\overline{r_{m}^{-1}v_m r_{m-1}})\cdots \\
F_{\beta_{2}}(\overline{r_{2}^{-1}v_{2}r_{1}})F_{\beta_{1}}(\overline{r_{1}^{-1}v_{1}}) \\
=F(u'v')=F(\widetilde{w}).
\end{multline*}

If $w=u(h|gg^{-1}|h)v$ or $w=u(h|\tau|h)v$, similarly one can show that $F(w)=F(\widetilde{w})$, where $\widetilde{w}=uv$.

\medskip
{\it Case 2: $\widetilde{w}$ is obtained from $w$ by replacing a subwalk with another, using relation $(h2)$ in Definition \ref{homotopy of walks}.}

Let $w=u(e_2|\tau|e_1)(e_1|\tau|e_0)v$ and $\widetilde{w}=u(e_2|\tau|e_0)v$ for some walks $u$, $v$ of $E$ with $t(u)$, $s(v)\in A$ and $P(e_0)=P(e_1)=P(e_2)$. As in Step 3, we may replace $u$, $v$ by divisible walks $u'$, $v'$ respectively. Choose a connecting sequence $h_0 =e_0$, $h_1$, $\cdots$, $h_n =e_1$ from $e_0$ to $e_1$ and a connecting sequence $h_n =e_1$, $h_{n+1}$, $\cdots$, $h_{n+m} =e_2$ from $e_1$ to $e_2$. Then $h_0 =e_0$, $h_1$, $\cdots$, $h_n =e_1$, $h_{n+1}$, $\cdots$, $h_{n+m} =e_2$ is a connecting sequence from $e_0$ to $e_2$. It follows that \\ $r=u'(h_{n+m}|\tau|h_{n+m-1})\cdots(h_{n+2}|\tau|h_{n+1})(h_{n+1}|\tau|h_n)(h_n|\tau|h_{n-1})\cdots(h_2|\tau|h_1)(h_1|\tau|h_0)v'$ is divisible and $F(w)=F(r)=F(\widetilde{w})$.

\medskip
{\it Case 3: $\widetilde{w}$ is obtained from $w$ by replacing a subwalk with another, using relation $(h3)$ in Definition \ref{homotopy of walks}.}

Denote by $\sigma$ the Nakayama automorphism of $E$. Let $w=u(\sigma(e_2)|\tau|\sigma(e_1))(\sigma(e_1)|g^{d(e_1)}|e_1)v$ and $\widetilde{w}=u(\sigma(e_2)|g^{d(e_2)}|e_2)(e_2|\tau|e_1)v$ for some walks $u$, $v$ of $E$ with $t(u)$, $s(v)\in A$ and $P(e_1)=P(e_2)$. As in Step 3, we may replace $u$, $v$ by divisible walks $u'$, $v'$ respectively. Choose a connecting sequence $h_0 =e_1$, $h_1$, $\cdots$, $h_n =e_2$ from $e_1$ to $e_2$. Then $\sigma(h_0) =\sigma(e_1)$, $\sigma(h_1)$, $\cdots$, $\sigma(h_n) =\sigma(e_2)$ is a connecting sequence from $\sigma(e_1)$ to $\sigma(e_2)$. For each $0\leq i\leq n$, let
\begin{equation*}
 r_i=u'(\sigma(h_{n})|\tau|\sigma(h_{n-1}))\cdots(\sigma(h_{i+1})|\tau|\sigma(h_i))(\sigma(h_i)|g^{d(h_i)}|h_i)(h_i|\tau|h_{i-1})\cdots(h_1|\tau|h_0)v'.
\end{equation*}
Each $r_i$ is divisible with $F(w)=F(r_0)$ and $F(\widetilde{w})=F(r_n)$. To show $F(w)=F(\widetilde{w})$, it suffices to show $F(r_{i-1})=F(r_i)$ for each $1\leq i\leq n$.
Choose a division $(u_1,\cdots,u_k,E_{\alpha_1},\cdots,E_{\alpha_k})$ of $u'(\sigma(h_{n})|\tau|\sigma(h_{n-1}))\cdots(\sigma(h_{i+1})|\tau|\sigma(h_i))$ and a division $(v_1,\cdots,v_m,E_{\beta_1},\cdots,E_{\beta_m})$ of \\ $(h_{i-1}|\tau|h_{i-2})\cdots(h_1|\tau|h_0)v'$. Choose $\gamma\in I$ such that $h_{i-1},h_i\in E_{\gamma}$ and $P_{\gamma}(h_{i-1})=P_{\gamma}(h_i)$. Then both  $(\sigma(h_{i})|\tau|\sigma(h_{i-1}))(\sigma(h_{i-1})|g^{d(h_{i-1})}|h_{i-1})$ and $(\sigma(h_i)|g^{d(h_i)}|h_i)(h_i|\tau|h_{i-1})$ are walks of $E_{\gamma}$. It follows that
\begin{equation*} (v_1,\cdots,v_m,(\sigma(h_{i})|\tau|\sigma(h_{i-1}))(\sigma(h_{i-1})|g^{d(h_{i-1})}|h_{i-1}),u_1,\cdots,u_k,E_{\beta_1},\cdots,
E_{\beta_m},E_{\gamma},E_{\alpha_1},\cdots,E_{\alpha_k})
\end{equation*} is a division of $r_{i-1}$ and
\begin{equation*}(v_1,\cdots,v_m,(\sigma(h_i)|g^{d(h_i)}|h_i)(h_i|\tau|h_{i-1}),u_1,\cdots,u_k,E_{\beta_1},\cdots,
E_{\beta_m},E_{\gamma},E_{\alpha_1},\cdots,E_{\alpha_k})
\end{equation*} is a division of $r_i$. Since $(\sigma(h_{i})|\tau|\sigma(h_{i-1}))(\sigma(h_{i-1})|g^{d(h_{i-1})}|h_{i-1})$ and $(\sigma(h_i)|g^{d(h_i)}|h_i)(h_i|\tau|h_{i-1})$ are homotopic walks of $E_{\gamma}$, it can be shown that the values of $F(r_{i-1})$ and $F(r_i)$ given by above divisions are equal.

\medskip
{\it Case 4: $\widetilde{w}$ is obtained from $w$ by replacing a subwalk with another, using relation $(h4)$ in Definition \ref{homotopy of walks}.}

Let $w=u(g\cdot e_2|\tau|g\cdot e_1)(g\cdot e_1|g|e_1)v$ and $\widetilde{w}=u(g\cdot e_2|g|e_2)(e_2|\tau|e_1)v$ for some walks $u$, $v$ of $E$ with $t(u)$, $s(v)\in A$ and $L(e_1)=L(e_2)$. As in Step 3, we may replace $u$, $v$ by divisible walks $u'$, $v'$ respectively. Since $E=\bigcup_{\alpha\in I}E_{\alpha}$ as f-BCs, there exists a sequence of elements $h_0=e_1$, $h_1$, $\cdots$, $h_{n-1}$, $h_n=e_2$ of $E$ such that for each $1\leq i\leq n$, there exists some $\gamma_{i}\in I$ with $h_{i-1}$, $h_i\in E_{\gamma_{i}}$ and $L_{\gamma_{i}}(h_{i-1})=L_{\gamma_{i}}(h_{i})$. Then $g\cdot h_0=g\cdot e_1$, $g\cdot h_1$, $\cdots$, $g\cdot h_{n-1}$, $g\cdot h_n=g\cdot e_2$ becomes a connecting sequence from $g\cdot e_1$ to $g\cdot e_2$. For each $0\leq i\leq n$, define a divisible walk $r_i$ of $E$ as follows:
\begin{equation*}
r_i=u'(g\cdot h_n|\tau|g\cdot h_{n-1})\cdots(g\cdot h_{i+1}|\tau|g\cdot h_i)(g\cdot h_i|g|h_i)(h_i|\tau|h_{i-1})\cdots(h_1|\tau|h_{0})v'.
\end{equation*}
Since $F(w)=F(r_0)$ and $F(\widetilde{w})=F(r_n)$, we need to show that $F(r_{i-1})=F(r_i)$ for each $1\leq i\leq n$. Choose a division $(u_1,\cdots,u_k,E_{\alpha_1},\cdots,E_{\alpha_k})$ of $u'(g\cdot h_{n}|\tau|g\cdot h_{n-1})\cdots(g\cdot h_{i+1}|\tau|g\cdot h_i)$ and a division $(v_1,\cdots,v_m,E_{\beta_1},\cdots,E_{\beta_m})$ of $(h_{i-1}|\tau|h_{i-2})\cdots(h_1|\tau|h_0)v'$. Then
\begin{equation*} (v_1,\cdots,v_m,(g\cdot h_i|\tau|g\cdot h_{i-1})(g\cdot h_{i-1}|g|h_{i-1}),u_1,\cdots,u_k,E_{\beta_1},\cdots,
E_{\beta_m},E_{\gamma_{i}},E_{\alpha_1},\cdots,E_{\alpha_k})
\end{equation*}
is a division of $r_{i-1}$ and
\begin{equation*}(v_1,\cdots,v_m,(g\cdot h_i|g|h_i)(h_i|\tau|h_{i-1}),u_1,\cdots,u_k,E_{\beta_1},\cdots,
E_{\beta_m},E_{\gamma_{i}},E_{\alpha_1},\cdots,E_{\alpha_k})
\end{equation*}
is a division of $r_i$. Since $(g\cdot h_i|\tau|g\cdot h_{i-1})(g\cdot h_{i-1}|g|h_{i-1})$ and $(g\cdot h_i|g|h_i)(h_i|\tau|h_{i-1})$ are homotopic walks of $E_{\gamma_{i}}$, it can be shown that the values of $F(r_{i-1})$ and $F(r_i)$ defined by above divisions are equal.

By Step 5, $F$ induce maps $\Pi(E,A)(a_1,a_2)\rightarrow \mathscr{G}(F(a_1),F(a_2))$ for all $a_1$, $a_2\in A$. We also denote these maps by $F$.

\medskip
{\it Step 6: To show that $F:\Pi(E,A)\rightarrow \mathscr{G}$ becomes a functor.}

For $\overline{w}\in\Pi(E,A)(a_1,a_2)$ and $\overline{w'}\in\Pi(E,A)(a_0,a_1)$, assume that $w$ and $w'$ are divisible. Choose a division $(w_1,\cdots,w_n,E_{\alpha_1},\cdots,E_{\alpha_n})$ of $w$ and a division \\ $(w'_1,\cdots,w'_m,E_{\beta_1},\cdots,E_{\beta_m})$ of $w'$. Then $(w'_1,\cdots,w'_m,w_1,\cdots,w_n,E_{\beta_1},\cdots,E_{\beta_m},E_{\alpha_1},\cdots,E_{\alpha_n})$ becomes a division of $ww'$. Choose a walk $u_i$ of $E_{\alpha_i}\cap E_{\alpha_{i+1}}$ from some $b_i\in A$ to $t(w_i)$ for each $1\leq i\leq n-1$, and choose a walk $u'_i$ of $E_{\beta_i}\cap E_{\beta_{i+1}}$ from some $c_i\in A$ to $t(w'_i)$ for each $1\leq i\leq m-1$. Moreover, choose a closed walk $u$ of $E_{\gamma}=E_{\alpha_1}\cap E_{\beta_m}$ at $a_1$ (For example, let $u$ be the trivial walk of $E_{\gamma}$ at $a_1$). Then
\begin{multline*}
F(\overline{ww'})=F_{\alpha_n}(\overline{w_n u_{n-1}})F_{\alpha_{n-1}}(\overline{u_{n-1}^{-1}w_{n-1}u_{n-2}})\cdots F_{\alpha_{2}}(\overline{u_{2}^{-1}w_{2}u_{1}})F_{\alpha_{1}}(\overline{u_{1}^{-1}w_{1}u}) \\
F_{\beta_m}(\overline{u^{-1}w'_m u'_{m-1}})F_{\beta_{m-1}}(\overline{(u'_{m-1})^{-1}w'_{m-1}u'_{m-2}})\cdots F_{\beta_{2}}(\overline{(u'_{2})^{-1}w'_{2}u'_{1}})F_{\beta_{1}}(\overline{(u'_{1})^{-1}w'_{1}}) \\
=F_{\alpha_n}(\overline{w_n u_{n-1}})F_{\alpha_{n-1}}(\overline{u_{n-1}^{-1}w_{n-1}u_{n-2}})\cdots F_{\alpha_{2}}(\overline{u_{2}^{-1}w_{2}u_{1}})F_{\alpha_{1}}(\overline{u_{1}^{-1}w_{1}})F_{\alpha_{1}}(\overline{u}) \\
F_{\beta_m}(\overline{u^{-1}})F_{\beta_m}(\overline{w'_m u'_{m-1}})F_{\beta_{m-1}}(\overline{(u'_{m-1})^{-1}w'_{m-1}u'_{m-2}})\cdots F_{\beta_{2}}(\overline{(u'_{2})^{-1}w'_{2}u'_{1}})F_{\beta_{1}}(\overline{(u'_{1})^{-1}w'_{1}}) \\
=F_{\alpha_n}(\overline{w_n u_{n-1}})F_{\alpha_{n-1}}(\overline{u_{n-1}^{-1}w_{n-1}u_{n-2}})\cdots F_{\alpha_{2}}(\overline{u_{2}^{-1}w_{2}u_{1}})F_{\alpha_{1}}(\overline{u_{1}^{-1}w_{1}})F_{\gamma}(\overline{u}) \\
F_{\gamma}(\overline{u^{-1}})F_{\beta_m}(\overline{w'_m u'_{m-1}})F_{\beta_{m-1}}(\overline{(u'_{m-1})^{-1}w'_{m-1}u'_{m-2}})\cdots F_{\beta_{2}}(\overline{(u'_{2})^{-1}w'_{2}u'_{1}})F_{\beta_{1}}(\overline{(u'_{1})^{-1}w'_{1}}) \\
=F_{\alpha_n}(\overline{w_n u_{n-1}})F_{\alpha_{n-1}}(\overline{u_{n-1}^{-1}w_{n-1}u_{n-2}})\cdots F_{\alpha_{2}}(\overline{u_{2}^{-1}w_{2}u_{1}})F_{\alpha_{1}}(\overline{u_{1}^{-1}w_{1}}) \\
F_{\beta_m}(\overline{w'_m u'_{m-1}})F_{\beta_{m-1}}(\overline{(u'_{m-1})^{-1}w'_{m-1}u'_{m-2}})\cdots F_{\beta_{2}}(\overline{(u'_{2})^{-1}w'_{2}u'_{1}})F_{\beta_{1}}(\overline{(u'_{1})^{-1}w'_{1}}) \\
=F(\overline{w})F(\overline{w'}).
\end{multline*}

For each $a\in A$, choose some $\alpha\in I$. Then $((a||a),E_{\alpha})$ is a division of $(a||a)$. We have $F(1_{a})=F(\overline{(a||a)})=F_{\alpha}(\overline{(a||a)})=F_{\alpha}(1_a)=1_{F_{\alpha}(a)}=1_{F(a)}$.

\medskip
{\it Step 7: To show that $F:\Pi(E,A)\rightarrow \mathscr{G}$ is the unique functor which satisfies $F_{\alpha}=Fj_{\alpha *}$ for all $\alpha\in I$, where $j_{\alpha}:E_{\alpha}\rightarrow E$ is the inclusion map.}

By the definition of $F$, we have $F(a)=F_{\alpha}(a)$ for all $\alpha\in I$ and $a\in A$. For each $\overline{w}\in\Pi(E_{\alpha},A)(a_1,a_2)$, $(w,E_{\alpha})$ is a division of $w$, and $F(\overline{w})=F_{\alpha}(\overline{w})$. Therefore $F_{\alpha}=Fj_{\alpha *}$. Let $F':\Pi(E,A)\rightarrow \mathscr{G}$ be another functor which satisfies $F_{\alpha}=F'j_{\alpha *}$ for all $\alpha\in I$. For each $a\in A$, choose some $\alpha\in I$, then $F'(a)=F_{\alpha}(a)=F(a)$. For each morphism $\overline{w}\in\Pi(E,A)(a_1,a_2)$, assume that $w$ is divisible (otherwise we may replace $w$ by a divisible walk $w'$ of $E$ which is homotopic to $w$, see Step 3), choose a division $(w_1,\cdots,w_n,E_{\alpha_1},\cdots,E_{\alpha_n})$ of $w$ and choose a walk $u_i$ of $E_{\alpha_i}\cap E_{\alpha_{i+1}}$ from some $b_i\in A$ to $t(w_i)$ for each $1\leq i\leq n-1$. Then
\begin{multline*}
F'(\overline{w})=F'(\overline{w_n u_{n-1}}\overline{u_{n-1}^{-1}w_{n-1}u_{n-2}}\cdots\overline{u_{2}^{-1}w_{2}u_{1}}\overline{u_{1}^{-1}w_{1}}) \\
=F'(\overline{w_n u_{n-1}})F'(\overline{u_{n-1}^{-1}w_{n-1}u_{n-2}})\cdots F'(\overline{u_{2}^{-1}w_{2}u_{1}})F'(\overline{u_{1}^{-1}w_{1}}) \\ =F_{\alpha_n}(\overline{w_n u_{n-1}})F_{\alpha_{n-1}}(\overline{u_{n-1}^{-1}w_{n-1}u_{n-2}})\cdots F_{\alpha_2}(\overline{u_{2}^{-1}w_{2}u_{1}})F_{\alpha_1}(\overline{u_{1}^{-1}w_{1}})=F(\overline{w}).
\end{multline*}

Therefore $F'=F$ and $F$ is unique.
\end{proof}

The following lemma shows that the fundamental group of an f-BC $E$ is isomorphic to the fundamental group of $E'$, where $E'$ is an f-BC obtained from $E$ by deleting some $1$-gons.

\begin{Lem}\label{isomorphism of fundamental groupoids}
Let $E=(E,P,L,d)$ be an f-BC. Let $D$ be a subset of $E$ which satisfies
\begin{itemize}
\item [$(1)$] for each $e\in E$, $e\in D$ if and only if $g^{d(e)}(e)\in D$ (here we write the action of $g^n$ on $h$ as $g^n(h)$);
\item [$(2)$] $P(e)=\{e\}$ for each $e\in D$.
\end{itemize}
Let $E'=E-D$ and assume that $E'\neq\emptyset$. Define an f-BC structure $(E',P',L',d')$ on $E'$ as follows: The action of $G=\langle g\rangle$ on $E'$ is given by
\begin{equation*}
g\cdot h= \begin{cases}
g(h), & \text{ if } g(h)\in E'; \\
g^{N}(h), & \text{ if } g(h)\notin E', \\
& \text{ where } N \text{ is the minimal positive integer such that } g^{N}(h)\in E'.
\end{cases}
\end{equation*}
Define $P'(h)=P(h)$ and $L'(h)=L(h)$ for every $h\in E'$. Define the degree function $d'$ by $d'(h)=d(h)-|\{i\mid 1\leq i\leq d(h)-1 $ and $g^{i}(h)\notin E'\}|$ for every $h\in E'$. Then $E'$ is an f-BC, and the groupoids $\Pi(E,E')$ and $\Pi(E',E')$ are isomorphic.
\end{Lem}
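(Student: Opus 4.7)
I verify axioms $(f1)$--$(f6)$ for $(E',P',L',d')$.  The key structural fact is that, since $P(d)=\{d\}$ for every $d\in D$, every polygon of $E$ is either a singleton lying in $D$ or is entirely contained in $E'$; this gives $(f1)$ immediately and reduces $(f2)$ to $(f2)$ in $E$.  Combining the polygon dichotomy with the closure $e\in D\Leftrightarrow g^{d(e)}(e)\in D$ yields the identity $g^{d'(h)}\cdot h=g^{d(h)}(h)$ for all $h\in E'$, which in turn reduces $(f3),(f4),(f5)$ for $E'$ to the corresponding axioms for $E$.  For $(f6)$ I shall argue that if the $L'$-sequence of some $e\in E'$ were a proper initial subsequence of the $L'$-sequence of some $h\in E'$, then by induction on length (using $(f2)$ in $E$ and the polygon dichotomy) the $L$-sequences of $e$ and $h$ in $E$ agree position-by-position along the matched segment, contradicting $(f6)$ for $E$.

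\medskip
\textbf{Part 2: Isomorphism of groupoids.}  Define $\iota$ on walks of $E'$: leave $\tau$-transitions unchanged, and replace each $E'$-transition $(h_2|g^{\pm1}|h_1)$ by the corresponding chain of $g^{\pm1}$-transitions in $E$ passing through the intermediate $D$-angles between $h_1$ and $h_2$.  Checking that each homotopy generator $(h1)$--$(h4)$ of Definition~\ref{homotopy of walks} in $E'$ maps under $\iota$ to a composite of homotopies in $E$ (for $(h3)$ one uses the identity from Part~1; for $(h4)$ one uses the polygon dichotomy together with $L'=L$), I obtain a functor $\iota_{*}\colon\Pi(E',E')\to\Pi(E,E')$ which is the identity on objects.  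For surjectivity on morphisms, given a walk $w$ of $E$ with endpoints in $E'$, I observe that any $\tau$-transition involving some $d\in D$ is forced to be $(d|\tau|d)$ (since $P(d)=\{d\}$) and hence homotopic to the trivial walk by $(h1)$; after removing all such and cancelling every $gg^{-1}$ and $g^{-1}g$ pair, the resulting walk meets $D$ only as interior angles of maximal $g^{\pm1}$-chains whose endpoints lie in $E'$, and reading off these chains yields the required walk $w'$ of $E'$ with $\iota(w')\sim w$.

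\medskip
\textbf{The main obstacle: injectivity of $\iota_{*}$.}  I plan to construct the inverse functor $\pi_{*}\colon\Pi(E,E')\to\Pi(E',E')$ explicitly, sending a walk $w$ to the reduced walk $w'$ produced in the surjectivity argument.  Well-definedness on homotopy classes requires a case analysis: relations $(h1)$--$(h4)$ taking place entirely inside $E'$ translate directly; relations entirely inside $D$ are forced to be trivial by the singleton-polygon property and the closure under $\sigma$; relations straddling the $D/E'$ boundary reduce, after collapse of the relevant $D$-chains, to composites of $E'$-relations using the identity $g^{d'(h)}\cdot h=g^{d(h)}(h)$.  Once $\pi_{*}$ is well-defined, $\pi_{*}\iota_{*}=\mathrm{id}$ holds by construction, and together with the surjectivity established above this yields the desired isomorphism $\Pi(E,E')\cong\Pi(E',E')$.
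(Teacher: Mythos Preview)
Your proposal is correct and follows the same strategy as the paper: construct the functor $\iota_*$ (the paper's $F$) by expanding each $E'$-step into the corresponding $g^{\pm1}$-chain in $E$, construct its inverse $\pi_*$ (the paper's $G$) by collapsing each maximal $D$-excursion of an $E$-walk to a single $E'$-step, and verify both respect the homotopy generators $(h1)$--$(h4)$ via case analysis using the identity $g^{d'(h)}\cdot h=g^{d(h)}(h)$ and the singleton-polygon property of $D$. The only cosmetic difference is that the paper defines the inverse directly on every walk via the canonical decomposition $w=w_mv_m\cdots v_1w_0$ (each $w_i$ staying in $E'$, each $v_i$ with interior in $D$) and the signed $g$-count $N(v_i)$ of each segment $v_i$, rather than first normalizing $w$ as you do.
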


\medskip
\begin{proof}
Note that $g^{d'(h)}\cdot h=g^{d(h)}(h)$ for each $h\in E'$, it is straightforward to show that $E'$ is satisfies $(f1)$-$(f5)$ in Definition \ref{f-BC}. Suppose $(f6)$ does not hold for $E'$, then there exist $e$, $h\in E'$ with $d'(e)<d'(h)$ and $L'(g^{i}\cdot e)=L'(g^{i}\cdot h)$ for $0\leq i\leq d'(e)-1$. Since $L(e)=L(h)$, $P(g(e))=P(g(h))$. If $g(e)\notin E'$, since $g(e)\in D$, $P(g(e))=\{g(e)\}$. Therefore $g(e)=g(h)$ and $e=h$, a contradiction. Therefore both $g(e)$ and $g(h)$ belong to $E'$, and $g\cdot e=g(e)$, $g\cdot h=g(h)$. By induction, for each $0\leq i\leq d'(e)$, $g^{i}(e),g^{i}(h)\in E'$, and $g^{i}\cdot e=g^{i}(e)$ (resp. $g^{i}\cdot h=g^{i}(h)$). Therefore $d(e)=d'(e)<d'(h)\leq d(h)$ and $L(g^i(e))=L(g^i(h))$ for $0\leq i\leq d(e)-1$, which contradicts the fact that $E$ satisfies $(f6)$. Then $(f6)$ holds for $E'$ and $E'$ is an f-BC.

To show that $\Pi(E,E')$ and $\Pi(E',E')$ are isomorphic, first we define a functor $F:\Pi(E',E')\rightarrow \Pi(E,E')$ as follows: For each walk $w'$ of $E'$, define a walk $f(w')$ of $E$ by replacing each subwalk $(g\cdot h|g|h)$ of $w'$ with $g(h)\notin E'$ by $(g^{N}(h)|g^{N}|h)$, where $N$ is the minimal positive integer such that $g^{N}(h)\in E'$, and replacing each subwalk $(g^{-1}\cdot h|g|h)$ of $w'$ with $g^{-1}(h)\notin E'$ by $(g^{-N'}(h)|g^{-N'}|h)$, where $N'$ is the minimal positive integer such that $g^{-N'}(h)\in E'$. By definition, $f$ preserves the composition of walks. For each $h\in E'$, $f((g^{d'(h)}\cdot h|g^{d'(h)}|h))=(g^{d(h)}(h)|g^{d(h)}|h)$, therefore $f$ preserves relation $(h3)$ in Definition \ref{homotopy of walks}. For $e$, $h\in E'$ such that $L'(e)=L'(h)$, if $g(e)\notin E'$ or $g(h)\notin E'$, then $P(g(e))=\{g(e)\}$ or $P(g(h))=\{g(h)\}$. Since $P(g(e))=P(g(h))$, $g(e)=g(h)$ and $e=h$, so $f((g\cdot h|\tau|g\cdot e)(g\cdot e|g|e))$ is homotopic to $f((g\cdot h|g|h)(h|\tau|e))$. If $g(e)$, $g(h)\in E'$, then
$$f((g\cdot h|\tau|g\cdot e)(g\cdot e|g|e))=(g(h)|\tau|g(e))(g(e)|g|e)\sim(g(h)|g|h)(h|\tau|e)=f((g\cdot h|g|h)(h|\tau|e)).$$
Therefore $f$ preserves relation $(h4)$ in Definition \ref{homotopy of walks}. It is straightforward to show that $f$ preserves relations $(h1)$ and $(h2)$ in Definition \ref{homotopy of walks}, and therefore $f$ sends homotopic walks of $E'$ to homotopic walks of $E$. Define $F(h)=h$ for each $h\in E'$ and $F(\overline{w'})=\overline{f(w')}$ for each walk $w'$ of $E'$. Then $F$ is a functor from $\Pi(E',E')$ to $\Pi(E,E')$.

To show that $F$ is an isomorphism, we need to define a functor $G:\Pi(E,E')\rightarrow \Pi(E',E')$. For each walk $w$ of $E$ with $s(w)$, $t(w)\in E'$, $w$ can be written uniquely as the form
\begin{equation}\label{decompose-walk}w_m v_m w_{m-1} v_{m-1}\cdots w_2 v_2 w_1 v_1 w_0,\end{equation}
where $w_i$ are walks of $E'$ and
$$v_i=e_{i,n_i}\frac{\delta_{i,n_i}}{}e_{i,n_{i}-1}\frac{\delta_{i,n_{i}-1}}{}\cdots\frac{\delta_{i2}}{}e_{i1}\frac{\delta_{i1}}{}e_{i0}$$
are walks of $E$ such that $n_i\geq 2$, $e_{i,0}$, $e_{i,n_i}\in E'$ and $e_{ij}\notin E'$ for $1\leq j\leq n_{i}-1$. Define a number $N(v_i)$ for each $1\leq i\leq m$: $$N(v_i):=|\{j\mid 1\leq j\leq n_i \text{ with } \delta_{ij}=g\}|-|\{j\mid 1\leq j\leq n_i \text{ with } \delta_{ij}=g^{-1}\}|.$$
Since $P(e)=\{e\}$ for each $e\in E-E'$, $e_{i,n_i}=g^{N(v_i)}(e_{i0})$. When $N(v_i)>0$, since $e_{i0}$, $e_{i,n_i}\in E'$ and $g^{i}(e_{i0})\notin E'$ for $1\leq i\leq N(v_i)-1$, $e_{i,n_i}=g^{N(v_i)}(e_{i0})=g\cdot e_{i0}$. When $N(v_i)<0$, similarly $e_{i,n_i}=g^{N(v_i)}(e_{i0})=g^{-1}\cdot e_{i0}$. Define a walk $v'_i$ of $E'$ for each $1\leq i\leq m$ as follows:
\begin{equation*}
v'_i= \begin{cases}
(g\cdot e_{i0}|g|e_{i0}), &\text{ if } N(v_i)>0; \\
(e_{i,0}||e_{i0}), &\text{ if } N(v_i)=0; \\
(g^{-1}\cdot e_{i,0}|g^{-1}|e_{i0}), &\text{ if } N(v_i)<0.
\end{cases}
\end{equation*}
Define a walk $\phi(w)$ of $E'$ by the formula $\phi(w)=w_m v'_m w_{m-1} v'_{m-1}\cdots w_2 v'_2 w_1 v'_1 w_0$. It can be shown that $\phi$ preserves the composition of walks of $E$ whose sources and terminals belong to $E'$.

We need to show that $\phi$ preserves homotopic relations of walks. If $t_1$, $t_2$ are two walks of $E$ whose sources and terminals belong to $E'$, such that $t_2$ is obtained from $t_1$ by replacing a subwalk with another, using relation $(h1)$ or $(h2)$ in Definition \ref{homotopy of walks}, then it is straightforward to show that $\phi(t_1)$ is homotopic to $\phi(t_2)$.

If $t_1$, $t_2$ are two walks of $E$ whose sources and terminals belong to $E'$, such that $t_2$ is obtained from $t_1$ by replacing a subwalk with another, using relation $(h3)$ in Definition \ref{homotopy of walks}, then we may assume that $t_1=u(g^{d(h)}(h)|\tau|g^{d(e)}(e))(g^{d(e)}(e)|g^{d(e)}|e)v$ and $t_2=u(g^{d(h)}(h)|g^{d(h)}|h)(h|\tau|e)v$ with $s(v),t(u)\in E'$, where $P(e)=P(h)$. If $e\notin E'$ or $h\notin E'$, since $P(e)=P(h)$, $e=h$. Then $t_1=u(g^{d(e)}(e)|\tau|g^{d(e)}(e))(g^{d(e)}(e)|g^{d(e)}|e)v$ and $t_2=u(g^{d(e)}(e)|g^{d(e)}|e)(e|\tau|e)v$. Write $t_1=w_m v_m w_{m-1} v_{m-1}\cdots w_2 v_2 w_1 v_1 w_0$ as in formula (\ref{decompose-walk}). Since $g^{d(e)}(e)\notin E'$, the subwalk \\ $(g^{d(e)}(e)|\tau|g^{d(e)}(e))$ of $t_1$ is a subwalk of some $v_i$. Let $v_i=p(g^{d(e)}(e)|\tau|g^{d(e)}(e))q$ and let $\widetilde{v_i}=p q$. Then $$u(g^{d(e)}(e)|g^{d(e)}|e)v=w_m v_m w_{m-1} v_{m-1}\cdots w_i \widetilde{v_i} w_{i-1}\cdots w_2 v_2 w_1 v_1 w_0$$
is the decomposition of $u(g^{d(e)}(e)|g^{d(e)}|e)v$ as in formula (\ref{decompose-walk}). We have $v'_i=\widetilde{v_i}'$ and
\begin{multline*}\phi(t_1)=\phi(u(g^{d(e)}(e)|\tau|g^{d(e)}(e))(g^{d(e)}(e)|g^{d(e)}|e)v)=w_m v'_m w_{m-1} v'_{m-1}\cdots w_2 v'_2 w_1 v'_1 w_0 = \\ w_m v'_m w_{m-1} v'_{m-1}\cdots w_i \widetilde{v_i}' w_{i-1}\cdots w_2 v'_2 w_1 v'_1 w_0=\phi(u(g^{d(e)}(e)|g^{d(e)}|e)v).\end{multline*}
Similarly, we also have $\phi(t_2)=\phi(u(g^{d(e)}(e)|g^{d(e)}|e)v)$.

If $e\in E'$, then so does $h$. Therefore the sources and terminals of walks $u$, $v$, \\ $(g^{d(h)}(h)|\tau|g^{d(e)}(e))(g^{d(e)}(e)|g^{d(e)}|e)$, $(g^{d(h)}(h)|g^{d(h)}|h)(h|\tau|e)$ belong to $E'$. Since $\phi$ preserves the composition of walks whose sources and terminals belong to $E'$, $$\phi(t_1)=\phi(u)\phi((g^{d(h)}(h)|\tau|g^{d(e)}(e))(g^{d(e)}(e)|g^{d(e)}|e))\phi(v)$$ and  $$\phi(t_2)=\phi(u)\phi((g^{d(h)}(h)|g^{d(h)}|h)(h|\tau|e))\phi(v).$$ Since $$\phi((g^{d(h)}(h)|\tau|g^{d(e)}(e))(g^{d(e)}(e)|g^{d(e)}|e))=(g^{d'(h)}\cdot h|\tau|g^{d'(e)}\cdot e)(g^{d'(e)}\cdot e|g^{d'(e)}|e)$$
and
$$\phi((g^{d(h)}(h)|g^{d(h)}|h)(h|\tau|e))=(g^{d'(h)}\cdot h|g^{d'(h)}|h)(h|\tau|e),$$
$\phi(t_1)$ is homotopic to $\phi(t_2)$.

If $t_1$, $t_2$ are two walks of $E$ whose sources and terminals belong to $E'$, such that $t_2$ is obtained from $t_1$ by replacing a subwalk with another, using relation $(h4)$ in Definition \ref{homotopy of walks}, then we may assume that $t_1=u(g(h)|\tau|g(e))(g(e)|g|e)v$ and $t_2=u(g(h)|g|h)(h|\tau|e)v$ with $s(v),t(u)\in E'$, where $L(e)=L(h)$. If $g(e)\notin E'$ or $g(h)\notin E'$, since $P(g(e))=P(g(h))$, $g(e)=g(h)$ and $e=h$. Therefore $\phi(t_1)=\phi(u(g(e)|g|e)v)\sim\phi(t_2)$. If $e\notin E'$ or $h\notin E'$, similarly we have $e=h$ and $\phi(t_1)\sim\phi(t_2)$. If $e$, $h$, $g(e)$, $g(h)\in E'$, then
$$\phi(t_1)=\phi(u)(g\cdot h|\tau|g\cdot e)(g\cdot e|g|e)\phi(v)\sim\phi(u)(g\cdot h|g|h)(h|\tau|e)\phi(v)=\phi(t_2).$$

Let $G:\Pi(E,E')\rightarrow \Pi(E',E')$ be the functor sending each object $h$ of $\Pi(E,E')$ to itself and sending each morphism $\overline{w}$ of $\Pi(E,E')$ to $\overline{\phi(w)}$, where $w$ is a walk of $E$ with source and terminal in $E'$. It can be shown that $GF=id_{\Pi(E',E')}$ and $FG=id_{\Pi(E,E')}$.
\end{proof}

Next we will reduce the calculation of the fundamental group of a Brauer configuration $E$ to the calculation of the fundamental group of a Brauer graph $C$. To show that the fundamental groups $\Pi(E)$ and $\Pi(C)$ are isomorphic, we will show that they are the direct limits of two isomorphic direct systems by using Proposition \ref{Van-Kampen}.

Let $E=(E,P,L,d)$ be a Brauer configuration, where we denote the action of $g^n$ on $e\in E$ by $g^n(e)$. For each polygon $P(e)$ of $E$ with $|P(e)|\geq 3$, label the elements of $P(e)$ by $e_1,\cdots,e_n$, and define a set $\overline{P(e)}=\{e'_2,\cdots,e'_{n-1}\}$. Define a Brauer graph $C=(C,\widetilde{P},\widetilde{L},\widetilde{d})$ as follows:
\begin{equation*} C=E\sqcup\bigsqcup_{|P(e)|\geq 3}\overline{P(e)}. \end{equation*}
The action of $G$ on $C$ is given by
\begin{equation*}
g\cdot h= \begin{cases}
g(h), &\text{ if } h\in E \text{ and } h' \text{ is not defined}; \\
h', &\text{ if } h\in E \text{ and } h' \text{ is defined}; \\
g(e), &\text{ if } h=e' \text{ for some } e\in E.
\end{cases}
\end{equation*}
For $h\in C$, if $h\in E$ and $|P(h)|=2$, define $\widetilde{P}(h)=P(h)$. For each polygon $P(e)=\{e_1,\cdots,e_n\}$ of $E$ with $|P(e)|\geq 3$, define the partition $\widetilde{P}$ of the subset $P(e)\sqcup \overline{P(e)}$ of $C$ by $\{e_1,e_2\}\sqcup\{e'_2,e_3\}\sqcup\{e'_3,e_4\}\sqcup\cdots\sqcup\{e'_{n-2},e_{n-1}\}\sqcup\{e'_{n-1},e_n\}$. Define $\widetilde{L}(h)=\{h\}$ for each $h\in C$. Define
$$\widetilde{d}(h)=\frac{d(h)\cdot\mid\{G\text{-orbit of } h\text{ in } C\}\mid}{\mid\{G\text{-orbit of } h\text{ in } E\}\mid}$$
for each $h\in E$, and define $\widetilde{d}(h)=\widetilde{d}(e)$ for $h=e'$ with $e\in E$.

For each polygon $\widetilde{P}(e)$ of $C$, define a sub-f-BC $C_{\widetilde{P}(e)}=(C_{\widetilde{P}(e)},\widetilde{P}_{\widetilde{P}(e)},\widetilde{L}_{\widetilde{P}(e)},\widetilde{d}_{\widetilde{P}(e)})$ of $C$ as follows: $C_{\widetilde{P}(e)}=C$ as $G$-sets and $\widetilde{d}_{\widetilde{P}(e)}=\widetilde{d}$. The partition $\widetilde{P}_{\widetilde{P}(e)}$ is given by
\begin{equation*}
\widetilde{P}_{\widetilde{P}(e)}(h)= \begin{cases}
\widetilde{P}(e), &\text{ if } h\in \widetilde{P}(e); \\
\{h\}, &\text{ otherwise},
\end{cases}
\end{equation*}
and the partition $\widetilde{L}_{\widetilde{P}(e)}$ is given by $\widetilde{L}_{\widetilde{P}(e)}(h)=\{h\}$ for each $h\in C_{\widetilde{P}(e)}$.

For each polygon $\widetilde{P}(e)$ of $C$, we also define a sub-f-BC $E_{\widetilde{P}(e)}=(E_{\widetilde{P}(e)},P_{\widetilde{P}(e)},L_{\widetilde{P}(e)},d_{\widetilde{P}(e)})$ of $E$: $E_{\widetilde{P}(e)}=E$ as $G$-sets and $d_{\widetilde{P}(e)}=d$. The partition $L_{\widetilde{P}(e)}$ is given by $L_{\widetilde{P}(e)}(h)=\{h\}$ for each $h\in E_{\widetilde{P}(e)}$. The partition $P_{\widetilde{P}(e)}$ is defined as follows: If $e\in E$ and $|P(e)|=2$, then $\widetilde{P}(e)=P(e)$, and the partition $P_{\widetilde{P}(e)}$ of $E_{\widetilde{P}(e)}$ is given by
\begin{equation*}
P_{\widetilde{P}(e)}(h)= \begin{cases}
P(e), &\text{ if } h\in P(e); \\
\{h\}, &\text{ otherwise}.
\end{cases}
\end{equation*}
Otherwise, then there exists some $P(e_1)=\{e_1,\cdots,e_n\}\subseteq E$ with $n\geq 3$ such that $\widetilde{P}(e)$ is equal to one of the following sets: $\{e_1,e_2\}$, $\{e'_2,e_3\}$, $\{e'_3,e_4\}$, $\cdots$, $\{e'_{n-1},e_n\}$. If $\widetilde{P}(e)=\{e_1,e_2\}$, define
\begin{equation*}
P_{\widetilde{P}(e)}(h)= \begin{cases}
\{e_1,e_2\}, &\text{ if } h\in \{e_1,e_2\}; \\
\{h\}, &\text{ otherwise}.
\end{cases}
\end{equation*}
If $\widetilde{P}(e)=\{e'_i,e_{i+1}\}$ with $2\leq i\leq n-1$, define
\begin{equation*}
P_{\widetilde{P}(e)}(h)= \begin{cases}
\{e_i,e_{i+1}\}, &\text{ if } h\in \{e_i,e_{i+1}\}; \\
\{h\}, &\text{ otherwise}.
\end{cases}
\end{equation*}

Moreover, define a sub-f-BC $C'=(C',\widetilde{P'},\widetilde{L'},\widetilde{d'})$ of $C$ (resp. a sub-f-BC $E'=(E',P',L',d')$ of $E$) as follows: $C'=C$ (resp. $E'=E$) as $G$-sets and $\widetilde{d'}=\widetilde{d}$ (resp. $d'=d$). The partition $\widetilde{P'}$ and the partition $\widetilde{L'}$ of $C'$ (resp. the partition $P'$ and the partition $L'$ of $E'$) are given by $\widetilde{P'}(e)=\widetilde{L'}(e)=\{e\}$ for each $e\in C$ (resp. $P'(e)=L'(e)=\{e\}$ for each $e\in E$). For each polygon $\widetilde{P}(e)$ of $C$, let $f_{\widetilde{P}(e)}:E'\rightarrow E_{\widetilde{P}(e)}$ (resp. $\widetilde{f}_{\widetilde{P}(e)}:C'\rightarrow C_{\widetilde{P}(e)}$) be the inclusion morphism. Then $f_{\widetilde{P}(e)}$ (resp. $\widetilde{f}_{\widetilde{P}(e)}$) induces a functor $F_{\widetilde{P}(e)}:\Pi(E',E)\rightarrow\Pi(E_{\widetilde{P}(e)},E)$ (resp. $\widetilde{F}_{\widetilde{P}(e)}:\Pi(C',E)\rightarrow\Pi(C_{\widetilde{P}(e)},E)$).

\begin{Prop}\label{iso. of direct systems}
The direct systems $\{F_{\widetilde{P}(e)}:\Pi(E',E)\rightarrow\Pi(E_{\widetilde{P}(e)},E)\}_{\widetilde{P}(e)\subseteq C}$ and $\{\widetilde{F}_{\widetilde{P}(e)}:\Pi(C',E)\rightarrow\Pi(C_{\widetilde{P}(e)},E)\}_{\widetilde{P}(e)\subseteq C}$ are isomorphic.
\end{Prop}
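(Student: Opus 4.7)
The plan is to build the required isomorphism of direct systems by constructing an isomorphism $\Phi:\Pi(C',E)\cong\Pi(E',E)$ at the source together with, for each polygon $\widetilde{P}(e)$ of $C$, an isomorphism $\Phi_{\widetilde{P}(e)}:\Pi(C_{\widetilde{P}(e)},E)\cong\Pi(E_{\widetilde{P}(e)},E)$, and then verifying the naturality square $F_{\widetilde{P}(e)}\circ\Phi=\Phi_{\widetilde{P}(e)}\circ\widetilde{F}_{\widetilde{P}(e)}$ for every polygon $\widetilde{P}(e)$ of $C$.

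The base isomorphism $\Phi$ is produced by applying Lemma \ref{isomorphism of fundamental groupoids} to $C'$ with $D:=C-E$. Since $C'$ carries the trivial polygon partition, every $e'_j\in D$ already sits in a singleton polygon of $C'$; for the closure condition one observes that $E$ is a Brauer configuration, so each vertex has integer f-degree, which forces the extended degree $\widetilde{d}(h)$ on $C$ to be a multiple of the $G$-orbit length in $C$, whence $\sigma_{C'}=g^{\widetilde{d}}$ acts as the identity on $C$ and $D$ is preserved trivially. A direct inspection then shows that the modified $G$-action on $C'-D=E$ restores the original action of $E$ (each two-step passage $e_i\to e'_i\to g(e_i)$ in $C$ collapses to the single step $e_i\to g(e_i)$ in $E$) and that the modified degree recovers $d$; hence $C'-D=E'$ as f-BCs, and Lemma \ref{isomorphism of fundamental groupoids} supplies $\Phi$.

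For $\Phi_{\widetilde{P}(e)}$ I split into two cases depending on whether the polygon $\widetilde{P}(e)$ contains an ``extra'' angle $e'_i\in C-E$. In Case A, when $\widetilde{P}(e)\subseteq E$ (either $|\widetilde{P}(e)|=2$ already in $E$, or $\widetilde{P}(e)=\{e_1,e_2\}$ is the first slot of a split polygon), every $e'_j\in C-E$ is still in a singleton polygon of $C_{\widetilde{P}(e)}$, so Lemma \ref{isomorphism of fundamental groupoids} applies with $D=C-E$ and the same calculation as for $\Phi$ identifies $C_{\widetilde{P}(e)}-D$ with $E_{\widetilde{P}(e)}$. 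In Case B, when $\widetilde{P}(e)=\{e'_i,e_{i+1}\}$, only the smaller set $D=(C-E)\setminus\{e'_i\}$ satisfies the lemma's singleton-polygon condition, so Lemma \ref{isomorphism of fundamental groupoids} only yields $\Pi(C_{\widetilde{P}(e)},E\cup\{e'_i\})\cong\Pi(\widehat{E},E\cup\{e'_i\})$, where $\widehat{E}$ is an intermediate f-BC on $E\cup\{e'_i\}$ with unique non-singleton polygon $\{e'_i,e_{i+1}\}$; passing to the full subgroupoid on objects in $E$ then produces $\Pi(C_{\widetilde{P}(e)},E)\cong\Pi(\widehat{E},E)$. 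The remaining isomorphism $\Pi(\widehat{E},E)\cong\Pi(E_{\widetilde{P}(e)},E)$ must be built by hand: on walks one contracts the two-step pieces $(g(e_i)|g|e'_i)(e'_i|g|e_i)$ and $(e_{i+1}|\tau|e'_i)(e'_i|g|e_i)$ together with their formal inverses to their one-step counterparts in $E_{\widetilde{P}(e)}$, and defines the inverse by inserting the midpoint $e'_i$ on each elementary step of $E_{\widetilde{P}(e)}$ involving $e_i$; well-definedness on homotopy classes is checked by a case analysis against the relations $(h1)$, $(h2)$, $(h3)$, $(h4)$ of Definition \ref{homotopy of walks}, modelled on the proof of Lemma \ref{isomorphism of fundamental groupoids}.

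Naturality of the collection $(\Phi,\{\Phi_{\widetilde{P}(e)}\})$ reduces to the commutativity of the naturality square on a single walk of the form $(g\cdot e|g|e)$ with $e\in C$, since $C'$ and $C_{\widetilde{P}(e)}$ have trivial $L'$-partitions so the source groupoids are generated by such elementary walks; this commutativity is then immediate from the explicit descriptions of $\Phi$ and $\Phi_{\widetilde{P}(e)}$. The hard part is the second stage of Case B, where $\widehat{E}$ and $E_{\widetilde{P}(e)}$ have different underlying sets so the required isomorphism is not induced by any f-BC morphism and must be constructed directly on walks, with a lengthy but routine homotopy verification.
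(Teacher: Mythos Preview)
Your proposal is correct in outline and close to the paper's argument for the base isomorphism and Case A, but in Case B you take a noticeably more laborious path than the paper, and the difference is worth noting.

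In your Case B you remove only $D=(C-E)\setminus\{e'_i\}$ from $C_{\widetilde{P}(e)}$, obtaining an intermediate f-BC $\widehat{E}$ on $E\cup\{e'_i\}$ which is \emph{not} isomorphic as an f-BC to $E_{\widetilde{P}(e)}$, and then build the remaining isomorphism $\Pi(\widehat{E},E)\cong\Pi(E_{\widetilde{P}(e)},E)$ by hand via a case-by-case homotopy verification. The paper avoids this entirely by observing that $e_i$ itself lies in a singleton polygon of $C_{\widetilde{P}(e)}$ (the unique non-singleton there being $\{e'_i,e_{i+1}\}$), so one can instead delete $D=\big((C-E)\setminus\{e'_i\}\big)\cup\{e_i\}$. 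The resulting f-BC $H_{\widetilde{P}(e)}$ on $(E\setminus\{e_i\})\cup\{e'_i\}$ is then genuinely isomorphic to $E_{\widetilde{P}(e)}$ as an f-BC via the relabeling $e_i\leftrightarrow e'_i$. Thus the paper's $\Phi_{\widetilde{P}(e)}$ is the composite of an f-BC isomorphism $f_*$, the isomorphism $\psi$ from Lemma \ref{isomorphism of fundamental groupoids}, and a simple object-set base-change $\phi$ (essentially conjugation by the walk $(e'_i|g|e_i)$). Your ``by hand'' step is correct but amounts to re-proving a variant of Lemma \ref{isomorphism of fundamental groupoids}; the paper's trick of trading $e_i$ for $e'_i$ in $D$ keeps everything within the lemma.

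One further point: your naturality reduction to elementary walks $(g\cdot e|g|e)$ with $e\in C$ is loose, since such a walk need not have both endpoints in $E$ and hence need not represent a morphism of $\Pi(C',E)$. The paper instead uses that every connected component of $E'$ is a single $G$-orbit, so each morphism of $\Pi(E',E)$ factors as $\overline{(y|g^d|x)}\cdot\overline{(x|g^N|x)}^{\,r}$ with $x,y\in E$, and checks commutativity on these. Your reduction can be fixed along the same lines.
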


\begin{proof}
Since $E'$ is obtained from $C'$ by removing $D$, where $D=\bigsqcup_{|P(e)|\geq 3}\overline{P(e)}$ is a subset of $C'$ which satisfies conditions $(1)$ and $(2)$ in Lemma \ref{isomorphism of fundamental groupoids}, there is an isomorphism $\Phi':\Pi(E',E)\rightarrow\Pi(C',E)$ of groupoids.

For each $\widetilde{P}(e)\subseteq C$, if $e\in E$ and $|P(e)|< 3$, then $\widetilde{P}(e)=P(e)$, and $E_{\widetilde{P}(e)}$ is obtained from $C_{\widetilde{P}(e)}$ by removing $D$, where $D=\bigsqcup_{|P(h)|\geq 3}\overline{P(h)}$ is a subset of $C_{\widetilde{P}(e)}$ which satisfies conditions $(1)$ and $(2)$ in Lemma \ref{isomorphism of fundamental groupoids}. Then there is an isomorphism $$\Phi_{\widetilde{P}(e)}:\Pi(E_{\widetilde{P}(e)},E)\rightarrow\Pi(C_{\widetilde{P}(e)},E)$$
of groupoids. Otherwise, there exists some $P(e_1)=\{e_1,\cdots,e_n\}\subseteq E$ with $n\geq 3$ such that $\widetilde{P}(e)$ is equal to one of the following sets:
$$\{e_1,e_2\}, \{e'_2,e_3\}, \{e'_3,e_4\}, \cdots, \{e'_{n-1},e_n\}.$$
If $\widetilde{P}(e)=\{e_1,e_2\}$, similarly, $E_{\widetilde{P}(e)}$ is obtained from $C_{\widetilde{P}(e)}$ by removing $D$, where $D=\bigsqcup_{|P(h)|\geq 3}\overline{P(h)}$ is a subset of $C_{\widetilde{P}(e)}$ which satisfies conditions $(1)$ and $(2)$ in Lemma \ref{isomorphism of fundamental groupoids}. Then there is an isomorphism
$$\Phi_{\widetilde{P}(e)}:\Pi(E_{\widetilde{P}(e)},E)\rightarrow\Pi(C_{\widetilde{P}(e)},E)$$
of groupoids. If $\widetilde{P}(e)=\{e'_i,e_{i+1}\}$ for some $2\leq i\leq n-1$, $E_{\widetilde{P}(e)}$ is isomorphic to the f-BC $H_{\widetilde{P}(e)}$, where $H_{\widetilde{P}(e)}$ is obtained from $C_{\widetilde{P}(e)}$ by removing $D$, where
$$D=(\bigsqcup_{|P(h)|\geq 3}\overline{P(h)}-\{e'_i\})\sqcup\{e_i\}$$
is a subset of $C_{\widetilde{P}(e)}$ which satisfies conditions $(1)$ and $(2)$ in Lemma \ref{isomorphism of fundamental groupoids}. The isomorphism $f:E_{\widetilde{P}(e)}\rightarrow H_{\widetilde{P}(e)}$ is given by $f(e_i)=e'_i$ and $f(x)=x$ for $x\in E-\{e_i\}$. Let $\Phi_{\widetilde{P}(e)}$ be the composition of isomorphisms $$\Pi(E_{\widetilde{P}(e)},E)\xrightarrow{f_{*}}\Pi(H_{\widetilde{P}(e)},H)\xrightarrow{\psi}\Pi(C_{\widetilde{P}(e)},H)\xrightarrow{\phi}\Pi(C_{\widetilde{P}(e)},E),$$
where $H$ denotes the underlying set of $H_{\widetilde{P}(e)}$, $\psi$ is given by Lemma \ref{isomorphism of fundamental groupoids}, and $\phi$ is defined as follows: for each $h\in H$,
\begin{equation*}
\phi(h)= \begin{cases}
h, &\text{ if } h\neq e'_i; \\
e_i, &\text{ if } h=e'_i,
\end{cases}
\end{equation*}
for each walk $w$ of $C_{\widetilde{P}(e)}$ with $s(w)$, $t(w)\in H$,
\begin{equation*}
\phi(\overline{w})= \begin{cases}
\overline{w}, &\text{ if } s(w)\neq e'_i \text{ and } t(w)\neq e'_i; \\
\overline{w(e'_i|g|e_i)}, &\text{ if } s(w)=e'_i \text{ and } t(w)\neq e'_i; \\
\overline{(e_i|g^{-1}|e'_i)w}, &\text{ if } s(w)\neq e'_i \text{ and } t(w)=e'_i; \\
\overline{(e_i|g^{-1}|e'_i)w(e'_i|g|e_i)}, &\text{ if } s(w)=e'_i \text{ and } t(w)=e'_i.
\end{cases}
\end{equation*}

To finish the proof, we need to show that for each $\widetilde{P}(e)\subseteq C$, the diagram
$$ \xymatrix@R=1.3pc{
\Pi(E',E)\ar[r]^(0.45){F_{\widetilde{P}(e)}}\ar[d]_{\Phi'} & \Pi(E_{\widetilde{P}(e)},E)\ar[d]^{\Phi_{\widetilde{P}(e)}} \\
\Pi(C',E)\ar[r]^(0.45){\widetilde{F}_{\widetilde{P}(e)}} & \Pi(C_{\widetilde{P}(e)},E) \\
} $$
is commutative. For the case $e\in E$ and $|P(e)|=2$, or the case $\widetilde{P}(e)=\{e_1,e_2\}$ with $P(e_1)=\{e_1,\cdots,e_n\}\subseteq E$ and $n\geq 3$, since both functors $\Phi'$ and $\Phi_{\widetilde{P}(e)}$ are defined as in the proof of Lemma \ref{isomorphism of fundamental groupoids}, it is straightforward to show the diagram above is commutative. For the case $\widetilde{P}(e)=\{e'_i,e_{i+1}\}$ with $2\leq i\leq n-1$, where $P(e_i)=\{e_1,\cdots,e_n\}\subseteq E$ with $n\geq 3$, it can be shown that for each $x\in E$, $$\widetilde{F}_{\widetilde{P}(e)}\Phi'(x)=x=\Phi_{\widetilde{P}(e)} F_{\widetilde{P}(e)}(x)$$
and
$$\widetilde{F}_{\widetilde{P}(e)}\Phi'(\overline{(x|g^{N}|x)})=\overline{(x|g^{N'}(x)|x)}=\Phi_{\widetilde{P}(e)}F_{\widetilde{P}(e)}
(\overline{(x|g^{N}|x)}),$$
where $N$ and $N'$ denote the cardinal of the $G$-orbit of $x$ in $E'$ and the cardinal of the $G$-orbit of $x$ in $C_{\widetilde{P}(e)}$, respectively. For each morphism $\overline{w}$ of $\Pi(E',E)$, since $w$ is homotopic to a walk of the form $(y|g^n|x)$, where $x$, $y\in E$ and $n\in\mathbb{Z}$, we may assume that $\overline{w}=\overline{(y|g^d|x)(x|g^{N}|x)^{r}}$, where $N$ denotes the cardinal of the $G$-orbit of $x$ in $E'$, $0\leq d< N$, and $r\in\mathbb{Z}$. To show $\widetilde{F}_{\widetilde{P}(e)}\Phi'(f)=\Phi_{\widetilde{P}(e)} F_{\widetilde{P}(e)}(f)$, it suffices to show $\widetilde{F}_{\widetilde{P}(e)}\Phi'(\overline{(y|g^d|x)})=\Phi_{\widetilde{P}(e)} F_{\widetilde{P}(e)}(\overline{(y|g^d|x)})$.

If $x\neq e_i$ and $y\neq e_i$, it can be shown straightforward that $$\widetilde{F}_{\widetilde{P}(e)}\Phi'(\overline{(y|g^d|x)})=\overline{(y|g^k|x)}=\Phi_{\widetilde{P}(e)}F_{\widetilde{P}(e)}(\overline{(y|g^d|x)}),$$
where $k$ is the minimal non-negative integer such that $g^{k}\cdot x=y$ in $C_{\widetilde{P}(e)}$. If $x=e_i$ and $y\neq e_i$, then $\widetilde{F}_{\widetilde{P}(e)}\Phi'(\overline{(y|g^d|x)})=\overline{(y|g^k|x)}$, where $k$ is the minimal non-negative integer such that $g^{k}\cdot x=y$ in $C_{\widetilde{P}(e)}$. By the definition of $\Phi_{\widetilde{P}(e)}$,
$$\Phi_{\widetilde{P}(e)}F_{\widetilde{P}(e)}(\overline{(y|g^d|x)})=\phi\psi f_{*}F_{\widetilde{P}(e)}(\overline{(y|g^d|x)}).$$
We have $\psi f_{*}F_{\widetilde{P}(e)}(\overline{(y|g^d|x)})=\overline{(y|g^l|e'_i)}$, where $l$ is the minimal non-negative integer such that $g^{l}\cdot e'_i=y$ in $C_{\widetilde{P}(e)}$. Since $e'_i=g\cdot x$ in $C_{\widetilde{P}(e)}$, $l=k-1$. Therefore
$$\Phi_{\widetilde{P}(e)}F_{\widetilde{P}(e)}(\overline{(y|g^d|x)})=\phi\psi f_{*}F_{\widetilde{P}(e)}(\overline{(y|g^d|x)})=\overline{(y|g^{k-1}|e'_i)(e'_i|g|e_i)}=\overline{(y|g^k|x)}=\widetilde{F}_{\widetilde{P}(e)}\Phi'(\overline{(y|g^d|x)}).$$ If $x\neq e_i$ and $y=e_i$, the proof of $\widetilde{F}_{\widetilde{P}(e)}\Phi'(\overline{(y|g^d|x)})=\Phi_{\widetilde{P}(e)} F_{\widetilde{P}(e)}(\overline{(y|g^d|x)})$ is similar to the case $x=e_i$ and $y\neq e_i$. If $x=y=e_i$, since $0\leq d<N$, we have $d=0$. Therefore
$$\Phi_{\widetilde{P}(e)} F_{\widetilde{P}(e)}(\overline{(y|g^d|x)})=\overline{(e_i|g^{-1}|e'_i)(e'_i|g|e_i)}=\overline{(e||e)}=\widetilde{F}_{\widetilde{P}(e)}\Phi'(\overline{(y|g^d|x)}).$$
\end{proof}

\begin{Cor}\label{fundamental group of BC and BG}
The groupoids $\Pi(E,E)$ and $\Pi(C,E)$ are isomorphic. Especially, if the Brauer configuration $E$ is connected, then the Brauer graph $C$ is also connected, and the fundamental groups of $E$ and $C$ are isomorphic.
\end{Cor}

\begin{proof}
By definition, $C$ is an admissible union of the family $\mathscr{C}=\{C_{\widetilde{P}(e)}\}_{\widetilde{P}(e)\subseteq C}\cup\{C'\}$ of sub-f-BCs. Moreover, $E$ is a union of the family $\mathscr{E}=\{E_{\widetilde{P}(e)}\}_{\widetilde{P}(e)\subseteq C}\cup\{E'\}$ of sub-f-BCs which is closed under finite intersections. We need to show this union is also admissible. For each polygon $P(e)$ of $E$ with $|P(e)|\geq 3$, assume that the elements of $P(e)$ is labeled by $e_1$, $\cdots$, $e_n$, then $K_e$ is the simplicial complex on $P(e)$ generated by simplices $\{e_1,e_2\}$, $\{e_2,e_3\}$, $\cdots$, $\{e_{n-1},e_n\}$. It is straightforward to show that $\Pi(K_e)$ is simply connected. For each polygon $P(e)$ of $E$ with $|P(e)|=2$, $K_e$ is the simplicial complex on $P(e)$ generated by the simplex $P(e)$. Therefore $\Pi(K_e)$ is also simply connected.

Since $E$ meets each connected components of each sub-f-BC of $E$ in $\mathscr{E}$ and each sub-f-BC of $C$ in $\mathscr{C}$, by Proposition \ref{Van-Kampen}, $\Pi(E,E)$ is the direct limit of the direct system $\{F_{\widetilde{P}(e)}:\Pi(E',E)\rightarrow\Pi(E_{\widetilde{P}(e)},E)\}_{\widetilde{P}(e)\subseteq C}$ (resp. $\Pi(C,E)$ is the direct limit of the direct system $\{\widetilde{F}_{\widetilde{P}(e)}:\Pi(C',E)\rightarrow\Pi(C_{\widetilde{P}(e)},E)\}_{\widetilde{P}(e)\subseteq C}$ ). By Proposition \ref{iso. of direct systems}, $\Pi(E,E)$ is isomorphic to $\Pi(C,E)$.

If $E$ is connected, then it is straightforward to show that $C$ is also connected. Since the groupoids $\Pi(E,E)$ and $\Pi(C,E)$ are isomorphic, the fundamental groups of $E$ and $C$ are isomorphic.
\end{proof}

Next we will calculate the fundamental group of a Brauer graph $E$. To do this we first calculate the fundamental group (groupoid) of $E$ in two special cases, then we use Proposition \ref{Van-Kampen} to deal with the general case. We denote by $F\langle x_1,x_2,\cdots,x_n\rangle$ the free group on the set $\{x_1,x_2,\cdots,x_n\}$.

\begin{Lem}\label{a calculation of fundamental group}
Let $E$ be the Brauer graph given by the diagram
$$\begin{tikzpicture}
\draw (0,0) circle (0.5);
\fill (0.5,0) circle (0.5ex);
\node at(0.9,0) {$m$}; \node at(0.5,0.4) {$e$}; \node at(0.5,-0.4) {$e'$};
\draw (0.7,-0.2) rectangle (1.1,0.2);
\end{tikzpicture},$$
where the f-degree of the unique vertex of $E$ is $m$. Then $\Pi(E,e)\cong F\langle x,y\rangle/\langle x^{m}y=yx^{m}\rangle$.
\end{Lem}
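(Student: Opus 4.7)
The plan is to use Theorem \ref{fundamental-group-of-fs-BC-and-fundamental-group-of-algebra-and-isomorphic} to translate $\Pi(E,e)$ into the fundamental group of the quiver with relations $(Q_E,I_E)$, compute an explicit presentation of the latter, and then rewrite it via a change of free generators to match the statement.

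First I will pin down the combinatorial data. The unique vertex has $G$-orbit $\{e,e'\}$ of size two with $g\cdot e=e'$ and $g\cdot e'=e$; f-degree $m$ forces $d(e)=d(e')=2m$, and since $E$ is type MS we have $L(e)=\{e\}$, $L(e')=\{e'\}$, $P(e)=P(e')=\{e,e'\}$. Hence $Q_E$ has a single vertex $P(e)$ carrying two loops $\alpha:=L(e)$ and $\beta:=L(e')$, so $\Pi(Q_E)$ is the free group $F\langle\alpha,\beta\rangle$.

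Next I will identify the minimal relations of $I_E$. Because $g^{i}\cdot e$ alternates between $e$ and $e'$, the two full standard sequences (at $e$ and at $e'$) have $L$-images $(\beta\alpha)^m$ and $(\alpha\beta)^m$ respectively, and they start at the common polygon $P(e)=P(e')$; applying $(fR1)$ with $k=0$ produces the relation $(\beta\alpha)^m-(\alpha\beta)^m\in I_E$. To show this is, up to scalar, the only minimal relation, I will use \cite[Lemma 4.16]{LL}: since $L$ is trivial, the first arrow of $L(p)$ determines $s(p)$, so for every standard sequence $p$ of length $0<k<2m$ the $R$-class $[L(p)]$ is a singleton and yields no minimal relation; for $k=2m$ only the pair above occurs, and for $k=0$ the two trivial standard sequences already represent the same identity morphism. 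Hence $m(I_E)$ is, up to scalar, generated by a single relation with only two monomials.

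By Definition \ref{fundamental-group-of-quiver-with-relations}, $\Pi(Q_E,I_E)=F\langle\alpha,\beta\rangle/N$, where $N$ is the normal subgroup generated by $(\beta\alpha)^m(\alpha\beta)^{-m}$. Setting $x:=\beta\alpha$ and $y:=\alpha$ (equivalently $\alpha=y$, $\beta=xy^{-1}$) gives an isomorphism $F\langle\alpha,\beta\rangle\xrightarrow{\sim}F\langle x,y\rangle$ under which $(\beta\alpha)^m\mapsto x^m$ and $(\alpha\beta)^m=(yxy^{-1})^m\mapsto yx^my^{-1}$; the relation becomes $x^m=yx^my^{-1}$, i.e.\ $x^m y=yx^m$. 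To finish I will check that the chosen generators in the statement go to $x$ and $y$ under $\overline w\mapsto\overline{f(w)}$: indeed $f((e|g^2|e))=L(e')L(e)=\beta\alpha=x$ and $f((e|\tau g|e))=1_{P(e)}\cdot L(e)=\alpha=y$.

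The main obstacle is the enumeration of minimal relations in the middle step; once the $R$-equivalence classes on $\mathscr{E}$ are under control, the remaining work is a routine Tietze-style change of free basis.
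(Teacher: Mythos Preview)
Your proof is correct and takes a genuinely different route from the paper. The paper argues directly at the level of walks in $E$: it first shows by induction on the length of special walks that $\overline{u}=\overline{(e|g^2|e)}$ and $\overline{v}=\overline{(e|\tau g|e)}$ generate $\Pi(E,e)$, observes the relation $u^m v\sim vu^m$, and then proves injectivity of the resulting surjection using the special-walk normal form of Proposition~\ref{unique factorization}. You instead invoke Theorem~\ref{fundamental-group-of-fs-BC-and-fundamental-group-of-algebra-and-isomorphic} to pass to $(Q_E,I_E)$, identify the single nontrivial $R$-class via \cite[Lemma 4.16]{LL}, and read the presentation off the quiver. Your approach is shorter and more conceptual once the Section~3 machinery is in hand, and makes transparent why the relation has exactly this form; the paper's approach is self-contained within Section~2 and exercises the special-walk technology that is reused throughout Section~5. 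Neither depends on the other, so there is no circularity.
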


\begin{proof}
Each walk of $E$ is homotopic to a walk of the form $$e_{n}\frac{\delta_{n}}{}e_{n-1}\frac{\delta_{n-1}}{}\cdots\frac{\delta_{3}}{}e_{2}\frac{\delta_{2}}{}e_{1}\frac{\delta_{1}}{}e_{0},$$
where $e_{0},e_{1},\cdots,e_{n}\in E$, $\delta_{1},\cdots,\delta_{n}\in\{g,g^{-1},\tau\}$, such that if $\delta_i =\tau$, then $e_{i-1}\neq e_i$. We will simply write a walk of this form as $(e_n|\delta_n \cdots \delta_2 \delta_1|e_0)$. Since $E$ contains only two half-edges, there is no confusion if we write a walk like that. Let $u=(e|g^2|e)$, $v=(e|\tau g|e)$ be closed walks of $E$ at $e$. First we need show that $\Pi(E,e)$ is generated by $\overline{u}$ and $\overline{v}$. Since each closed walk of $E$ at $e$ is homotopic to a walk of the form $u^{nm}w$, where $n\in\mathbb{Z}$ and $w$ is a closed special walk of $E$ at $e$ (for the definition of special walk, see Definition \ref{special-walk}), it suffices to show that for each closed special walk $w$ of $E$ at $e$, $\overline{w}$ belongs to the subgroup of $\Pi(E,e)$ generated by $\overline{u}$ and $\overline{v}$. We shall give a proof by induction on the length of $w$. Let
$$w=(e|g^{i_k}\tau g^{i_{k-1}}\tau\cdots \tau g^{i_1}\tau g^{i_0}|e),$$
where $0\leq i_0,i_k<2m$, and $0<i_j<2m$ for $1\leq j\leq k-1$. If $i_0\geq 2$, then $w=w'u$, where
$$w'=(e|g^{i_k}\tau g^{i_{k-1}}\tau\cdots \tau g^{i_1}\tau g^{i_{0}-2}|e)$$
is a closed special walk of $E$ at $e$ with $l(w')<l(w)$. By induction, $\overline{w'}$ belongs to the subgroup of $\Pi(E,e)$ generated by $\overline{u}$ and $\overline{v}$, and so is $\overline{w}$. If $i_0=1$, then $w=w'v$, where
$$w'=(e|g^{i_k}\tau g^{i_{k-1}}\tau\cdots \tau g^{i_1}|e)$$
is a closed special walk of $E$ at $e$ with $l(w')<l(w)$. Therefore by induction, $\overline{w}$ belongs to the subgroup of $\Pi(E,e)$ generated by $\overline{u}$ and $\overline{v}$. When $i_0=0$, since $uv^{-1}\sim (e|g\tau|e)$, $w\sim w'uv^{-1}$, where
$$w'=(e|g^{i_k}\tau g^{i_{k-1}}\tau\cdots \tau g^{i_{1}-1}|e)$$
is a closed special walk of $E$ at $e$ with $l(w')<l(w)$. By induction, $\overline{w}$ belongs to the subgroup of $\Pi(E,e)$ generated by $\overline{u}$ and $\overline{v}$.

Define a group homomorphism $f':F\langle x,y\rangle\rightarrow\Pi(E,e)$, $x\mapsto\overline{u}$, $y\mapsto\overline{v}$. Since $\Pi(E,e)$ is generated by $\overline{u}$ and $\overline{v}$, $f'$ is surjective. Since $u^m v$ is homotopic to $v u^m$, $f'$ induces a surjective group homomorphism
$$f:F\langle x,y\rangle/\langle x^{m}y=yx^{m}\rangle\rightarrow\Pi(E,e).$$
We need to show that $f$ is injective. For $a\in F\langle x,y\rangle$, denote by $\overline{a}$ the image of $a$ in $F\langle x,y\rangle/\langle x^{m}y=yx^{m}\rangle$.

When $m>1$, if $f(\overline{a})=1$, we may assume that
\begin{equation}\label{formula-a}\overline{a}=\overline{x^{mn}x^{i_k}y^{j_k}x^{i_{k-1}}y^{j_{k-1}}\cdots y^{j_2}x^{i_1}y^{j_1}x^{i_0}},\end{equation}
where $n\in\mathbb{Z}$, $0\leq i_0,i_k<m$, $0<i_l<m$ for $1\leq l\leq k-1$, $j_1,\cdots,j_k\neq 0$. Denote by $z=xy^{-1}$, then $y^{-1}=x^{-1}z$. For each $1\leq d\leq k$ with $j_d<0$, we may replace $xy^{j_d}$ by $(x^{m})^{j_d +1}z(x^{m-1}z)^{-j_d -1}$ in formula (\ref{formula-a}). Since $\overline{x^{m}y}=\overline{yx^{m}}$ in $F\langle x,y\rangle/\langle x^{m}y=yx^{m}\rangle$, we may express $\overline{a}$ as
$$\overline{x^{mN}a_{r}^{n_r}a_{r-1}^{n_{r-1}}\cdots a_{2}^{n_2}a_{1}^{n_1}},$$
where $N\in\mathbb{Z}$, $n_1,\cdots,n_r >0$, $a_j\in\{x,y,z\}$ for $1\leq j\leq r$, such that
\begin{itemize}
\item [$(1)$] $a_{j-1}\neq a_j$ for each $2\leq j\leq r$;
\item [$(2)$] if $a_j =x$, then $0<n_j<m$;
\item [$(3)$] there exists no $2\leq j\leq r$ such that $a_{j-1}=y$ and $a_j =z$.
\end{itemize}
Since $f(\overline{x})=\overline{(e|g^2|e)}$, $f(\overline{y})=\overline{(e|\tau g|e)}$, $f(\overline{z})=\overline{(e|g\tau|e)}$, it can be shown that $$f(\overline{a_{r}^{n_r}a_{r-1}^{n_{r-1}}\cdots a_{2}^{n_2}a_{1}^{n_1}})=\overline{w},$$
where $w$ is a closed special walk of $E$ at $e$ of length $2(n_1 +\cdots +n_r)$. Therefore $f(\overline{a})=\overline{(e|g^{N d(e)}|e)w}$. By Proposition \ref{unique factorization}, $N=0$ and $w=(e||e)$. Then $r =0$ and $\overline{a}=1$.

When $m=1$, if $f(\overline{a})=1$, we may assume that $\overline{a}=\overline{x^{n}y^{l}}$ for some $n$, $l\in\mathbb{Z}$. If $l\geq 0$, then $f(\overline{a})=\overline{(e|g^{n d(e)}|e)w}$, where $w=(e|(\tau g)^{l}|e)$ is a closed special walk of $E$ at $e$. By Proposition \ref{unique factorization}, $n=0$ and $l=0$. Therefore $\overline{a}=1$. If $l\leq 0$, denote by $z=xy^{-1}$. Then $\overline{x^{n}y^{l}}=\overline{x^{n+l}z^{-l}}$. Since $f(\overline{z})=\overline{(e|g\tau|e)}$, $f(\overline{a})=\overline{(e|g^{(n+l) d(e)}|e)w}$, where $w=(e|(g\tau)^{-l}|e)$ is a closed special walk of $E$ at $e$. By Proposition \ref{unique factorization}, $n+l=0$ and $-l=0$. Therefore $\overline{a}=1$.
\end{proof}

The following fact should be well-known.

\begin{Lem}\label{isomorphism functor}
Let $\mathscr{G}$, $\mathscr{G}'$ be two connected groupoids. Let $F:\mathscr{G}\rightarrow\mathscr{G}'$ be a functor, which induces a bijection between objects of $\mathscr{G}$ and objects of $\mathscr{G}'$. If there exists an object $x$ of $\mathscr{G}$ such that $F$ induces an isomorphism between $\mathscr{G}(x,x)$ and $\mathscr{G}'(Fx,Fx)$, then $F$ is an isomorphism functor.
\end{Lem}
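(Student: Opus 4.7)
The plan is to reduce the statement to the fact that, in a connected groupoid, every hom-set is in bijection (non-canonically) with the automorphism group at a fixed base point, and that these bijections can be made compatible with the functor $F$.

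First I would fix a base point $x$ in $\mathscr{G}$ and set $x'=Fx$. For each object $y$ of $\mathscr{G}$, using the connectedness of $\mathscr{G}$, choose a morphism $\alpha_y\in\mathscr{G}(x,y)$, taking $\alpha_x=1_x$. Applying $F$ gives morphisms $F(\alpha_y)\in\mathscr{G}'(x',Fy)$. Since the objects of $\mathscr{G}'$ are exhausted bijectively by the objects $Fy$, and since $\mathscr{G}'$ is connected, we can use the family $\{F(\alpha_y)\}$ as our chosen base-point connectors on the $\mathscr{G}'$ side.

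Next, for any two objects $y,z$ of $\mathscr{G}$, consider the maps
\begin{equation*}
\phi_{y,z}:\mathscr{G}(x,x)\to\mathscr{G}(y,z),\quad f\mapsto\alpha_z\, f\,\alpha_y^{-1},
\end{equation*}
\begin{equation*}
\phi'_{y,z}:\mathscr{G}'(x',x')\to\mathscr{G}'(Fy,Fz),\quad g\mapsto F(\alpha_z)\,g\,F(\alpha_y)^{-1}.
\end{equation*}
Both are bijections (indeed group-torsor isomorphisms), with inverses given by $h\mapsto\alpha_z^{-1}h\alpha_y$ on each side. Since $F$ is a functor, the square formed by $\phi_{y,z}$ on top, $\phi'_{y,z}$ on the bottom, and $F$ on both verticals commutes. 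The left vertical $F\colon\mathscr{G}(x,x)\to\mathscr{G}'(x',x')$ is a bijection by hypothesis, and the two horizontal maps are bijections, so the right vertical $F\colon\mathscr{G}(y,z)\to\mathscr{G}'(Fy,Fz)$ is also a bijection.

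Thus $F$ is bijective on objects and fully faithful, and hence admits an inverse functor defined in the obvious way, completing the proof that $F$ is an isomorphism functor. There is no real obstacle here; the only thing to be careful about is that the chosen connectors $\alpha_y$ on the $\mathscr{G}$ side push forward (via $F$) to a valid system of connectors on the $\mathscr{G}'$ side, which is automatic because $F$ is bijective on objects.
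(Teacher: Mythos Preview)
Your proof is correct and follows essentially the same approach as the paper's own proof: both choose connecting morphisms $\alpha_y\in\mathscr{G}(x,y)$ (the paper calls them $f_y$) and use conjugation by these to reduce bijectivity of $F$ on each $\mathscr{G}(y,z)$ to the assumed bijectivity on $\mathscr{G}(x,x)$. The only cosmetic difference is that you package the argument as a single commutative square of bijections, whereas the paper checks faithfulness and fullness separately.
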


\begin{proof}
Since $F$ induces a bijection on objects, to show that $F$ is an isomorphism, it suffices to show that $F$ is fully faithful. Since $\mathscr{G}$ is connected, for each object $y$ of $\mathscr{G}$, we can choose a morphism $f_y \in\mathscr{G}(x,y)$. For $g$, $h\in\mathscr{G}(y,z)$, if $F(g)=F(h)$, then $F(f_{z}^{-1}gf_{y})=F(f_z)^{-1}F(g)F(f_y)
=F(f_z)^{-1}F(h)F(f_y)=F(f_{z}^{-1}hf_{y})$. Since $F$ induces an isomorphism between $\mathscr{G}(x,x)$ and $\mathscr{G}'(Fx,Fx)$, $f_{z}^{-1}gf_{y}=f_{z}^{-1}hf_{y}$. Therefore $g=h$ and $F$ is faithful. For each $g'\in\mathscr{G}'(Fy,Fz)$, we have $g'=F(f_z)F(f_{z}^{-1})g'F(f_y)F(f_{y}^{-1})$. Since $F(f_{z}^{-1})g'F(f_y)\in\mathscr{G}'(Fx,Fx)$, there exists $f\in\mathscr{G}(x,x)$ such that $F(f)=F(f_{z}^{-1})g'F(f_y)$. Let $g=f_z f f_{y}^{-1}\in\mathscr{G}(y,z)$. Then $F(g)=F(f_z)F(f)F(f_{y}^{-1})=g'$. Therefore $F$ is full.
\end{proof}

In the following of this paper, by abuse of notation, we also denote by $w$ the corresponding morphism in $\Pi(Q)$ for a walk $w$ in the quiver $Q$, where $\Pi(Q)$ is the fundamental groupoid of the quiver $Q$ (see Section \ref{sec:isomorphic-fundamental-groups}).

\begin{Lem}\label{a calculation of fundamental groupoid}
Let $E$ be the Brauer graph given by the diagram
$$\begin{tikzpicture}
\draw (0,0)--(2,0);
\fill (0,0) circle (0.5ex);
\fill (2,0) circle (0.5ex);
\node at(-0.4,0) {$m$};
\draw (-0.6,-0.2) rectangle (-0.2,0.2);
\node at(2.4,0) {$n$};
\draw (2.2,-0.2) rectangle (2.6,0.2);
\end{tikzpicture},$$
where the f-degree of the vertex on the left (resp. right) is $m$ (resp. $n$). Then the fundamental groupoid $\Pi(E,E)$ is isomorphic to $\mathscr{F}/\langle z x^{m}=y^{n} z\rangle$, where $\mathscr{F}$ is the fundamental groupoid of the quiver
$$\begin{tikzpicture}
\draw[->] (0.2,0) -- (1.8,0);
\draw[->] (-0.2,0.1) arc (15:345:0.5);
\draw[->] (2.2,-0.1) arc (195:525:0.5);
\fill (0,0) circle (0.5ex);
\fill (2,0) circle (0.5ex);
\node at(-1.4,0) {$x$};
\node at(3.4,0) {$y$};
\node at(1,0.2) {$z$};
\node at(0.2,0.2) {$p$};
\node at(1.8,0.2) {$q$};
\end{tikzpicture}.$$
\end{Lem}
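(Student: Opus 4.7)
The plan is to construct a functor from the quotient groupoid on the right-hand side to $\Pi(E,E)$, show it is a bijection on objects, and then invoke Lemma~\ref{isomorphism functor} to reduce the problem to a single vertex group, which will be handled by combining Proposition~\ref{unique factorization} with the normal form theorem for amalgamated free products. The crucial structural fact I would use is that $E$ is an $f_{ms}$-BC with $d(p)=m$, $d(q)=n$, and with both $G$-orbits $\{p\}$ and $\{q\}$ being singletons (so $g\cdot p=p$ and $g\cdot q=q$), so that the only nontrivial homotopy relation to feed in comes from~$(h3)$.

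First I would define $F:\mathscr{F}\to\Pi(E,E)$ by $p\mapsto p$, $q\mapsto q$ on objects and by $F(x)=\overline{(p|g|p)}$, $F(y)=\overline{(q|g|q)}$, $F(z)=\overline{(q|\tau|p)}$ on the generating arrows. Applying relation~$(h3)$ in Definition~\ref{homotopy of walks} with $e=p$, $h=q$ (both lying in the unique polygon $P(p)=P(q)$) yields $F(zx^m)=F(y^nz)$, so $F$ descends to a functor $\bar F:\mathscr{F}/\langle zx^m=y^nz\rangle\to\Pi(E,E)$. Since $\bar F$ is a bijection on objects and both groupoids are connected, Lemma~\ref{isomorphism functor} reduces the argument to showing that $\bar F$ induces a group isomorphism on the vertex group at $p$. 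Using the spanning tree $\{z\}$ of the underlying quiver, $\mathscr{F}(p,p)$ is free on $x$ and $\tilde y:=z^{-1}yz$, and the imposed relation becomes $x^m=\tilde y^n$; thus $(\mathscr{F}/\mathrm{rel})(p,p)\cong\langle a,b\mid a^m=b^n\rangle$, the amalgamated free product $\mathbb Z*_{\mathbb Z}\mathbb Z$. Set $\alpha=\bar F(x)$ and $\beta=\bar F(\tilde y)=\overline{(p|\tau|q)(q|g|q)(q|\tau|p)}$.

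For surjectivity, Proposition~\ref{unique factorization} gives, for any closed walk $\gamma$ at $p$, a unique decomposition $\overline\gamma=\overline{(p|g^{Nm}|p)\,v}$ with $N\in\mathbb Z$ and $v$ a closed special walk at $p$, necessarily of the form $g^{i_{2r}}\tau g^{i_{2r-1}}\tau\cdots\tau g^{i_1}\tau g^{i_0}$ with an even number of $\tau$'s in order to return to $p$. Inserting trivial segments $(p|\tau|q)(q|\tau|p)\sim(p||p)$ between consecutive $g$-powers identifies $\overline v$ with $\alpha^{i_{2r}}\beta^{i_{2r-1}}\cdots\beta^{i_1}\alpha^{i_0}$, so $\overline\gamma\in\mathrm{im}(\bar F)$. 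For injectivity, I would invoke the normal form theorem for amalgamated free products: with transversals $\{a^0,\ldots,a^{m-1}\}$ and $\{b^0,\ldots,b^{n-1}\}$ for $\langle a^m\rangle=\langle b^n\rangle$, every element of $\langle a,b\mid a^m=b^n\rangle$ admits a unique expression $a^{Nm}\cdot a^{i_{2r}}b^{i_{2r-1}}\cdots b^{i_1}a^{i_0}$ whose exponent conditions $0\leq i_0,i_{2r}<m$, $0<i_{2j}<m$ for $0<j<r$, $0<i_{2j+1}<n$ are precisely those characterising closed special walks at $p$; hence the bijection of Proposition~\ref{unique factorization} is transported by $\bar F$ to this group-theoretic normal form. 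The main obstacle I anticipate is the accurate matching of normal-form conditions at the boundary indices $i_0,i_{2r}$, which may vanish, and in small degenerate cases such as $m=1$ or $n=1$ where alternation constraints collapse; these can be settled by a short case analysis mirroring that in the proof of Lemma~\ref{a calculation of fundamental group}.
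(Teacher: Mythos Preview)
Your proposal is correct and follows essentially the same strategy as the paper. Both arguments reduce via Lemma~\ref{isomorphism functor} to showing that the vertex group at the left half-edge is $\langle a,b\mid a^m=b^n\rangle$, establish the relation through $(h3)$, and prove injectivity by matching a normal form in this group against the special-walk decomposition of Proposition~\ref{unique factorization}. The only organisational difference is that the paper passes through an auxiliary groupoid $\mathscr{F}'$ (the fundamental groupoid of a quiver with both loops $a,b$ at a single vertex $u$ and a single arrow $c:u\to v$), first proving $\mathscr{F}'/\langle a^m=b^n\rangle\cong\Pi(E,E)$ and then identifying $\mathscr{F}'$ with $\mathscr{F}$ via $a\mapsto x$, $b\mapsto z^{-1}yz$, $c\mapsto z$; you instead bypass $\mathscr{F}'$ by applying the spanning-tree computation to $\mathscr{F}$ directly, which amounts to the same substitution. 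The paper also leaves the normal form in $\langle a,b\mid a^m=b^n\rangle$ implicit (writing ``we may assume that $\overline{z}=\overline{a^{rm}a^{p_k}b^{q_k}\cdots}$'') and only uses its \emph{existence}, letting the uniqueness in Proposition~\ref{unique factorization} do the work; your formulation invokes the full normal-form theorem, which is a slightly heavier citation but achieves the same end.
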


\begin{proof}
Each walk of $E$ is homotopic to a walk of the form $$e_{n}\frac{\delta_{n}}{}e_{n-1}\frac{\delta_{n-1}}{}\cdots\frac{\delta_{3}}{}e_{2}\frac{\delta_{2}}{}e_{1}\frac{\delta_{1}}{}e_{0},$$
where $e_{0},e_{1},\cdots,e_{n}\in E$, $\delta_{1},\cdots,\delta_{n}\in\{g,g^{-1},\tau\}$, such that if $\delta_i =\tau$, then $e_{i-1}\neq e_i$. We will simply write a walk of this form as $(e_n|\delta_n \cdots \delta_2 \delta_1|e_0)$. Let $e$ (resp. $e'$) be the half-edge of $E$ on the left (resp. right) of the diagram. First we need to show that $\Pi(E,e)$ is isomorphic to $F\langle a,b\rangle/\langle a^m =b^n\rangle$. Define a group homomorphism $f':F\langle a,b\rangle\rightarrow\Pi(E,e)$, $a\mapsto \overline{(e|g|e)}$, $b\mapsto \overline{(e|\tau g\tau|e)}$. It is straightforward to show that $f'$ is surjective. Since $(e'|\tau g^m|e)\sim(e'|g^n\tau|e)$, $f'$ induces a surjective group homomorphism $f:F\langle a,b\rangle/\langle a^m =b^n\rangle\rightarrow\Pi(E,e)$. We need to show that $f$ is injective. For $c\in F\langle a,b\rangle$, denote by $\overline{c}$ the image of $c$ in $F\langle a,b\rangle/\langle a^m =b^n\rangle$. If $f(\overline{c})=1$ for some $c\in F\langle a,b\rangle$, we may assume that
$$\overline{c}=\overline{a^{rm}a^{p_k}b^{q_k}a^{p_{k-1}}b^{q_{k-1}}\cdots a^{p_1}b^{q_1}a^{p_0}},$$
where $r\in\mathbb{Z}$, $0\leq p_0,p_k<m$, $0<p_i<m$ for $1\leq i\leq k-1$, $0<q_j<n$ for $1\leq j\leq k$. Then $f(\overline{c})=\overline{(e|g^{rd(e)}|e)w}$, where $$w=(e|g^{p_k}\tau g^{q_k}\tau g^{p_{k-1}}\tau g^{q_{k-1}}\tau\cdots g^{p_1}\tau g^{q_1}\tau g^{p_0}|e)$$
is a closed special walk of $E$ at $e$. Since $(e|g^{rd(e)}|e)w\sim (e||e)$, by Proposition \ref{unique factorization}, $w=(e||e)$ and $r=0$. Therefore $\overline{c}=1$ and $f$ is injective.

Let $\mathscr{F'}$ be the fundamental groupoid of the quiver
$$\begin{tikzpicture}
\draw[->] (0.2,0) -- (1.8,0);
\draw[->] (0,0.2) arc (-45:285:0.5);
\draw[->] (-0.2,-0.1) arc (75:405:0.5);
\fill (0,0) circle (0.5ex);
\fill (2,0) circle (0.5ex);
\node at(-1,0.5) {$a$};
\node at(-1,-0.5) {$b$};
\node at(1.1,0.2) {$c$};
\node at(0.3,0.2) {$u$};
\node at(2.2,0.2) {$v$};
\end{tikzpicture}.$$
Define a functor $F:\mathscr{F'}/\langle a^{m}=b^{n}\rangle\rightarrow \Pi(E,E)$, $u\mapsto e$, $v\mapsto e'$, $a\mapsto \overline{(e|g|e)}$, $b\mapsto \overline{(e|\tau g\tau|e)}$, $c\mapsto \overline{(e'|\tau|e)}$. Since the group $(\mathscr{F'}/\langle a^{m}=b^{n}\rangle)(u,u)$ is isomorphic to the group $F\langle a,b\rangle/\langle a^m =b^n\rangle$, $F$ induces an isomorphism $(\mathscr{F'}/\langle a^{m}=b^{n}\rangle)(u,u)\rightarrow \Pi(E,e)$. By Lemma \ref{isomorphism functor}, $F$ is an isomorphism functor.

Define a functor $G:\mathscr{F'}/\langle a^{m}=b^{n}\rangle\rightarrow \mathscr{F}/\langle z x^{m}=y^{n} z\rangle$, $u\mapsto p$, $v\mapsto q$, $a\mapsto x$, $b\mapsto z^{-1}yz$, $c\mapsto z$. It can be shown that $G$ is an isomorphism functor. Therefore the fundamental groupoid $\Pi(E,E)$ is isomorphic to $\mathscr{F}/\langle z x^{m}=y^{n} z\rangle$, where the isomorphism is given by $\mathscr{F}/\langle z x^{m}=y^{n} z\rangle\rightarrow\Pi(E,E)$, $p\mapsto e$, $q\mapsto e'$, $x\mapsto \overline{(e|g|e)}$, $y\mapsto \overline{(e'|g|e')}$, $z\mapsto \overline{(e'|\tau|e)}$.
\end{proof}

\begin{Prop}\label{fundamental group of BG}
Let $E=(E,P,L,d)$ be a connected Brauer graph with $n$ vertices $v_1$, $\cdots$, $v_n$ and $k$ edges. Let $d_i=d_f(v_i)$ be the f-degree of $v_i$ for each $1\leq i\leq n$, and let $r=k-n+1$. Then the fundamental group of $E$ is isomorphic to
$$F\langle a_1,\cdots,a_n,b_1,\cdots,b_r\rangle/\langle a_{1}^{d_1}=\cdots=a_{n}^{d_n}, a_{1}^{d_1} b_1=b_1 a_{1}^{d_1},\cdots, a_{1}^{d_1} b_r=b_r a_{1}^{d_1}\rangle.$$
\end{Prop}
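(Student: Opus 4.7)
The plan is to proceed by induction on the first Betti number $r=k-n+1$ of the underlying graph of $E$, peeling off one edge at a time and applying the Van Kampen-type Proposition \ref{Van-Kampen} in combination with Lemmas \ref{a calculation of fundamental group} and \ref{a calculation of fundamental groupoid}. Throughout, I will exploit the fact that every polygon of a Brauer graph has exactly two angles, so each simplicial complex $K_e$ appearing in the admissibility hypothesis of Proposition \ref{Van-Kampen} is a single simplex and hence simply connected. This means any decomposition of $E$ into a union of sub-f-BCs that share the same underlying $G$-set and differ only in which polygons are kept whole (versus split into singletons) automatically satisfies the admissible-union hypothesis.

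For the base case $r=0$, I would set up a secondary induction on $n$. The smallest tree ($n=2$, $k=1$) is a single edge, and Lemma \ref{a calculation of fundamental groupoid} directly gives the fundamental group $F\langle a_1,a_2\rangle/\langle a_1^{d_1}=a_2^{d_2}\rangle$ after extracting the loop group at a chosen base point (the edge-morphism $z$ of Lemma \ref{a calculation of fundamental groupoid} is absorbed into a conjugation, collapsing $zx^{d_1}=y^{d_2}z$ to $x^{d_1}=w^{d_2}$ with $w=z^{-1}yz$). Then I would enlarge the tree one leaf at a time: pick a leaf vertex $v_n$ with its pendant edge $\epsilon=\{e,e'\}$, let $E_0$ be the tree with $v_n$ and $\epsilon$ removed, and decompose $E=\tilde{E}_0\cup\tilde{E}_\epsilon$ into two sub-f-BCs of $E$ with the same underlying $G$-set as $E$: $\tilde{E}_0$ retains all polygons except that of $\epsilon$ (which is split into the singletons $\{e\},\{e'\}$), while $\tilde{E}_\epsilon$ retains only the polygon of $\epsilon$ and splits every other polygon into singletons. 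The intersection is the all-singleton sub-f-BC.

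At this point the plan uses Lemma \ref{isomorphism of fundamental groupoids} twice. First, delete the two singleton half-edges $e,e'$ from $\tilde{E}_0$ (they are automatically $\sigma$-invariant since the Nakayama automorphism fixes every angle in a Brauer graph), and verify via the formula $d'(h)=d(h)-|\{1\le i<d(h):g^i(h)\notin E_0\}|$ that the f-degrees of the remaining vertices are unchanged, thereby identifying $\Pi(\tilde{E}_0,A)$ with $\Pi(E_0,A')$. Second, apply Lemma \ref{isomorphism of fundamental groupoids} to $\tilde{E}_\epsilon$ to reduce it (away from any chosen base point) to the two-vertex single-edge Brauer graph of Lemma \ref{a calculation of fundamental groupoid}, again seeing that the f-degrees shrink exactly from $d_i|v_i|$ to $d_i$ at the surviving endpoints. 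The direct limit prescribed by Proposition \ref{Van-Kampen} amalgamates these two pieces over the cyclic subgroup generated by the rotation $c_i=\overline{(a_i|g^{|v_i|}|a_i)}$, which on each side becomes the Nakayama power $a_i^{d_i}$; this introduces the new generator $a_n$ with the single relation $a_i^{d_i}=a_n^{d_n}$, matching the claimed formula.

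For the inductive step $r-1\to r$, I would choose a non-tree edge $\epsilon$ (which exists since $r\ge 1$) so that $E_0=E\setminus\epsilon$ remains connected, and repeat the same $\tilde{E}_0\cup\tilde{E}_\epsilon$ decomposition. In the subcase where $\epsilon$ is a loop at some $v_i$, Lemma \ref{a calculation of fundamental group} provides $\Pi(\tilde{E}_\epsilon,a_i)\cong F\langle x,y\rangle/\langle x^{d_i}y=yx^{d_i}\rangle$, and amalgamation over $\langle a_i^{d_i}\rangle$ contributes a generator $b_r=y$ commuting with $a_i^{d_i}=a_1^{d_1}$. In the subcase where $\epsilon$ joins distinct $v_i\ne v_j$, Lemma \ref{a calculation of fundamental groupoid} contributes the arrow $z$ with relation $zx^{d_i}=y^{d_j}z$; choosing a walk $\gamma$ in the spanning tree of $E_0$ from $a_1$ to $a_i$ (and similarly from $a_j$ back to $a_1$), this arrow converts into a closed walk at $a_1$ representing a new generator $b_r$, and the relation becomes $b_r a_1^{d_1}=a_1^{d_1}b_r$ after using $a_i^{d_i}=a_j^{d_j}=a_1^{d_1}$, which is already forced in $\Pi(E_0)$ by the inductive hypothesis. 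In both cases, the inductive formula is augmented exactly as required.

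The main obstacle is bookkeeping in the two Lemma \ref{isomorphism of fundamental groupoids} reductions: one must verify the $\sigma$-invariance of each set of deleted singletons, check that the f-degrees of the retained vertices transform correctly, and keep precise track of how the distinguished elements $c_i$ in the intersection are identified on either side of the amalgamation — for it is exactly this identification that fuses all the $a_i^{d_i}$ into a single central element and that then forces each subsequent $b_j$ to commute with it. Once these identifications are set up carefully, the direct-limit presentation supplied by Proposition \ref{Van-Kampen} produces the claimed group essentially by formal manipulation.
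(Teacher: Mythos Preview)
Your inductive edge-peeling approach is correct and viable, but it is genuinely different from the paper's proof. Rather than inducting on $r$, the paper decomposes $E$ all at once into sub-f-BCs $E_1,\ldots,E_k$ (one per edge, keeping only that edge's polygon intact) together with the all-singleton intersection $E'$, applies Proposition~\ref{Van-Kampen} a single time with $A=\{h_1,\ldots,h_n\}$ a set of vertex representatives, and then identifies the resulting direct limit with an explicit quiver groupoid $\Sigma=\Pi(Q)/\langle(\alpha'_{q(i)})^{d_{q(i)}}\gamma'_i=\gamma'_i(\alpha'_{p(i)})^{d_{p(i)}}\rangle$; the spanning-tree simplification that you distribute across the induction is performed once at the end, by rewriting $\Sigma$ in terms of a directed spanning tree $R\subseteq Q$. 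Your route is closer to the classical Seifert--van Kampen computation for graph fundamental groups and avoids the auxiliary quiver machinery, at the price of repeated invocations of Proposition~\ref{Van-Kampen} and Lemma~\ref{isomorphism of fundamental groupoids}; the paper's one-shot route trades that repetition for a heavier single direct-limit identification. One point in your sketch that needs tightening: each pushout is over the full groupoid $\Pi(E',A)\cong\bigsqcup_{l=1}^{n}\langle c_l\rangle$, not a single cyclic group $\langle c_i\rangle$, and in the leaf-attachment step $\tilde{E}_0$ is disconnected (the leaf half-edge $e'=h_n$ is its own component), so you must carry the isolated factor $\langle c_n\rangle$ through and glue it to the loop $y$ on the $\tilde{E}_\epsilon$ side---this is exactly where the new relation $a_i^{d_i}=a_n^{d_n}$ enters, while the remaining $c_l$ with $l\neq i,n$ map isomorphically into both pieces and contribute nothing new.
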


\begin{proof}
If $k=1$ then it follows from Lemma \ref{a calculation of fundamental group} and Lemma \ref{a calculation of fundamental groupoid}. Therefore we may assume that $k>1$.

Let $P(e_1)$, $\cdots$, $P(e_k)$ be all edges of $E$. For each $1\leq i\leq k$, define a sub-f-BC $E_i=(E_i,P_i,L_i,d_i)$ of $E$ as follows: $E_i=E$ as $G$-sets; $P_i(e_i)=P(e_i)$ and $P_i(e)=\{e\}$ for each $e\notin P(e_i)$; $L_i$ is trivial and $d_i=d$. Let $E'=(E',P',L',d')$ be the intersection of $E_i$'s. Since $k>1$, $P'(e)=\{e\}$ for each $e\in E'$. For each $1\leq j\leq n$, choose $h_j\in v_j$, and let $A=\{h_1,\cdots,h_n\}$ be a subset of $E$. Denote by $N_j$ the cardinal of $v_j$ for each $1\leq j\leq n$.

For each $1\leq i\leq k$, if the two half-edges of $P(e_i)$ belong to the same vertex $v_j$, then we denote by $\mathscr{F}_{i}$ the fundamental groupoid of quiver
$$\begin{tikzpicture}
\draw[->] (-0.2,0.1) arc (15:345:0.5);
\draw[->] (0.2,-0.1) arc (195:525:0.5);
\fill (0,0) circle (0.5ex);
\node at(-1.4,0) {$\alpha_{ij}$};
\node at(0,0.3) {$x_{ij}$};
\node at(1.4,0) {$\gamma_{i}$};
\end{tikzpicture},$$
and for each $1\leq l\leq n$ with $l\neq j$, denote by $\mathscr{F}_{il}$ the fundamental groupoid of quiver
$$\begin{tikzpicture}
\draw[->] (-0.2,0.1) arc (15:345:0.5);
\fill (0,0) circle (0.5ex);
\node at(-1.4,0) {$\alpha_{il}$};
\node at(0.3,0) {$x_{il}$};
\end{tikzpicture}.$$
Let $\Sigma_i$ be the groupoid
$$\bigsqcup_{1\leq l\leq n, l\neq j}\mathscr{F}_{il}\sqcup (\mathscr{F}_{i}/\langle \alpha_{ij}^{d_j}\gamma_i=\gamma_i\alpha_{ij}^{d_j}\rangle).$$
By Lemma \ref{a calculation of fundamental group} and Lemma \ref{isomorphism of fundamental groupoids}, there is an isomorphism of groupoids $F_i:\Sigma_i\rightarrow\Pi(E_i,A)$ such that $F_i(x_{il})=h_l$ and $F_i(\alpha_{il})=\overline{(h_l|g^{N_l}|h_l)}$ for each $1\leq l\leq n$. If the two half-edges of $P(e_i)$ belong to two different vertices $v_{j_1}$, $v_{j_2}$ with $j_1<j_2$, then we denote by $\mathscr{F}_{i}$ the fundamental groupoid of quiver
$$\begin{tikzpicture}
\draw[->] (0.2,0) -- (1.8,0);
\draw[->] (-0.2,0.1) arc (15:345:0.5);
\draw[->] (2.2,-0.1) arc (195:525:0.5);
\fill (0,0) circle (0.4ex);
\fill (2,0) circle (0.4ex);
\node at(-1.5,0) {$\alpha_{i j_1}$};
\node at(3.6,0) {$\alpha_{i j_2}$};
\node at(1,0.2) {$\gamma_i$};
\node at(0.2,0.2) {$x_{i j_1}$};
\node at(1.8,0.2) {$x_{i j_2}$};
\end{tikzpicture},$$
and for each $1\leq l\leq n$ with $l\neq j_1,j_2$, denote by $\mathscr{F}_{il}$ the fundamental groupoid of quiver
$$\begin{tikzpicture}
\draw[->] (-0.2,0.1) arc (15:345:0.5);
\fill (0,0) circle (0.5ex);
\node at(-1.4,0) {$\alpha_{il}$};
\node at(0.3,0) {$x_{il}$};
\end{tikzpicture}.$$
Let $\Sigma_i$ be the groupoid
$$\bigsqcup_{1\leq l\leq n, l\neq j_1,j_2}\mathscr{F}_{il}\sqcup (\mathscr{F}_{i}/\langle \alpha_{i j_2}^{d_{j_2}}\gamma_i=\gamma_i\alpha_{i j_1}^{d_{j_1}}\rangle).$$
By Lemma \ref{a calculation of fundamental groupoid} and Lemma \ref{isomorphism of fundamental groupoids}, there is an isomorphism of groupoids $F_i:\Sigma_i\rightarrow\Pi(E_i,A)$ such that $F_i(x_{il})=h_l$ and $F_i(\alpha_{il})=\overline{(h_l|g^{N_l}|h_l)}$ for each $1\leq l\leq n$.

For each $1\leq l\leq n$, let $\mathscr{G}_l$ be the fundamental groupoid of quiver
$$\begin{tikzpicture}
\draw[->] (-0.2,0.1) arc (15:345:0.5);
\fill (0,0) circle (0.5ex);
\node at(-1.4,0) {$\alpha_{l}$};
\node at(0.3,0) {$x_{l}$};
\end{tikzpicture}.$$
Let $\Sigma'$ be the groupoid $\bigsqcup_{1\leq l\leq n}\mathscr{G}_l$. It is straightforward to show that there is an isomorphism of groupoids $F':\Sigma'\rightarrow\Pi(E',A)$ such that $F'(x_{l})=h_l$ and $F'(\alpha_{l})=\overline{(h_l|g^{N_l}|h_l)}$ for each $1\leq l\leq n$. For each $1\leq i\leq k$, let $\tau_i:E'\rightarrow E_i$ be the inclusion morphism, and let $\mu_i=F_{i}^{-1}\tau_{i*}F':\Sigma'\rightarrow\Sigma_i$. Then $F'$ together with $F_i$'s define an isomorphism of direct systems $\{\mu_i:\Sigma'\rightarrow\Sigma_i\}_{1\leq i\leq k}$ and $\{\tau_{i*}:\Pi(E',A)\rightarrow\Pi(E_i,A)\}_{1\leq i\leq k}$.

Define a quiver $Q$ as follows: $Q_0=\{v_1,\cdots,v_n\}$, $Q_1=\{\alpha'_1,\cdots,\alpha'_n,\gamma'_1,\cdots,\gamma'_k\}$; define $s(\alpha'_l)=t(\alpha'_l)=v_l$ for each $1\leq l\leq n$; for each $1\leq i\leq k$, if the two half-edges of $P(e_i)$ belong to the same vertex $v_j$, define $s(\gamma'_i)=t(\gamma'_i)=v_j$; if the two half-edges of $P(e_i)$ belong to different vertices $v_{j_1}$, $v_{j_2}$ with $j_1<j_2$, define $s(\gamma'_i)=v_{j_1}$ and $t(\gamma'_i)=v_{j_2}$. Let $\Pi(Q)$ be the fundamental groupoid of the quiver $Q$, and let $\Sigma$ be the groupoid
$$\Pi(Q)/\langle(\alpha'_{q(i)})^{d_{q(i)}}\gamma'_i=\gamma'_i(\alpha'_{p(i)})^{d_{p(i)}}\mid 1\leq i\leq k \rangle,$$
where $s(\gamma'_i)=v_{p(i)}$ and $t(\gamma'_i)=v_{q(i)}$. For each $1\leq i\leq k$, define a functor $\nu_i:\Sigma_i\rightarrow\Sigma$ as $\nu_i(x_{il})=v_l$, $\nu_i(\alpha_{il})=\alpha'_l$ for $1\leq l\leq n$ and $\nu_i(\gamma_i)=\gamma'_i$. Since for each $1\leq i\leq k$, $\mu_i(x_l)=x_{il}$ and $\mu_i(\alpha_l)=\alpha_{il}$ for each $1\leq l\leq n$, $\nu_i\mu_i$ are equal for different $1\leq i\leq k$. We need to show that the groupoid $\Sigma$ together with functors $\nu_i:\Sigma_i\rightarrow\Sigma$ form the direct limit of the direct system $\{\mu_i:\Sigma'\rightarrow\Sigma_i\}_{1\leq i\leq k}$.

Let $\mathscr{G}$ be a groupoid and $\Phi_i:\Sigma_i\rightarrow\mathscr{G}$ be a functor for each $1\leq i\leq k$, such that $\Phi_i\mu_i$ are equal for different $1\leq i\leq k$. Define a functor $\Phi:\Sigma\rightarrow\mathscr{G}$ as follows. $\Phi(v_l):=\Phi_1(x_{1l})$, $\Phi(\alpha'_l):=\Phi_1(\alpha_{1l})$ for $1\leq l\leq n$; $\Phi(\gamma'_i):=\Phi_i(\gamma_i)$ for $1\leq i\leq k$. Since
$$\Phi(\alpha'_l)=\Phi_1(\alpha_{1l})=\Phi_1\mu_1(\alpha_{l})=\Phi_i\mu_i(\alpha_l)=\Phi_i(\alpha_{il})$$
for each $1\leq i\leq k$ and for each $1\leq l\leq n$, we have $$\Phi((\alpha'_{q(i)})^{d_{q(i)}}\gamma'_i)=\Phi_i(\alpha_{i,q(i)})^{d_{q(i)}}\Phi_i(\gamma_i)=\Phi_i(\alpha_{i,q(i)}^{d_{q(i)}}\gamma_i)
=\Phi_i(\gamma_i\alpha_{i,p(i)}^{d_{p(i)}})=\Phi(\gamma'_i (\alpha'_{p(i)})^{d_{p(i)}}),$$
where $s(\gamma'_i)=v_{p(i)}$ and $t(\gamma'_i)=v_{q(i)}$. Therefore $\Phi$ is well-defined. It is straightforward to show that $\Phi\nu_i=\Phi_i$ for each $1\leq i\leq k$. If there exists another functor $\Psi:\Sigma\rightarrow\mathscr{G}$ such that $\Psi\nu_i=\Phi_i$ for each $1\leq i\leq k$, then $\Psi(v_l)=\Psi\nu_1(x_{1l})=\Phi_1(x_{1l})=\Phi(v_l)$, $\Psi(\alpha'_l)=\Psi\nu_1(\alpha_{1l})=\Phi_1(\alpha_{1l})=\Phi(\alpha'_l)$ for $1\leq l\leq n$, and  $\Psi(\gamma'_i)=\Psi\nu_i(\gamma_i)=\Phi_i(\gamma_i)=\Phi(\gamma'_i)$ for each $1\leq i\leq k$. Therefore $\Psi=\Phi$ and such functor is unique.

Since $E$ is an admissible union of sub-f-BCs $\{E_i\}_{1\leq i\leq k}\cup\{E'\}$, and since $A$ meets each connected component of $E'$ and $E_i$'s, by Proposition \ref{Van-Kampen}, $\Pi(E,A)$ is the direct limit of the direct system $\{\tau_{i*}:\Pi(E',A)\rightarrow \Pi(E_i,A)\}_{1\leq i\leq k}$. Since the direct systems $\{\mu_i:\Sigma'\rightarrow\Sigma_i\}_{1\leq i\leq k}$ and $\{\tau_{i*}:\Pi(E',A)\rightarrow\Pi(E_i,A)\}_{1\leq i\leq k}$ are isomorphic, their direct limits are also isomorphic. Therefore the groupoid $\Pi(E,A)$ is isomorphic to $\Sigma$.

Let $\Gamma(E)$ be the diagram of $E$. Since $E$ is a connected Brauer graph, $\Gamma(E)$ is a connected graph. Choose a spanning tree $T$ of $\Gamma(E)$, and let $R$ be the subquiver of $Q$ such that $R_0=Q_0$ and $R_1=\{\gamma'_i\mid P(e_i)$ is a edge of $T\}$. Then $R$ is a directed tree. For each $1\leq l\leq n$, let $w_l$ be the unique reduced walk of $R$ (that is, a walk containing no subwalk of the form $\alpha^{-1}\alpha$ or $\alpha\alpha^{-1}$ for each arrow $\alpha$ of $R$) from $v_1$ to $v_l$. We construct a new quiver $B$ such that $R$ is a subquiver of $B$ as follows:
$$B_0=Q_0, B_1=\{\epsilon_l\mid 1\leq l\leq n\}\cup\{\gamma'_i\mid 1\leq i\leq k\},$$
where $s(\epsilon_l)=t(\epsilon_l)=v_1$ for each $1\leq l\leq n$. Let $\Delta$ be the groupoid
$$\Pi(B)/\langle\epsilon_{1}^{d_1}=\cdots=\epsilon_{n}^{d_n},w_{q(i)}^{-1}\gamma'_{i}w_{p(i)}\epsilon_{1}^{d_1}=\epsilon_{1}^{d_1}w_{q(i)}^{-1}\gamma'_{i}w_{p(i)}\text{ } (1\leq i\leq k \text{ and } \gamma'_i\notin R_1)\rangle,$$
where $s(\gamma'_i)=v_{p(i)}$ and $t(\gamma'_i)=v_{q(i)}$. Recall that $\Sigma$ is the groupoid $$\Pi(Q)/\langle(\alpha'_{q(i)})^{d_{q(i)}}\gamma'_i=\gamma'_i(\alpha'_{p(i)})^{d_{p(i)}}\mid 1\leq i\leq k \rangle,$$
where $s(\gamma'_i)=v_{p(i)}$ and $t(\gamma'_i)=v_{q(i)}$. Since the relations
$$\{(\alpha'_{q(i)})^{d_{q(i)}}\gamma'_i=\gamma'_i(\alpha'_{p(i)})^{d_{p(i)}}\mid 1\leq i\leq k\}$$
of $\Pi(Q)$ are equivalent to the relations
$$\{w_l(\alpha'_1)^{d_1}=(\alpha'_l)^{d_l}w_l\mid 2\leq l\leq n\}\cup\{w_{q(i)}^{-1}\gamma'_{i}w_{p(i)}(\alpha'_1)^{d_1}=(\alpha'_1)^{d_1}w_{q(i)}^{-1}\gamma'_{i}w_{p(i)}\mid \gamma'_i\notin R_1\},$$ the functor $\Delta\rightarrow\Sigma$, $v_l\mapsto v_l$, $\gamma'_i\mapsto\gamma'_i$, $\epsilon_1\mapsto\alpha'_1$, $\epsilon_l\mapsto w_{l}^{-1}\alpha'_{l}w_l$ for $2\leq l\leq n$ is an isomorphism of groupoids.

Since the fundamental group of a quiver is isomorphic to the fundamental group of its geometric realization, the fundamental group of $B$ at $v_1$ is the free group with generators $\epsilon_1$, $\cdots$, $\epsilon_n$ and $w_{q(i)}^{-1}\gamma'_{i}w_{p(i)}$ for each $1\leq i\leq k$ with $\gamma'_i\notin R_1$. Since $\Pi(E,A)$ is isomorphic to $\Delta$, the fundamental group of $E$ is isomorphic to $\Delta(v_1,v_1)$. Since the group $\Delta(v_1,v_1)$ is isomorphic to the fundamental group of $B$ at $v_1$ modulo the relations $\epsilon_{1}^{d_1}=\cdots=\epsilon_{n}^{d_n}$ and $w_{q(i)}^{-1}\gamma'_{i}w_{p(i)}\epsilon_{1}^{d_1}=\epsilon_{1}^{d_1}w_{q(i)}^{-1}\gamma'_{i}w_{p(i)}$ for each $1\leq i\leq k$ with $\gamma'_i\notin R_1$, and since the number of $1\leq i\leq k$ with $\gamma'_i\notin R_1$ is $k-n+1$, $\Delta(v_1,v_1)$ is isomorphic to $$F\langle a_1,\cdots,a_n,b_1,\cdots,b_r\rangle/\langle a_{1}^{d_1}=\cdots=a_{n}^{d_n}, a_{1}^{d_1} b_1=b_1 a_{1}^{d_1},\cdots, a_{1}^{d_1} b_r=b_r a_{1}^{d_1}\rangle,$$
where $r=k-n+1$.
\end{proof}

We illustrate the above proof by the following example.

\begin{Ex1}
Let $E$ be the Brauer graph given by the diagram
$$\begin{tikzpicture}
\draw (0,0) circle (0.5);
\fill (0.5,0) circle (0.5ex);
\draw (0.5,0)--(2,0);
\fill (2,0) circle (0.5ex);
\node at(2.4,0) {$2$};
\node at(0.45,0.5) {$e_1$};
\node at(0.45,-0.5) {$e'_1$};
\node at(0.8,0.2) {$e_2$};
\node at(1.7,0.27) {$e'_2$};
\draw (2.2,-0.2) rectangle (2.6,0.2);
\end{tikzpicture},$$
which has two vertices $v_1=G\cdot e_{1}$, $v_2=G\cdot e'_{2}$ and two edges $P(e_1),P(e_2)$. Let $E_1$ be the sub-f-BC of $E$ given by the diagram
$$\begin{tikzpicture}
\draw (0,0) circle (0.5);
\fill (0.5,0) circle (0.5ex);
\draw (0.5,0)--(1,0);
\draw (1.5,0)--(2,0);
\fill (2,0) circle (0.5ex);
\node at(2.4,0) {$2$};
\node at(0.45,0.5) {$e_1$};
\node at(0.45,-0.5) {$e'_1$};
\node at(0.8,0.2) {$e_2$};
\node at(1.7,0.27) {$e'_2$};
\draw (2.2,-0.2) rectangle (2.6,0.2);
\end{tikzpicture}$$
and let $E_2$ be the sub-f-BC of $E$ given by the diagram
$$\begin{tikzpicture}
\draw (0.5,0)--(0,0.866);
\draw (0.5,0)--(0,-0.866);
\fill (0.5,0) circle (0.5ex);
\draw (0.5,0)--(2,0);
\fill (2,0) circle (0.5ex);
\node at(2.4,0) {$2$};
\node at(0.45,0.5) {$e_1$};
\node at(0.45,-0.5) {$e'_1$};
\node at(0.8,0.2) {$e_2$};
\node at(1.7,0.27) {$e'_2$};
\draw (2.2,-0.2) rectangle (2.6,0.2);
\end{tikzpicture}.$$
Moreover, let $E'=E_1\cap E_2$, which a sub-f-BC of $E$ given by the diagram
$$\begin{tikzpicture}
\draw (0.5,0)--(0,0.866);
\draw (0.5,0)--(0,-0.866);
\fill (0.5,0) circle (0.5ex);
\draw (0.5,0)--(1,0);
\draw (1.5,0)--(2,0);
\fill (2,0) circle (0.5ex);
\node at(2.4,0) {$2$};
\node at(0.45,0.5) {$e_1$};
\node at(0.45,-0.5) {$e'_1$};
\node at(0.8,0.2) {$e_2$};
\node at(1.7,0.27) {$e'_2$};
\draw (2.2,-0.2) rectangle (2.6,0.2);
\end{tikzpicture}.$$
Let $A=\{e_2,e'_2\}$ be a subset of $E$. Then $\Pi(E,A)$ is the direct limit of the direct system
\begin{equation}\label{direct-system}\Pi(E_1,A)\leftarrow\Pi(E',A)\rightarrow\Pi(E_2,A).\end{equation}

Let $Q^{(1)}$ be the quiver
$$\begin{tikzpicture}
\draw[->] (-0.2,0.1) arc (15:345:0.5);
\draw[->] (0.2,-0.1) arc (195:525:0.5);
\fill (0,0) circle (0.5ex);
\node at(-1.5,0) {$\alpha_{11}$};
\node at(0,0.4) {$x_{11}$};
\node at(1.45,0) {$\gamma_{1}$};
\node at(2,0) {$\sqcup$};
\draw[->] (4,0.1) arc (15:345:0.5);
\fill (4.2,0) circle (0.5ex);
\node at(4.6,0) {$x_{12}$};
\node at(2.7,0) {$\alpha_{12}$};
\end{tikzpicture},$$
$Q^{(2)}$ be the quiver
$$\begin{tikzpicture}
\draw[->] (0.2,0) -- (1.8,0);
\draw[->] (-0.2,0.1) arc (15:345:0.5);
\draw[->] (2.2,-0.1) arc (195:525:0.5);
\fill (0,0) circle (0.4ex);
\fill (2,0) circle (0.4ex);
\node at(-1.5,0) {$\alpha_{2 1}$};
\node at(3.6,0) {$\alpha_{2 2}$};
\node at(1,0.2) {$\gamma_2$};
\node at(0.2,0.2) {$x_{2 1}$};
\node at(1.75,0.25) {$x_{2 2}$};
\end{tikzpicture},$$
$Q'$ be the quiver
$$\begin{tikzpicture}
\draw[->] (-0.2,0.1) arc (15:345:0.5);
\fill (0,0) circle (0.5ex);
\node at(-1.4,0) {$\alpha_{1}$};
\node at(0.4,0) {$x_{1}$};
\node at(1,0) {$\sqcup$};
\draw[->] (3,0.1) arc (15:345:0.5);
\fill (3.2,0) circle (0.5ex);
\node at(3.6,0) {$x_{2}$};
\node at(1.7,0) {$\alpha_{2}$};
\end{tikzpicture}.$$
Let $\Sigma_1=\Pi(Q^{(1)})/\langle\alpha_{11}\gamma_1=\gamma_1\alpha_{11}\rangle$, $\Sigma_2=\Pi(Q^{(1)})/\langle\gamma_2\alpha_{21}=\alpha_{22}^{2}\gamma_2\rangle$, $\Sigma'=\Pi(Q')$.
Then the direct system (\ref{direct-system}) is isomorphic to the direct system
\begin{equation}\label{direct-system-2}\Sigma_1\leftarrow\Sigma'\rightarrow\Sigma_2,\end{equation}
where the functor $\Sigma'\rightarrow\Sigma_i$ maps each $x_j$ to $x_{ij}$ and maps each $\alpha_j$ to $\alpha_{ij}$.

Let $Q$ be the quiver
$$\begin{tikzpicture}
\draw[->] (0.2,0) -- (1.8,0);
\draw[->] (0.2,0.2) arc (-75:255:0.5);
\draw[->] (-0.2,-0.2) arc (105:435:0.5);
\draw[->] (2.2,-0.1) arc (195:525:0.5);
\fill (0,0) circle (0.4ex);
\fill (2,0) circle (0.4ex);
\node at(-0.7,0.7) {$\alpha'_{1}$};
\node at(3.5,0) {$\alpha'_{2}$};
\node at(-0.75,-0.7) {$\gamma'_1$};
\node at(1,0.3) {$\gamma'_2$};
\node at(-0.25,0) {$v_1$};
\node at(1.8,0.2) {$v_2$};
\end{tikzpicture}$$ and let $\Sigma=\Pi(Q)/\langle\alpha'_{1}\gamma'_{1}=\gamma'_{1}\alpha'_{1},\gamma'_2\alpha'_{1}=(\alpha'_{2})^{2}\gamma'_2\rangle$. Then $\Sigma$ is the direct limit of the direct system (\ref{direct-system-2}), and therefore $\Pi(E,A)\cong\Sigma$. Denote by $a_1=\alpha'_1$, $a_2=(\gamma'_{2})^{-1}\alpha'_2\gamma'_2$, $b_1=\gamma'_1$. Then $\Sigma(v_1,v_1)$ is the group generated by $a_1,a_2,b_1$ with relations $a_1=a_{2}^{2}$, $a_1 b_1=b_1 a_1$. Therefore
$$\Pi(E)\cong\Sigma(v_1,v_1)\cong F\langle a_1,a_2,b\rangle/\langle a_1=a_{2}^{2}, a_1 b_1=b_1 a_1\rangle\cong F\langle x,y\rangle/\langle x^2 y=yx^2\rangle.$$
\end{Ex1}

\begin{Thm}\label{fundamental group of BC}
Let $E=(E,P,L,d)$ be a connected Brauer configuration with $n$ vertices $v_1$, $\cdots$, $v_n$. Let $k_l$ be the number of $l$-gons of $E$ for each $l\geq 2$. Let $d_i=d_f(v_i)$ be the f-degree of $v_i$ for each $1\leq i\leq n$, and let $r=\sum_{l\geq 2} (l-1)k_l-n+1$. Then the fundamental group of $E$ is isomorphic to $F\langle a_1,\cdots,a_n,b_1,\cdots,b_r\rangle/\langle a_{1}^{d_1}=\cdots=a_{n}^{d_n}, a_{1}^{d_1} b_1=b_1 a_{1}^{d_1},\cdots, a_{1}^{d_1} b_r=b_r a_{1}^{d_1}\rangle$.
\end{Thm}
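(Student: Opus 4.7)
The plan is to reduce the computation to the Brauer graph case handled by Proposition \ref{fundamental group of BG}, using the associated Brauer graph $C$ constructed just before Proposition \ref{iso. of direct systems}. By Corollary \ref{fundamental group of BC and BG} there is an isomorphism of groupoids $\Pi(E,E)\cong\Pi(C,E)$. Since $E$ is connected and $E\subseteq C$ meets every $G$-orbit of $C$, the Brauer graph $C$ is also connected and this yields an isomorphism of fundamental groups $\Pi(E)\cong\Pi(C)$.

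Next, I would read off the combinatorial invariants of $C$ from its definition. Each polygon $P(e)=\{e_1,\ldots,e_l\}$ of $E$ with $l\geq 3$ is subdivided in $C$ into a chain of $l-1$ edges $\{e_1,e_2\},\{e'_2,e_3\},\ldots,\{e'_{l-1},e_l\}$ by inserting the $l-2$ new half-edges $e'_2,\ldots,e'_{l-1}$, while every $2$-gon of $E$ remains a $2$-gon in $C$. Hence $C$ has $\sum_{l\geq 2}(l-1)k_l$ edges. The $G$-action on $C$ is defined so that each inserted half-edge $e'_i$ is placed immediately after $e_i$ in the orbit of $e_i$ (we have $g\cdot e_i=e'_i$ and $g\cdot e'_i=g(e_i)$ in $C$), so every $G$-orbit of $E$ extends to a single $G$-orbit of $C$ and no new orbits are created. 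Consequently $C$ has exactly $n$ vertices, one for each vertex of $E$. Moreover, the formula $\widetilde{d}(h)=d(h)\cdot|G\text{-orbit of }h\text{ in }C|/|G\text{-orbit of }h\text{ in }E|$ was chosen precisely so that the f-degree of the vertex of $C$ corresponding to $v_i$ equals $d_f(v_i)=d_i$.

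With these identifications in place, Proposition \ref{fundamental group of BG} applied to $C$ with $n$ vertices, $k=\sum_{l\geq 2}(l-1)k_l$ edges, and f-degrees $d_1,\ldots,d_n$ gives the required presentation with $r=k-n+1=\sum_{l\geq 2}(l-1)k_l-n+1$. Transporting this across the isomorphism $\Pi(E)\cong\Pi(C)$ yields the theorem.

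The main obstacle is the bookkeeping in the passage from $E$ to $C$: one must verify carefully that the construction neither splits nor merges any $G$-orbit and that the f-degrees match, which comes down to a case analysis of the modified $g$-action near each inserted half-edge $e'_i$ and the defining formula for $\widetilde d$. Once this bijection of vertices (with f-degrees) together with the edge count is established, the rest is an immediate appeal to Proposition \ref{fundamental group of BG}.
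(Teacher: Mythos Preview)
Your proposal is correct and follows essentially the same approach as the paper: reduce to the associated Brauer graph $C$ via Corollary \ref{fundamental group of BC and BG}, then apply Proposition \ref{fundamental group of BG}. The paper's proof simply asserts that $C$ is connected with $n$ vertices, $\sum_{l\geq 2}(l-1)k_l$ edges, and f-degrees $d_i$, whereas you spell out the combinatorial verification of these facts; this extra bookkeeping is accurate and is exactly what the paper leaves implicit.
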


\begin{proof}
By Corollary \ref{fundamental group of BC and BG}, the fundamental group of $E$ is isomorphic to the fundamental group of $C$, where $C=(C,\widetilde{P},\widetilde{L},\widetilde{d})$ is a connected Brauer graph with $n$ vertices $\widetilde{v_1}$, $\cdots$, $\widetilde{v_n}$ and $\sum_{l\geq 2} (l-1)k_l$ edges, where the f-degree of $\widetilde{v_i}$ is $d_i$ for each $1\leq i\leq n$. It follows from Proposition \ref{fundamental group of BG} that the fundamental group of $C$ is isomorphic to
$$F\langle a_1,\cdots,a_n,b_1,\cdots,b_r\rangle/\langle a_{1}^{d_1}=\cdots=a_{n}^{d_n}, a_{1}^{d_1} b_1=b_1 a_{1}^{d_1},\cdots, a_{1}^{d_1} b_r=b_r a_{1}^{d_1}\rangle,$$
where $r=\sum_{l\geq 2} (l-1)k_l-n+1$.
\end{proof}

\begin{Rem1}
(1) The theorem above shows that the fundamental group of a connected BC is finitely presented (for the definition of finitely presented group, see \cite[page 106]{M}).

(2) According to Corollary \ref{iso-of-fundamental-gp-of-f-BC-and-fundamental-gp-of-quiver-with-aadmissible-relation}, the fundamental group of a connected BC $E$ is isomorphic to the fundamental group of the quiver with admissible relations $(Q'_E,I'_E)$, where the quiver $Q'_E$ and the ideal $I'_E$ are just the quiver and the ideal given in \cite[Section 2]{GS} for the corresponding BCA. So using Theorem \ref{fundamental group of BC} we can also calculate the fundamental group of $(Q'_E,I'_E)$ for a connected BC $E$.
\end{Rem1}

Finally, we would like to mention that the discussions in this paper suggest a general method to calculate the fundamental group of given f-BC $E$ using a covering $E\rightarrow E'$. This method will be used in the forthcoming paper \cite[Section 3]{LL2} and the idea is as follows.

Let $f:E\rightarrow E'$ be a covering of f-BCs. For every $e\in E$ and for every walk $w'$ of $E'$ with $s(w')=f(e)$, denote by $w$ the unique walk of $E$ with $s(w)=e$ and $f(w)=w'$. We denote by $w'\cdot e$ the terminal of $w$. By Proposition \ref{homotopy lifting}, we have $w'\cdot e=v'\cdot e$ if $w',v'$ are homotopic walks of $E'$ with $s(w')=s(v')=f(e)$, so we may define $\overline{w'}\cdot e$ as $w'\cdot e$. Under this operation $f^{-1}(e')$ becomes a $\Pi(E',e')$-set for every $e'\in E'$. Moreover, for every $e\in f^{-1}(e')$, the stabilizer subgroup of $e$ in $\Pi(E',e')$ is equal to the image of $f_{*}:\Pi(E,e)\rightarrow \Pi(E',e')$, which is isomorphic to $\Pi(E,e)$. This reduces the calculation of the fundamental group $\Pi(E,e)$ to the calculation of the stabilizer subgroup of $e$ in $\Pi(E',e')$. In particular, if $E$ is a finite connected $f_{ms}$-BC such that the group of automorphisms $\langle\sigma\rangle$ of $E$ generated by the Nakayama automorphism $\sigma$ acts admissibly on $E$, then $E/\langle\sigma\rangle$ is a connected BC and we have a covering $E\rightarrow E/\langle\sigma\rangle$, where the fundamental group of $E/\langle\sigma\rangle$ is given by Corollary \ref{fundamental group of BC}. Therefore to calculate the fundamental group $\Pi(E,e)$ of $E$, it suffices to calculate the stablizer subgroup of $e$ in $\Pi(E/\langle\sigma\rangle,e^{\langle\sigma\rangle})$.

\end{document}